\numberwithin{equation}{section}
\newcommand\caA{{\mathcal A}}
\newcommand\caB{{\mathcal B}}
\newcommand\caC{{\mathcal C}}
\newcommand\caD{{\mathcal D}}
\newcommand\caF{{\mathcal F}}
\newcommand\caL{{\mathcal L}}
\newcommand\caN{{\mathcal N}}
\newcommand\caS{{\mathcal S}}
\newcommand\caO{{\mathcal O}}
\newcommand\caU{{\mathcal U}}
\newcommand\gone{{ \mathchoice {1\mskip-4mu\mathrm{l} } {1\mskip-4mu\mathrm{l} }{1\mskip-4.5mu\mathrm{l} } {1\mskip-5mu\mathrm{l}} }}
\newcommand\gR{{\mathbb R}}
\newcommand\gA{\mathbb A}
\newcommand\gC{{\mathbb C}}
\newcommand\gM{{\mathbb M}}
\newcommand\gP{{\mathbb P}}
\newcommand\gB{{\mathbb B}}
\newcommand\gN{{\mathbb N}}
\newcommand\gZ{{\mathbb Z}}
\newcommand\algzero{{\mathsf 0}}
\newcommand\superA{{\mathcal A}}
\newcommand\ehH{\mathcal H}
\newcommand\modE{{E}}				
\newcommand\modF{{F}}				
\newcommand\kb{{\mathfrak b}}
\newcommand\kg{{\mathfrak g}}
\newcommand\kP{{\mathfrak P}}
\newcommand\kh{{\mathfrak h}}
\newcommand\kU{{\mathfrak U}}
\newcommand\kV{{\mathfrak V}}
\newcommand\eps{{\varepsilon}}
\newcommand\Ad{{\text{\textup{Ad}}}}
\newcommand\fois{\mathord{\cdot}}
\DeclareMathOperator{\Hom}{\mathsf{Hom}}
\newcommand\Ker{{\text{\textup{Ker}}}}
\newcommand\Imag{{\text{\textup{Im}}}}
\newcommand\dd{{\text{\textup{d}}}}
\newcommand\Lie{{\text{\textup{Lie}}}}
\newcommand\End{{\text{\textup{End}}}}
\newcommand\norm{\mathord{\parallel}}
\newcommand\defin{\bf}
\newcommand\bzeta{\overline\zeta}
\DeclareMathOperator{\convol}{\ast} 
\DeclareMathOperator{\Str}{Str} 
\newcommand\ev{{\text ev}}
\newcommand\od{{\text od}}
\newcommand\qev{q^\ev}
\newcommand\qod{q^\od}
\newcommand\signp{{k}}
\newcommand\signq{{\ell}}
\newcommand\vv{v}
\newcommand\vw{w}
\newcommand\xx{\mathbf x}
\newcommand\yy{\mathbf y}
\newcommand\BCH{\mathrm{BCH}}   
\newcommand{\D}{D}
\newcommand{\DGR}{\D(G_0) } 
\newcommand{\DgR}{\D(\kg_\gR) } 
\newcommand{\DG}{\D(G) }
\newcommand{\Dg}{\D(\kg) }
\newcommand\ehF{\mathcal{F}}   
\newcommand\rH{\mathrm{H}} 	
\newcommand\Hol{Hol}  
\newcommand\cN{\mathcal{N}} 
\newcommand\pio{\pi_0} 
\newcommand\piH{\pi_*^H} 
\newcommand\piG{\pi_*} 
\newcommand\piwod{\widetilde\pio} 
\newcommand\parity{\sigma} 
\newcommand\wod{\text{w.o.d.}} 
\newcommand\Mp{\mathrm{Mp}} 
\newcommand\SO{\mathrm{SO}} 
\newcommand\Sp{\mathrm{Sp}} 
\newcommand\Spin{\mathrm{Spin}} 
\newcommand\SpO{\mathrm{SpO}} 
\newcommand{\dual}[1]{{\vphantom{#1}\,}^{\flat}{\!#1}}  
\def\a{\alpha}
\def\b{\beta}
\def\g{\gamma}
\def\l{\lambda}
\def\m{\mu}
\def\om{\omega}
\newtheorem{Theorem}{Theorem}[section]
\newtheorem{theorem}[Theorem]{Theorem}
\newtheorem{proposition}[Theorem]{Proposition}
\newtheorem{lemma}[Theorem]{Lemma}
\newtheorem{corollary}[Theorem]{Corollary}
\newtheorem{example}[Theorem]{Example}
\newtheorem{remark}[Theorem]{Remark}
\newtheorem{definition}[Theorem]{Definition}
\theoremstyle{nonumberplain}
\newtheorem{proof}{Proof}
\long\def\sideremark#1{\ifvmode\leavevmode\fi\vadjust{\vbox to0pt{\vss
 \hbox to 0pt{\hskip\hsize\hskip1em
 \vbox{\hsize3cm\tiny\raggedright\pretolerance10000
 \noindent #1\hfill}\hss}\vbox to8pt{\vfil}\vss}}}%
\title{Superunitary representations of Heisenberg Supergroups\footnote{Work
supported by the Belgian Interuniversity Attraction Pole (IAP) within the framework ``Dynamics, Geometry and Statistical Physics'' (DYGEST).}}
\author{Axel de Goursac\footnote{Corresponding author, phone: +32 10473164}, Jean-Philippe Michel}
\begin{document}

\maketitle
\vspace*{-1cm}
\begin{center}
\textit{IRMP, Universit\'e Catholique de Louvain,\\ Chemin du Cyclotron, 2, B-1348 Louvain-la-Neuve, Belgium\\
e-mail: \texttt{Axelmg@melix.net, jpmichel82@gmail.com}}\\
\end{center}%

\vskip 2cm

\begin{abstract}
Numerous Lie supergroups do not admit superunitary representations except the trivial one, e.g.,
 Heisenberg and orthosymplectic supergroups in mixed signature.
To avoid this situation, we introduce in this paper a broader definition of superunitary representation,
relying on a new definition of Hilbert superspace. The latter is inspired by the notion of Krein space and 
was developed initially for noncommutative supergeometry. 

For Heisenberg supergroups, this new approach yields a smooth generalization, whatever the signature, 
of the unitary representation theory of the classical Heisenberg group. 
First, we obtain Schr\"odinger-like representations by quantizing
generic coadjoint orbits. They satisfy the new definition of irreducible superunitary representations and serve as ground to the main result of this paper: a generalized Stone-von Neumann theorem. 
Then, we obtain the superunitary dual and build a group Fourier transformation,
satisfying Parseval theorem. We eventually show that metaplectic representations,
which extend Schr\"odinger-like representations to metaplectic supergroups, 
 also fit into this definition of superunitary representations.
 
 \vskip 0.5cm
 MSC classes: 22E27, 58C50, 43A32
\end{abstract}
%
%

\tableofcontents


\section{Introduction}

After the appearance of supersymmetry in field theories \cite{Neveu:1971,Wess:1974},
Lie superalgebras quickly become a classical topic in algebraic representation theory.
They offer a rich playground generalizing Lie algebras, 
with important results 
and many applications.

Superunitary representations are at the core of supersymmetric field theories \cite{Salam:1974}.
They were first investigated from a purely
algebraic perspective, as star-representations \cite{Scheunert:1977}
or $\varepsilon$-unitary representations \cite{Nishiyama:1990} 
of Lie superalgebras
on a graded $\gC$-vector space $V=V_0\oplus V_1$.
Contrary to unitary representations of Lie groups, the preserved structure is not
a positive definite hermitian form (or scalar product), which is not adapted to 
super-commutation relations \cite{Sternberg:1978}, but rather a
non-degenerate sesquilinear form (or inner product) 
$\langle -,- \rangle :V\times V\to V$
which is superhermitian. That is 
$\overline{\langle x,y\rangle} = (-1)^{|x||y|}\langle y,x\rangle$
if $x,y\in V_0\cup V_1$ are of degree $|x|,|y|\in\{0,1\}$.
\medskip


To develop a full theory of superunitary representations, including analytic aspects,
one needs  a topology on the representation space. But none is canonically associated with a superhermitian
inner product. 
A first way to proceed is to start with a graded  Hilbert space and define 
a compatible superhermitian inner product afterwards. 
This yields to the following definition
of Hilbert superspace, which is the most common one 
\cite{Deligne:1999su} among numerous other possibilities \cite{Rudolph:2000}. 
It is coined as standard in this paper.

\begin{definition}
\label{def-oldversion}
A {\defin standard Hilbert superspace} is a $\gZ_2$-graded complex Hilbert space $\ehH=\ehH_0\oplus\ehH_1$ such that $(\ehH_0,\ehH_1)=\{0\}$, where $(-,-)$ denotes the positive definite  hermitian scalar product. A {\it superhermitian} inner product is then uniquely defined on $\ehH$ by
\begin{align*}
&\langle x,y\rangle:= (x,y)\quad\text{ if } x,y\in\ehH_0\\
&\langle x,y\rangle:= i(x,y)\quad\text{ if } x,y\in\ehH_1,\\
&\langle x,y\rangle:= 0\quad\text{ if } x\in\ehH_0, y\in\ehH_1.
\end{align*}
\end{definition}

Up to our knowledge, the above definition first appeared in 
\cite{ElGradechi:1996} 
and served as ground for all the mathematics literature on
 superunitary representations, see e.g.\ 
\cite{ElGradechi:1996,Carmeli:2006,Salmasian:2010}. 
An algebraic version of this definition, where $\ehH$ is a pre-Hilbert space,
was considered before in representation theory \cite{Furutsu:1988},
following physics conventions.

The odd part of Lie supergroups is intrinsically of infinitesimal
nature. This is best encoded in the description of Lie supergroups
 as super Harish-Chandra pairs, that is pairs $(G_0,\kg_\gR)$ where $G_0$ is a Lie group and $\kg_\gR=(\kg_\gR)_0\oplus(\kg_\gR)_1$  is a Lie superalgebra 
carrying an adjoint-like action of $G_0$, with  $(\kg_\gR)_0=Lie(G_0)$.
As a consequence, representations of Lie supergroups involve in general unbounded
operators. More precisely, a definition of superunitary
 representations should come along with a choice of domain
for the operators representing $(\kg_\gR)_1$. In the seminal paper 
\cite{Carmeli:2006}, this domain was chosen as the space of smooth 
vectors of the underlying $G_0$-representation. The definition 
reads as follows.

\begin{definition}[\cite{Carmeli:2006}]\label{Def:standardSUR}
Let $(G_0,\kg_\gR)$ be a super Harish-Chandra pair.
A {\defin standard super\-unitary representation} (standard SUR) of 
$(G_0,\kg_\gR)$ is a triple $(\ehH,\pio,\piG)$ such that
\begin{itemize}
\item $\ehH$ is a standard Hilbert superspace;
\item $\pio:G_0\to \caU(\ehH)$ is a unitary representation of the Lie group $G_0$ on $\ehH$;
\item $\piG:\kg_\gR\to\End(\ehH^\infty)$ is a $\gR$-Lie superalgebra
 morphism such that $\piG=d\pio$ on $(\kg_\gR)_0$ and
\begin{align*}
\forall g\in G_0,\ \forall X\in(\kg_\gR)_1, \qquad \piG(X)^\dag&=-\piG(X),\\ 
\piG(\Ad_g(X))&=\pio(g)\piG(X)\pio(g)^{-1},
\end{align*}
where $\ehH^\infty$ is the space of smooth vectors  of the representation $\pio$, 
$\Ad:G_0\times\kg_\gR\to\kg_\gR$ is the adjoint-like action of $G_0$ on $\kg_\gR$ and 
${}^\dag$ denotes the adjoint operation w.r.t.\ the superhermitian inner product of $\ehH$.
\end{itemize}
\end{definition}

Due to the very definition of standard Hilbert superspace,
the choice of domain in the above definition
proves to be inessential. As a result, the category of standard SUR's is well-behaved
under tensor products and restrictions to sub-pairs \cite{Carmeli:2006}.
In addition, fundamental results, 
like Schur Lemma and Mackey induction 
\cite{Carmeli:2006},  smoothly extend to standard SUR's. 
However, there is a list of Lie supergroups \cite{Neeb:2011} 
for which no non-trivial standard SUR exists:
\begin{align}
\mathrm{SL}(m|n,\gR)\text{ for }m>2&\text{ or }n>2;\quad \mathrm{SU}(p,q|r,s)\text{ for }p,q,r,s>0;\quad \mathrm{OSp}(\signp,\signq|2m)\text{ for }\signp,\signq,m>0;\nonumber\\
&\  \SpO(2m|\signp,\signq)\text{ for }m,\signp,\signq>0;\quad \rH_{2m|\signp,\signq}\text{ for }\signp,\signq>0,\dots\label{eq-list}
\end{align}
We are particularly interested here in the Heisenberg supergroup  $\rH_{2m|\signp,\signq}$.
It can be described as the super Harish-Chandra pair 
$(\rH_{2m},(\kh_{2m|\signp,\signq})_\gR)$ where $\rH_{2m}$ is the 
classical Heisenberg group of dimension $2m+1$ and 
$(\kh_{2m|\signp,\signq})_\gR$ is the $\gR$-Lie superalgebra
with basis $(p_i,q_i,e_\a,f_\b,Z)$ and non-trivial commutation
relations
$$
[p_i,q_j]=\delta_{ij} Z,\qquad [e_\a,e_{\a'}]=\delta_{\a\a'} Z,
\qquad [f_\b,f_{\b'}]=-\delta_{\b\b'} Z,
$$
where $i,j=1,\ldots,m$, $\a,\a'=1,\ldots,\signp$ 
and $\b,\b'=1,\ldots,\signq$.
If 
$\signp\signq=0$, the standard superunitary representation theory
of $\rH_{2m|\signp,\signq}$ is a smooth extension of
the unitary
 representation theory of $\rH_{2m}$
\cite{Salmasian:2010,Alldridge:2013}, e.g.\ 
the standard superunitary dual of $\rH_{2m|\signp,\signq}$ essentially coincides
with the unitary dual of $\rH_{2m}$.
On the contrary, if $\signp\signq\neq 0$, 
the standard superunitary theory of $\rH_{2m|\signp,\signq}$ is empty,
there is no standard SUR except the trivial one \cite{Salmasian:2010}.

\medskip

In this paper, we introduce a new definition for Hilbert superspaces,
generalizing the one of standard Hilbert superspaces.
They are endowed with an algebraic structure, namely a superhermitian inner product, 
and a compatible Hilbert topology. 

Before digging into details, we would like to comment
on how topology might be induced from algebra.
In the usual non-graded operator algebras setting, topology is fully determined by algebra, e.g.,
\begin{itemize}
\item in Hilbert space, the Hilbert topology is canonically determined by the positive definite hermitian scalar product $\norm x\norm =\sqrt{(x,x)}$;
\item in C*-algebra, the C*-norm is determined by the algebraic notion of spectral radius $\rho$: $\norm a\norm =\sqrt{\rho(a^*a)}$;
\item in von Neumann algebra, the strong (or weak) topology is determined (at least on bounded subsets) by bicommutant's properties.
\end{itemize}
This ideal algebraico-topological framework breaks down in some cases, e.g.\ for Lorentzian noncommutative geometry.
There, one needs an extra ingredient to build the topology out of the algebraic structure. This is the case of {\defin Krein spaces}, defined as pseudo-hermitian inner product spaces $\big(\ehH,\langle-,-\rangle\big)$ endowed with 
an extra involutive $\langle-,-\rangle$-isometric automorphism $J$ of $\ehH$ such that 
 $\big(\ehH, \langle -,J -\rangle\big)$ is a Hilbert space. The topology associated to the positive definite 
scalar product  $\langle -,J(-)\rangle$ is actually independent of the choice of 
$J$ and thus canonical \cite{Bognar:1974}.

Noncommutative Supergeometry \cite{Bieliavsky:2010su,deGoursac:2014kv,deGoursac:2008bd} provides a new framework for graded operator algebras, dealing with Hilbert superspaces (as defined below), C*-superalgebras and von Neumann superalgebras. Their natural superalgebraic structures induce topology, in a similar fashion as for Krein spaces. 
Building on \cite{Bieliavsky:2010su}, we introduce the following definition of Hilbert superspace. 
\begin{definition}
\label{def-newversion}
A {\defin Hilbert superspace} of parity $\sigma\in\gZ_2$ is a complex $\gZ_2$-graded vector space $\ehH=\ehH_0\oplus\ehH_1$ endowed with a homogeneous superhermitian inner product $\langle-,-\rangle$ of degree $\sigma$, such that there exists a fundamental symmetry $J$, i.e., an endomorphism $J$ of $\ehH$ of degree $\sigma$ satisfying, $\forall x,y\in\ehH$,
\begin{itemize}
\item $J^2(x)=(-1)^{(\sigma+1)|x|}x$ and $\langle J(x),J(y)\rangle=\langle x,y\rangle$,
\item $(x, y)_J := \langle x, J (y)\rangle$ is a positive definite hermitian scalar product on $\ehH$ for which $\ehH$ is complete.
\end{itemize}
\end{definition}
Then, Hilbert superspaces admit a canonical Hilbert topology, given by any of the scalar product $(-,-)_J$. Particular cases of Hilbert superspaces include Krein spaces, for which $\ehH_1=\algzero$, and standard
Hilbert superspaces, for which $J=\left(\begin{smallmatrix}
\gone &\, 0 \\ 
0 &\, -i\gone
\end{smallmatrix}\right)$ on $\ehH_0\oplus\ehH_1$.
An important example of non-standard Hilbert superspace is the superspace 
of square integrable functions on $\gR^{m|n}$,
whose fundamental symmetry $J$ is a Hodge-like operator (see Example \ref{ex-superhilbert} below).
\medskip

{\defin Superunitary representations} (SUR) of Lie supergroups can be defined on Hilbert superspaces.
Several Lie supergroups in the list \eqref{eq-list} then enjoy SUR's even if they do not admit standard SUR's.
In particular, we build in this paper non-trivial SUR's for the supergroups 
$\SpO(2m|\signp,\signq)$ and $\rH_{2m|\signp,\signq}$ for any integers $m,\signp,\signq$.

More precisely, a SUR of a super Harish-Chandra pair $(G_0,\kg_\gR)$ is a triple $(\ehH,\pio,\piG)$,
where $\ehH$ is a Hilbert superspace, $\pi_0:G_0\to \caB(\ehH)$ is a continuous representation
such that
\begin{equation*}
\forall g\in G_0,\; v,w\in\ehH,\qquad \langle \pi_0(g)v,\pi_0(g)w\rangle=\langle v,w\rangle,
\end{equation*}
 and $\piG$ satisfies the same axioms as in Definition \ref{Def:standardSUR}.
This generalization of standard SUR's comes along with many difficulties. 
First, it includes all the Krein-unitary representations of Lie groups, as defined
in \cite{Naimark:1968b}, which are wild. For instance, it is not known if Schur's Lemma 
holds or not for these representations. Second, the choice of domain for the operators
$\piG(X)$, with $X\in\kg_1$, is now a crucial part of the definition. For instance, the restriction
of a SUR to a sub-pair $(H_0,\kh_\gR)$ might not exist because an operator $\piG(X)$, 
with $X\in\kh_1$, admits no extension to the space of smooth vectors over $H_0$.
We fully solve the first problem and partially solve the second one by introducing a more
restrictive generalization of standard SUR, called {\defin strong SUR}. 
Namely, 
a strong SUR is a SUR such that the  map $\pio$ is unitarizable and the operators $\piG(X)$, with $X\in\kg_1$,
admit a domain extension to the space of smooth vectors of $\DGR$,
the connected Lie subgroup of $G_0$ with Lie algebra $[(\kg_\gR)_1,(\kg_\gR)_1]$.

For Heisenberg supergroups  $\rH_{2m|\signp,\signq}$, with arbitrary integers $\signp,\signq$,
strong SUR's provide the right framework for a smooth generalization of unitary representation theory
of the classical Heisenberg group $\rH_{2m}$ and its harmonic analysis applications.
In particular, Kirillov Orbit Method allows us to construct the graded version of 
the Schr\"odinger representation, which is a strong SUR for any $(\signp,\signq)$, 
and to prove the Stone-von Neumann theorem in this general setting. 
The full description of the superunitary dual follows as well as the group Fourier transformation
on $\rH_{2m|\signp,\signq}$. 
Building on \cite{Furutsu:1988}, we also define a SUR of the metaplectic supergroup, which is defined as a finite covering of the orthosymplectic supergroup $\SpO(2m|\signp,\signq)$,
extending the classical metaplectic representation of $\Sp(2m)$, and whose representation operators
are intertwiners of the graded Schr\"odinger representation.

\medskip

Let us now detail the content of this paper.

In section 2, after providing basics on supergeometry, we recall the framework of Lie supergroups and Harish-Chandra pairs, 
as well as the equivalence between these two notions. The Heisenberg supergroup is presented as an example.

In section 3, we prove the basic analytic properties of Hilbert superspaces, introduced in  Definition \ref{def-newversion}.
In particular, 
we classify them in terms of a generalized notion of signature. Fundamental examples of Hilbert superspaces,
given by functional spaces, are presented.

We introduce in section $4$ the notions of SUR and strong SUR,  for both equivalent settings: 
Harish-Chandra super pairs and Lie supergroups.
We investigate their basic properties and illustrate the general theory with many examples.

In section 5, we derive the expression of the Schr\"odinger representation for the Heisenberg supergroup $\rH_{2m|\signp,\signq}$ in {\it any signature} $(\signp,\signq)$ by performing Kirillov Orbit Method and we prove that it is an irreducible strong SUR. We then show its relation with the classical Schr\"odinger representation of $\rH_{2m}$ and the spinor representation of $\mathrm{Spin}(\signp,\signq)$.

Next, we prove in section 6 the main result of this paper: the Stone-von Neumann theorem. It states that any strong SUR of the Heisenberg supergroup $\rH_{2m|\signp,\signq}$ with character $\hbar\neq 0$ can be decomposed as the tensor product of the Schr\"odinger representation with the trivial representation on some Hilbert superspace. 

Eventually, we give in section 7 the classification of the strong superunitary dual of $\rH_{2m|\signp,\signq}$ and some applications to harmonic analysis. We construct the Fourier transformation between functions on $\rH_{2m|\signp,\signq}$ and operators on the Hilbert superspace of the Schr\"odinger representation and we prove the Parseval-Plancherel identity. Moreover, we build the metaplectic-type representation naturally appearing in this setting and we show that it is a SUR.

\subsection*{Notations}
Throughout the paper,
$\gN=\{0,1,2,\ldots\}$ denotes the set of non-negative integers, 
$\gN^\times=\{1,2,\ldots\}$ the set of positive ones and $i=\sqrt{-1}$. 

\subsection*{Acknowledgments}
We would like to thank many people for useful discussions and comments,
 including Pierre Bieliavsky, Victor Gayral and Gijs Tuynman.

\section{Preliminaries}\label{sec-prelim}

We recall here some basic notions of supergeometry following the concrete approach,
as fully exposed in the book \cite{Tuynman:2005} and summed up in the appendix of 
\cite{Tuynman:2016} (see also \cite{DeWitt:1984,Berezin:1976,Rogers:2007}).
This approach is equivalent to the sheaf-theoretic one \cite{Kostant:1977,Deligne:1999su}, but differs
in the presentation. It makes an explicit use of a fixed supercommutative algebra $\superA$,
which serves as a replacement for the field $\gR$ of real numbers. Such an algebra
appears in the sheaf-theoretic approach as soon as one uses the functor of points.

\medskip
For the rest of the paper, we choose a unital $\gZ_2$-graded real algebra 
$\superA=\superA_0\oplus\superA_1$ which satisfies the three following properties:  
\begin{itemize}
\item supercommutativity, i.e., $ab=(-1)^{|a||b|}ba$
for all $a,b\in\superA_0\cup\superA_1$ of degree $|a|,|b|\in\{0,1\}$;
\item $\superA/\mathcal{N}_{\superA}\simeq \gR$, where $\mathcal{N}_{\superA}$ 
is the set of all nilpotent elements of $\superA$; 
\item for all $n\in\gN^\times$, there exist $a_1,\ldots,a_n\in\superA_1$ such that
$a_1\cdots a_n \neq 0$.
\end{itemize}
For instance, we can choose $\superA=\bigwedge V$, the 
Grassmann algebra of a real infinite-dimensional vector space $V$. 
The quotient map $\gB:\superA\to\gR$, with kernel $\mathcal{N}_{\superA}$, is called the body map. 
The unit of $\superA$ defines a section of $\gB$ via $r\mapsto r\cdot 1$
so that $\superA=\gR\oplus\mathcal{N}_{\superA}$ and
\begin{equation*}
\forall a\in\superA, \quad a=\gB a+\cN(a),
\end{equation*}
where $\gB a:=\gB(a)\in\gR$ and $\cN(a)\in\cN_\superA$. 
The complexified algebra $\superA_\gC=\superA\otimes\gC = \superA \oplus i\,\superA$
is endowed with the complex conjugation: 
\begin{equation*}
\forall a\in\superA,\ \forall\lambda\in\gC,\quad\overline{a\otimes \lambda}= a\otimes\overline{\lambda}.
\end{equation*}
For all $a,b\in\superA_\gC$, it satisfies:  $\overline{a\, b}=
\overline{a}\,\overline{b}=(-1)^{|a||b|}\,\overline{b}\,\overline{a}$. 

In the sequel, graded will always stand for $\gZ_2$-graded, elements of 
degree $0$ and $1$ are called even and odd respectively, and collectively
referred as homogeneous elements.

\subsection{Superspaces and supermanifolds}\label{subsec-linsuper}

A {\defin graded $\superA$-module} is an $\superA$-module $\modE$  
with decomposition $\modE=\modE_0\oplus\modE_1$ and such that 
$$
\forall \a,\b\in\gZ_2,\quad\superA_\a \modE_\b\subseteq \modE_{\a+\b}.
$$
A graded submodule of $\modE$ is an $\superA$-submodule $\modF$
such that $\modF=(\modF\cap\modE_0)\oplus(\modF\cap\modE_1)$.
A $\superA$-linear map $f:E\to F$ between two graded $\superA$-modules
is of degree $|f|\in\gZ_2$ if $f(E_\a)\subseteq F_{\a+|f|}$ for all $\a\in\gZ_2$.
The space of nilpotent vectors of $\modE$ is defined by
$$
\cN_E:=\{x\in E\;|\; \exists a\in\superA,\ a\neq 0 \;\& \; ax=0\}.
$$
It permits to extend the body map as the quotient map $\gB: E\to E/\cN_E =: \gB E$.
If the graded  $\superA$-module $E$ is free, then 
$\gB E=\gB E_0\oplus\gB E_1$ is a graded $\gR$-vector space,
i.e., $\gB E_0$ and $\gB E_1$ are $\gR$-vector spaces.

A {\defin graded $\superA$-vector space} is a free graded $\superA$-module
$E$ together with a decomposition $E=R_E\oplus\cN_E$
such that $\superA\cdot R_E=E$.
Then we have $R_E\simeq \gB E$ as graded $\gR$-vector spaces and 
$E\simeq \superA\otimes \gB E$ as graded $\superA$-modules,
in particular,
\begin{equation}\label{eq-grrealvect}
E_0\simeq \left(\superA_0\otimes\gB E_0\right)\oplus\left(\superA_1\otimes\gB E_1\right) 
\quad \text{and}\quad
E_1\simeq \left(\superA_0\otimes\gB E_1\right)\oplus\left(\superA_1\otimes\gB E_0\right).
\end{equation}
The graded dimension of $\modE$, given by 
$\dim(\modE)=\dim(\gB E_0)|\dim(\gB E_1)$, is a complete invariant. 
Hence, a graded $\superA$-vector space of dimension $m|n$ is isomorphic to 
$\superA^{m|n}=(\superA^{m|n})_0\oplus(\superA^{m|n})_1$, 
with $(\superA^{m|n})_0=(\superA_0)^m\times(\superA_1)^n$ and
$(\superA^{m|n})_1=(\superA_0)^n\times(\superA_1)^m$.
The associated graded $\gR$-vector space is $R_{\superA^{m|n}}=\gR^m\oplus\gR^n$.
Morphisms between graded $\superA$-vector spaces $E$ and $F$ are even
$\superA$-linear maps $\phi:E\to F$ which are {\defin body-compatible}, 
i.e.\ $\phi(R_E)\subseteq R_F$. They 
are completely determined from
the corresponding $\gR$-linear maps between $\gB E\simeq R_E$ and $\gB F\simeq R_F$.
The categories of graded $\superA$-vector spaces
and of graded $\gR$-vector spaces are equivalent via the inverse
isomorphisms $\gB$ and $\superA\otimes-$.

The {\defin superspace}  of dimension $m|n$ is the even part of the graded $\superA$-vector space 
$\superA^{m|n}$,
$$
\gR^{m|n}:=(\superA^{m|n})_0=(\superA_0)^m\times(\superA_1)^n.
$$ 
The body map $\gB:\gR^{m|n}\to\gR^m$
defines the so-called DeWitt topology, as the coarsest topology on $\gR^{m|n}$ 
such that $\gB$ is continuous. 
\medskip 

Each element $x\in\gR^{m|0}=(\superA_0)^m$ 
decomposes as $x=\gB x+\cN(x)$ with $\gB x\in\gR^m$ and $\cN(x)\in(\mathcal{N}_\superA\cap\superA_0)^m$.
Mimicking Taylor expansions, one can extend any smooth function $f\in\caC^\infty(\gR^m,\gC)$
into a function $\tilde{f}:\gR^{m|0}\to\superA_0\otimes\gC$ defined by 
\begin{equation}\label{Function:Rm0}
\forall x\in\gR^{m|0},\quad \tilde{f}(x)=\sum_{\nu\in\gN^m}\frac{\cN(x)^\nu}{\nu!}\,\partial^\nu f(\gB x).
\end{equation}
Here standard index notation is used, i.e., if $x=(x^1,\ldots,x^m)\in\gR^{m|0}$
and $\nu=(\nu_1,\ldots,\nu_m)\in\gN^m$, we have
$\nu!:=\nu_1!\cdots\nu_m!$, $\partial^\nu:=\partial_1^{\nu_1}\cdots\partial_m^{\nu_m}$
and $\cN(x)^\nu:=\cN(x^1)^{\nu_1}\cdots \cN(x^m)^{\nu_m}$.
Moreover, the sum is finite since the elements $\cN(x^1),\ldots,\cN(x^m)$ are nilpotent. 

The space of {\defin smooth functions} $\caC^\infty(\gR^{m|n})$ is the
space of  maps $f:\gR^{m|n}\to\superA_\gC$ such that 
\begin{equation}\label{Function:Rmn}
\forall(x,\xi)\in\gR^{m|n},\quad f(x,\xi)=\sum_{\a\in(\gZ_2)^n} \widetilde{f_\a}(x)\xi^\a,
\end{equation}
where each $\widetilde{f_\a}$ is obtained from a function $f_\a\in\caC^\infty(\gR^m,\gC)$
via Equation \eqref{Function:Rm0}. We use again standard index notation, i.e.,
if $\xi=(\xi_1,\ldots,\xi_n)\in\gR^{0|n}$ and $\a=(\a_1,\ldots,\a_n)\in(\gZ_2)^n$, 
we have $\xi^\a:=\xi_1^{\a_1}\cdots\xi_n^{\a_n}$. 
By convention, $\xi_1^0=\cdots =\xi_n^0=1$. 
The $\gZ_2$-degree of a function is determined by the rules $|\tilde{f}_\a|=0$
and $|\xi^\a|=|\a| \text{ mod }2$, with $|\a|=\a_1+\cdots+\a_n\in\gN$. 
Hence, the splitting of 
a function in even and odd parts corresponds to the splitting of the target space 
$\superA_\gC=\superA_0\otimes\gC\oplus\superA_1\otimes\gC$. 
As a result, the space of smooth functions on $\gR^{m|n}$ is a 
$\gZ_2$-graded $\gC$-algebra such that 
\begin{equation}
\caC^\infty(\gR^{m|n})\simeq \caC^\infty(\gR^m,\gC)\otimes\bigwedge\gR^n.\label{eq-decompRmn}
\end{equation}
The definition of a smooth function on an open subset of $\gR^{m|n}$ is a 
straightforward adaptation. Note that a smooth function on $\gR^{m|n}$ 
is automatically continuous for the DeWitt topology. 
The definition of smooth functions readily extends from $\superA_\gC$-valued
functions to functions with values in arbitrary superspaces. 

\begin{remark}\label{rmk:smoothlinear}
The morphisms between two finite dimensional 
$\superA$-vector spaces $E$ and $F$ are in bijection with the smooth 
$\gR$-linear maps between the superspaces $E_0$ and $F_0$.
\end{remark}

\medskip
On the superspace $\gR^{m|n}$, the {\defin Berezin integration} of a smooth function 
$f\in \caC^\infty(\gR^{m|n})$, decomposed as in Equation \eqref{Function:Rmn},
is defined as the Lebesgue integration 
on $\gR^m$ of its top component in the $\xi$-variables.
Namely, if $\dd\xi=\dd\xi^n\cdots\dd\xi^1$, we have
\begin{equation}
\int\dd x\dd\xi\ f(x,\xi):=\int \dd x\ f_{1_n}(x) ,\label{eq-berezin}
\end{equation}
where $1_n=(1,\dots,1)\in(\gZ_2)^n$ and, abusing notation, $x$ stands for $\gB x$ 
in the right hand side.
Therefore, the Berezin integral of $f$ is well-defined as soon as
the Lebesgue integral of $f_{1_n}\in\caC^\infty(\gR^{m},\gC)$ is. 
Note that Berezin integration over $\xi$-variables corresponds to a multiple derivation, 
$\int\dd\xi^n\cdots\dd\xi^1:=\partial_{\xi^n}\cdots\partial_{\xi^1}$ in this precise order. 

For $p\geq 1$, we define the {\defin Lebesgue superspace} $L^p(\gR^{m|n})$ 
as the completion of the subspace
\begin{equation*}
\{f\in\caC^\infty(\gR^{m|n})\;|\; \forall \alpha\in(\gZ_2)^n,\ f_\alpha\in L^p(\gR^m) \},
\end{equation*}
with respect to the norm $\norm f\norm:=\sum_{\alpha}(\int_{\gR^m}|f_\alpha|^p)^{\frac1p}$. 
This is a $\gZ_2$-graded Banach space. The identification \eqref{eq-decompRmn} 
extends as $L^p(\gR^{m|n})\simeq L^p(\gR^m)\otimes\bigwedge\gR^n$.

\medskip

We introduce briefly the notion of {\defin supermanifold}. First, a proto-supermanifold  
of dimension $m|n$ is a topological space $M$ endowed 
with an atlas of charts valued in open subsets of $\gR^{m|n}$ (for DeWitt topology)
with smooth transition functions. The body map on $\gR^{m|n}$ allows to
 define a canonical projection $M\to\gB M$ on a topological space $\gB M$, 
called the body of $M$. If $\gB M$ is Hausdorff and second-countable, 
then the induced atlas turns $\gB M$  into a smooth manifold of dimension $m$ 
and $M$ is simply called a supermanifold. Of course, a superspace
is a supermanifold. The inclusions 
$\gR^m\subseteq\gR^{m|0}=\superA_0\otimes\gR^m\subseteq\gR^{m|n}$
translate into canonical smooth embeddings $\gB M\subseteq M^{\wod}\subseteq M$,
where $M^{\wod}$ is the sub-supermanifold of $M$
obtained from $\gB M$ by extension of scalars from $\gR$ to $\superA_0$.
Here, $\wod$ stands for "without odd dimensions".
Note that $M^\wod$ is of dimension $m|0$ and $\gB M^\wod=\gB M$.

\subsection{Lie supergroups and super Harish-Chandra pairs}

A {\defin Lie superalgebra} is a graded $\superA$-vector space 
 $\kg=\kg_0\oplus \kg_1$ endowed with a body-compatible $\superA$-bilinear map
$[\cdot,\cdot]:\kg\times\kg\to\kg$ which is even, i.e. 
$[\kg_\a, \kg_\b]\subseteq \kg_{\a+\b}$ for all $\a,\b\in\gZ_2$,
and satisfies the two following properties
\begin{itemize}
\item skew-symmetry: $\forall X,Y\in \kg_0\cup \kg_1$, $[X,Y]=-(-1)^{|X||Y|}[Y,X]$;
\item Jacobi identity: $\forall X,Y,Z\in \kg_0\cup \kg_1$, $[X,[Y,Z]]=[[X,Y],Z]+(-1)^{|X||Y|}[Y,[X,Z]]$.
\end{itemize}
Note that $\superA$-bilinearity means, in particular, that 
$[a X,b Y]=(-1)^{|X||b|}ab[X,Y]$ for all homogeneous elements
$X,Y\in\kg$ and $a,b\in\superA$.
As the bracket $[\cdot,\cdot]$ is body-compatible, it induces a bilinear 
operation on the body space $\gB \kg$. The later defines 
a $\gR$-Lie superalgebra  structure on $\gB \kg$.
The equivalence of categories between $\superA$-vector spaces 
and $\gR$-vector spaces specializes straightforwardly to the Lie setting
with inverse isomorphisms given again by $\gB$ and $\superA\otimes-$.

A {\defin Lie supergroup} is a supermanifold $G$ endowed with a group structure 
for which the multiplication is a smooth map. If $G$ is a Lie supergroup 
then $\gB G$ turns into a Lie group and $G^\wod$ into a Lie supergroup.
Moreover, there is a  canonically associated  Lie superalgebra $\kg=\Lie(G)$, 
and an exponential map $e:U\to G$, with $U\subseteq \kg_0$ an open neighborhood of $0$.
 As a consequence, one can prove that $G$  trivializes over $G^\wod$.

\begin{proposition}[\cite{Tuynman:2005}, Proposition VI.1.7]
\label{prop-diffeom}
Let $G$ be a Lie supergroup, $\gB \kg$ the body of its Lie superalgebra, 
and $\kg^{(1)}:=\superA_1\otimes\gB \kg_1\subseteq\kg_0$. Then, the following map
\begin{equation*}
G^\wod\times\kg^{(1)}\to G,\quad (g,X)\mapsto g \, e^X
\end{equation*}
is a global diffeomorphism satisfying $\gB( g \,  e^X)=\gB g$. 
\end{proposition}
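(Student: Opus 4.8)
Since $X\in\kg^{(1)}=\superA_1\otimes\gB\kg_1$ has vanishing body, $\kg^{(1)}\cong\gR^{0|n}$ (with $n=\dim\gB\kg_1$) is a purely odd superspace, so the source $G^\wod\times\kg^{(1)}$ and the target $G$ share the dimension $m|n$, where $m=\dim\gB G$. Moreover $\kg^{(1)}\subseteq\gB^{-1}(0)$, and every DeWitt-open neighbourhood of $0\in\kg_0$ has the form $\gB^{-1}(W)$ with $0\in W$; hence $\kg^{(1)}$ lies in the domain $U$ of the exponential, and $e^X$ is defined for all $X\in\kg^{(1)}$. The map $\Phi(g,X):=g\,e^X$ is therefore smooth, being the composite of the canonical embedding $G^\wod\hookrightarrow G$, the exponential $e$, and the multiplication of $G$. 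I would prove that $\Phi$ is a diffeomorphism by establishing that (i) the induced map on bodies is a diffeomorphism and (ii) the tangent map of $\Phi$ is invertible at every point; by the inverse function theorem in the concrete approach (see \cite{Tuynman:2005}) these two facts force $\Phi$ to be a global diffeomorphism.

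\textbf{Body identity.} Since $\gB\colon G\to\gB G$ is a Lie group morphism intertwining the exponentials and $\gB X=0$, one gets $\gB(e^X)=e^{\gB X}=1_{\gB G}$, whence $\gB\Phi(g,X)=\gB g\cdot 1=\gB g$. This proves the stated relation $\gB(g\,e^X)=\gB g$; under the identification $\gB(G^\wod\times\kg^{(1)})=\gB G$ it also shows that the body map of $\Phi$ is the identity of $\gB G$, in particular a diffeomorphism.

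\textbf{Invertibility of the tangent map.} I would right-trivialise $d\Phi$: differentiating $g\,e^X$ and translating the result to $\kg_0=T_eG$ gives
\begin{equation*}
(\delta g,\delta X)\longmapsto \theta(\delta g)+\Ad_g\!\left(\tfrac{1-e^{-\ad X}}{\ad X}(\delta X)\right),
\end{equation*}
where $\theta(\delta g)\in\kg^{(0)}:=\superA_0\otimes\gB\kg_0=\Lie(G^\wod)_0$ is the right-invariant form of $\delta g$. With respect to the splitting $\kg_0=\kg^{(0)}\oplus\kg^{(1)}$ this map is block upper-triangular: the $\delta g$-entry is an isomorphism onto $\kg^{(0)}$ with zero $\kg^{(1)}$-component (right-trivialisation of $G^\wod$); and since $X$ has zero body, $\ad X$ is nilpotent, so $\tfrac{1-e^{-\ad X}}{\ad X}=\mathrm{id}+(\text{nilpotent})$, while body-compatibility of $\Ad$ shows $\Ad_g$ maps $\kg^{(1)}$ isomorphically onto $\kg^{(1)}$ modulo nilpotent corrections into $\kg^{(0)}$. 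Thus the diagonal block $\kg^{(1)}\to\kg^{(1)}$ is of the form isomorphism-plus-nilpotent, hence invertible, and the whole tangent map is invertible at every $(g,X)$. (One may streamline this by first using the equivariance $\Phi\circ(L_{g_0}\times\mathrm{id})=L_{g_0}\circ\Phi$ for $g_0\in G^\wod$ to reduce to points $(e,X)$.)

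\textbf{Conclusion and main obstacle.} Combining the two previous steps with the inverse function theorem yields that $\Phi$ is a global diffeomorphism. If one prefers to verify bijectivity directly, injectivity follows from $\gB g_1=\gB g_2$ together with the injectivity of $e$ on body-zero elements, after rewriting $g_2^{-1}g_1=e^{X_2}e^{-X_1}$ via $\BCH$ and matching the $\kg^{(1)}$- and $\kg^{(0)}$-components by a nilpotency induction (using $[\kg^{(1)},\kg^{(1)}]\subseteq\kg^{(0)}$); surjectivity follows by lifting $\gB h$ through the embedding $\gB G\subseteq G^\wod$ and solving $g^{-1}h=e^X$ order by order in the nilpotent filtration. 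I expect the main obstacle to be precisely this control of the odd/nilpotent directions: confirming that $\Ad_g$ respects the decomposition $\kg_0=\kg^{(0)}\oplus\kg^{(1)}$ up to nilpotents, and that the resulting ``identity-plus-nilpotent'' map along each body fibre is globally invertible---equivalently, invoking the inverse function theorem of the concrete framework---rather than the body computation, which is immediate.
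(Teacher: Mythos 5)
Your proposal concerns a statement the paper does not actually prove: Proposition \ref{prop-diffeom} is imported by citation from \cite{Tuynman:2005} (Proposition VI.1.7), so there is no in-paper argument to compare against, and your proof must be judged on its own merits. On those terms it is sound and follows the natural route for the concrete (DeWitt--Tuynman) framework. Your three pillars are all correct: (1) $e^X$ is defined for every $X\in\kg^{(1)}$ because DeWitt-open neighbourhoods of $0$ are body-preimages and $\kg^{(1)}$ has zero body; (2) $\gB(e^{X})=e^{\gB X}=1$ gives the stated body identity, so the body map of $\Phi$ is $\mathrm{id}_{\gB G}$; (3) the right-trivialised differential is block upper-triangular with respect to $\kg_0=(\superA_0\otimes\gB\kg_0)\oplus\kg^{(1)}$. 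The passage from local to global invertibility, which you fold into the inverse function theorem, does work, and it is worth seeing why: DeWitt-open sets are saturated over the body, so two preimages of the same point (which necessarily have equal bodies) lie in a single chart on which $\Phi$ is injective, and the image of a chart around a body point contains the full fibre over a body-neighbourhood, giving surjectivity. Your alternative direct bijectivity sketch via $\BCH$ and nilpotency induction is also viable; it is essentially the mechanism recorded in the paper's Appendix (Proposition \ref{Prop:BCH} and Equation \eqref{GroupLaw:BCH}).

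One step deserves a more careful justification than you give it. ``Isomorphism plus nilpotent, hence invertible'' is false for general linear maps, since the two terms need not commute. What saves the argument here is that all the nilpotent corrections---those coming from $\tfrac{1-e^{-\ad X}}{\ad X}-\mathrm{id}$ (note $\ad X$ is nilpotent because $X$ has coefficients in $\caN_\superA$) and those coming from $\Ad_g-\Ad_{\gB g}$---have matrix entries in $\caN_\superA$, i.e.\ zero body; and for an even $\superA$-linear endomorphism of a free graded $\superA$-module, invertibility is equivalent to invertibility of its body. The body of your odd-odd diagonal block is $\Ad_{\gB g}$ restricted to $\gB\kg_1$, which is invertible, so the block is invertible; that is the correct reason, and it also removes the need for your hedge about corrections ``into $\kg^{(0)}$'' (in fact $\Ad_g$ preserves $\kg^{(1)}$ exactly for $g\in G^\wod$, since $[\superA_0\otimes\gB\kg_0,\,\superA_1\otimes\gB\kg_1]\subseteq\superA_1\otimes\gB\kg_1$). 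A final cosmetic point: the paper's Appendix reserves the symbol $\kg^{(0)}$ for $(\caN_\superA\cap\superA_0)\otimes(\kg_\gR)_0$, whereas you use it for $\superA_0\otimes\gB\kg_0$; your usage is internally consistent but clashes with the paper's.
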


A {\defin super Harish-Chandra pair} $(G_0,\kg_\gR)$ is the data of 
a Lie group $G_0$ and a compatible $\gR$-Lie superalgebra $\kg_\gR$.
Namely, $\kg_\gR$ is a $G_0$-module, its even part $(\kg_\gR)_0$ is the Lie algebra of  $G_0$
and the differential of the action $G_0\times\kg_{\gR}\to\kg_{\gR}$ 
coincides with the adjoint action of $(\kg_\gR)_0$  on $\kg_\gR$.
\medskip

There is an equivalence of categories between Lie supergroups $G$ and
super Harish-Chandra pairs \cite{Rogers:2007,Tuynman:2016} 
(see also \cite{Koszul:1982, Deligne:1999su}). 
In the sequel, we use that a Lie supergroup $G$ induces a super Harish-Chandra 
pair $(G_0,\kg_\gR)$ given by $G_0:=\gB G$, $\kg_\gR:=\gB \kg$ 
(with $\kg=\Lie(G)$), and where the action of $G_0$ on $\kg_{\gR}$ 
is the body of the adjoint action of $G$ on $\kg$.
For the inverse construction of a Lie supergroup out of a super Harish-Chandra
pair, we refer to Remark \ref{Rmk:SupergrouptoSHC} in Appendix.

\subsection{Heisenberg supergroups}

Let $\modE$ be a graded $\superA$-vector space of 
dimension $m'|n'$. A {\defin symplectic form} on $\modE$ 
is an even body-compatible $\superA$-bilinear map 
$\omega:\modE\times\modE\to\superA$ which is
\begin{itemize}
\item skewsymmetric: $\forall X,Y\in\modE$, $\omega(X,Y)=-(-1)^{|X||Y|}\omega(Y,X)$,
\item non-degenerate: ($\forall Y\in\modE,\ \omega(X,Y)=0$)$\Rightarrow$ $X=0$.
\end{itemize}
As a consequence, $m'=2m$ is even, $\omega$ induces 
a symplectic form on $\gB\modE_0$ and 
a quadratic form of given signature $(\signp,\signq)$ on $\gB\modE_1$, with $\signp+\signq=n'$.
The triple $(2m|\signp,\signq)$ is a complete invariant of the symplectic 
space $(\modE,\omega)$. In the following, we will always use the normal 
form of $(\modE,\omega)$ described below.
Let $\varepsilon\in\{-1,+1\}$ and $r,s'\in\gN$ be such that
\begin{equation}\label{def:rs}
\varepsilon s'=\signp-\signq \quad \& \quad 2r+s'=\signp+\signq,
\end{equation}
i.e., $\varepsilon=1$ and $(\signp,\signq)=(r+s',r)$, or,  $\varepsilon=-1$ and $(\signp,\signq)=(r,r+s')$.
Then, setting $W=\superA^{m|r}$ and $V=\superA^{0|s'}$, we have
\begin{equation}\label{decompo:E}
E \simeq(W\oplus W^*)\oplus V \quad\& \quad E_0\simeq\gR^{m|r}\oplus(\gR^{m|r})^*\oplus\gR^{0|s'},
\end{equation}
where $W^*$ stands for the dual of $W$ as $\superA$-module.
Under the above isomorphism, the symplectic form $\om$ restricts as the canonical 
symplectic form on $W\oplus W^*$ and as a definite quadratic form 
on $V$, namely
\begin{equation}\label{decompo:om}
\begin{aligned}
\forall q_1,q_2\in W,\ \forall p_1,p_2\in W^*,\quad &\om((q_1,p_1),(q_2,p_2))=q_1p_2-q_2p_1,\\
\forall \xi_1,\xi_2\in V,\quad &\om(\xi_1,\xi_2)=\eps\xi_1\xi_2,
\end{aligned}
\end{equation}
where $q_1p_2$ and $q_2p_1$ are the scalars obtained by the duality 
pairing between $W$ and $W^*$ and $\xi_1\xi_2$ is the scalar obtained
by the canonical scalar product of two elements in $\superA^{s'}$. 

\medskip

The {\defin Heisenberg Lie superalgebra} $\kh(\modE,\omega)$ is defined 
as the $\superA$-module $\modE\oplus \superA Z$, where $Z$ is an even 
generator, endowed with the Lie bracket
\begin{equation}\label{eq-heisliealg}
[X+aZ,X'+a'Z]=\omega(X,X')Z,
\end{equation}
for any $X,X'\in \modE$ and $a,a'\in\superA$. Its center is $Z(\kh(E,\om))=\superA Z$.
The {\defin Heisenberg Lie supergroup} 
$\rH(\modE_0,\omega)$ is obtained by exponentiation of the even part 
of $\kh(\modE,\omega)$.
Namely, $\rH(\modE_0,\omega)$ is the supermanifold $\modE_0\times \gR^{1|0}$ 
endowed with the group law 
\begin{equation}\label{GroupLaw}
\big(\xx,t\big)\big(\xx',t'\big)=(\xx+\xx',t+t'+\tfrac12\omega(\xx,\xx')),
\end{equation}
for any $\xx,\xx'\in \modE_0$ and $t,t'\in\gR^{1|0}$.
Its neutral element is $(0,0)$ and the inverse is given by $(\xx,t)^{-1}=(-\xx,-t)$.
Its center is $Z(\rH(E,\om))=\gR^{1|0}$.
The Heisenberg Lie superalgebra and supergroup are uniquely determined 
(up to isomorphism) by the dimension $2m|n'$ of $\modE$ and the signature $(\signp,\signq)$ 
of the quadratic form induced by $\omega$ on $ \gB\modE_1$. 
In the following, we use  their canonical versions
\begin{equation}\label{heis:normal}
\kh_{2m|\signp,\signq}:=\kh(\modE,\omega) \quad\& \quad \rH_{2m|\signp,\signq}:=\rH(\modE_0,\omega),
\end{equation}
where $(\modE,\om)$ and $(\modE_0,\om)$ are  in their normal form, as defined by
\eqref{decompo:E} and \eqref{decompo:om}. 
%

Let $\kh_{2m|\signp,\signq}^\ast$ be the dual of 
the Heisenberg Lie superalgebra. The symplectic form $\om$
induces a linear isomorphism, $\dual{}:\kh_{2m|\signp,\signq}\to\kh_{2m|\signp,\signq}^\ast$, 
given by:
\begin{equation*}
\forall X,X'\in\modE,\ \forall a,a'\in\superA,\quad \dual{(X+aZ)}(X'+a'Z)=\omega(X,X')+aa'.
\end{equation*}
Then, one can compute the adjoint and coadjoint actions of the Heisenberg Lie supergroup:
\begin{align}\nonumber
\forall (\xx,t)\in\modE_0\times\gR^{1|0},
\quad
& \Ad_{(\xx,t)}(X+aZ)=X+(a+\omega(\xx,X))Z,\\ \label{coad-action}
& \Ad^\ast_{(\xx,t)}\dual{(X+aZ)}=\dual{(X-a\xx+aZ)}.
\end{align}
The coadjoint orbit $\caO_\mu=\{\Ad^\ast_g\mu,\,g\in \rH_{2m|\signp,\signq}\}$ 
passing through $\mu=\dual{(X+aZ)}$
corresponds to the affine space $a\dual{Z}+\modE_0$,
provided $a\in\gR^\times$ and $X\in E_0$. If $a= 0$, 
then the coadjoint orbit $\caO_\mu$ is a single point.

The super Harish-Chandra pair $(G_0,\kg_\gR)$ corresponding to 
the Heisenberg Lie supergroup $\rH_{2m|\signp,\signq}$ is defined as follows. The associated 
Lie group $G_0:=\gB \rH_{2m|\signp,\signq}$ is the classical Heisenberg group $\rH_{2m}$, i.e., 
the manifold $\gR^{2m}\times\gR$ endowed with the group law
\begin{equation*}
\forall (x,t),(x',t')\in\gR^{2m}\times\gR,\quad
\big(x,t\big)\big(x',t'\big)=(x+x',t+t'+\tfrac12\omega_0(x,x')),
\end{equation*}
with $\omega_0$ the canonical symplectic form on $\gR^{2m}$.
The $\gR$-Lie superalgebra $\kg_\gR:=\gB\kh_{2m|\signp,\signq}$ is the
$\gZ_2$-graded real vector space with components 
$(\kg_\gR)_0=\gR^{2m}\times\gR Z$ and $(\kg_\gR)_1=\gR^{\signp+\signq}$,
endowed with the Lie bracket 
\begin{equation}\label{LieHeisenberg}
\forall X,X'\in\gR^{2m}\oplus\gR^{\signp+\signq},\forall a,a'\in\gR,\quad
[X+aZ,X'+a'Z]=\omega(X,X')Z,
\end{equation}
where $\om=\om_0\oplus\om_1$ and $\om_1$ is the canonical quadratic form
of signature $(\signp,\signq)$ on $\gR^{\signp+\signq}$. The action of $G_0$ on $\kg_\gR$ is 
the adjoint action 
$$
\Ad_{(x,t)}(X+aZ)=X+(a+\omega(x,X))Z, 
$$
with $(x,t)\in G_0$, and $X+aZ\in\kg_\gR$. 
A graded basis of $\kg_\gR$ is given by $(c_i,e_\alpha, Z)$, where $(c_i)_{1\leq i\leq 2m}$ 
is a basis of $\gR^{2m}$ with degree $0$, $(e_\alpha)_{1\leq \alpha\leq \signp+\signq}$ 
is a basis of $\gR^{\signp+\signq}$ with degree 1, and $\gR Z$ is the center subalgebra (with degree $0$).
The non-zero commutators read then as
\begin{equation}
[c_i,c_j]=\omega_{ij}Z,\qquad [e_\alpha,e_\beta]=\omega_{\alpha\beta}Z,\qquad [c_i,e_\alpha]=0,\label{eq-baseheis}
\end{equation}
where $\omega_{ij}$ and  $\omega_{\alpha\beta}$ are the components of $\om$ in the basis.
Explicitly,  for all $ i,j=1,\ldots, m$, we have $\omega_{ij}=\delta_{j, m+i}-\delta_{j, i-m}$, and 
for all $\beta=1,\ldots,\signp+\signq$, we have $\omega_{\alpha\beta}=\delta_{\alpha\beta}$ 
if $1\leq\alpha\leq \signp$ and $\omega_{\alpha\beta}=-\delta_{\alpha\beta}$ if $\signp<\alpha\leq \signp+\signq$.

\section{Theory of Hilbert superspaces}

Building on \cite{Bieliavsky:2010su}, we introduce a new definition of Hilbert superspaces, 
which generalizes the standard one \cite{ElGradechi:1996,Deligne:1999su,Carmeli:2006,Salmasian:2010}. 
Their basic properties are derived by using the theory of Krein spaces \cite{Bognar:1974}.

\subsection{Definition of Hilbert superspaces} 

A  {\defin Hermitian superspace} is 
a complex $\gZ_2$-graded vector space $\ehH=\ehH_0\oplus\ehH_1$
endowed with a superhermitian inner product $\langle-,-\rangle$, i.e.,
a non-degenerate sesquilinear\footnote{By convention, sesquilinear means
left-antilinear and right-linear.} map
$\langle-,-\rangle:\ehH\times\ehH\to\gC$ which satisfies: 
\begin{equation}\label{Eq:SuperHerm}
\forall x,y\in\ehH_0\cup\ehH_1,\quad \overline{\langle x,y\rangle}=(-1)^{|x||y|}\langle y,x\rangle.
\end{equation}
If the  inner product is a homogeneous map of degree $\sigma(\ehH)\in\gZ_2$,
then $\ehH$ is called a Hermitian superspace of {\defin parity} $\parity(\ehH)$.
If $\parity(\ehH)=0$ the subspaces $\ehH_0$ and $\ehH_1$ are orthogonal,
i.e.\ $\langle \ehH_0,\ehH_1\rangle=\algzero$, and by Equation \eqref{Eq:SuperHerm} we get 
$\langle x,x\rangle\in i^{|x|}\gR$ if $x\in\ehH$ is homogeneous.
If $\parity(\ehH)=1$, the subspaces $\ehH_0$ and $\ehH_1$ are isotropic,
i.e.\ $\langle \ehH_0,\ehH_0\rangle=\langle \ehH_1,\ehH_1\rangle=\algzero$,
and by Equation \eqref{Eq:SuperHerm} we get $\langle x,x\rangle\in\gR$ for all $x\in\ehH$.
Via the formula
\begin{equation*}
\forall x,y\in\ehH,\quad (x,y)_J:=\langle x,J(y)\rangle,
\end{equation*}
a linear isomorphism $J$ of $\ehH$ gives rise to a non-degenerate
sesquilinear map $(-,-)_J:\ehH\times\ehH\to\gC$.

\begin{definition}
Consider a Hermitian superspace  $(\ehH,\langle-,-\rangle)$ of given parity.
\begin{itemize}
\item A {\defin fundamental decomposition} is an orthogonal decomposition
\begin{equation*}
\ehH=\ehH_{[1]}\oplus\ehH_{[i]}\oplus\ehH_{[-1]}\oplus\ehH_{[-i]}
\end{equation*}
such that: $\forall k\in\{0,\dots,3\},\ \forall x\in\ehH_{[i^k]}\setminus\algzero, 
\quad \langle x,x\rangle\in i^k\gR^\ast_+$.
\item A {\defin fundamental symmetry} is an endomorphism $J$  of $\ehH$ such that 
$J^4=\gone$, $\langle J(x),J(y)\rangle=\langle x,y\rangle$ for all $x,y\in\ehH$, 
and $(-,-)_J$ defines a scalar product on $\ehH$
(i.e. $(-,-)_J$ is a positive definite  hermitian map).
\end{itemize}
\end{definition}

A fundamental decomposition allows one to define a fundamental symmetry by setting
\begin{equation*}
J:=P_{[1]}-iP_{[i]}-P_{[-1]}+iP_{[-i]},
\end{equation*}
where $P_{[i^k]}$ is the orthogonal projector on the subspace $\ehH_{[i^k]}$, 
with $k=0,\ldots,3$. Conversely, a fundamental symmetry $J$ defines 
a  fundamental decomposition given by  $\ehH_{[i^k]}=\text{Ker}(J-i^{-k}\gone)$.
Note that $(-,-)_J$ provides a scalar product on $\ehH_{[i^k]}$
equal to $i^{-k}\langle-,-\rangle$.

\begin{definition}\label{Def:SuperHilbert}
A  Hermitian superspace $(\ehH,\langle-,-\rangle)$ of given parity 
is said to be a {\defin Hilbert superspace} if one of the two equivalent 
conditions hold:
\begin{itemize}
\item there exists a fundamental decomposition such that each component 
$\ehH_{[i^k]}$ is intrinsically complete, i.e. complete for the topology defined 
by $i^{-k}\langle-,-\rangle$;
\item there exists a fundamental symmetry $J$ such that $(\ehH,(-,-)_J)$ is a Hilbert space.
\end{itemize}
\end{definition}

The above definition of Hilbert superspace is slightly more general 
that the one provided in \cite{Bieliavsky:2010su}, 
where a fundamental symmetry $J$ is fixed as part of the structure.
This freedom in the choice of $J$ will be important in the following sections.


A {\defin standard Hilbert superspace} is a Hilbert superspace  $\ehH$
of parity $0$ which admits the canonical fundamental decomposition $\ehH_{[1]}=\ehH_0$, 
$\ehH_{[i]}=\ehH_1$ and $\ehH_{[-1]}=\ehH_{[-i]}=\algzero$.
It admits a unique fundamental symmetry given by $J(x)=(-i)^{|x|}x$
on homogeneous elements $x\in\ehH$. Moreover, both $(\ehH_0,\langle-,-\rangle)$ 
and $(\ehH_1,i\langle-,-\rangle)$ are Hilbert spaces.
The notion of standard Hilbert superspace coincides with the usual  
definition of Hilbert superspace, as given in \cite{ElGradechi:1996,Deligne:1999su,Carmeli:2006,Salmasian:2010}.
In other words, the Definition \ref{Def:SuperHilbert} of Hilbert superspace 
is a generalization of the definition introduced in the above references.

The classical notion underlying Definition \ref{Def:SuperHilbert}
is the concept of Krein space \cite{Bognar:1974}.
In our terms, a Krein space is a purely even Hilbert superspace $\ehH$ of parity $0$,
i.e., $\ehH=\ehH_0$.
We have the following obvious proposition.

\begin{proposition}
\label{prop-krein}
Let $\ehH=\ehH_0\oplus\ehH_1$ be a complex $\gZ_2$-graded vector space.
\begin{itemize}
\item $(\ehH,\langle-,-\rangle)$ is a Hilbert superspace of parity 0 if and only if 
$(\ehH_0,\langle-,-\rangle)$ and $(\ehH_1,i\langle-,-\rangle)$ are Krein spaces.
For any fundamental decomposition of $\ehH$, we have 
$\ehH_0=\ehH_{[1]}\oplus\ehH_{[-1]}$ and $\ehH_1=\ehH_{[i]}\oplus\ehH_{[-i]}$.
Fundamental symmetries of $\ehH$, $\ehH_0$ and
$\ehH_1$ are related by the equality $J=J_0\oplus (- i J_1)$, in obvious notations.

\item $(\ehH,\langle-,-\rangle)$ is a Hilbert superspace of parity 1 if and only if
$(\ehH,\langle-,-\rangle)$ is a Krein space admitting $\ehH_0$ and  $\ehH_1$ as isotropic subspaces.
For any fundamental decomposition  of $\ehH$, we have $\ehH_{[i]}=\ehH_{[-i]}=\algzero$. 
\end{itemize}
\end{proposition}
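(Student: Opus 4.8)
The plan is to reduce both items to the theory of Krein spaces, the only genuinely nontrivial input being that any fundamental decomposition is automatically compatible with the $\gZ_2$-grading. Throughout I would use the structural consequences of homogeneity already recorded before Definition \ref{Def:SuperHilbert}: in parity $0$ one has $\langle\ehH_0,\ehH_1\rangle=\algzero$, with $\langle-,-\rangle|_{\ehH_0}$ hermitian and $\langle-,-\rangle|_{\ehH_1}$ anti-hermitian (so $i\langle-,-\rangle|_{\ehH_1}$ is hermitian); in parity $1$ one has $\langle\ehH_0,\ehH_0\rangle=\langle\ehH_1,\ehH_1\rangle=\algzero$ and, by \eqref{Eq:SuperHerm}, $\langle-,-\rangle$ is a genuine non-degenerate hermitian form on $\ehH$. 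Non-degeneracy of $\langle-,-\rangle$ together with parity-$0$ orthogonality gives non-degeneracy of each restriction $\langle-,-\rangle|_{\ehH_0}$ and $\langle-,-\rangle|_{\ehH_1}$.

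For the first item the implication ``$\Leftarrow$'' is assembled directly: given Krein fundamental decompositions $\ehH_0=\ehH_0^+\oplus\ehH_0^-$ and $\ehH_1=\ehH_1^+\oplus\ehH_1^-$ for $\langle-,-\rangle$ and $i\langle-,-\rangle$ respectively, I would set $\ehH_{[1]}:=\ehH_0^+$, $\ehH_{[-1]}:=\ehH_0^-$, $\ehH_{[-i]}:=\ehH_1^+$, $\ehH_{[i]}:=\ehH_1^-$ and check the axis conditions and mutual orthogonality (the cross terms between $\ehH_0$ and $\ehH_1$ vanish by parity), intrinsic completeness of each piece being inherited from the Krein decompositions. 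The substantial point is the converse together with the clause ``for any fundamental decomposition'', for which I would show that every component is homogeneous. Using that $i^{-k}\langle-,-\rangle$ restricts to a positive-definite hermitian scalar product on $\ehH_{[i^k]}$, and expanding $\langle x,y\rangle=\langle x_0,y_0\rangle+\langle x_1,y_1\rangle$ into even/odd parts (cross terms killed by parity), the hermitian symmetry of $i^{-k}\langle-,-\rangle$ forces $\langle x_1,y_1\rangle=0$ for $x,y$ in a real-axis component ($k=0,2$) and $\langle x_0,y_0\rangle=0$ for $x,y$ in an imaginary-axis component ($k=1,3$). Running the same bookkeeping on a pair lying in two distinct components, where mutual orthogonality supplies both $\langle x,y\rangle=0$ and $\langle y,x\rangle=0$, shows that the odd parts of all real-axis vectors are mutually orthogonal and orthogonal to all odd parts. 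Writing $A_1$ for the subspace of odd components of vectors in $\ehH_{[1]}\oplus\ehH_{[-1]}$, this reads $\langle A_1,\ehH_1\rangle=\algzero$, whence $A_1=\algzero$ by non-degeneracy of $\langle-,-\rangle|_{\ehH_1}$; dually the even parts of $\ehH_{[i]}\oplus\ehH_{[-i]}$ vanish. Hence $\ehH_{[1]}\oplus\ehH_{[-1]}\subseteq\ehH_0$ and $\ehH_{[i]}\oplus\ehH_{[-i]}\subseteq\ehH_1$, and as the four pieces sum to $\ehH=\ehH_0\oplus\ehH_1$ these inclusions are equalities. Then $\ehH_0=\ehH_{[1]}\oplus\ehH_{[-1]}$ is a fundamental decomposition of $(\ehH_0,\langle-,-\rangle)$ into intrinsically complete positive and negative parts, i.e.\ a Krein space, and $\ehH_1=\ehH_{[-i]}\oplus\ehH_{[i]}$ exhibits $(\ehH_1,i\langle-,-\rangle)$ as one. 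Restricting $J=P_{[1]}-iP_{[i]}-P_{[-1]}+iP_{[-i]}$ to each summand and identifying $J_0=P_{[1]}-P_{[-1]}$ on $\ehH_0$ and the fundamental symmetry $J_1$ on $\ehH_1$ yields $J=J_0\oplus(-iJ_1)$.

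For the second item the situation is simpler. If $\ehH$ is a parity-$1$ Hilbert superspace, then $\langle x,x\rangle\in\gR$ for every $x$, so no nonzero vector satisfies $\langle x,x\rangle\in i\gR^\ast$; hence in any fundamental decomposition $\ehH_{[i]}=\ehH_{[-i]}=\algzero$ and the decomposition reduces to $\ehH=\ehH_{[1]}\oplus\ehH_{[-1]}$ into intrinsically complete positive and negative parts. Since $\langle-,-\rangle$ is already a non-degenerate hermitian form with $\ehH_0,\ehH_1$ isotropic, this says precisely that $(\ehH,\langle-,-\rangle)$ is a Krein space having $\ehH_0,\ehH_1$ as isotropic subspaces. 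Conversely, such a Krein space carries an inner product of degree $1$ satisfying \eqref{Eq:SuperHerm}, and its Krein fundamental decomposition placed in the slots $\ehH_{[1]},\ehH_{[-1]}$ (with $\ehH_{[\pm i]}=\algzero$) is a fundamental decomposition with intrinsically complete components, so $\ehH$ is a parity-$1$ Hilbert superspace.

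The only real obstacle is the homogeneity of the components of an arbitrary fundamental decomposition in the parity-$0$ case: a vector need not be homogeneous merely because $\langle x,x\rangle$ lands on one of the four axes, so this cannot be read off element-by-element and must be extracted at the subspace level from the interplay between the hermitian symmetry internal to each component, the mutual orthogonality across components, and the non-degeneracy of $\langle-,-\rangle$ on $\ehH_1$ (resp.\ $\ehH_0$). Everything else is routine verification of signs and completeness, notably the precise sign convention entering $J=J_0\oplus(-iJ_1)$.
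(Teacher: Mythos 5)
The paper offers no proof of Proposition~\ref{prop-krein} at all (it is introduced with ``We have the following obvious proposition''), so your argument cannot be compared to the authors'; judged on its own it is correct in all substantive respects, and it supplies exactly the part that is genuinely not obvious: in parity $0$, every fundamental decomposition is automatically compatible with the grading. Your mechanism for that is sound. Positivity of $i^{-k}\langle-,-\rangle$ on each component gives hermitian symmetry there (a sesquilinear form with real diagonal is hermitian by polarization), which kills the odd--odd pairings inside the real-axis components and the even--even pairings inside the imaginary-axis ones; conjugating one of the two relations $\langle x,y\rangle=0=\langle y,x\rangle$ for $x,y$ in distinct components splits cross-orthogonality into its even and odd parts; and since the odd parts of the four components span $\ehH_1$, non-degeneracy of $\langle-,-\rangle|_{\ehH_1}$ (which follows from non-degeneracy on $\ehH$ together with $\langle\ehH_0,\ehH_1\rangle=\algzero$) forces your $A_1$ to vanish, and dually for the even parts of the imaginary-axis components. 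The parity-$1$ item and both converse directions are also handled correctly, including the observation that a degree-$1$ hermitian form is automatically superhermitian.

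The one place you should tighten is precisely the point you flagged as needing a sign check and then did not check: the relation $J=J_0\oplus(-iJ_1)$. With your own (correct) conventions this identity is false as written. You identified the positive part of the Krein space $(\ehH_1,i\langle-,-\rangle)$ as $\ehH_{[-i]}$, so its fundamental symmetry is $J_1=P_{[-i]}-P_{[i]}$; on the other hand the restriction of $J=P_{[1]}-iP_{[i]}-P_{[-1]}+iP_{[-i]}$ to $\ehH_1$ is $-iP_{[i]}+iP_{[-i]}=i\,(P_{[-i]}-P_{[i]})=+iJ_1$. Hence the correct relation is $J=J_0\oplus(iJ_1)$, equivalently $J_1=-i\,J|_{\ehH_1}$; the version with $-iJ_1$ would require $J_1=P_{[i]}-P_{[-i]}$, which is the fundamental symmetry of $(\ehH_1,-i\langle-,-\rangle)$ and contradicts the first sentence of the very item being proved. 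This is in fact a sign slip in the paper's statement itself: for a standard Hilbert superspace one has $\ehH_1=\ehH_{[i]}$, so $(\ehH_1,i\langle-,-\rangle)$ is negative definite and $J_1=-\gone$, and the paper's own formula $J=\gone\oplus(-i\gone)$ agrees with $J_0\oplus(iJ_1)$, not with $J_0\oplus(-iJ_1)$. The fix is one line, but an ``identification'' that never pins down which eigenspace is the positive one reproduces the typo instead of catching it.
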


\subsection{Properties of Hilbert superspaces}

A number of important results on Hilbert superspaces can be obtained
as direct consequence of the theory of Krein spaces, by using that a Hilbert
superspace is a Krein space or the direct sum of two Krein spaces (see 
Proposition \ref{prop-krein}). When this is the case, we do not provide a proof 
and simply refer to the corresponding result for Krein spaces.
\medskip

We start with topological properties.

\begin{theorem}[\cite{Bognar:1974}, Theorem V.1.1]
\label{thm-topo}
Let $\ehH$ be a Hilbert superspace. 
The scalar products $(-,-)_J$, with $J$ a fundamental symmetry 
of $\ehH$, all define the same topology.
\end{theorem}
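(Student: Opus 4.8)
The plan is to reduce the statement to the corresponding topological fact about Krein spaces, which is already available as \cite{Bognar:1974}, Theorem V.1.1, and to do so by exploiting the structural dictionary established in Proposition \ref{prop-krein}. The key point is that the independence of the topology from the choice of fundamental symmetry is a known phenomenon in the purely even (Krein) setting; what remains is to transport it faithfully across the even/odd decomposition of a Hilbert superspace. I would treat the two parities separately, since Proposition \ref{prop-krein} describes them differently.

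First, in the parity $0$ case, I would invoke Proposition \ref{prop-krein} to write a Hilbert superspace $\ehH$ as the orthogonal sum of two Krein spaces $(\ehH_0,\langle-,-\rangle)$ and $(\ehH_1,i\langle-,-\rangle)$, and to record that any fundamental symmetry decomposes as $J=J_0\oplus(-iJ_1)$, where $J_0$ and $J_1$ are fundamental symmetries of $\ehH_0$ and $\ehH_1$ respectively. The scalar product $(-,-)_J$ then splits as the orthogonal sum of $\langle-,J_0(-)\rangle$ on $\ehH_0$ and $i\langle-,J_1(-)\rangle$ on $\ehH_1$. Since a different choice $J'=J_0'\oplus(-iJ_1')$ yields scalar products $\langle-,J_0'(-)\rangle$ and $i\langle-,J_1'(-)\rangle$, the Krein-space theorem applied separately on each summand guarantees that the two topologies on $\ehH_0$ agree and the two on $\ehH_1$ agree. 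The topology on $\ehH$ is the product (equivalently, orthogonal direct sum) of these, so it is independent of $J$.

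For the parity $1$ case, Proposition \ref{prop-krein} tells us that $\ehH$ is itself a single Krein space (with $\ehH_0,\ehH_1$ as complementary isotropic subspaces), and every fundamental symmetry satisfies $J^4=\gone$ with $\ehH_{[i]}=\ehH_{[-i]}=\algzero$, so in fact $J^2=\gone$ and $J$ is an ordinary Krein-space fundamental symmetry. Here the result is immediate: \cite{Bognar:1974}, Theorem V.1.1 applies verbatim, giving a single canonical topology. I would remark that the only subtlety is checking that the fundamental symmetries arising in our broader Definition \ref{Def:SuperHilbert} (with $J^4=\gone$ rather than $J^2=\gone$) genuinely reduce, on each parity-homogeneous summand, to genuine Krein-space fundamental symmetries; this is precisely the content of the relation $J=J_0\oplus(-iJ_1)$ recorded above.

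The main obstacle I anticipate is purely bookkeeping rather than conceptual: one must verify that the passage from $J$ to the pair $(J_0,J_1)$ is a bijection onto pairs of Krein-space fundamental symmetries, so that varying $J$ on $\ehH$ corresponds exactly to independently varying $J_0$ and $J_1$, with no compatibility constraint linking them. Granting this — which follows directly from the orthogonality of $\ehH_0$ and $\ehH_1$ and from $J$ having degree $\sigma$ — the argument is a clean reduction and requires no new estimates. I would therefore present it as a short corollary of the Krein-space theory, spelling out the parity $0$ decomposition and dismissing the parity $1$ case as an instance of the classical result.
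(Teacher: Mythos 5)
Your proposal is correct and is essentially the paper's own argument: the paper explicitly states that this theorem is obtained as a direct consequence of Krein-space theory, "by using that a Hilbert superspace is a Krein space or the direct sum of two Krein spaces (see Proposition \ref{prop-krein})," and then simply cites \cite{Bognar:1974}, Theorem V.1.1 rather than writing out the reduction. You have merely filled in the routine bookkeeping (the splitting $J=J_0\oplus(\mp i J_1)$ in parity $0$, and the observation that parity-$1$ fundamental symmetries are exactly Krein fundamental symmetries), which is precisely what the paper leaves implicit.
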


As a consequence, while a Hilbert superspace $\ehH$ has no canonical scalar product,
it admits a canonical Hilbert topology. In particular, all fundamental decompositions of $\ehH$ 
have intrinsically complete components and all fundamental symmetries
$J$ yield unitarily equivalent Hilbert spaces  $(\ehH,(-,-)_J)$.

The following lemma states that the superhermitian inner product 
of a Hilbert superspace is globally continuous for its Hilbert topology.

\begin{lemma}
\label{lem-ineq}
Let $\ehH$ be a Hermitian superspace of given parity.
For any fundamental symmetry $J$, we have
\begin{equation*}
\forall x,y\in\ehH, \quad |\langle x,y\rangle|\leq \norm x\norm_J\norm y\norm_J,
\end{equation*}
where $\norm x\norm_J:=\sqrt{(x,x)_J}$.
\end{lemma}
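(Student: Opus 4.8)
The plan is to reduce the desired inequality to the classical Cauchy--Schwarz inequality for the positive definite hermitian scalar product $(-,-)_J$, using the defining algebraic compatibility between $J$ and $\langle-,-\rangle$. The key observation is that the fundamental symmetry $J$ lets us rewrite $\langle x,y\rangle$ in terms of $(-,-)_J$, so that the continuity of the inner product follows from the Hilbert space structure of $(\ehH,(-,-)_J)$, which is guaranteed by Definition \ref{Def:SuperHilbert}.

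First I would express $\langle x,y\rangle$ through $(-,-)_J$. Since $(x,y)_J=\langle x,J(y)\rangle$ by definition, I would like to invert the role of $J$. The natural move is to apply $J$ to the second argument: writing $z=J^{-1}(y)$ (using that $J$ is invertible, as $J^4=\gone$), we get $\langle x,y\rangle=\langle x,J(z)\rangle=(x,z)_J=(x,J^{-1}(y))_J$. Now the classical Cauchy--Schwarz inequality for the scalar product $(-,-)_J$ yields
\begin{equation*}
|\langle x,y\rangle|=|(x,J^{-1}(y))_J|\leq \norm x\norm_J\,\norm{J^{-1}(y)}_J.
\end{equation*}
To conclude it then suffices to check that $J^{-1}$ is an isometry for the norm $\norm\cdot\norm_J$, i.e.\ that $\norm{J^{-1}(y)}_J=\norm y\norm_J$.

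The isometry property is the crux, and it follows from the $\langle-,-\rangle$-isometry of $J$. Indeed, $\norm{J^{-1}(y)}_J^2=(J^{-1}(y),J^{-1}(y))_J=\langle J^{-1}(y),J(J^{-1}(y))\rangle=\langle J^{-1}(y),y\rangle$, and applying the identity $\langle J(u),J(v)\rangle=\langle u,v\rangle$ with $u=J^{-1}(y)$ and $v=J^{-2}(y)$ gives $\langle J^{-1}(y),y\rangle=\langle y,J^{-1}(y)\rangle$ after accounting for the powers of $J$; one checks directly from $\langle J(x),J(y)\rangle=\langle x,y\rangle$ that $\langle J^{-1}(y),y\rangle=(y,y)_J=\norm y\norm_J^2$. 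I expect the only delicate point to be keeping track of the powers of $J$ and the degree-$\sigma$ sign conventions, since in the odd-parity case $J^2$ is not the identity but a sign depending on $\sigma$ and $|x|$; however, because we only ever pair $J$-shifted vectors against each other and the $\langle-,-\rangle$-isometry is exact (with no sign), these signs cancel and the isometry $\norm{J^{-1}(y)}_J=\norm y\norm_J$ holds on homogeneous vectors and extends by bilinearity. With this, the inequality is immediate and, by Theorem \ref{thm-topo}, the bound is independent of the chosen $J$ up to equivalence of the induced norms.
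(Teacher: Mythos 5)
Your proof is correct, but it takes a genuinely different route from the paper's. The paper decomposes $x=\sum_k x_k$ and $y=\sum_k y_k$ along the fundamental decomposition associated to $J$, uses the orthogonality $\langle x_k,y_l\rangle=0$ for $k\neq l$ together with $|\langle x_k,y_k\rangle|=|(x_k,y_k)_J|$, applies Cauchy--Schwarz componentwise, and then recombines the four terms via the elementary inequality $(\norm{x_k}_J\norm{y_l}_J-\norm{x_l}_J\norm{y_k}_J)^2\geq 0$. You instead make a single global reduction: $\langle x,y\rangle=(x,J^{-1}(y))_J$ (valid since $J^4=\gone$ makes $J$ invertible), apply classical Cauchy--Schwarz once, and conclude from the fact that $J^{-1}$ is a $\norm{\cdot}_J$-isometry. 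Your route is shorter and avoids the decomposition machinery entirely; the paper's route stays closer to the Krein-space formalism and makes the role of the four eigenspaces explicit, which it reuses elsewhere. Note also that only positive definiteness of $(-,-)_J$ is needed, not completeness, so your appeal to the Hilbert space structure of Definition \ref{Def:SuperHilbert} can be weakened to just the definition of fundamental symmetry --- which matters, since the lemma is stated for Hermitian superspaces that need not be complete.

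One small repair in your isometry step: the substitution $u=J^{-1}(y)$, $v=J^{-2}(y)$ in $\langle J(u),J(v)\rangle=\langle u,v\rangle$ gives $\langle y,J^{-1}(y)\rangle=\langle J^{-1}(y),J^{-2}(y)\rangle$, not the identity you state. The clean computation is: take $u=J^{-1}(y)$, $v=y$, which gives $\langle y,J(y)\rangle=\langle J^{-1}(y),y\rangle$, hence
\begin{equation*}
\norm{J^{-1}(y)}_J^2=\langle J^{-1}(y),J(J^{-1}(y))\rangle=\langle J^{-1}(y),y\rangle=\langle y,J(y)\rangle=(y,y)_J=\norm{y}_J^2,
\end{equation*}
with no case analysis on parity or homogeneity needed. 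Your final sentence quoting your own conclusion is correct; this just fixes the intermediate justification.
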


\begin{proof}
Let $x\in\ehH$. Using the fundamental decomposition of $\ehH$ associated to $J$, 
we have $x=\sum_{k=0}^3x_k$ where $x_k\in\ehH_{[i^k]}$. This yields
\begin{equation*}
|\langle x,y\rangle|= |\sum_k \langle x_k,y_k\rangle|\leq \sum_k |\langle x_k,y_k\rangle|.
\end{equation*}
Since $|\langle x_k,y_k\rangle|=|i^{-k}\langle x_k,y_k\rangle|=|(x_k,y_k)_J|$, 
we can apply the Cauchy-Schwartz inequality for $(-,-)_J$ to obtain:
\begin{equation*}
|\langle x,y\rangle|\leq  \sum_k \norm x_k\norm_J\norm y_k\norm_J. 
\end{equation*}
Using the inequality
$(\norm x_k\norm_J\norm y_l\norm_J- \norm x_l\norm_J\norm y_k\norm_J)^2\geq 0$,
with $k,l=0,\ldots,3$, we deduce that
\begin{equation*}
|\langle x,y\rangle|\leq \Big(\sum_{k,l} \norm x_k\norm_J^2\norm y_l\norm_J^2 \Big)^{\frac 12}
=\norm x\norm_J\norm y\norm_J.
\end{equation*}
\end{proof}
\medskip

We pursue with the study of linear operators.
Let $(\ehH^{(1)},\langle-,-\rangle_1)$ and $(\ehH^{(2)},\langle-,-\rangle_2)$ 
be two Hilbert superspaces. A {\defin bounded operator} between 
$\ehH^{(1)}$ and $\ehH^{(2)}$ is a linear operator 
$T:\ehH^{(1)}\to\ehH^{(2)}$ which is  continuous w.r.t.\
their Hilbert topologies. 
The degree of operators defines 
a $\gZ_2$-grading on the space of bounded operators 
$\caB(\ehH^{(1)},\ehH^{(2)})$ and on the algebra
$\caB(\ehH):=\caB(\ehH,\ehH)$.

\begin{proposition}[\cite{Bieliavsky:2010su}]
\label{prop-superhilbert-adjoint}
Let $T\in\caB(\ehH^{(1)},\ehH^{(2)})$ be of degree  $|T|\in\gZ_2$. 
There exists a unique operator $T^\dag\in\caB(\ehH^{(2)},\ehH^{(1)})$ 
such that: 
\begin{equation*}
\forall x\in\ehH^{(2)},\ \forall y\in\ehH^{(1)},\quad\langle T^\dag(x),y\rangle_1=(-1)^{|T||x|}\langle x,T(y)\rangle_2.
\end{equation*}
\end{proposition}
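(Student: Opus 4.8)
The plan is to reduce the statement to the classical existence of the Hilbert-space adjoint and then twist it by the fundamental symmetries to produce the super-adjoint. First I would fix a fundamental symmetry $J_1$ of $\ehH^{(1)}$ and a fundamental symmetry $J_2$ of $\ehH^{(2)}$, which turn $\ehH^{(1)}$ and $\ehH^{(2)}$ into genuine complex Hilbert spaces for the scalar products $(-,-)_{J_1}$ and $(-,-)_{J_2}$. By Theorem \ref{thm-topo} these scalar products induce exactly the canonical Hilbert topologies, so the hypothesis that $T$ is a bounded operator means precisely that $T$ is continuous between these two Hilbert spaces. The classical Riesz representation then furnishes a unique Hilbert-space adjoint $T^{*}$, characterized by $(T^{*}(x),y)_{J_1}=(x,T(y))_{J_2}$ for all $x\in\ehH^{(2)}$ and $y\in\ehH^{(1)}$.

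Next I would record the two elementary facts about a fundamental symmetry that drive the computation: the isometry identity $\langle J(x),J(y)\rangle=\langle x,y\rangle$ yields $\langle J^{-1}(u),v\rangle=\langle u,J(v)\rangle$, and it also shows that each $J$ is unitary for its own scalar product, $(J(x),J(y))_J=(x,y)_J$. With these in hand I would define, on homogeneous $x$ and then by linear extension,
\[
T^{\dag}(x):=(-1)^{|T||x|}\,J_1^{-1}T^{*}J_2(x),
\]
and verify the defining identity by a direct chain of substitutions: rewrite $\langle J_1^{-1}T^{*}J_2(x),y\rangle_1$ as $(T^{*}J_2(x),y)_{J_1}$ using the $J_1$-identities, apply the characterization of $T^{*}$ to turn this into $(J_2(x),T(y))_{J_2}$, and finally use the $J_2$-isometry to reach $\langle x,T(y)\rangle_2$. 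Boundedness of $T^{\dag}$ is then automatic, being a composition of the bounded operator $T^{*}$ with the (bounded) fundamental symmetries; alternatively one controls it directly through Lemma \ref{lem-ineq}.

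Uniqueness is immediate from the non-degeneracy of $\langle-,-\rangle_1$: if two operators satisfied the identity, their difference would pair to zero with every $y$ and hence vanish. The same non-degeneracy shows that $T^{\dag}$, although built from the auxiliary data $J_1,J_2$, is in fact independent of these choices, since it is singled out by a relation that does not mention them. The only genuinely delicate point — which I expect to be a bookkeeping obstacle rather than a conceptual one — is the tracking of the $\gZ_2$-degrees together with the sign $(-1)^{|T||x|}$: one must check that the construction is compatible with the splitting into homogeneous components and determine the degree of $T^{\dag}$ (which comes out as $|T|+\sigma(\ehH^{(1)})+\sigma(\ehH^{(2)})$), confirming in particular that $T^{\dag}$ indeed lands in $\caB(\ehH^{(2)},\ehH^{(1)})$.
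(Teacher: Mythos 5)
Your proof is correct. There is nothing in the paper to compare it against: the proposition is imported verbatim from the reference [Bieliavsky et al.], and no proof is given in the text, so your argument stands as a self-contained justification. The route you take — reduce to the classical Hilbert adjoint $T^{*}$ via a choice of fundamental symmetries and then conjugate, $T^{\dag}(x)=(-1)^{|T||x|}J_1^{-1}T^{*}J_2(x)$ on homogeneous $x$ — is the natural one, and each link of your verification chain is sound: $\langle J_1^{-1}u,v\rangle_1=\langle u,J_1(v)\rangle_1$ does follow from the $J_1$-isometry, the middle step is exactly the defining property of $T^{*}$, and the final step is the $J_2$-isometry; uniqueness and independence of the auxiliary $J_1,J_2$ follow from non-degeneracy as you say. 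The one point you defer to "bookkeeping" deserves to be made explicit, since it is the only place where boundedness is not literally a composition of bounded maps: the sign $(-1)^{|T||x|}$ amounts (when $|T|=1$) to precomposing with the parity involution of $\ehH^{(2)}$, and its continuity must be checked. It is true, because every fundamental symmetry is homogeneous of degree $\sigma$ (Proposition \ref{Prop:J}), so in either parity one gets $(\ehH^{(2)}_0,\ehH^{(2)}_1)_{J_2}\subseteq\langle \ehH^{(2)}_0,\ehH^{(2)}_{\sigma_2+1}\rangle_2=\algzero$; hence the $\gZ_2$-grading is a topological orthogonal decomposition of the Hilbert space $(\ehH^{(2)},(-,-)_{J_2})$ and the sign operator has norm $1$. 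With that remark in place, your degree count $|T^{\dag}|=|T|+\sigma(\ehH^{(1)})+\sigma(\ehH^{(2)})$ is also correct, as one can cross-check directly from the homogeneity of the two inner products in the defining identity.
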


The operator $T^\dag$ is called the  {\defin superadjoint} operator of $T$.

\begin{definition}[\cite{Bieliavsky:2010su}]
\label{def-superunit}
A {\defin superunitary operator} between $\ehH^{(1)}$ and $\ehH^{(2)}$ 
is a homogeneous operator $\Phi\in\caB(\ehH^{(1)},\ehH^{(2)})$ of degree 0 satisfying: 
\begin{equation*}
\Phi^\dag\Phi=\Phi\Phi^\dag=\gone.
\end{equation*}
\end{definition}

The set of superunitary operators is denoted by $\caU(\ehH^{(1)},\ehH^{(2)})$.
In particular, $\caU(\ehH):=\caU(\ehH,\ehH)$ is  a group. Any superunitary operator
$\Phi\in\caU(\ehH^{(1)},\ehH^{(2)})$ is in particular a linear bjection of degree $0$ and an isometry, that is
\begin{equation}\label{Eq:Isometry}
\langle\Phi(x),\Phi(y)\rangle_2=\langle x,y\rangle_1,
\end{equation}
for all $x,y\in\ehH^{(1)}$.
The converse also holds.

\begin{proposition}\label{prop:isosuperunitary}
Let 
$\Phi:\ehH^{(1)}\to\ehH^{(2)}$ be a linear bijection of degree $0$.
The operator $\Phi$ is superunitary if and only if 
this is an isometry.
\end{proposition}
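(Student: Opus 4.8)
The plan is to prove the two implications separately. The forward direction---superunitary operators are isometries---has already been recorded in Equation \eqref{Eq:Isometry} just above the statement, so only the converse requires work: I must show that a degree-$0$ linear bijection $\Phi$ satisfying the isometry identity \eqref{Eq:Isometry} is superunitary, i.e.\ that $\Phi^\dag$ exists and $\Phi^\dag\Phi=\Phi\Phi^\dag=\gone$.

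First I would settle the point that is logically prior to everything else: the superadjoint $\Phi^\dag$ is defined only for \emph{bounded} operators (Proposition \ref{prop-superhilbert-adjoint}), whereas the hypothesis provides merely an algebraic bijection. So I begin by proving that an isometric linear bijection is automatically continuous for the Hilbert topologies, via the closed graph theorem. The two spaces are complete for any scalar product $(-,-)_J$ by Definition \ref{Def:SuperHilbert}, hence Banach. To see that the graph of $\Phi$ is closed, I take $x_n\to x$ with $\Phi(x_n)\to y$ and test against an arbitrary $z$: using that $\Phi$ is a bijective isometry one rewrites $\langle\Phi(x_n),z\rangle_2=\langle x_n,\Phi^{-1}(z)\rangle_1$, and the joint continuity of the inner product (Lemma \ref{lem-ineq}) lets me pass to the limit on both sides, giving $\langle y,z\rangle_2=\langle\Phi(x),z\rangle_2$ for every $z$. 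Non-degeneracy then forces $y=\Phi(x)$, so the graph is closed and $\Phi\in\caB(\ehH^{(1)},\ehH^{(2)})$.

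Once continuity is secured, the remainder is a short computation. Because $\Phi$ has degree $0$, the defining relation of the superadjoint loses its sign and reads $\langle\Phi^\dag(x),y\rangle_1=\langle x,\Phi(y)\rangle_2$. Combining this with the isometry hypothesis yields, for all $x,y$, the equalities $\langle\Phi^\dag\Phi(x),y\rangle_1=\langle\Phi(x),\Phi(y)\rangle_2=\langle x,y\rangle_1$, so non-degeneracy of $\langle-,-\rangle_1$ gives $\Phi^\dag\Phi=\gone$. Finally, since $\Phi$ is bijective, evaluating $\Phi^\dag\Phi=\gone$ at $\Phi^{-1}(w)$ shows $\Phi^\dag=\Phi^{-1}$, whence $\Phi\Phi^\dag=\gone$ as well, and $\Phi$ is superunitary.

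I expect the only genuine obstacle to be the automatic-continuity step; the algebra is routine once $\Phi^\dag$ is known to exist. If one prefers to avoid the closed graph theorem, an alternative is to reduce to Krein spaces via Proposition \ref{prop-krein}---on each Krein summand $\Phi$ restricts to a bijective Krein-space isometry---and invoke the standard fact that such isometries are bounded; but the closed-graph argument handles all parities uniformly and seems cleanest.
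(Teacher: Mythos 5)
Your proof is correct, and it reaches continuity by a genuinely different route than the paper. Both arguments hinge on the same point you isolate: the superadjoint $\Phi^\dag$ only exists once $\Phi$ is known to be bounded, after which the computation $\langle\Phi^\dag\Phi(x),y\rangle_1=\langle\Phi(x),\Phi(y)\rangle_2=\langle x,y\rangle_1$ and the identity $\Phi^\dag=\Phi^{-1}$ (from bijectivity) finish the job. For that continuity step you use the closed graph theorem, testing $\langle\Phi(x_n),z\rangle_2=\langle x_n,\Phi^{-1}(z)\rangle_1$ against all $z$ and concluding via Lemma \ref{lem-ineq} and non-degeneracy; the paper instead transports a fundamental symmetry: if $J_1$ is a fundamental symmetry of $\ehH^{(1)}$, then $\Phi\circ J_1\circ\Phi^{-1}$ is a fundamental symmetry of $\ehH^{(2)}$ (isometry transfers invariance and positivity, surjectivity transfers completeness), so $\Phi$ is a unitary bijection between the Hilbert spaces $\bigl(\ehH^{(1)},(-,-)_{J_1}\bigr)$ and $\bigl(\ehH^{(2)},(-,-)_{\Phi J_1\Phi^{-1}}\bigr)$, and Theorem \ref{thm-topo} identifies the latter topology with the canonical one, giving continuity of both $\Phi$ and $\Phi^{-1}$ at once. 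The paper's argument buys a structural fact that is reused elsewhere (fundamental symmetries are transported by isometric bijections, exactly as in the duality proposition with $R\circ J\circ R^{-1}$), while yours is self-contained standard functional analysis that makes explicit the boundedness issue the paper compresses into ``one easily proves''; your Krein-space fallback via Proposition \ref{prop-krein} is closer in spirit to the paper's toolkit but is not what the paper actually does.
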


\begin{proof}
Let $\Phi:\ehH^{(1)}\to\ehH^{(2)}$ be a linear bijection 
of degree $0$. First, assume that $
\langle\Phi(x),\Phi(y)\rangle_2=\langle x,y\rangle_1$
for all $x,y\in\ehH^{(1)}$. If $J_1$ is a fundamental symmetry 
of $\ehH^{(1)}$, then $\Phi\circ J_1\circ\Phi^{-1}$  is a 
fundamental symmetry of $\ehH^{(2)}$.  This implies that $\Phi$ 
and $\Phi^{-1}$ are continuous. Then, one easily proves that 
$\Phi\in\caU(\ehH^{(1)},\ehH^{(2)})$. 
The converse implication is straightforward.
\end{proof}

On a Hilbert superspace $\ehH$, the {\defin parity operator} 
$\gP\in\caB(\ehH)$ is defined by 
\begin{equation}
\forall x_0\in\ehH_0,\ \forall x_1\in\ehH_1,\quad \gP(x_0+x_1)=x_0-x_1.\label{eq-parity}
\end{equation}
This operator is of degree 0, involutive and satisfies
$\gP^\dag=(-1)^\sigma\gP$ if $\ehH$ is of parity $\sigma$. 

\begin{proposition}\label{Prop:J}
Let $J$  be a fundamental symmetry of $\ehH$.
Then, $J$ is an operator of degree $\sigma$,
unitary w.r.t.\ the scalar product $(-,-)_J$
and satisfies $J^2=\gP^{(\sigma+1)}$, $J^\dag=J\gP$.
Moreover, $J$ is superunitary if $\sigma=0$ and $J$ provides
a linear isomorphism between $\ehH_0$ and $\ehH_1$ if $\sigma=1$.
\end{proposition}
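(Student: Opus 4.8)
The plan is to read off every assertion from the fundamental decomposition $\ehH=\ehH_{[1]}\oplus\ehH_{[i]}\oplus\ehH_{[-1]}\oplus\ehH_{[-i]}$ attached to $J$, on which $J$ acts as the scalar $i^{-k}$ on $\ehH_{[i^k]}$ and is in particular bijective with $J^4=\gone$. The unitarity of $J$ on the Hilbert space $(\ehH,(-,-)_J)$ is the one point needing no fine structure: for all $x,y\in\ehH$ I would compute
\[
(Jx,Jy)_J=\langle Jx,J(Jy)\rangle=\langle x,Jy\rangle=(x,y)_J,
\]
where the middle step is the isometry $\langle J-,J-\rangle=\langle-,-\rangle$ applied to the pair $(x,Jy)$; bijectivity of $J$ then upgrades this to genuine unitarity.

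To fix the degree I would combine positive definiteness with the homogeneity of $\langle-,-\rangle$. Since $J$ is homogeneous of some degree $\delta$, for any nonzero homogeneous $x$ the real number $(x,x)_J=\langle x,Jx\rangle$ is strictly positive, hence nonzero; as $\langle-,-\rangle$ has degree $\sigma$ and $|Jx|=|x|+\delta$, nonvanishing forces $|x|+(|x|+\delta)\equiv\sigma$, i.e.\ $\delta=\sigma$. For $\sigma=1$ this is exactly the statement that the isotropy of $\ehH_0$ and $\ehH_1$ rules out $\delta=0$. For the identity $J^2=\gP^{(\sigma+1)}$ I would note that $J^2$ acts as $(-1)^{k}$ on $\ehH_{[i^k]}$ and invoke Proposition \ref{prop-krein}: when $\sigma=0$ it places $\ehH_{[1]},\ehH_{[-1]}$ inside $\ehH_0$ and $\ehH_{[i]},\ehH_{[-i]}$ inside $\ehH_1$, so $J^2=\gone$ on $\ehH_0$ and $-\gone$ on $\ehH_1$, which is $\gP=\gP^{(\sigma+1)}$; when $\sigma=1$ it gives $\ehH_{[i]}=\ehH_{[-i]}=\algzero$, leaving only the even indices $k=0,2$, so $J^2=\gone=\gP^{(\sigma+1)}$.

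For the superadjoint I would first extract, from the isometry together with $J^{-1}=J^{3}=J\,\gP^{(\sigma+1)}$, the auxiliary relation
\[
\langle Jx,z\rangle=\langle x,J^{-1}z\rangle=(-1)^{(\sigma+1)|z|}\langle x,Jz\rangle,
\]
valid for homogeneous $z$. I would then verify that $J\gP$ meets the characterization of the superadjoint in Proposition \ref{prop-superhilbert-adjoint}: writing $\langle J\gP x,y\rangle=(-1)^{|x|}\langle Jx,y\rangle$ and applying the auxiliary relation gives $(-1)^{|x|+(\sigma+1)|y|}\langle x,Jy\rangle$, and since $\langle x,Jy\rangle\neq0$ forces $|x|=|y|$ the exponent collapses to $(\sigma+2)|x|\equiv\sigma|x|$; hence $\langle J\gP x,y\rangle=(-1)^{\sigma|x|}\langle x,Jy\rangle$ and uniqueness of the superadjoint yields $J^\dag=J\gP$.

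The two remaining statements then fall out at once. For $\sigma=0$ the operator $J$ has degree $0$, hence commutes with $\gP$, so $J^\dag J=J\gP J=J^2\gP=\gP^2=\gone$ and likewise $JJ^\dag=\gone$, i.e.\ $J\in\caU(\ehH)$. For $\sigma=1$ the degree-$1$ bijection $J$ with $J^2=\gone$ restricts to mutually inverse maps $\ehH_0\to\ehH_1$ and $\ehH_1\to\ehH_0$, giving the isomorphism $\ehH_0\cong\ehH_1$. I expect the genuine difficulty to sit in the parity-$1$ case: there the fundamental decomposition no longer refines the $\gZ_2$-grading, so one cannot simply read $\gP$ off the projectors $P_{[i^k]}$, and both the degree count and the sign bookkeeping in $J^\dag=J\gP$ must instead be driven by the homogeneity of the inner product—which is precisely the hypothesis one must be careful to invoke, since a merely non-homogeneous linear map can satisfy the other axioms of a fundamental symmetry.
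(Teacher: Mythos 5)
Your write-up is correct in every step it actually carries out, and its centerpiece coincides with the paper's own proof: the paper declares all assertions obvious except $J^2=\gP^{(\sigma+1)}$, which it proves exactly as you do, by noting that $J^2$ acts as $(i^{-k})^2=(-1)^k$ on the eigenspace $\ehH_{[i^k]}=\Ker(J-i^{-k}\gone)$ and then invoking Proposition \ref{prop-krein} to identify $\ehH_{[1]}\oplus\ehH_{[-1]}$ with $\ehH_0$ and $\ehH_{[i]}\oplus\ehH_{[-i]}$ with $\ehH_1$ when $\sigma=0$, and to kill $\ehH_{[\pm i]}$ when $\sigma=1$. Your fillers for the parts the paper omits --- the one-line unitarity computation, the verification via Proposition \ref{prop-superhilbert-adjoint} that $J\gP$ satisfies the defining identity of the superadjoint, and the two parity-dependent conclusions --- are all sound.

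The one assertion that is not actually proved is the first one, that $J$ has degree $\sigma$: you begin with ``since $J$ is homogeneous of some degree $\delta$'' and then pin down $\delta=\sigma$, so you only establish the conditional statement. This gap cannot be closed from the definition of fundamental symmetry given in Section 3, which asks only for $J^4=\gone$, the isometry property, and positive definiteness of $(-,-)_J$: these axioms do not force homogeneity. For instance, on the parity-$0$ space $\gC^{2|2}$ with $\langle e_1,e_1\rangle=1$, $\langle e_2,e_2\rangle=-1$, $\langle f_1,f_1\rangle=i$, $\langle f_2,f_2\rangle=-i$ (all other pairs of basis vectors orthogonal, $e_j$ even, $f_j$ odd), the mutually orthogonal lines spanned by $e_1+f_1+f_2$, $-ie_1+ie_2+f_1$, $e_2+f_1+f_2$, $ie_1-ie_2+f_2$ form a fundamental decomposition (the isotropy of $f_1+f_2$ is the key) whose components are not graded, and the associated operator $J=P_{[1]}-iP_{[i]}-P_{[-1]}+iP_{[-i]}$ is then a fundamental symmetry in the sense of Section 3 that does not preserve the grading. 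So homogeneity must be read as part of the definition --- as it is in Definition \ref{def-newversion} of the introduction, where $J$ is required to be of degree $\sigma$ --- rather than as a consequence of the Section 3 axioms; the same caveat applies to the parity-$0$ claim of Proposition \ref{prop-krein} that \emph{every} fundamental decomposition satisfies $\ehH_0=\ehH_{[1]}\oplus\ehH_{[-1]}$, which holds for decompositions into graded subspaces (such as the eigenspace decomposition of a homogeneous even $J$), and which both you and the paper use. Your closing sentence shows you sensed exactly this; the clean fix is to make homogeneity a standing hypothesis, after which your $\delta=\sigma$ computation and the rest of your argument are complete. The paper's proof, which calls this point obvious, is silent on it as well.
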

\begin{proof}
All stated  properties are obvious, we only prove that
$J^2=\gP^{(\sigma+1)}$. Consider the fundamental decomposition
associated to $J$, so that $\ehH_{[i^k]}$ is the eigenspace  of $J$
with eigenvalue $i^{-k}$. Then, $J^2$ is equal to $\gone$
on $\ehH_{[1]}\oplus\ehH_{[-1]}$ and to $-\gone$ on
$\ehH_{[i]}\oplus\ehH_{[-i]}$. The result follows then from
Proposition~\ref{prop-krein}.
 \end{proof}

Two Hilbert superspaces $\ehH^{(1)}$ and $\ehH^{(2)}$ are 
superunitarily equivalent, i.e. isomorphic, if there exists an 
operator $\Phi\in\caU(\ehH^{(1)},\ehH^{(2)})$. 
Our next goal is to provide a complete invariant for isomorphism classes 
of Hilbert superspaces. Recall that, up to unitary equivalence, 
a Hilbert space is uniquely determined
by its dimension, defined as the cardinal of a Hilbertian basis.

\begin{proposition}
Let $\ehH$ be a Hilbert superspace. The dimensions (as Hilbert spaces) 
of the components of a fundamental decomposition 
$\ehH=\ehH_{[1]}\oplus\ehH_{[i]}\oplus\ehH_{[-1]}\oplus\ehH_{[-i]}$
do not depend on the choice of this decomposition.
\end{proposition}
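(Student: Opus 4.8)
The plan is to reduce the statement to the corresponding invariance for Krein spaces via Proposition~\ref{prop-krein}, and then to settle that invariance through the angular-operator description of maximal definite subspaces.

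First I would separate the two parities. If $\sigma(\ehH)=1$, Proposition~\ref{prop-krein} gives $\ehH_{[i]}=\ehH_{[-i]}=\algzero$ for every fundamental decomposition, while $\ehH$ is itself a Krein space, so the decomposition reduces to $\ehH=\ehH_{[1]}\oplus\ehH_{[-1]}$. If $\sigma(\ehH)=0$, the same proposition gives $\ehH_0=\ehH_{[1]}\oplus\ehH_{[-1]}$ and $\ehH_1=\ehH_{[i]}\oplus\ehH_{[-i]}$, the former being a fundamental decomposition of the Krein space $(\ehH_0,\langle-,-\rangle)$ and the latter one of the Krein space $(\ehH_1,i\langle-,-\rangle)$. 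Here I would stress that this splitting is forced: for homogeneous $x$ one has $\langle x,x\rangle\in i^{|x|}\gR$, so the defining condition $\langle x,x\rangle\in i^k\gR^\ast_+$ of $\ehH_{[i^k]}$ can only be met inside $\ehH_0$ for $k\in\{0,2\}$ and inside $\ehH_1$ for $k\in\{1,3\}$. Thus in every case each component $\ehH_{[i^k]}$ is the positive, respectively negative, part of a fundamental decomposition of a genuine Krein space, and it suffices to prove that the Hilbert dimensions of the positive and of the negative part of a fundamental decomposition of a Krein space $\mathcal K$ are decomposition-independent.

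For this Krein-space statement I would proceed as follows. Let $\mathcal K=\mathcal K^+\oplus\mathcal K^-$ and $\mathcal K=\widehat{\mathcal K}^+\oplus\widehat{\mathcal K}^-$ be two fundamental decompositions, with fundamental symmetries $J$, $\widehat J$ and orthogonal projectors $P^\pm$, $\widehat P^\pm$. By Theorem~\ref{thm-topo} the scalar products $(-,-)_J$ and $(-,-)_{\widehat J}$ define the same Hilbert topology, so all four projectors are bounded for it. I would first note that $\widehat P^+|_{\mathcal K^+}\colon\mathcal K^+\to\widehat{\mathcal K}^+$ is injective, since a nonzero $x\in\mathcal K^+\cap\widehat{\mathcal K}^-$ would satisfy simultaneously $\langle x,x\rangle>0$ and $\langle x,x\rangle<0$. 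To upgrade injectivity into an isomorphism I would invoke the angular-operator representation of maximal uniformly positive subspaces: $\widehat{\mathcal K}^+=\{u+Ku:u\in\mathcal K^+\}$ for a strict contraction $K\colon\mathcal K^+\to\mathcal K^-$ (relative to $(-,-)_J$), whence $u\mapsto u+Ku$ is a topological isomorphism $\mathcal K^+\to\widehat{\mathcal K}^+$ with inverse $P^+|_{\widehat{\mathcal K}^+}$. Since a topological isomorphism of Hilbert spaces preserves the density character, and the density character of a Hilbert space equals its Hilbert dimension, this gives $\dim\mathcal K^+=\dim\widehat{\mathcal K}^+$; exchanging the roles of $+$ and $-$ gives $\dim\mathcal K^-=\dim\widehat{\mathcal K}^-$. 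Applying this with $\mathcal K=\ehH_0$ and $\mathcal K=\ehH_1$ in parity $0$, or $\mathcal K=\ehH$ in parity $1$, yields the four desired equalities. As this invariance is classical, in the final write-up I would simply cite the corresponding result in Bogn\'ar's monograph rather than reproduce the argument.

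The step I expect to be the main obstacle is precisely the passage from injectivity to equality of Hilbert dimensions in infinite dimensions: a bounded injection between Hilbert spaces is far from dimension-preserving, and the real content is that the two maximal positive subspaces are not merely abstractly comparable but \emph{topologically} isomorphic through the projection. Everything hinges on the uniform, rather than merely strict, positivity of $\mathcal K^+$, which is exactly what Theorem~\ref{thm-topo} together with the completeness built into Definition~\ref{Def:SuperHilbert} supply, and which forces the angular operator to be a strict contraction with $\|K\|<1$.
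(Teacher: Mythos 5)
Your proof is correct, and it supplies substantially more than the paper records: the paper's entire proof of this proposition is the single sentence that it ``follows easily from \eqref{Eq:Isometry}'', in keeping with the section's announced policy of deferring such statements to Krein-space theory. Your argument gives the actual mechanism, and it is the standard one: split by parity via Proposition \ref{prop-krein}, so that each component $\ehH_{[i^k]}$ is the positive or negative part of a fundamental decomposition of a genuine Krein space ($\ehH_0$ and $\ehH_1$ in parity $0$, $\ehH$ itself in parity $1$), then use the angular-operator representation of maximal uniformly positive subspaces to show that the $J$-orthogonal projection restricts to a topological isomorphism between the positive parts of any two fundamental decompositions, and conclude by invariance of Hilbert dimension under topological isomorphism. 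This is precisely the classical result of Bogn\'ar that the paper is implicitly invoking; your version has the merit of making explicit where Theorem \ref{thm-topo} and the completeness built into Definition \ref{Def:SuperHilbert} enter (uniform positivity, hence $\|K\|<1$), whereas the bare reference to the isometry identity \eqref{Eq:Isometry} does not by itself yield the invariance --- the identity map is an isometry, but it relates a decomposition only to itself.

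One caveat on a side remark. Your claim that the splitting is ``forced'' because the condition $\langle x,x\rangle\in i^k\gR^\ast_+$ can only be met inside $\ehH_0$ (for $k$ even) is false as a pointwise statement. In parity $0$ one has $\langle x,x\rangle=\langle x_0,x_0\rangle+\langle x_1,x_1\rangle$ for $x=x_0+x_1$ with $x_j\in\ehH_j$, and if $x_1\neq 0$ is an isotropic vector of the Krein space $(\ehH_1,i\langle-,-\rangle)$ (such vectors exist whenever that space is indefinite) while $\langle x_0,x_0\rangle>0$, then $\langle x,x\rangle\in\gR^\ast_+$ although $x\notin\ehH_0$. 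The inclusion $\ehH_{[i^k]}\subseteq\ehH_0$ for $k$ even does hold for the components of a fundamental decomposition, but proving it requires the full structure of the decomposition, not the sign of $\langle x,x\rangle$ vector by vector; that is exactly the content of Proposition \ref{prop-krein}, which you cite anyway. So the flaw sits only in the aside, not in the logical skeleton of your proof --- simply drop the pointwise justification and rely on the cited proposition.
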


The proof follows easily from \ref{Eq:Isometry}.
We define the {\defin signature} of $\ehH$ as 
$\text{sgn}(\ehH)=\big(\dim(\ehH_{[1]}),\dim(\ehH_{[i]}),\dim(\ehH_{[-1]}),\dim(\ehH_{[-i]})\big)$.

\begin{theorem}[\cite{Bognar:1974}, Theorem V.1.4]\label{Thm:IsoHilbert}
Two Hilbert superspaces are superunitarily equivalent
if and only if their parities and their signature coincide.
\end{theorem}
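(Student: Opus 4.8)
The plan is to prove both implications, using the signature as the conjectured complete invariant. The easy direction is that superunitary equivalence forces equal parities and signatures. Suppose $\Phi\in\caU(\ehH^{(1)},\ehH^{(2)})$. Since $\Phi$ has degree $0$, the parities of the inner products match: the degree-$\sigma$ property of $\langle-,-\rangle$ on each side is preserved because $\Phi$ is an isometry (Equation \eqref{Eq:Isometry}) intertwining homogeneous subspaces. For the signatures, the idea is to transport a fundamental decomposition: if $\ehH^{(1)}=\bigoplus_k\ehH^{(1)}_{[i^k]}$ is a fundamental decomposition, then applying $\Phi$ and using that $\langle\Phi(x),\Phi(x)\rangle_2=\langle x,x\rangle_1\in i^k\gR^\ast_+$ for $x\in\ehH^{(1)}_{[i^k]}$ shows that $\Phi(\ehH^{(1)}_{[i^k]})$ is a candidate for the $[i^k]$-component of a fundamental decomposition of $\ehH^{(2)}$. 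Because $\Phi$ is a degree-$0$ bijection and an isometry, it restricts to a Hilbert-space isometry from $(\ehH^{(1)}_{[i^k]}, i^{-k}\langle-,-\rangle_1)$ onto $(\Phi(\ehH^{(1)}_{[i^k]}), i^{-k}\langle-,-\rangle_2)$, so the four component dimensions are preserved. Since these dimensions are independent of the chosen decomposition (the preceding proposition), the signatures coincide.

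For the converse, assume the two Hilbert superspaces have equal parities $\sigma$ and equal signatures. First I would fix fundamental decompositions $\ehH^{(1)}=\bigoplus_k\ehH^{(1)}_{[i^k]}$ and $\ehH^{(2)}=\bigoplus_k\ehH^{(2)}_{[i^k]}$. On each component, $i^{-k}\langle-,-\rangle$ is a genuine positive definite scalar product for which the component is a complete Hilbert space (Definition \ref{Def:SuperHilbert}). Since $\dim(\ehH^{(1)}_{[i^k]})=\dim(\ehH^{(2)}_{[i^k]})$ as Hilbert spaces for each $k$, the classical theorem on Hilbert spaces gives a unitary $\Phi_k:(\ehH^{(1)}_{[i^k]}, i^{-k}\langle-,-\rangle_1)\to(\ehH^{(2)}_{[i^k]}, i^{-k}\langle-,-\rangle_2)$. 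I would then set $\Phi=\bigoplus_k\Phi_k$. By construction, for $x\in\ehH^{(1)}_{[i^k]}$ we have $\langle\Phi(x),\Phi(x)\rangle_2=i^k(\Phi_k x,\Phi_k x)=i^k(x,x)=\langle x,x\rangle_1$, and since the components are $\langle-,-\rangle$-orthogonal on both sides, $\Phi$ is an isometry by polarization. It remains to check that $\Phi$ has degree $0$: this is where the parity hypothesis enters, since the way the $\ehH_{[i^k]}$ sit inside $\ehH_0$ and $\ehH_1$ is governed by $\sigma$ via Proposition \ref{prop-krein}.

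The main obstacle I anticipate is precisely this grading compatibility. When $\sigma=0$, Proposition \ref{prop-krein} gives $\ehH_0=\ehH_{[1]}\oplus\ehH_{[-1]}$ and $\ehH_1=\ehH_{[i]}\oplus\ehH_{[-i]}$, so each $\Phi_k$ automatically preserves the even/odd splitting and $\Phi$ is degree $0$. When $\sigma=1$, however, the components $\ehH_{[i]}=\ehH_{[-i]}=\algzero$ vanish and $\ehH$ is a Krein space with $\ehH_0,\ehH_1$ isotropic, so the fundamental decomposition alone does not record the even/odd splitting; the two gradings $\ehH^{(j)}=\ehH^{(j)}_0\oplus\ehH^{(j)}_1$ are independent data that must be matched. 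The fix is to choose the fundamental decompositions adapted to the grading, or equivalently to work through the fundamental symmetry $J$ of Proposition \ref{Prop:J}, which in the odd case provides an isomorphism $\ehH_0\simeq\ehH_1$; I would build $\Phi$ to intertwine $J_1$ and $J_2$ so that it respects both the Krein structure and the grading simultaneously. Once the degree-$0$ condition is secured, Proposition \ref{prop:isosuperunitary} upgrades the isometry $\Phi$ to a genuine superunitary operator, completing the equivalence.
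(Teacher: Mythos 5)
Your proposal is correct, but it takes a genuinely different route from the paper: the paper gives no argument of its own for Theorem \ref{Thm:IsoHilbert} beyond the general reduction of Hilbert superspaces to Krein spaces (Proposition \ref{prop-krein}) and a citation of Bogn\'ar's Theorem V.1.4, whereas you effectively reprove the classification from scratch in the graded setting. Your necessity direction (transporting a fundamental decomposition through the degree-$0$ isometry and invoking the well-definedness of the signature) is unproblematic, and your converse via component-wise Hilbert-space unitaries $\Phi_k$ settles parity $0$ completely, since there Proposition \ref{prop-krein} makes each $\ehH_{[i^k]}$ a graded subspace, so $\Phi=\bigoplus_k\Phi_k$ automatically has degree $0$. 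What your approach buys is an explicit treatment of the one point a bare citation glosses over: in parity $1$ the components $\ehH_{[\pm 1]}$ are never graded (they meet the isotropic subspaces $\ehH_0,\ehH_1$ only in $\algzero$), so a Krein-unitary equivalence supplied by Bogn\'ar's theorem need not have degree $0$, and the grading is genuinely extra data that must be matched. Your $J$-intertwining fix is the right one, and it does complete: pick fundamental symmetries $J_1,J_2$; by Proposition \ref{Prop:J} they are degree-$1$ operators with $J_j^2=\gone$, the even parts $\big(\ehH^{(j)}_0,(-,-)_{J_j}\big)$ are Hilbert spaces, $(-,-)_{J_j}$-orthogonal to $\ehH^{(j)}_1$, and of Hilbert dimension $d$ because $(\gone+J_j):\ehH^{(j)}_0\to\ehH^{(j)}_{[1]}$ is a continuous linear bijection (proof of Proposition \ref{SgnParity1}); choose a unitary $u:\ehH^{(1)}_0\to\ehH^{(2)}_0$ and set $\Phi:=u$ on $\ehH^{(1)}_0$ and $\Phi:=J_2\circ u\circ J_1$ on $\ehH^{(1)}_1$. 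Then $\Phi$ is a degree-$0$ bijection intertwining $J_1$ with $J_2$, and for $x\in\ehH^{(1)}_0$, $y\in\ehH^{(1)}_1$ one computes $\langle\Phi x,\Phi y\rangle_2=(ux,uJ_1y)_{J_2}=(x,J_1y)_{J_1}=\langle x,J_1^2y\rangle_1=\langle x,y\rangle_1$, while all pairings of vectors of equal degree vanish on both sides by isotropy; Proposition \ref{prop:isosuperunitary} then upgrades the isometry $\Phi$ to a superunitary operator. With that last step written out, your argument is a full, self-contained proof, and it has the added virtue of making visible why the parity hypothesis is needed at all.
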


According to Proposition \ref{prop-krein} and the theory of Krein spaces,
there exist Hilbert superspaces of parity $0$ and of arbitrary signature.
For instance, there exists four inequivalent classes of Hilbert superspaces
of total dimension $1$, characterized by the value $\langle e,e\rangle\in\{1,i,-1,-i\}$ 
for a normalized generator $e$.
As proved below, there does not exist Hilbert superspaces of parity $1$ with arbitrary signature.
Indeed, they are uniquely determined (up to isomorphism) by their total dimensions. 

\begin{proposition}\label{SgnParity1}
The signature of  a Hilbert superspace $\ehH$ of parity $1$
is given by $\text{sgn}(\ehH)=\big(d,0,d,0\big)$,
where $d=\dim(\ehH_{[1]})=\dim(\ehH_{[-1]})$. Moreover, the parity operator
$\gP$ gives an isomorphism between $\ehH_{[1]}$ and $\ehH_{[-1]}$.
\end{proposition}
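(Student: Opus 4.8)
The plan is to dispatch the two vanishing entries of the signature first, and then to produce the parity operator $\gP$ as the concrete map realizing the isomorphism $\ehH_{[1]}\simeq\ehH_{[-1]}$; the equality $\dim(\ehH_{[1]})=\dim(\ehH_{[-1]})$ will then drop out automatically.

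First I would record that $\ehH_{[i]}=\ehH_{[-i]}=\algzero$. This is already contained in Proposition \ref{prop-krein}, but it is worth re-deriving directly. Since $\ehH$ has parity $1$, its even and odd parts are isotropic, so for every $x=x_0+x_1\in\ehH$ one has $\langle x,x\rangle=\langle x_0,x_1\rangle+\langle x_1,x_0\rangle=2\,\mathrm{Re}\,\langle x_0,x_1\rangle\in\gR$ by superhermiticity. On the other hand, a nonzero vector $x\in\ehH_{[i]}$ (resp. $\ehH_{[-i]}$) satisfies $\langle x,x\rangle\in i\gR^\ast_+$ (resp. $-i\gR^\ast_+$) by the definition of a fundamental decomposition, which is incompatible with $\langle x,x\rangle\in\gR$. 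Hence only $\ehH_{[1]}$ and $\ehH_{[-1]}$ survive, and $\ehH=\ehH_{[1]}\oplus\ehH_{[-1]}$.

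Next I would exploit the fundamental symmetry attached to this decomposition, namely $J=P_{[1]}-P_{[-1]}$. By Proposition \ref{Prop:J}, $J$ is homogeneous of degree $\parity=1$ and satisfies $J^2=\gone$, so that $\ehH_{[1]}=\Ker(J-\gone)$ and $\ehH_{[-1]}=\Ker(J+\gone)$ are its $(+1)$- and $(-1)$-eigenspaces. The crucial observation is that the even operator $\gP$ anticommutes with the odd operator $J$: for homogeneous $x$ one has $\gP J x=(-1)^{|x|+1}Jx=-J\gP x$, hence $\gP J=-J\gP$ on all of $\ehH$. Consequently, if $x\in\ehH_{[1]}$ then $J(\gP x)=-\gP(Jx)=-\gP x$, so $\gP x\in\ehH_{[-1]}$; symmetrically $\gP(\ehH_{[-1]})\subseteq\ehH_{[1]}$. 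As $\gP$ is a bounded involution, its restriction $\gP\colon\ehH_{[1]}\to\ehH_{[-1]}$ is a topological isomorphism with inverse $\gP\colon\ehH_{[-1]}\to\ehH_{[1]}$.

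This already yields $\dim(\ehH_{[1]})=\dim(\ehH_{[-1]})=:d$, and therefore $\mathrm{sgn}(\ehH)=(d,0,d,0)$, establishing both assertions. The step I expect to be the delicate one is precisely this eigenspace-swapping: from $\langle\gP x,\gP y\rangle=-\langle x,y\rangle$ (again a consequence of isotropy) one only learns that $\gP$ sends positive vectors to negative ones, which is not enough to pin down $\gP x\in\ehH_{[-1]}$; it is the anticommutation $\gP J=-J\gP$, forced by $J$ being odd, that upgrades this into an exact interchange of the eigenspaces. As a bonus, combining $\langle\gP x,\gP y\rangle=-\langle x,y\rangle$ with the fact that the intrinsic scalar products on $\ehH_{[1]}$ and $\ehH_{[-1]}$ are $\langle-,-\rangle$ and $-\langle-,-\rangle$ respectively shows $\gP$ to be an isometry between these Hilbert components, so the isomorphism is in fact unitary.
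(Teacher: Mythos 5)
Your proof is correct and follows essentially the same route as the paper: both rest on the facts that $\ehH_{[\pm i]}=\algzero$ and that a fundamental symmetry $J$ is odd with $J^2=\gone$ (Proposition \ref{Prop:J}), so that the parity operator $\gP$ interchanges the $\pm1$-eigenspaces $\ehH_{[\pm1]}$ of $J$. The only cosmetic difference is that the paper exhibits the swap through the parametrization $\ehH_{[\pm 1]}=(\gone\pm J)\ehH_0$, whereas you obtain it from the anticommutation relation $\gP J=-J\gP$; your closing observation that $\gP$ is moreover an isometry between the components (hence equality of Hilbert dimensions is immediate) is a small bonus the paper leaves implicit.
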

\begin{proof}
Consider a fundamental symmetry $J$ and the 
corresponding fundamental decomposition, given by the 
eigenspaces of $J$.
According to Proposition \ref{prop-krein}, we have 
$\dim(\ehH_{[i]})=\dim(\ehH_{[-i]})=0$.
By Proposition \ref{Prop:J}, we know that $J^2=\gone$
and $J\ehH_0=\ehH_1$. We deduce that 
$\ehH_{[1]}=\Ker (\gone-J)= \Imag(\gone+J)=(\gone+J)\ehH_0$, and similarly
 $\ehH_{[-1]}=(1-J)\ehH_0$. Therefore,
$\gP$ gives an isomorphism between $\ehH_{[1]}$ and $\ehH_{[-1]}$.
\end{proof}
\medskip

Finally, we provide ways of constructing new Hilbert superspaces 
out of old ones. Given a Hilbert superspace $(\ehH,\langle-,-\rangle)$, a Hilbert 
sub-superspace $\caF$ is a closed subspace of $\ehH$ which is graded,
i.e. $\caF=\caF\cap\ehH_0\oplus\caF\cap\ehH_1$, and such that 
$(\caF,\langle-,-\rangle|_{\caF\times\caF})$ is a Hilbert superspace.
It is characterized below using the orthogonal of $\caF$ w.r.t.\ the inner product $\langle-,-\rangle$,
defined by $\caF^\perp:=\{x\in\ehH\; | \; \forall y \in\ehH,\;\langle x,y\rangle=0\}$.

\begin{theorem}[\cite{Bognar:1974}, Theorem V.3.4]
\label{thm-subspace}
A graded subspace $\caF\leq \ehH$ is a Hilbert sub-superspace
 if and only if $\caF\oplus \caF^\perp=\ehH$.
\end{theorem}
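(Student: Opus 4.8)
The plan is to reduce everything to the corresponding statement for Krein spaces via the structural dichotomy of Proposition~\ref{prop-krein}, treating the two parities separately. In both cases the graded subspace splits as $\caF=\caF_0\oplus\caF_1$ with $\caF_0=\caF\cap\ehH_0$ and $\caF_1=\caF\cap\ehH_1$, and the whole argument rests on identifying the inner-product orthogonal $\caF^\perp$ in terms of the orthogonals computed inside the relevant Krein spaces.

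First I would handle parity $0$. By Proposition~\ref{prop-krein}, $(\ehH_0,\langle-,-\rangle)$ and $(\ehH_1,i\langle-,-\rangle)$ are Krein spaces and $\ehH_0\perp\ehH_1$. The key computation is that, for a graded $\caF$, the orthogonality $\ehH_0\perp\ehH_1$ forces $\caF^\perp=\caF_0^{\perp_0}\oplus\caF_1^{\perp_1}$, where $\perp_0,\perp_1$ denote orthogonals taken inside $\ehH_0,\ehH_1$: testing $x=x_0+x_1\in\caF^\perp$ against the homogeneous elements of $\caF$ decouples the two conditions. Since multiplying the form on $\ehH_1$ by $i$ does not change which vectors pair to zero, $\caF_1^{\perp_1}$ is the Krein orthogonal of $\caF_1$ in $(\ehH_1,i\langle-,-\rangle)$. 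Consequently $\caF\oplus\caF^\perp=\ehH$ is equivalent to the conjunction $\caF_0\oplus\caF_0^{\perp_0}=\ehH_0$ and $\caF_1\oplus\caF_1^{\perp_1}=\ehH_1$. On the other hand, Proposition~\ref{prop-krein} tells us that $\caF$ is a Hilbert sub-superspace precisely when $\caF_0$ is a Krein subspace of $\ehH_0$ and $\caF_1$ a Krein subspace of $\ehH_1$. Applying the Krein-space version of the theorem \cite{Bognar:1974} to each component closes this case.

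Next I would treat parity $1$. Here Proposition~\ref{prop-krein} identifies $(\ehH,\langle-,-\rangle)$ itself with a Krein space in which $\ehH_0,\ehH_1$ are isotropic. A graded subspace $\caF$ then automatically inherits isotropic components $\caF_0,\caF_1$, so that the restricted form is again superhermitian of parity $1$; thus $\caF$ is a Hilbert sub-superspace exactly when it is a Krein subspace of $\ehH$. The Krein-space theorem \cite{Bognar:1974} states that this holds if and only if $\caF\oplus\caF^\perp=\ehH$, which is the desired equivalence with no further splitting required.

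The only genuinely delicate point is the orthogonal decomposition $\caF^\perp=\caF_0^{\perp_0}\oplus\caF_1^{\perp_1}$ in the parity-$0$ case; everything else is bookkeeping on top of Proposition~\ref{prop-krein} and the cited Krein-space result. I would take care to note that closedness of $\caF$, required for a Hilbert sub-superspace, is not an extra hypothesis to verify by hand: the algebraic direct-sum condition $\caF\oplus\caF^\perp=\ehH$ already forces it, as is built into the Krein-space statement being invoked.
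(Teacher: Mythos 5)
Your proposal is correct and takes essentially the same route the paper intends: the paper offers no written proof, but explicitly says such results are obtained by reducing to Krein-space theory via Proposition~\ref{prop-krein} and then citing the corresponding theorem of \cite{Bognar:1974}, which is exactly what you do (parity $0$: the graded splitting $\caF^\perp=\caF_0^{\perp_0}\oplus\caF_1^{\perp_1}$ and the Krein theorem applied to each factor; parity $1$: direct application to the underlying Krein space, the isotropy of $\caF_0,\caF_1$ being automatic). Your closing remark that closedness is forced by $\caF\oplus\caF^\perp=\ehH$ rather than an extra hypothesis is also consistent with the cited Krein-space statement.
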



\begin{corollary}
\label{cor-subspace}
Let $\ehH$ be a Hilbert superspace and $P\in\caB(\ehH)$  be 
an orthogonal projector, i.e. $P^2=P=P^\dag$. Then $\Imag(P)$ 
is a Hilbert sub-superspace of $\ehH$.
\end{corollary}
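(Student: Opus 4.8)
The plan is to deduce this corollary directly from Theorem~\ref{thm-subspace}, by showing that an orthogonal projector $P$ (with $P^2=P=P^\dag$) yields a graded subspace $\caF=\Imag(P)$ satisfying the splitting condition $\caF\oplus\caF^\perp=\ehH$. First I would verify that $\Imag(P)$ is a closed graded subspace. Closedness follows from continuity of $P$ together with the identity $\Imag(P)=\Ker(\gone-P)$, which holds since $P$ is idempotent: $x\in\Imag(P)$ means $x=P(y)$, whence $P(x)=P^2(y)=P(y)=x$, and conversely $P(x)=x$ gives $x\in\Imag(P)$. Gradedness of $\Imag(P)$ requires that $P$ be a homogeneous operator of degree $0$; since $P=P^\dag$ and the superadjoint (Proposition~\ref{prop-superhilbert-adjoint}) preserves degree, $P$ is homogeneous, and an orthogonal projector is naturally of degree~$0$, so $P$ commutes with the parity operator and $\Imag(P)=\Imag(P)\cap\ehH_0\oplus\Imag(P)\cap\ehH_1$.

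The core step is to identify the $\langle-,-\rangle$-orthogonal complement of $\caF=\Imag(P)$ with $\Imag(\gone-P)=\Ker(P)$, and to show these two subspaces give an internal direct sum decomposition of $\ehH$. For the algebraic decomposition, every $x\in\ehH$ splits as $x=P(x)+(\gone-P)(x)$ with $P(x)\in\Imag(P)$ and $(\gone-P)(x)\in\Ker(P)$, and $\Imag(P)\cap\Ker(P)=\{0\}$ because $x=P(x)$ and $P(x)=0$ force $x=0$; hence $\ehH=\Imag(P)\oplus\Ker(P)$ as vector spaces. It then remains to check that $\Ker(P)$ is precisely $\caF^\perp$. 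The self-adjointness $P=P^\dag$ is what makes this work: for $x\in\Ker(P)$ and $y=P(z)\in\Imag(P)$, the defining relation of the superadjoint gives
\begin{equation*}
\langle x,P(z)\rangle=(-1)^{|P||x|}\langle P^\dag(x),z\rangle=\langle P(x),z\rangle=0,
\end{equation*}
so $\Ker(P)\subseteq\caF^\perp$; and conversely, if $x\in\caF^\perp$ then $\langle x,P(z)\rangle=0$ for all $z$, which by the same computation yields $\langle P(x),z\rangle=0$ for all $z$, whence $P(x)=0$ by non-degeneracy of $\langle-,-\rangle$. This establishes $\caF^\perp=\Ker(P)$, and combined with the direct-sum decomposition above we get $\caF\oplus\caF^\perp=\ehH$.

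Having verified the hypothesis of Theorem~\ref{thm-subspace}, I would invoke it to conclude that $\caF=\Imag(P)$ is a Hilbert sub-superspace, completing the proof. The main obstacle, such as it is, lies in the orthogonality computation of the second paragraph: one must handle the sign factor $(-1)^{|P||x|}$ in the superadjoint relation carefully and use that $P$ has degree~$0$ so that this factor equals~$1$, and one must invoke non-degeneracy of the superhermitian inner product (part of the definition of Hermitian superspace) rather than of a positive-definite scalar product. Everything else is routine bookkeeping about idempotents, and no appeal to fundamental symmetries or the Krein-space machinery is needed beyond what is already packaged into Theorem~\ref{thm-subspace}.
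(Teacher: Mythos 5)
Your proof is correct and follows exactly the route the paper intends: the corollary is stated there without proof as an immediate consequence of Theorem \ref{thm-subspace}, and your verification that $\Imag(P)$ is closed and graded, that $\Imag(P)^\perp=\Ker(P)$ (using $P=P^\dag$, $|P|=0$ and non-degeneracy), and that $\ehH=\Imag(P)\oplus\Ker(P)$ is precisely the missing bookkeeping. The one wobbly step is your claim that $P=P^\dag$ forces homogeneity --- it does not, since a sum of an even self-adjoint and an odd self-adjoint operator is again self-adjoint and idempotence alone does not exclude this; the clean justification is that the paper's superadjoint (Proposition \ref{prop-superhilbert-adjoint}) is defined only for homogeneous operators, so writing $P^\dag$ presupposes $P$ homogeneous, and then $P^2=P$ forces $|P|=0$ because $P^2$ is always of even degree.
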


If $\caF$ is a Hilbert sub-superspace of $\ehH$, the natural embedding is a bounded isometry.
All bounded isometries arise in this way.

\begin{proposition}
\label{prop:ImIso}
Let 
$\Phi:\ehH^{(1)}\to\ehH^{(2)}$ be a bounded isometry.
The operator $\Phi$ satisfies $\Imag\Phi\oplus(\Imag\Phi)^\perp=\ehH^{(2)}$.
Both $\Imag\Phi$ and $(\Imag\Phi)^\perp$ are Hilbert sub-superspaces of $\ehH^{(2)}$
and $\Imag\Phi$ is superunitarily equivalent to $\ehH^{(1)}$.
\end{proposition}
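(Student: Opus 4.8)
The plan is to manufacture, out of $\Phi$, the orthogonal projector onto $\Imag\Phi$, and then to invoke the structural results already established for Hilbert sub-superspaces. Since $\Phi$ is a bounded operator of degree $0$, Proposition~\ref{prop-superhilbert-adjoint} furnishes its superadjoint $\Phi^\dag\in\caB(\ehH^{(2)},\ehH^{(1)})$, characterised by $\langle\Phi^\dag(x),y\rangle_1=\langle x,\Phi(y)\rangle_2$ for $x\in\ehH^{(2)},\ y\in\ehH^{(1)}$. First I would show that $\Phi^\dag$ is a left inverse of $\Phi$: substituting $x=\Phi(z)$ into the defining relation and using the isometry property \eqref{Eq:Isometry} gives $\langle\Phi^\dag\Phi(z),y\rangle_1=\langle\Phi(z),\Phi(y)\rangle_2=\langle z,y\rangle_1$ for every $y$, whence $\Phi^\dag\Phi=\gone$ by non-degeneracy of $\langle-,-\rangle_1$. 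In particular $\Phi$ is injective.

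Next I would set $P:=\Phi\Phi^\dag\in\caB(\ehH^{(2)})$ and check that it is an orthogonal projector. Using $\Phi^\dag\Phi=\gone$ one gets $P^2=\Phi(\Phi^\dag\Phi)\Phi^\dag=\Phi\Phi^\dag=P$, while the contravariance and involutivity of the superadjoint on degree-$0$ operators give $P^\dag=(\Phi\Phi^\dag)^\dag=\Phi\Phi^\dag=P$. Thus $P^2=P=P^\dag$, and Corollary~\ref{cor-subspace} applies. I would then identify $\Imag P=\Imag\Phi$: the inclusion $\Imag P\subseteq\Imag\Phi$ is clear, and conversely $\Phi(z)=\Phi(\Phi^\dag\Phi(z))=P(\Phi(z))$ shows $\Imag\Phi\subseteq\Imag P$. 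Hence $\Imag\Phi=\Imag P$ is a Hilbert sub-superspace of $\ehH^{(2)}$.

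From here the decomposition follows, most transparently by arguing directly with $P$. Since $P^\dag=P$ has degree $0$, one computes $(\Imag P)^\perp=\Ker P=\Imag(\gone-P)$, and $\gone-P$ is again an orthogonal projector, so $(\Imag\Phi)^\perp$ is itself a Hilbert sub-superspace by Corollary~\ref{cor-subspace}. The splitting $x=Px+(\gone-P)x$ together with $\Imag P\cap\Ker P=\algzero$ then yields $\Imag\Phi\oplus(\Imag\Phi)^\perp=\ehH^{(2)}$, in accordance with Theorem~\ref{thm-subspace}. Finally, regarding $\Phi$ as a map onto its image, it is a linear bijection of degree $0$ from $\ehH^{(1)}$ to the Hilbert superspace $\Imag\Phi$ that preserves the inner products; Proposition~\ref{prop:isosuperunitary} then upgrades it to a superunitary operator, so that $\Imag\Phi$ is superunitarily equivalent to $\ehH^{(1)}$.

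The computations are all routine once the right object is in hand; the only genuine decision is to pass through the superadjoint and recognise $\Phi\Phi^\dag$ as the orthogonal projector onto $\Imag\Phi$, which simultaneously delivers closedness of the image, the decomposition, and the equivalence. The mild points requiring care are that I am tacitly using the standard identities $(ST)^\dag=T^\dag S^\dag$ and $(T^\dag)^\dag=T$ for degree-$0$ bounded operators, both of which follow immediately from the uniqueness part of Proposition~\ref{prop-superhilbert-adjoint}, and that the whole argument presupposes $\Phi$ to be homogeneous of degree $0$, as is built into the notion of isometry used here.
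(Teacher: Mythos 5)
Your proof is correct, and it takes a genuinely different route from the paper's. The paper gets the splitting $\Imag\Phi+(\Imag\Phi)^\perp=\ehH^{(2)}$ by invoking ``a direct adaptation'' of a Krein-space theorem of Bognar (Theorem VI.3.8), then checks that the intersection is trivial using the isometry property, and finally applies Theorem~\ref{thm-subspace} to conclude that both summands are Hilbert sub-superspaces. You instead build everything from the superadjoint: the identity $\Phi^\dag\Phi=\gone$ (which also gives injectivity for free), the orthogonal projector $P=\Phi\Phi^\dag$ with $\Imag P=\Imag\Phi$ and $(\Imag P)^\perp=\Ker P=\Imag(\gone-P)$, and then Corollary~\ref{cor-subspace} applied to both $P$ and $\gone-P$. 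This buys self-containedness: every ingredient (Proposition~\ref{prop-superhilbert-adjoint}, Corollary~\ref{cor-subspace}, Proposition~\ref{prop:isosuperunitary}) is already established in the paper, so no external Krein-space result needs to be ``adapted'' to the graded setting, and the closedness of $\Imag\Phi$ and the directness of the sum come out automatically from the projector identities rather than requiring separate arguments. The caveats you flag yourself --- that $(ST)^\dag=T^\dag S^\dag$ and $(T^\dag)^\dag=T$ for even bounded operators follow from the uniqueness clause of Proposition~\ref{prop-superhilbert-adjoint}, and that $\Phi$ is tacitly homogeneous of degree $0$ --- are genuine but harmless; the paper's own proof makes the same degree-$0$ assumption implicitly when it invokes Proposition~\ref{prop:isosuperunitary} at the end.
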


\begin{proof}
Assume $\Phi:\ehH^{(1)}\to\ehH^{(2)}$ is a bounded isometry.
By a direct adaptation of \cite[Theorem VI.3.8]{Bognar:1974}, 
the operator $\Phi$ satisfies $\Imag\Phi+(\Imag\Phi)^\perp=\ehH^{(2)}$.
If $u\in\Imag\Phi\cap(\Imag\Phi)^\perp$, then we have $u=\Phi(x)$ and, for all $y\in\ehH^{(1)}$,
$0=\langle u,\Phi(y)\rangle_2=\langle \Phi(x),\Phi(y)\rangle_2=\langle x,y\rangle_1$. 
Hence, $x=0$, $u=0$ and we deduce that 
$\Imag\Phi\oplus(\Imag\Phi)^\perp=\ehH$. Thanks to Theorem \ref{thm-subspace},
this implies that both $\Imag\Phi$ and $(\Imag\Phi)^\perp$ are Hilbert sub-superspaces of $\ehH^{(2)}$.
Since $\Phi$ is an isometry, this is a linear bijection from $\ehH^{(1)}$ to $\Imag\Phi$. 
By Poposition \ref{prop:isosuperunitary}, both are superunitarily equivalent.
\end{proof}

The direct sum of two Hilbert superspaces 
is easily obtained.

\begin{proposition}[\cite{Bieliavsky:2010su}]
\label{prop-superhilbert-sumprod}
Let $(\ehH^{(1)},\langle-,-\rangle_1)$ and $(\ehH^{(2)},\langle-,-\rangle_2)$ 
be two Hilbert superspaces of same parity $\sigma$. 
The direct sum $\ehH=\ehH^{(1)}\oplus\ehH^{(2)}$ 
endowed with the superhermitian inner product 
\begin{equation*}
\forall x_1,y_1\in\ehH^{(1)}, \ \forall x_2,y_2\in\ehH^{(2)},\quad
\langle x_1\oplus x_2,y_1\oplus y_2\rangle=\langle x_1,y_1\rangle_1+\langle x_2,y_2\rangle_2,
\end{equation*}
is a Hilbert superspace of parity $\sigma$. 
Moreover, the direct sum $J=J^{(1)}\oplus J^{(2)}$ 
of any two fundamental symmetries of $\ehH^{(1)}$
and $\ehH^{(2)}$ gives a fundamental symmetry of $\ehH$.
The component-wise sum of two fundamental decompositions
of $\ehH^{(1)}$ and $\ehH^{(2)}$ gives a fundamental 
decomposition of $\ehH$.
\end{proposition}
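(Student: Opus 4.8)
The plan is to verify the three assertions in turn, reducing each to the corresponding property on the two summands and exploiting the fact that the inner product on $\ehH$ carries no cross terms, so that $\ehH^{(1)}$ and $\ehH^{(2)}$ sit inside $\ehH$ as mutually orthogonal subspaces. Concretely, for the grading I would take $\ehH_a=\ehH^{(1)}_a\oplus\ehH^{(2)}_a$ for $a\in\gZ_2$, and I record once and for all that $\langle\ehH^{(1)},\ehH^{(2)}\rangle=\algzero$ by the very shape of the formula, so every bilinear-type computation on $\ehH$ splits as a sum over the two blocks.

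First I would check that $(\ehH,\langle-,-\rangle)$ is a Hermitian superspace of parity $\sigma$. The superhermitian symmetry \eqref{Eq:SuperHerm} for homogeneous $x=x_1\oplus x_2$ and $y=y_1\oplus y_2$ follows termwise from the same identity in each factor, since complex conjugation and the sign $(-1)^{|x||y|}$ act diagonally on the two summands. Non-degeneracy is immediate from the orthogonal splitting: if $\langle x,-\rangle$ vanishes identically then so do $\langle x_1,-\rangle_1$ and $\langle x_2,-\rangle_2$, forcing $x_1=x_2=0$. Homogeneity of degree $\sigma$ holds because each factor pairs nontrivially only elements of combined degree $\sigma$, while the cross pairings are zero.

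Next I would confirm that $J=J^{(1)}\oplus J^{(2)}$ is a fundamental symmetry. The identities $J^4=\gone$ and $\langle J(x),J(y)\rangle=\langle x,y\rangle$ are checked blockwise, using that each $J^{(i)}$ preserves $\ehH^{(i)}$ so that no cross terms appear; positive definiteness of $(x,y)_J=\langle x,J(y)\rangle$ follows from the relation $(x,y)_J=(x_1,y_1)_{J^{(1)}}+(x_2,y_2)_{J^{(2)}}$, a sum of two scalar products. The one genuinely analytic point is completeness: since $(\ehH^{(i)},(-,-)_{J^{(i)}})$ are Hilbert spaces by hypothesis and $(-,-)_J$ is their orthogonal direct sum, $(\ehH,(-,-)_J)$ is again complete. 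By Definition \ref{Def:SuperHilbert} this already shows $\ehH$ is a Hilbert superspace of parity $\sigma$. Finally, for the decomposition claim I would set $\ehH_{[i^k]}=\ehH^{(1)}_{[i^k]}\oplus\ehH^{(2)}_{[i^k]}$: mutual orthogonality of the four summands follows from orthogonality in each factor together with $\langle\ehH^{(1)},\ehH^{(2)}\rangle=\algzero$, and for $0\neq x\in\ehH_{[i^k]}$ one has $\langle x,x\rangle=\langle x_1,x_1\rangle_1+\langle x_2,x_2\rangle_2$, a sum of two elements of $i^k\gR_+$ with at least one of them nonzero, hence in $i^k\gR^\ast_+$.

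I expect no real obstacle, which matches the paper's framing of the result as easily obtained: once one observes that the direct-sum inner product is block diagonal, every axiom of Hermitian superspace, fundamental symmetry, and fundamental decomposition decouples across the two factors. The only step demanding more than a one-line verification is completeness in the second paragraph, and even there the argument is the standard fact that an orthogonal Hilbert direct sum of complete spaces is complete.
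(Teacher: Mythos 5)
Your proof is correct. The paper gives no proof of this proposition at all---it is quoted from \cite{Bieliavsky:2010su} and framed as ``easily obtained''---and your blockwise verification (cross-orthogonality of the two summands, termwise checking of the superhermitian and fundamental-symmetry axioms, positive definiteness and completeness of the orthogonal direct sum, and the componentwise fundamental decomposition) is precisely the routine argument the paper leaves implicit.
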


Up to our knowledge, the construction of the {\defin tensor product}
has not been developed in the Krein setting. We provide it
for Hilbert superspaces.
Let $(\ehH^{(1)},\langle-,-\rangle_1)$ and $(\ehH^{(2)},\langle-,-\rangle_2)$ 
be two Hilbert superspaces of parity $\sigma_1,\sigma_2\in\gZ_2$.
There is a canonical superhermitian inner product of parity $\sigma_1+\sigma_2$
on the algebraic tensor product $\ehH^{(1)}\otimes\ehH^{(2)}$:
\begin{equation}\label{Eq:TPsuperH}
\forall x_1,y_1\in\ehH^{(1)},\ \forall x_2,y_2\in\ehH^{(2)},\quad
\langle x_1\otimes x_2,y_1\otimes y_2\rangle:=(-1)^{\sigma_1\sigma_2+|x_2||y_1|}\langle x_1,y_1\rangle_1\langle x_2,y_2\rangle_2.
\end{equation}
For any two fundamental symmetries  $J_1$ and $J_2$ 
on $\ehH^{(1)}$ and $\ehH^{(2)}$,
we get a scalar product on $\ehH^{(1)}\otimes\ehH^{(2)}$:
\begin{equation}\label{Eq:scalarproduct12}
(x_1\otimes x_2,y_1\otimes y_2)_{J}:=(x_1,y_1)_{J_1} \, (x_2,y_2)_{J_2},
\end{equation}
which satisfies $(-,-)_{J}=\langle-,J-\rangle$, where $J$ is the operator
\begin{equation}\label{Eq:J}
J(x_1\otimes x_2):=(-1)^{\sigma_1\sigma_2+(\sigma_1+|x_1|)|x_2|}J_1(x_1)\otimes J_2(x_2).
\end{equation}

\begin{proposition}
\label{prop-superhilbert-tensprod}
In same notations as above, we have:
\begin{itemize}
\item all scalar products $(-,-)_J$ as in Equation \eqref{Eq:scalarproduct12}
induce the same pre-Hilbert topology on $\ehH^{(1)}\otimes \ehH^{(2)}$;
\item the inner product $\langle-,-\rangle$ extends continuously to 
the completed tensor product $\ehH^{(1)}\hat{\otimes} \ehH^{(2)}$;
\item $(\ehH^{(1)}\hat{\otimes} \ehH^{(2)},\langle-,-\rangle)$ is 
a Hilbert superspace of parity $\sigma=\sigma_1+\sigma_2$,
and the operators $J$ as in Equation \eqref{Eq:J} are fundamental symmetries for it.
\end{itemize}
\end{proposition}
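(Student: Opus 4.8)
The plan is to fix, once and for all, a pair of fundamental symmetries $J_1,J_2$ and to recognize the scalar product $(-,-)_J$ of Equation \eqref{Eq:scalarproduct12} as the ordinary Hilbert-space tensor product of the scalar products $(-,-)_{J_1}$ and $(-,-)_{J_2}$. Since $(\ehH^{(i)},(-,-)_{J_i})$ are genuine Hilbert spaces, the pre-Hilbert space $(\ehH^{(1)}\otimes\ehH^{(2)},(-,-)_J)$ completes to the Hilbert tensor product, which serves as the definition of $\ehH^{(1)}\hat\otimes\ehH^{(2)}$. The whole statement then reduces to standard Hilbert-space tensor product theory combined with the Krein-type machinery already established, chiefly Theorem \ref{thm-topo} and Lemma \ref{lem-ineq}.

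For the first bullet I would compare two product scalar products $(-,-)_J$ and $(-,-)_{J'}$ coming from $J_1,J_2$ and $J_1',J_2'$. By Theorem \ref{thm-topo} the identity map $U_i$ on each factor $\ehH^{(i)}$ is a topological isomorphism between $(-,-)_{J_i}$ and $(-,-)_{J_i'}$, i.e.\ the corresponding norms are equivalent. Tensoring these two bounded-with-bounded-inverse maps yields an operator $U_1\otimes U_2$ on the algebraic tensor product that is again bounded with bounded inverse (bounded operators between Hilbert spaces tensor to bounded operators). Hence $\norm\cdot\norm_J$ and $\norm\cdot\norm_{J'}$ are equivalent and define the same topology, which is therefore canonical.

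Next I would establish the algebraic structure, namely that $(\ehH^{(1)}\otimes\ehH^{(2)},\langle-,-\rangle)$ with the inner product of Equation \eqref{Eq:TPsuperH} is a Hermitian superspace of parity $\sigma_1+\sigma_2$ for which $J$ is a fundamental symmetry. Homogeneity of degree $\sigma_1+\sigma_2$ is immediate from the homogeneity of each $\langle-,-\rangle_i$; superhermiticity and the invariance $\langle J(u),J(v)\rangle=\langle u,v\rangle$ are sign computations that I would carry out on homogeneous simple tensors, using crucially that a factor inner product $\langle x_i,y_i\rangle_i$ is nonzero only when $|x_i|+|y_i|=\sigma_i$ (on the vanishing terms both sides are zero). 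Positive definiteness of $(-,-)_J$ and the relation $(-,-)_J=\langle-,J-\rangle$ are as recorded before the statement, $J^4=\gone$ follows from $J_1^4=J_2^4=\gone$ (Proposition \ref{Prop:J}) after the same bookkeeping, and non-degeneracy of $\langle-,-\rangle$ then follows from $\langle u,v\rangle=(u,J^{-1}v)_J$ with $J$ bijective and $(-,-)_J$ positive definite. With this in hand, Lemma \ref{lem-ineq} gives $|\langle u,v\rangle|\le\norm u\norm_J\norm v\norm_J$ on the algebraic tensor product, so $\langle-,-\rangle$ is jointly continuous for the Hilbert topology and extends uniquely and continuously to $\ehH^{(1)}\hat\otimes\ehH^{(2)}$, proving the second bullet.

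For the third bullet, the completion $(\ehH^{(1)}\hat\otimes\ehH^{(2)},(-,-)_J)$ is a Hilbert space by construction. By Proposition \ref{Prop:J} each $J_i$ is unitary for $(-,-)_{J_i}$, so $J$, acting as $\pm J_1\otimes J_2$ on each homogeneous component, is unitary for $(-,-)_J$; in particular it is bounded and extends to the completion, where the identities $J^4=\gone$, $\langle J(u),J(v)\rangle=\langle u,v\rangle$ and $(-,-)_J=\langle-,J-\rangle$ persist by density and continuity. Hence $J$ is a fundamental symmetry and $(\ehH^{(1)}\hat\otimes\ehH^{(2)},\langle-,-\rangle)$ is a Hilbert superspace of parity $\sigma_1+\sigma_2$, with the operators of Equation \eqref{Eq:J} as fundamental symmetries. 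The only genuine obstacle is bookkeeping: the Koszul signs in \eqref{Eq:TPsuperH} and \eqref{Eq:J} are tuned precisely so that superhermiticity and $J$-invariance hold, and checking $\langle J(u),J(v)\rangle=\langle u,v\rangle$ demands careful tracking of these signs together with the homogeneity constraints $|x_i|+|y_i|=\sigma_i$; everything else is inherited directly from the classical Hilbert tensor product.
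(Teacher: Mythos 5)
Your proposal is correct and takes essentially the same route as the paper: factor-wise norm equivalence from Theorem \ref{thm-topo} transported to the tensor product, Lemma \ref{lem-ineq} for the continuity of $\langle-,-\rangle$, and $(-,-)_J$-unitarity of $J$ to extend both structures to the completion. The only cosmetic difference is in the first bullet, where you cite the standard fact that bounded operators between Hilbert spaces tensor to bounded operators, while the paper proves exactly this by expanding vectors of the algebraic tensor product along an orthonormal family in the second factor --- the same argument, just made explicit; likewise, your careful verification of the superhermitian and fundamental-symmetry axioms on the algebraic tensor product is bookkeeping the paper leaves implicit in the setup preceding the proposition.
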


\begin{proof}
Let $J_1$, $J_{1'}$ be two fundamental symmetries of $\ehH^{(1)}$
and $J_2$, $J_{2'}$ two of $\ehH^{(2)}$. Denote by $(-,-)_{J_{12}}$,
$(-,-)_{J_{1'2}}$, $(-,-)_{J_{12'}}$ and $(-,-)_{J_{1'2'}}$
the four corresponding scalar products on $\ehH^{(1)}\otimes\ehH^{(2)}$. 
Accordingly, we write $\norm\cdot\norm_{J_{12}}$, etc, for the corresponding norms.
We shall prove that they are all equivalent norms.
Any $x\in\ehH^{(1)}\otimes\ehH^{(2)}$ admits a decomposition as follows
$$
x=\sum_{j=1}^k x^{(1)}_j\otimes x^{(2)}_j,
$$
where $k\in\gN$, $(x^{(1)}_j)$ is a family of elements in $\ehH^{(1)}$ and
$(x^{(2)}_j)$ is a family of orthonormal elements in the Hilbert space $(\ehH^{(2)},(-,-)_{J_2})$. 
Then a direct computation shows that
$$
\norm x\norm^2_{J_{12}}=\sum_{j=1}^k \norm x^{(1)}_j\norm^2_{J_1}\cdot\norm x^{(2)}_j\norm^2_{J_2}.
$$
By Theorem \ref{thm-topo}, there exists $c>0$ such that 
$\norm\cdot\norm_{J_1}\leq c\norm\cdot\norm_{J_{1'}}$.
Therefore, we get
$$
\norm x\norm_{J_{12}}\leq c \norm x\norm_{J_{1'2}}.
$$
Repeating the same argument, we obtain that all the four above norms are equivalent.
As a consequence, $\ehH^{(1)}\otimes \ehH^{(2)}$ inherits of a canonical pre-Hilbert 
topology and admits a completion $\ehH^{(1)}\hat{\otimes} \ehH^{(2)}$.

By Lemma \ref{lem-ineq}, $\langle-,-\rangle$
is globally continuous on $\ehH^{(1)}\otimes \ehH^{(2)}$.
As $J$ is unitary w.r.t.\ the scalar product $(-,-)_J$, this is a bounded
operator on $\ehH^{(1)}\otimes \ehH^{(2)}$. Therefore, $\langle-,-\rangle$
and $J$ extend to the completion $\ehH^{(1)}\hat{\otimes} \ehH^{(2)}$ 
and turn it into a Hilbert superspace.
\end{proof}

The two Hilbert superspaces $\ehH^{(1)}\hat{\otimes} \ehH^{(2)}$ 
and $\ehH^{(2)}\hat{\otimes} \ehH^{(1)}$ are isomorphic via
the superunitary operator given by
\begin{equation*}
\forall x_1\in\ehH^{(1)}, \ \forall x_2\in\ehH^{(2)},\quad
x_1\otimes x_2 \mapsto (-1)^{|x_1|(|x_2|+\sigma_2)}x_2\otimes x_1.
\end{equation*}  
We describe the fundamental decomposition of $\ehH^{(1)}\hat{\otimes} \ehH^{(2)}$ below, 
using the operator $\gA=\gone\oplus i\gone$ on $\ehH^{(j)}=\ehH^{(j)}_0\oplus\ehH^{(j)}_1$,
with $j=1,2$. Note that $\gA^2=\gP=\gone\oplus -\gone$  is the parity operator.

\begin{proposition}
The Hilbert superspace $\ehH:=\ehH^{(1)}\hat{\otimes} \ehH^{(2)}$ 
inherits of the following fundamental decomposition, where 
$k,k_1,k_2\in\gZ_4=\{0,1,2,3\}$ and sums are taken modulo $4$, 
\begin{itemize}
\item if $\sigma_1=\sigma_2=0$, 
\begin{equation*}
\ehH_{[i^k]}=\bigoplus_{k_1-(-1)^kk_2=k}\ehH^{(1)}_{[i^{k_1}]}\hat{\otimes} \ehH^{(2)}_{[i^{k_2}]};
\end{equation*}
\item if $\sigma_1=1$ and $\sigma_2=0$, 
\begin{equation*}
\ehH_{[i^k]}=\begin{cases} 
\bigoplus\limits_{k_1-k_2=k} \gA^{k_1}\ehH^{(1)}_{[1]}\hat{\otimes} \ehH^{(2)}_{[i^{k_2}]},  &
\quad\text{if }k\in\{0,2\},\\
0, &\quad\text{if }k\in\{1,3\};
 \end{cases}
\end{equation*}
\item if $\sigma_1=\sigma_2=1$, denoting by $\overline{\mathrm{span}}$ 
the completed linear span of a subset in $\ehH$,
\begin{equation*}
\ehH_{[i^k]}=\overline{\mathrm{span}}\{ x\otimes \gA^{k}y+(-1)^{k}\gP x\otimes\gP \gA^{k}y\ | \ x\in \ehH^{(1)}_{[1]}, \ y\in\ehH^{(2)}_{[1]}\}.
\end{equation*}
\end{itemize}
\end{proposition}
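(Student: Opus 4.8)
The plan is to use that, by Proposition~\ref{prop-superhilbert-tensprod}, the operator $J$ of Equation~\eqref{Eq:J} is already known to be a fundamental symmetry of $\ehH=\ehH^{(1)}\hat{\otimes}\ehH^{(2)}$; consequently the fundamental decomposition to be described is exactly the eigenspace decomposition $\ehH_{[i^k]}=\Ker(J-i^{-k}\gone)$ recalled before Definition~\ref{Def:SuperHilbert}. The whole proof therefore reduces to diagonalising the explicit operator \eqref{Eq:J} in terms of the eigenspaces $\ehH^{(j)}_{[i^{k_j}]}=\Ker(J_j-i^{-k_j}\gone)$ of the two factors. Since the latter span $\ehH^{(1)}$ and $\ehH^{(2)}$, their algebraic tensor products span a dense subspace of $\ehH$, so it suffices to evaluate $J$ on such products, read off the eigenvalue, and then check that the resulting families both span and lie in pairwise distinct eigenspaces.

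First I would dispose of the case $\sigma_1=\sigma_2=0$. By Proposition~\ref{prop-krein} each $\ehH^{(j)}_{[i^{k_j}]}$ is homogeneous of degree $k_j\bmod 2$, so for homogeneous $x_j\in\ehH^{(j)}_{[i^{k_j}]}$ Equation~\eqref{Eq:J} gives at once $J(x_1\otimes x_2)=(-1)^{|x_1||x_2|}\,i^{-(k_1+k_2)}\,x_1\otimes x_2$, a scalar multiple. A short discussion of the parities of $k_1,k_2$ turns the scalar $(-1)^{|x_1||x_2|}i^{-(k_1+k_2)}$ into $i^{-k}$ with $k$ characterised by $k_1-(-1)^kk_2\equiv k$, which is the announced index condition. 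The mixed case $\sigma_1=1,\ \sigma_2=0$ is the first where the parity-$1$ factor obliges us to leave the homogeneous vectors: by Proposition~\ref{SgnParity1} only $\ehH^{(1)}_{[1]}$ and $\ehH^{(1)}_{[-1]}=\gP\ehH^{(1)}_{[1]}$ are non-zero, and any $y\in\ehH^{(1)}_{[1]}$ splits as $y=y_0+J_1y_0$ with $J_1$ of degree $1$. Writing $\gA^{k_1}y=y_0+i^{k_1}J_1y_0$ and applying \eqref{Eq:J} to the two homogeneous summands of $\gA^{k_1}y\otimes z$ (with $z\in\ehH^{(2)}_{[i^{k_2}]}$) separately, I would get that $\gA^{k_1}y\otimes z$ is a $J$-eigenvector exactly when $k$ is even, subject to $k_1-k_2\equiv k$; the vanishing of $\ehH_{[i^k]}$ for odd $k$ is precisely the constraint $\sigma(\ehH)=1$ of Proposition~\ref{SgnParity1}.

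The genuinely delicate case is $\sigma_1=\sigma_2=1$, which I expect to be the main obstacle. Here both factors contribute non-homogeneous eigenvectors $x=x_0+J_1x_0$ and $y=y_0+J_2y_0$, so $J$ is no longer scalar on a simple tensor: on the four homogeneous pieces $x_a\otimes y_b$ it acts, through the sign $(-1)^{\sigma_1\sigma_2+(\sigma_1+|x_1|)|x_2|}$ of \eqref{Eq:J} and the relations $J_1^2=J_2^2=\gone$, as two uncoupled two-dimensional blocks, one exchanging $x_0\otimes y_0$ with $x_1\otimes y_1$ and one exchanging $x_0\otimes y_1$ with $x_1\otimes y_0$ (up to signs). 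Diagonalising these $2\times2$ blocks, with the help of $\gA^2=\gP$ and the identities of Proposition~\ref{Prop:J}, produces exactly the symmetrised vectors $x\otimes\gA^ky+(-1)^k\gP x\otimes\gP\gA^ky$ of the statement as eigenvectors, and matching their eigenvalues with $i^{-k}$ gives the asserted decomposition. The crux is therefore the careful bookkeeping of all Koszul signs; the extra overall factor $(-1)^{\sigma_1\sigma_2}$ present only in this case is what makes it the most error-prone step. Once the eigenvectors are in hand, I would conclude uniformly: the families obtained for $k\in\gZ_4$ recover each of $x_0\otimes y_0,\,x_0\otimes y_1,\,x_1\otimes y_0,\,x_1\otimes y_1$ by elementary linear combinations, hence span a dense subspace; as each family sits in a single eigenspace $\Ker(J-i^{-k}\gone)$ and distinct eigenspaces are $\langle-,-\rangle$-orthogonal with the common Hilbert topology of Theorem~\ref{thm-topo}, passing to closures yields the stated equalities $\ehH_{[i^k]}=\overline{\mathrm{span}}\{\dots\}$.
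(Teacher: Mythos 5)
Your overall strategy is sound and genuinely different from the paper's. The paper verifies the definition of a fundamental decomposition directly: it asserts (but never writes out) the ``direct computations'' showing $\langle x,x\rangle\in i^k\gR^\ast_+$ on each claimed component, and then proves exhaustiveness, $\ehH=\bigoplus_k\ehH_{[i^k]}$, by a case analysis — in the mixed case via $\ehH^{(1)}=\ehH^{(1)}_{[1]}\oplus\gA^2\ehH^{(1)}_{[1]}=\gA\ehH^{(1)}_{[1]}\oplus\gA^3\ehH^{(1)}_{[1]}$, in the last case via an expansion of the generators; orthogonality of the components, which the definition also requires, is not discussed at all. Your reduction to the eigenspace decomposition $\ehH_{[i^k]}=\Ker(J-i^{-k}\gone)$ of the fundamental symmetry $J$ of \eqref{Eq:J} (guaranteed by Proposition \ref{prop-superhilbert-tensprod}) delivers orthogonality and the sign condition automatically, so in this respect it is tighter than the paper's argument. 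Your treatment of the cases $\sigma_1=\sigma_2=0$ and $\sigma_1=1,\ \sigma_2=0$ is correct as sketched; note that in both of these $\sigma_1\sigma_2=0$, so the troublesome global sign is absent.

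The gap sits exactly at the step you defer, the sign bookkeeping for $\sigma_1=\sigma_2=1$: carried out with the paper's conventions, it does \emph{not} give the matching you assert. Writing $x=x_0+J_1x_0$, $y=y_0+J_2y_0$ and using $|J_j|=1$, $J_j^2=\gone$ (Proposition \ref{Prop:J}), Equation \eqref{Eq:J} gives
\begin{align*}
J(x_0\otimes y_0)&=-J_1x_0\otimes J_2y_0, & J(J_1x_0\otimes J_2y_0)&=-x_0\otimes y_0,\\
J(J_1x_0\otimes y_0)&=-x_0\otimes J_2y_0, & J(x_0\otimes J_2y_0)&=+J_1x_0\otimes y_0,
\end{align*}
the overall $-1$ in the first line coming precisely from the factor $(-1)^{\sigma_1\sigma_2}$. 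The claimed generator $w_k:=x\otimes\gA^ky+(-1)^k\gP x\otimes\gP\gA^ky$ expands to $2(x_0\otimes y_0+i^kJ_1x_0\otimes J_2y_0)$ for $k$ even and $2(J_1x_0\otimes y_0+i^kx_0\otimes J_2y_0)$ for $k$ odd, and the table above yields $Jw_k=-i^{-k}w_k=i^{-(k+2)}w_k$ for every $k\in\gZ_4$. Hence $w_k\in\Ker\bigl(J-i^{-(k+2)}\gone\bigr)=\ehH_{[i^{k+2}]}$: all four families come out shifted by two with respect to the statement. You can cross-check this without $J$: for $k=0$, Equation \eqref{Eq:TPsuperH} gives $\langle w_0,w_0\rangle=-8\,(x_0,x_0)_{J_1}(y_0,y_0)_{J_2}<0$, so $w_0$ cannot lie in $\ehH_{[1]}$.

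In other words, your method, carried out honestly, does not prove the third case as labelled; instead it detects an inconsistency between the statement and the sign $(-1)^{\sigma_1\sigma_2}$ carried by \eqref{Eq:TPsuperH} and \eqref{Eq:J}. (The paper's own proof suffers from the same defect: it is hidden in the positivity computations that are invoked but never performed, and in the unproved identification $\ehH_{[\pm1]}=(\gone\otimes\gone\pm J_1\otimes J_2)(\ehH^{(1)}\hat{\otimes}\ehH^{(2)})$.) A correct completion requires either relabelling the third case — exchanging $[1]\leftrightarrow[-1]$ and $[i]\leftrightarrow[-i]$, i.e.\ replacing $\gA^{k}$ by $\gA^{k+2}=\gP\gA^{k}$ in the generators — or defining the tensor inner product without the factor $(-1)^{\sigma_1\sigma_2}$. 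As written, your sentence ``matching their eigenvalues with $i^{-k}$ gives the asserted decomposition'' is the precise point where the argument fails.
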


\begin{proof}
For each of the three values of $(\sigma_1,\sigma_2)$, the equations above define
subspaces $\ehH_{[i^k]}$. In the three cases, direct computations using Equation \eqref{Eq:TPsuperH} show that $\langle x,x\rangle\in i^k\gR^\times_+$ 
for all $x\in \ehH_{[i^k]}\setminus\algzero$.
It remains to prove that $\ehH=\bigoplus_k\ehH_{[i^k]}$.
We proceed by a case analysis on the values of $(\sigma_1,\sigma_2)$.

If $\sigma_1=\sigma_2=0$,  the result follows from 
the fundamental decompositions of $\ehH^{(1)}$ and $\ehH^{(2)}$. 

If $\sigma_1=1$ and $\sigma_2=0$, we obtain the announced 
decomposition of $\ehH$ thanks to the fundamental decomposition of
$\ehH^{(2)}$ and to the decompositions
\begin{equation*}
\ehH^{(1)}=\ehH^{(1)}_{[1]}\oplus \gA^2\ehH^{(1)}_{[1]}\quad
\text{and}\quad
\ehH^{(1)}=\gA\ehH^{(1)}_{[1]}\oplus \gA^3\ehH^{(1)}_{[1]}.
\end{equation*}
The latter are consequences of the proof of Proposition \ref{SgnParity1} and
of the equality $\gA^2=\gP$. 

The last case, $\sigma_1=\sigma_2=1$, is more involved.
Let $x\in \ehH^{(1)}_{[1]}$, $y\in\ehH^{(2)}_{[1]}$.
By the proof of Proposition~\ref{SgnParity1}, for $j=1,2$ we have 
$\ehH^{(j)}_{[1]}=(1+J_j)\ehH^{(j)}_0$, i.e., $x=x_0+J_1x_0$
and $y=y_0+J_2y_0$ with $x_0\in \ehH^{(1)}_{0}$, $y_0\in\ehH^{(2)}_{0}$.
Hence, the generators of the subspace $\ehH_{[i^k]}$ 
(as defined in the proposition statement) read as
\begin{equation*}
(1+(-1)^k)(x_0\otimes y_0+i^kJ_1x_0\otimes J_2 y_0)
+(1-(-1)^k)(J_1x_0\otimes y_0+i^kx_0\otimes J_2 y_0).
\end{equation*}
This means that 
\begin{equation*}
\ehH_{[\pm 1]}=\left(\gone\otimes\gone\pm J_1\otimes J_2\right)
\left(\ehH^{(1)}\hat\otimes\ehH^{(2)}\right)
\quad\text{and}\quad
\ehH_{[\pm i]}=\left(J_1\otimes\gone\pm i\gone\otimes J_2\right)
\left(\ehH^{(1)}\hat\otimes\ehH^{(2)}\right).
\end{equation*}
The equalities $J_j\ehH^{(j)}_0=\ehH^{(j)}_1$ for $j=1,2$ imply then the
following identities:
\begin{align*}
\ehH_{[1]}\oplus\ehH_{[-1]}&=\ehH^{(1)}_0\hat\otimes\ehH^{(2)}_0
\oplus\ehH^{(1)}_1\hat\otimes\ehH^{(2)}_1=\ehH_0,\\
\ehH_{[i]}\oplus\ehH_{[-i]}&=\ehH^{(1)}_1\hat\otimes\ehH^{(2)}_0
\oplus\ehH^{(1)}_0\hat\otimes\ehH^{(2)}_1=\ehH_1.
\end{align*}
The decomposition $\ehH=\bigoplus_k\ehH_{[i^k]}$ follows.
\end{proof}

The signature of a tensor product can be easily obtained from the above 
proposition. In particular, if both $\ehH^{(1)}$ and $\ehH^{(2)}$ are of
parity $1$, then the four components of $\ehH$ have the same dimension,
equal to the one of $\ehH^{(1)}_{[1]}\hat\otimes\ehH^{(2)}_{[1]}$.

The dual $\ehH^*$ of a Hilbert superspace $\ehH$ is 
defined as the space of all continuous linear maps $\ehH\to\gC$. 
As proved below, $\ehH^*$ is canonically a Hilbert superspace 
and there is a canonical Riesz anti-isomorphism between $\ehH$ and $\ehH^*$. 

\begin{proposition}
The map $x\in\ehH\mapsto {}^\flat x\in\ehH^*$, defined by 
${}^\flat x(y):={\langle x,y\rangle}$, 
is a homogeneous antilinear bijection. Moreover, endowing $\ehH^*$ 
with the superhermitian inner product
\begin{equation*}
\langle {}^\flat x,{}^\flat y\rangle_{\ehH^*}:=\langle x,y\rangle,
\end{equation*}
the space $\ehH^*$ becomes a Hilbert superspace anti-isomorphic to $\ehH$.
\end{proposition}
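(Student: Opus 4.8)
The plan is to verify the four asserted features of the map ${}^\flat$ — continuity, antilinearity with homogeneity, bijectivity, and compatibility of the inner products — and then to transport the Hilbert structure of $\ehH$ across ${}^\flat$. For the first three, continuity and antilinearity are immediate: for fixed $x$ the assignment $y\mapsto\langle x,y\rangle$ is linear, and Lemma~\ref{lem-ineq} gives $|{}^\flat x(y)|\leq\norm x\norm_J\norm y\norm_J$, so indeed ${}^\flat x\in\ehH^*$; antilinearity of $\langle-,-\rangle$ in its first slot yields ${}^\flat(\lambda x)=\overline\lambda\,{}^\flat x$. For homogeneity, recall that a continuous functional has degree $d$ precisely when it vanishes on the component of degree $1+d$; since $\langle-,-\rangle$ has degree $\sigma$, a functional ${}^\flat x$ with $x\in\ehH_a$ vanishes off $\ehH_{a+\sigma}$, so ${}^\flat$ maps $\ehH_a$ into $(\ehH^*)_{a+\sigma}$ and is homogeneous of degree $\sigma$.

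Injectivity is nothing but nondegeneracy of $\langle-,-\rangle$. Surjectivity is the crux, and I would obtain it from classical Riesz representation combined with the fundamental symmetry. Fix a fundamental symmetry $J$; by Theorem~\ref{thm-topo} any $f\in\ehH^*$ is continuous for the Hilbert scalar product $(-,-)_J$, so there is a unique $z$ with $f(y)=(z,y)_J=\langle z,Jy\rangle$. It remains to absorb $J$ into the left slot: using $|J|=\sigma$, the defining relation of the superadjoint (Proposition~\ref{prop-superhilbert-adjoint}), and $J^\dag=J\gP$ (Proposition~\ref{Prop:J}), one finds on homogeneous $z$ that $\langle z,Jy\rangle=(-1)^{\sigma|z|}\langle J\gP z,y\rangle=\langle J\gP^{\sigma+1}z,\,y\rangle$, so that $f={}^\flat x$ with $x=J\gP^{\sigma+1}z$. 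Since $z\mapsto J\gP^{\sigma+1}z$ is an invertible linear operator and Riesz representation is bijective, ${}^\flat$ is onto. I expect this transfer step to be the main obstacle: one must track the Koszul sign produced when commuting $J$ past the inner product, and confirm that the resulting operator $J\gP^{\sigma+1}$ is invertible, so that bijectivity of ${}^\flat$ is not lost.

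With ${}^\flat$ a bijection, the formula $\langle{}^\flat x,{}^\flat y\rangle_{\ehH^*}:=\langle x,y\rangle$ is well defined, and nondegeneracy transfers from $\langle-,-\rangle$; its parity is again $\sigma$, since $|{}^\flat x|+|{}^\flat y|=2\sigma+|x|+|y|=|x|+|y|$ in $\gZ_2$. The superhermitian identity reduces, via $\overline{\langle x,y\rangle}=(-1)^{|x||y|}\langle y,x\rangle$, to checking $(-1)^{|{}^\flat x||{}^\flat y|}=(-1)^{|x||y|}$ whenever $|x|+|y|=\sigma$, which I would settle by the two cases $\sigma=0$ and $\sigma=1$ (in the latter one factor always vanishes). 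Finally, to produce a Hilbert structure I would transport $J$ by $\hat J:={}^\flat\circ J\circ({}^\flat)^{-1}$; this is linear, satisfies $\hat J^4=\gone$ and is a $\langle-,-\rangle_{\ehH^*}$-isometry, and for $u={}^\flat x$ one computes $(u,u)_{\hat J}=\langle{}^\flat x,{}^\flat(Jx)\rangle_{\ehH^*}=\langle x,Jx\rangle=(x,x)_J$. Thus ${}^\flat$ is an antilinear isometry from $(\ehH,(-,-)_J)$ onto $(\ehH^*,(-,-)_{\hat J})$, so positive-definiteness and completeness transfer, $\hat J$ is a fundamental symmetry, and $\ehH^*$ is a Hilbert superspace of parity $\sigma$ anti-isomorphic to $\ehH$ via ${}^\flat$.
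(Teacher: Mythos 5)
Your proof is correct and follows essentially the same route as the paper's: continuity of ${}^\flat x$ from Lemma \ref{lem-ineq}, injectivity from non-degeneracy, surjectivity from the classical Riesz theorem for $(-,-)_J$ combined with the fundamental symmetry, the same parity computation for superhermitianity, and transport of $J$ by conjugation with ${}^\flat$ to produce a fundamental symmetry on $\ehH^*$. If anything, you are slightly more careful than the paper: you track the Koszul sign in the Riesz step (obtaining $x=J\gP^{\sigma+1}z$, where the paper writes $J^\dag x$ and drops the factor $(-1)^{\sigma|x|}$), and you correctly identify ${}^\flat$ as homogeneous of degree $\sigma$, which is what the paper itself uses in its sign computation even though its proof asserts $R$ is ``even''.
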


\begin{proof}
Let $J$ be a fundamental symmetry of $\ehH$.
By Lemma \ref{lem-ineq}, the antilinear form ${}^\flat x$ is 
continuous so that the map $R:x\mapsto {}^\flat x$ is 
valued in $\ehH^*$. Clearly, $R$ is antilinear, even and injective.
By the Riesz theorem, for any $\varphi\in\ehH^*$,  there 
exists $x\in\ehH$ such that 
$\varphi(y)=(x,y)_J=\langle J^\dag x,y\rangle$. 
Therefore, $\phi={}^\flat(J^\dag x)$ and $R$ is bijective.

We denote by $\sigma$ the parity of $\ehH$. The inner 
product $\langle -,-\rangle_{\ehH^*}$ defined on $\ehH^*$ is of same degree 
that the one on $\ehH$ and non-degenerate. The 
following computation shows that it is also 
superhermitian: for any $x,y\in\ehH$ with $|x|+|y|=\sigma$, 
we have $(-1)^{|{}^\flat x||{}^\flat y|}=
(-1)^{(\sigma+|x|)(\sigma+|y|)}=(-1)^{|x||y|}(-1)^{\sigma(\sigma+|x|+|y|)}=(-1)^{|x||y|}$.
Finally, any fundamental symmetry $J$ of $\ehH$ induces 
a fundamental symmetry of $\ehH^*$ given by 
$\tilde J=R\circ J\circ R^{-1}$. Therefore, $\ehH^*$ is a 
Hilbert superspace anti-isomorphic to $\ehH$ via the superunitary
operator $R$.
\end{proof}

\subsection{Useful examples}

We present three classes of examples of Hilbert superspaces.
They are based on finite dimensional graded spaces and functional spaces
of square integrable functions on real and complex superspaces.

\begin{example}[\cite{Bognar:1974}, page 100]
Any finite-dimensional complex $\gZ_2$-graded space $\ehH$ 
endowed with a homogeneous superhermitian non-degenerate 
inner product $\langle-,-\rangle$ is a Hilbert superspace.
\end{example}


\begin{example}[\cite{Bieliavsky:2010su}]
\label{ex-superhilbert}
We consider the superspace $\caD(\gR^{m|n})$ of smooth functions 
with compact support on $\gR^{m|n}$. 
Its elements read as $\varphi(x,\xi)=\sum_{\a\in(\gZ_2)^n}\widetilde{\varphi_\a}(x)\xi^\a$
with $\varphi_\a\in\caD(\gR^m)$ (see \eqref{Function:Rmn}).
Berezin integration \eqref{eq-berezin} induces the following superhermitian inner product on $\caD(\gR^{m|n})$
\begin{equation}
\langle \varphi,\psi\rangle=\int\dd x\dd\xi\ \overline{\varphi(x,\xi)}\psi(x,\xi)=\sum_{\a\in(\gZ_2)^n}\eps(\a)\int\dd x\ \overline{\varphi_\a(x)}\psi_{\bar{\a}}(x),\label{eq-superman-scalprod}
\end{equation}
where $\bar{\a}=(1,\ldots,1)-\a\in(\gZ_2)^n$ and $\eps(\a)=\pm 1$
is determined by the relation $\xi^\a\xi^{\bar{\a}}=\eps(\a)\xi^1\cdots \xi^n$.
The operator $J$, defined as:
\begin{equation*}
J(\varphi_\a(x)\xi^\a)=\eps(\a)\varphi_\a(x)\xi^{\bar{\a}},
\end{equation*}
is a fundamental symmetry. The completion 
of $\caD(\gR^{m|n})$, w.r.t.\ the scalar product 
$\langle-,J-\rangle$ is a Hilbert superspace of parity $n$ mod $2$, 
which coincides with the Lebesgue superspace
$L^2(\gR^{m|n})\simeq L^2(\gR^m,\gC)\otimes\bigwedge\gR^n$ 
introduced after \eqref{eq-berezin}.

If $m=0$, the above construction provides a Hilbert 
superspace structure on $\ehH:=\bigwedge \gC^n$.
The $\gZ_2$-grading is the standard one,
$\ehH_0=\bigoplus_k \bigwedge^{2k}\gC^n$
and $\ehH_1=\bigoplus_k \bigwedge^{2k+1}\gC^n$,
and $J$ coincides with the Hodge dual.
If $n$ is odd, $\ehH$ is of parity $1$ and signature 
$(2^{n-1},0,2^{n-1},0)$. 
If $n\geq 2$ is even, $\ehH$ is of parity $0$ and the four eigenspaces 
of $J$ are of same dimensions. They
correspond to spaces of differential forms over a point 
which are even  or odd, self-dual or anti self-dual. Hence, 
$\ehH$ is of signature $(2^{n-2},2^{n-2},2^{n-2},2^{n-2})$.
If $n=0$, then $\ehH=\gC$ is of parity $0$ and signature $(1,0,0,0)$.
\end{example}


\begin{example}\label{ex-holfct}
Consider coordinates $(q,p,\xi,\eta)\in\gR^{2m|2n}$,
with $q,p\in\gR^{m|0}$
and $\xi,\eta\in\gR^{0|n}$.
The complex superspace $\gC^{m|n}$ admits holomorphic coordinates $(z,\zeta)$
given by $z=q+ip$ and $\zeta=\xi+i\eta$.
The canonical K\"ahler form on $\gC^{m|n}$ reads as
$\omega=\dd p \dd q+\frac{1}{2}\left(\dd\xi^2+\dd\eta^2\right)
=\frac{\dd z \dd\bar{z}}{2i}+\frac{\dd\zeta \dd\bar{\zeta}}{4}$.
It derives from a K\"ahler potential: 
$\omega=\frac{i}{2}\partial\bar{\partial}\left(|z|^2+\frac{i}{2}\bar{\zeta}\zeta\right)$.
By definition, holomorphic functions are smooth functions $\varphi\in\caC^\infty(\gR^{2m|2n})$ such that $\partial_{\bar{z}}\varphi=0=\partial_{\bar{\zeta}}\varphi$. 
Hence, the algebra 
of holomorphic functions satisfies $\Hol(\gC^{0|n})=\bigwedge\gC^n$
and $\Hol(\gC^{m|n})\simeq\Hol(\gC^m)\otimes\bigwedge\gC^n$.

The {\defin Segal-Bargmann} (or Bargmann-Fock) {\defin superspace} is
$$
\ehH:=\{\varphi\in\Hol(\gC^{m|n}) \;| \; |\langle \varphi,\varphi\rangle|<+\infty\},
$$
where $\langle -,-\rangle$ is the superhermitian inner product given by 
$$
\langle \varphi,\psi\rangle=\frac{(2i)^n}{\pi^m}\int \dd z \dd\bar{z}\dd\zeta\dd\bar{\zeta} \
\overline{\varphi(z,\zeta)}\psi(z,\zeta) e^{-\left(|z|^2+\frac{i}{2}\bar{\zeta}\zeta\right)},
$$
with
$\dd\zeta\dd\bzeta=\dd\zeta^n\dd\bzeta^n\cdots\dd\zeta^1\dd\bzeta^1$, 
i.e.\
$\int\dd\zeta\dd\bar{\zeta}\ \left(\bar{\zeta}^1\zeta^1\cdots \bar{\zeta}^n\zeta^n\right)=1$
(see \eqref{eq-berezin} and below).

The Hermitian superspace $(\ehH,\langle -,-\rangle)$ is a standard Hilbert superspace,
as justified below.
Expanding holomorphic functions as 
$\varphi(z,\zeta)=\sum_{\a\in(\gZ_2)^n} \widetilde{\varphi_\a}(z) \zeta^\a$,
 with $\varphi_\a\in Hol(\gC^m)$, we get that
$$
\langle \varphi,\psi\rangle=
\frac{1}{\pi^m}\sum_{\a\in(\gZ_2)^n}(-1)^{\frac{|\a|(|\a|-1)}{2}}
\left(2i\right)^{|\a|}\int \dd z \dd\bar{z}\ \overline{\varphi_{\a}(z)}\psi_{\a}(z)e^{-|z|^2},
$$
where $|\a|=\a_1+\cdots+\a_n\in\gN$ is the length of the multi-index $\a$.
Setting
\begin{equation*}
J\varphi(z,\zeta)=\sum_{\a\in(\gZ_2)^n}(-1)^{\frac{|\a|(|\a|-1)}{2}}
\left(-i\right)^{|\a|}\varphi_\a(z)\zeta^\a,
\end{equation*}
we obtain a scalar product $(-,-)_J$ on $\ehH$.
Moreover, the operator $J$ satisfies $J\varphi=\varphi$ if $\varphi$ is even
and $J\varphi=i\varphi$ if $\varphi$ is odd. 
Therefore, the expressions 
$\sqrt{|\langle \varphi,\varphi\rangle|}$ and $\sqrt{\langle \varphi,J\varphi\rangle}$ 
define equivalent norms and $\ehH$ is complete w.r.t. the
scalar product $(-,-)_J$. We conclude that $(\ehH,\langle -,-\rangle)$ is a Hilbert superspace,
with fundamental symmetry $J$. Since $J$ is equal to $\gone$ on even functions 
and $i\gone$ on odd functions, $(\ehH,\langle -,-\rangle)$ is a standard Hilbert superspace.

If $m=0$, the above construction turns $\ehH=\bigwedge \gC^n$ 
into a Hilbert superspace of parity $0$ and signature $(2^{n-1},2^{n-1},0,0)$, 
whose $\gZ_2$-grading is the standard one.
The normalization has been chosen such that $\Hol(\gC^{0|n})$ 
can be embedded isometrically into $L^2(\gR^{0|2n})$,
via the map 
$\varphi(\zeta)\mapsto \tilde\varphi(\xi,\eta)=e^{\frac{i}{4}\zeta\bzeta}\varphi(\zeta)$, 
with $\zeta=\xi+i\eta$. However, $\Hol(\gC^{0|n})$ is not 
preserved by the Hodge dual in $L^2(\gR^{0|2n})$.

In the sequel, we will have to deal with a slightly modified 
inner product on $\Hol(\gC^{0|n})$, namely
\begin{equation}\label{Hol-scalprod}
\langle \varphi,\psi\rangle=(2i)^n\int\dd\zeta\dd\bzeta\ \overline{\varphi(\zeta)}\psi(\zeta) e^{-\frac{i\hbar\eps}{2}\bzeta\zeta},
\end{equation}
where $\hbar\in\gR^\times$ and $\eps=\pm 1$. Then, each summand of
the inner product is modified by the factor $(\hbar\eps)^{n-|\a|}$ and, 
depending on the sign of $\hbar\eps$, 
there are two Hilbert superspaces $\Hol_\pm(\gC^{0|n})$.
Obviously, $\Hol_+(\gC^{0|n})$ (i.e. $h\eps>0$) is the same Hilbert superspace as above 
whereas $\Hol_-(\gC^{0|n})$ has signature 
$(2^{n-1},0,0,2^{n-1})$ if $n$ is even and $(0,2^{n-1},2^{n-1},0)$ if $n$ is odd.
\end{example}

\section{Superunitary representations}

In this section, we introduce the notion of superunitary representation 
on Hilbert superspaces (SUR for short). We provide independently a 
definition of SUR for super Harish-Chandra pairs and for Lie supergroups
and prove that they are equivalent.
Our definition of SUR generalizes the one in \cite{Carmeli:2006,Salmasian:2010}.

\subsection{SUR of Lie groups}

A {\defin superunitary representation} (SUR) of a Lie group $G_0$ on a Hilbert 
superspace $\ehH$ (see Definition \ref{Def:SuperHilbert}) is defined as a group morphism $\pio:G_0\to\caU(\ehH)$ such 
that, for all $\vv\in\ehH$, the maps $\pio^\vv:g\mapsto\pio(g)\vv$ 
are continuous on $G_0$. Since superunitary operators on $\ehH$ are bounded
for the Hilbert topology of $\ehH$, SUR's fall into the class of strongly continuous 
representations, whose theory is well-developed \cite{Warner:1972}.
In particular, if $\ehH$ is a Hilbert space, then SUR's $(\ehH,\pio)$
are unitary representations (see e.g. \cite{Warner:1972}),
while if $\ehH$ is a Krein space they are Krein-unitary 
representations as defined in \cite{Naimark:1968b}. 

Consider $(\ehH^{(i)},\pio^{(i)})$ with $i=1,2$, two SUR's of the Lie group $G_0$.
{\defin Intertwiners}, or intertwining operators, between these two SUR's
are bounded operators $T\in\caB(\ehH^{(1)},\ehH^{(2)})$ which 
intertwine the representation morphisms: 
\begin{equation}\label{Intertwine:pio}
\forall g\in G_0,\quad T\circ\pio^{(1)}(g)=\pio^{(2)}(g)\circ T.
\end{equation}
The morphism space $\Hom(\pio^{(1)},\pio^{(2)})$ between two SUR's is the 
space of even intertwiners.
Two SUR's are said to be equivalent if they are related by 
a superunitary intertwiner $T\in\caU(\ehH^{(1)},\ehH^{(2)})$ (see Definition \ref{def-superunit}). 
A SUR is {\defin unitarizable} if it is equivalent to a unitary representation.

\begin{remark}
A superunitary representation $(\ehH,\pio)$ of a Lie group $G_0$
is unitarizable if and only if there exists a 
fundamental symmetry $J$ of $\ehH$ which is preserved by the representation operators,
i.e., $\pio(g)\circ J = J\circ\pio(g)$ for all $g\in G_0$. The representation
is then unitary w.r.t.\ the scalar product $(-,-)_J$.
This is automatically the case if $\ehH$ is a standard Hilbert superspace.
\end{remark}

\begin{example}\label{Ex:nonunitarizable}
Finite dimensional irreducible representations of non-compact simple Lie groups are never 
unitarizable (except the trivial representation), however there might be SUR's. 
For instance, if $k,l$ are positive integers, this is the case of the standard representation of the pseudo-unitary group 
$\mathrm{SU}(k,l)$ on $\gC^{k+l}$ and of the representation of the spin groups 
$\mathrm{Spin}(k,l)$ on the complex spinor module, see Section \ref{Sec:meta}.
\end{example}

As mentioned above, the theory of strongly continuous 
representations applies to SUR's. Results in \cite[pages 252-253]{Warner:1972}
particularize as follows.
For any SUR $(\ehH,\pio)$ of $G_0$,
the space of {\defin smooth vectors},
$$
\ehH^\infty:=\{\vv\in\ehH\;|\;
\pio^\vv\in\caC^\infty(G_0,\ehH)\},
$$ 
is dense in~$\ehH$ and stable under the action 
of the operators $\pio(g)$ for all $g\in G_0$. Examples of smooth vectors are
given by 
\begin{equation*}
\pio(f)\vv:=\int_{G_0}\dd g\ f(g)\pio(g)\vv \ \in\ehH^\infty,
\end{equation*}
where $\vv\in\ehH$ and $f\in\caC^\infty(G_0)$ has compact support.
If $X\in \Lie(G_0)$ and $\vv\in\ehH^\infty$, then the function
$t\mapsto \pio(e^{tX})\vv\in\ehH^\infty$ is smooth in a neighborhood 
of zero and its derivative, $d\pio(X)\vv:=\frac{d}{dt}\big|_{t=0}\left(\pio(e^{tX})\vv\right)$,
defines a Lie algebra morphism $d\pio:\Lie(G_0)\to\End(\ehH^\infty)$.
It extends canonically to the universal enveloping algebra $\mathfrak{U}(\Lie(G_0))$ 
as an algebra morphism. Both $d\pio$ and its extension to $\mathfrak{U}(\Lie(G_0))$
are referred to as the {\defin infinitesimal representation} associated to $(\ehH,\pio)$.
The relative topology induced by the inclusion $\ehH^\infty\subseteq\caC^\infty(G_0,\ehH)$
turns $\ehH^\infty$ into a Fr\'echet space. Once a Hilbert norm $\norm -\norm$ is chosen 
on $\ehH$, the seminorms of $\ehH^\infty$ read as $\norm d\pio(X) (-) \norm$, 
for $X$ running over a basis of $\mathfrak{U}(\Lie(G_0))$.

If $H_0$ is a Lie subgroup of $G_0$, then a SUR of $G_0$ defines 
a SUR of $H_0$  by restriction. Its space of smooth vectors $\ehH^\infty_{H_0}$ 
is a Fr\'echet space which satisfies $\ehH^\infty\subseteq\ehH^\infty_{H_0}\subseteq\ehH$ 
and both inclusions are strict in general. If $H_0$ is the trivial group
we have $\ehH^\infty_{H_0}=\ehH$ as topological spaces.
The density property of $\ehH^\infty$ in $\ehH$ can be generalized as follows.

\begin{proposition}\label{prop:density}
Let $(\ehH,\pio)$ be a unitarizable SUR of $G_0$
and $H_0\trianglelefteq G_0$  a normal subgroup.
Then, $\ehH^\infty$ is dense in $\ehH_{H_0}^\infty$. 
\end{proposition}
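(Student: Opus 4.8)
The plan is to use unitarizability to reduce the claim to a classical fact about unitary representations, and then to prove that fact by a G\aa rding-type mollification along $G_0$. First I would fix a fundamental symmetry $J$ of $\ehH$ preserved by every $\pio(g)$, which exists precisely because $(\ehH,\pio)$ is unitarizable. With respect to $(-,-)_J$ the space $\big(\ehH,(-,-)_J\big)$ is a Hilbert space and $\pio$ is unitary, so $\norm\pio(g)u\norm_J=\norm u\norm_J$ for all $g\in G_0$, $u\in\ehH$. Since the Hilbert topology of $\ehH$ is canonical (Theorem \ref{thm-topo}), the spaces $\ehH^\infty$ and $\ehH_{H_0}^\infty$ and their Fr\'echet topologies---with seminorms $\norm d\pio(D)(-)\norm_J$ for $D$ running over a basis of $\mathfrak{U}(\Lie(H_0))$---are unchanged. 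Hence it suffices to treat $\pio$ as a unitary representation.

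Given $w\in\ehH_{H_0}^\infty$, I would pick an approximate identity $(f_n)$ on $G_0$: nonnegative functions $f_n\in\caC^\infty(G_0)$ with compact support, $\int_{G_0}f_n=1$, and $\Supp f_n$ shrinking to $\{1\}$. Each $\pio(f_n)w=\int_{G_0}f_n(g)\,\pio(g)w\,\dd g$ lies in $\ehH^\infty$, as recalled before the statement, so it only remains to show $\pio(f_n)w\to w$ in $\ehH_{H_0}^\infty$, i.e.\ that $d\pio(D)\pio(f_n)w\to d\pio(D)w$ for every $D\in\mathfrak{U}(\Lie(H_0))$.

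The decisive step, and the point where normality of $H_0$ is used, is a commutation formula between $d\pio(D)$ and the smoothing operator $\pio(f_n)$. For $X\in\Lie(H_0)$ one has $e^{sX}g=g\,e^{s\Ad_{g^{-1}}X}$, and because $H_0\trianglelefteq G_0$ the element $\Ad_{g^{-1}}X$ again belongs to $\Lie(H_0)$ and therefore acts on the $H_0$-smooth vector $w$. Differentiating $\pio(e^{sX})\pio(f_n)w=\int_{G_0}f_n(g)\,\pio(g)\,\pio(e^{s\Ad_{g^{-1}}X})w\,\dd g$ at $s=0$ under the integral, and iterating for a monomial $D=X_1\cdots X_k$ with $X_i\in\Lie(H_0)$, I expect
\[
d\pio(D)\pio(f_n)w=\int_{G_0}f_n(g)\,\pio(g)\,d\pio(\Ad_{g^{-1}}X_1)\cdots d\pio(\Ad_{g^{-1}}X_k)\,w\,\dd g.
\]
Justifying this differentiation under the integral---the smooth dependence of the integrand on $g$ near $1$, using that $w$ is $H_0$-smooth and that $Y\mapsto d\pio(Y)w$ is linear on the finite-dimensional space $\Lie(H_0)$---is the main technical obstacle. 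Since such monomials span $\mathfrak{U}(\Lie(H_0))$ by the Poincar\'e--Birkhoff--Witt theorem, the formula suffices.

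To conclude I would run the approximate-identity estimate. Setting $F(g):=d\pio(\Ad_{g^{-1}}X_1)\cdots d\pio(\Ad_{g^{-1}}X_k)w$, the map $g\mapsto F(g)$ is continuous at $1$ with $F(1)=d\pio(D)w$, as $\Ad_{g^{-1}}X_i\to X_i$ when $g\to 1$; since $\pio$ is strongly continuous and unitary, $g\mapsto\pio(g)F(g)$ is likewise continuous at $1$. Then
\[
\norm d\pio(D)\pio(f_n)w-d\pio(D)w\norm_J\le\int_{G_0}f_n(g)\,\norm \pio(g)F(g)-F(1)\norm_J\,\dd g\le\sup_{g\in\Supp f_n}\norm \pio(g)F(g)-F(1)\norm_J,
\]
which tends to $0$ as $\Supp f_n$ shrinks to $\{1\}$; here unitarity provides the uniform bound $\norm\pio(g)\norm_J=1$. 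This yields $d\pio(D)\pio(f_n)w\to d\pio(D)w$ for all $D$, hence $\pio(f_n)w\to w$ in $\ehH_{H_0}^\infty$, establishing the density.
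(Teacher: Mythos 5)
Your proof is correct and takes essentially the same route as the paper's: a G\aa rding mollification by an approximate identity, with normality entering exactly through the commutation relation $d\pio(X)\pio(g)v=\pio(g)d\pio(\Ad_{g^{-1}}X)v$ and unitarizability supplying the uniform norm control $\norm\pio(g)\norm_J=1$. The only difference is presentational: the paper packages your commutation-plus-continuity estimate into a lemma stating that $\pio$ acts (jointly) continuously on the Fr\'echet space $\ehH^\infty_{H_0}$ and then invokes Warner's general convergence result for approximate identities, whereas you carry out that same estimate by hand.
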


\begin{proof}
We need a lemma.

\begin{lemma}\label{Lem:continuitypio} 
Let $(\ehH,\pio)$ be a unitarizable SUR of $G_0$
and $H_0\trianglelefteq G_0$  a normal subgroup.
For all $v\in \ehH^\infty_{H_0}$ and $g\in G_0$,
we have $\pio(g)v\in\ehH^\infty_{H_0}$.
Moreover, $\pio$ restricts as a continuous map 
$\pio:G_0\times\ehH^\infty_{H_0}\to\ehH^\infty_{H_0}$ 
for the Fr\'echet topology of $\ehH^\infty_{H_0}$.
\end{lemma}
\begin{proof}
Since $\pio$ is unitarizable, the topology on $\ehH$ is given by a Hilbert norm
which is preserved by all the representation operators $\pio(g)$ with $g\in G_0$.
This norm is denoted by  $\norm - \norm$.

Let $v\in \ehH^\infty_{H_0}$ and $g\in G_0$.
By definition of $d\pio$, we have
\begin{equation*}
d\pio(X)\pio(g)v=\pio(g)d\pio(\Ad_g(X))v,
\end{equation*}
for all $X\in\mathfrak U(\Lie(H_0))$.
Since $\pio$ is unitarizable and $\Ad_g(X)\in\mathfrak U(\Lie(H_0))$ (recall that $H_0$ is normal),
we have $\norm\pio(g)d\pio(\Ad_g(X))v\norm<+\infty$. Hence,
the element $\pio(g)v\in\ehH$ has finite seminorms, 
$\norm d\pio(X)\pio(g)v\norm <+\infty$ for all $X\in\mathfrak U(\Lie(H_0))$,
and we have $\pio(g)v\in\ehH^\infty_{H_0}$.

Consider $v\in \ehH^\infty_{H_0}$, $g\in G_0$ and a sequence $(g_j)$ in $G_0$ converging to $g$. 
As $\pio$ is unitarizable, for all $X\in\mathfrak U(\Lie(H_0))$, 
we get
\begin{align*}
\norm d\pio(X)\pio(g_j)v-&d\pio(X)\pio(g)v\norm\\
&\leq\norm \pio(g_j)d\pio (\Ad_{g_j}X)v-\pio(g)d\pio(\Ad_{g}X)v\norm\\
&\leq \norm \pio(g_j)\left(d\pio(\Ad_{g_j}X)v-d\pio(\Ad_{g}X)v\right)\norm
+ \norm (\pio(g_j)-\pio(g))d\pio(\Ad_{g}X)v\norm \\
&\leq\norm d\pio(\Ad_{g_j}X)v-d\pio(\Ad_{g}X)v\norm + \norm (\pio(g_j)-\pio(g))d\pio(\Ad_{g}X)v\norm.
\end{align*}
Since 
$\pio$ is strongly continuous, we have
$\lim_{j\to\infty}\norm (\pio(g_j)-\pio(g))d\pio(\Ad_{g}X)v\norm= 0$. 
We also have $\lim_{j\to\infty}\norm d\pio(\Ad_{g_j}X)v-d\pio(\Ad_{g}X)v\norm= 0$,
as a consequence of $\lim_{j\to\infty}\Ad_{g_j}X= \Ad_gX$
and of the continuity of the linear map $\Lie(H_0)\ni Y\mapsto d\pio(Y)v\in\ehH$.
Therefore, the map 
\begin{equation*}
G_0\ni g\quad\mapsto\quad \pio(g)v\in\ehH^\infty_{H_0},
\end{equation*}
is continuous. Since $\ehH^\infty_{H_0}$ is a Fr\'echet space, 
this implies the joint continuity of the map 
$\pio:G_0\times\ehH^\infty_{H_0}\to\ehH^\infty_{H_0}$ (see \cite[page 219]{Warner:1972}).
\end{proof}

We are now ready to prove the proposition. Let $v\in\ehH^\infty_{H_0}$ and 
choose a decreasing sequence of compact subsets $(K_j)_{j\in\gN}$ 
of $G_0$ converging to $\{e_{G_0}\}$. Then, there exists a sequence of smooth 
positive functions $(f_j)$ such that $\text{Supp}(f_j)\subseteq K_j$ and $\int_{G_0} f_j=1$. 
Applying Proposition 4.1.1.2. in \cite{Warner:1972} to the strongly continuous
representation $\pio:G_0\times\ehH^\infty_{H_0}\to\ehH^\infty_{H_0}$,
we get that $\lim_{j\to\infty}\pio(f_j)v= v$ in the topology 
of $\ehH^\infty_{H_0}$.
Since $(\pio(f_j)v)\in\ehH^\infty$, this proves
that $\ehH^\infty$ is dense in $\ehH^\infty_{H_0}$.
\end{proof}

\subsection{SUR of super Harish-Chandra pairs}
\label{subsec-surhc}

For a super Harish-Chandra pair $(G_0,\kg_\gR)$, representations by bounded operators are 
too restrictive \cite{Carmeli:2006}. Instead, one should fix a common domain 
of definition for operators representing the odd elements of the $\gR$-Lie superalgebra $\kg_\gR$. 
A natural choice is the space of smooth vectors $\ehH^\infty$ for the underlying 
superunitary representation of the Lie group $G_0$. 
Recall that $\kg_\gR=(\kg_\gR)_0\oplus (\kg_\gR)_1$ and $(\kg_\gR)_0=\Lie(G_0)$.

\begin{definition}
\label{def-reprhc}
Let $(G_0,\kg_\gR)$ be a super Harish-Chandra pair.
A {\bf super\-unitary representation} (SUR) of $(G_0,\kg_\gR)$ 
is a triple $(\ehH,\pio,\piG)$ such that
\begin{itemize}
\item $\ehH$ is a Hilbert superspace;
\item $\pio:G_0\to \caU(\ehH)$ is a superunitary representation of the Lie group $G_0$ on $\ehH$;
\item $\piG:\kg_\gR\to\End(\ehH^\infty)$ is a $\gR$-Lie superalgebra morphism
such that $\piG=d\pio$ on $(\kg_\gR)_0$ and
\begin{align}\nonumber
\forall g\in G_0,\ \forall X\in(\kg_\gR)_1, \qquad \piG(X)^\dag&=-\piG(X),\\  \label{Eq:HC-rep} \piG(\Ad_g(X))&=\pio(g)\piG(X)\pio(g)^{-1},
\end{align}
where $\ehH^\infty$ is the space of smooth vectors  of the representation $\pio$ and
 $\Ad:G_0\times\kg_\gR\to\kg_\gR$ is the defining  action of the pair $(G_0,\kg_\gR)$.
\end{itemize}
\end{definition}


Since $\piG=d\pio$ on $(\kg_\gR)_0$, the display \eqref{Eq:HC-rep}
holds for all $X\in\kg_\gR$. If $G_0$ is connected,
the second line of the display is redundant, this is a consequence
of the fact that $\piG$ is a $\gR$-Lie superalgebra morphism.
If $\ehH$ is a standard Hilbert superspace, i.e.\ 
$\ehH_{[-1]}=\ehH_{[-i]}=\algzero$,
the above definition reduces exactly to the definition 
of a superunitary representation in \cite{Carmeli:2006}.
Note that $\pio$ is not (yet) asked to be unitarizable.

\begin{remark}\label{Rmk:piGcontinuous}
Given a SUR $(\ehH,\pio,\piG)$ of $(G_0,\kg_\gR)$, consider the operators $\piG(X)$
with $X\in\kg_\gR$. The condition $\piG(X)^\dag=-\piG(X)$ allows to prove easily that
$\piG(X)$ is closable.  Since $\piG(X)$ is defined on the whole Fr\'echet space $\ehH^\infty$,
it is continuous w.r.t.\ the Fr\'echet topology of $\ehH^\infty$,
by the closed graph theorem for Fr\'echet spaces.
As a consequence of this result, the operators $\piG(X)$ are bounded 
if $\ehH^\infty=\ehH$.
\end{remark}

Consider $(\ehH^{(j)},\pio^{(j)},\piG^{(j)})$ with $j=1,2$, two SUR's of 
the super Harish-Chandra pair $(G_0,\kg_\gR)$. 
Clearly, 
an intertwiner $T$
of the underlying $G_0$-representations 
 (see Equation \eqref{Intertwine:pio}) preserves the spaces of smooth vectors, $T((\ehH^{(1)})^\infty)\subseteq (\ehH^{(2)})^\infty$.
The operator $T$ is an intertwiner of the full SUR's 
of the super Harish-Chandra pair if it also satisfies
\begin{equation}\label{Intertwine:piG}
\forall X\in\kg_\gR,\  \forall\vv\in(\ehH^{(1)})^\infty,\quad
\big(T\circ \piG^{(1)}(X)\big)\vv=\big(\piG^{(2)}(X)\circ T\big)\vv.
\end{equation}
{\defin Morphisms} between two SUR's are the even intertwiners.
If there exists a superunitary intertwiner, $T\in\caU(\ehH^{(1)},\ehH^{(2)})$, then 
$(\ehH^{(1)},\pio^{(1)},\piG^{(1)})$ and $(\ehH^{(2)},\pio^{(2)},\piG^{(2)})$
are said to be isomorphic or equivalent. As usual, to be isomorphic is an equivalence relation 
and one can speak of {\defin equivalence classes of SUR's}.
If $\kg_\gR=(\kg_\gR)_0$, SUR's of $(G_0,\kg_\gR)$  coincide
with SUR's of the Lie group $G_0$, as well as their intertwiners. 
\\

Given a SUR $(\ehH,\pio,\piG)$ ,
a closed subspace $\ehF\leq\ehH$  is said $(\pio,\piG)$-invariant if 
$\pio(g)\ehF\subseteq\ehF$ for all $g\in G_0$ and
$\piG(X)\big(\ehF\cap\ehH^\infty\big)\subseteq\ehF\cap\ehH^\infty$ 
for all $X\in\kg_\gR$. 
If $\ehF$ is in addition a Hilbert sub-superspace of $\ehH$,
then $(\ehF,\pio,\piG)$ is a SUR
called a sub-representation of $(\ehH,\pio,\piG)$.

\begin{definition}
Let $(\ehH,\pio,\piG)$ be a SUR of a 
super Harish-Chandra pair $(G_0,\kg_\gR)$. It is called 
{\defin indecomposable} if it admits no proper sub-representation
and {\defin (graded-)irreducible} if there exists no proper closed $(\pio,\piG)$-invariant (graded) subspace $\ehF\leq\ehH$.
\end{definition}

The orthogonal of a sub-representation is clearly a sub-representation.
Hence, a SUR can always be decomposed as a direct sum
of indecomposable SUR's. A graded-irreducible SUR is always indecomposable 
but the converse is not always true,
see Example \ref{IndecomposableReducible} below.
There exist graded-irreducible SUR's which are not irreducible,
see Example \ref{GradedIrrReducible} below.
\\

If $(\ehH^{(j)},\pio^{(j)},\piG^{(j)})$ with  $j=1,2$ are two SUR's of 
the super Harish-Chandra pair $(G_0,\kg_\gR)$, 
one can form their {\defin direct sum} by considering the triple
\begin{equation*}
\big(\ehH^{(1)}\oplus\ehH^{(2)}, \pio^{(1)}\oplus\pio^{(2)}, \piG^{(1)}\oplus \piG^{(2)}\big).
\end{equation*}
This is straightforwardly a SUR of $(G_0,\kg_\gR)$ since $(\ehH^{(1)}\oplus\ehH^{(2)})^\infty=(\ehH^{(1)})^\infty\oplus(\ehH^{(2)})^\infty$. 

Let $(\ehH^{(j)},\pio^{(j)},\piG^{(j)})$ with  $j=1,2$ be two SUR's of the super Harish-Chandra pairs $(G_0^{(j)},\kg_\gR^{(j)})$. We want to construct their {\defin tensor product}. First, we set $\ehH:=\ehH^{(1)}\hat\otimes \ehH^{(2)}$ by using the Hilbert tensor product (see Proposition \ref{prop-superhilbert-tensprod}), and $\pi_0(g_1,g_2):=\pio^{(1)}(g_1)\otimes\pio^{(2)}(g_2)$, for $g_1\in G_0^{(1)}$ and $g_2\in G_0^{(2)}$. The pair $(\ehH,\pio)$ is a SUR of the Lie group $G_0:=G_0^{(1)}\times G_0^{(2)}$. 
Then, the natural definition of the representation $\piG$ 
would be
\begin{equation*}
\piG(X_1,X_2)(v_1\otimes v_2):=\big(\piG^{(1)}(X_1)v_1\big)\otimes v_2 + v_1\otimes\big( \piG^{(2)}(X_2)v_2\big).
\end{equation*}
The operators $\piG(X_1,X_2)$ are well-defined on the algebraic tensor product $(\ehH^{(1)})^\infty\otimes (\ehH^{(2)})^\infty$,
which is included in $\ehH^\infty$.
But in general, they are not defined on the whole of $\ehH^\infty$ and do not stabilize it. 
However, if both conditions are satisfied, the resulting representation $(\ehH, \pi_0,\piG)$ 
is a SUR of $(G_0^{(1)}\times G_0^{(2)},\kg_\gR^{(1)}\oplus\kg_\gR^{(2)})$.

One is often interested in the tensor product of two SUR's of the same 
Harish-Chandra pair $(G_0,\kg_\gR)$, viewed as a representation of the original pair. 
However, restriction of a SUR to a sub-pair is a delicate notion, discussed below.
In particular, even if the tensor product exists as a SUR of $(G_0\times G_0,\kg_\gR\oplus\kg_\gR)$, 
its restriction to the diagonal sub-pair $(G_0,\kg_\gR)$ might not be a SUR, see Example \ref{Example:tensor}.
\\



Let $(\ehH,\pio,\piG)$ be a SUR of a super Harish-Chandra pair $(G_0,\kg_\gR)$.
Consider a sub-pair $(H_0,\kh_\gR)$, i.e., $H_0$ is a subgroup of $G_0$,
$\kh_\gR$ is a $\gR$-Lie sub-superalgebra of $\kg_\gR$ and the action $H_0\times\kh_\gR\to\kh_\gR$
is the restriction of the defining action $G_0\times\kg_\gR\to\kg_\gR$ of $(G_0,\kg_\gR)$.
Recall that the spaces of smooth vectors on $G_0$ and $H_0$  
satisfy then $\ehH^\infty\subseteq\ehH^\infty_{H_0}$.
The SUR $(\ehH,\pio,\piG)$ admits a {\defin restriction} to $(H_0,\kh_\gR)$ if there exists a map 
$\piH:\kh_\gR\to \End(\ehH^\infty_{H_0})$ such that $(\ehH,\pio,\piH)$ is a SUR of $(H_0,\kh_\gR)$
and
\begin{equation}\label{Eq:restriction}
\forall X\in \kh_\gR,\ \forall \vv\in\ehH^\infty,\quad \piH(X)\vv=\piG(X)\vv.
\end{equation}
If $\ehH^\infty_{H_0}=\ehH^\infty$, then the restriction always exists and is unique.
The general case is more involved.
For $X\in(\kh_\gR)_0$, one can set $\piH(X):=d(\left.\pio\right|_{H_0})(X)$
which is a continuous operator on $\ehH^\infty_{H_0}$. However, if $X\in(\kh_\gR)_1$, 
there is no natural way to define $\piH(X)$ on $\ehH_{H_0}^\infty$ in general. Thus, restriction 
to a sub-pair might not exist, see Example~\ref{Example:tensor}.
We provide below conditions for a restriction to exist and be unique when $\pio$ is unitarizable.

\begin{proposition}\label{Prop:restriction}
Let $(\ehH,\pio,\piG)$ be a SUR of a super Harish-Chandra pair $(G_0,\kg_\gR)$, with $\pio$ unitarizable,
and $(H_0,\kh_\gR)$ be a sub-pair with $H_0$ a normal subgroup of $G_0$.
The SUR $(\ehH,\pio,\piG)$ admits a restriction to $(H_0,\kh_\gR)$ if and only if 
the operators $\piG(X)$ are continuous w.r.t.\ the Fr\'echet topology
of $\ehH^\infty_{H_0}$ for all $X\in(\kh_\gR)_1$.
If it exists, the restriction is unique and given by $(\ehH,\pio,\piH)$
with $\piH(X)$ the continuous extension of $\piG(X)$ to $\ehH^\infty_{H_0}$.
\end{proposition}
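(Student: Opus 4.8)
The plan is to reduce both implications to a single density-plus-continuity mechanism, whose engine is Proposition~\ref{prop:density}: since $\pio$ is unitarizable and $H_0$ is normal in $G_0$, the space $\ehH^\infty$ is dense in the Fr\'echet space $\ehH^\infty_{H_0}$. Together with the completeness of $\ehH^\infty_{H_0}$, this will let me transport all structure from $\ehH^\infty$ to $\ehH^\infty_{H_0}$.

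For the direct implication, I would assume the restriction $(\ehH,\pio,\piH)$ exists. The smooth vectors of the underlying $H_0$-representation $\pio|_{H_0}$ are by definition $\ehH^\infty_{H_0}$, so Remark~\ref{Rmk:piGcontinuous} applied to this SUR of $(H_0,\kh_\gR)$ shows that each operator $\piH(X)$ is continuous for the Fr\'echet topology of $\ehH^\infty_{H_0}$. As $\piH(X)$ coincides with $\piG(X)$ on $\ehH^\infty$ by \eqref{Eq:restriction}, restricting $\piH(X)$ to the subspace $\ehH^\infty$ yields that $\piG(X)$ is itself continuous for that topology, which is exactly the claimed condition.

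For the converse, assume each $\piG(X)$ with $X\in(\kh_\gR)_1$ is continuous into $\ehH^\infty_{H_0}$. By density of $\ehH^\infty$ in the complete space $\ehH^\infty_{H_0}$, each such $\piG(X)$ extends uniquely to a continuous operator $\piH(X):\ehH^\infty_{H_0}\to\ehH^\infty_{H_0}$; for $X\in(\kh_\gR)_0=\Lie(H_0)$ I set $\piH(X):=d(\pio|_{H_0})(X)$, which is continuous on $\ehH^\infty_{H_0}$ and agrees with $\piG(X)=d\pio(X)$ on $\ehH^\infty$, hence is again the unique continuous extension. It then remains to verify the axioms of Definition~\ref{def-reprhc}. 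The identity $\piH=d(\pio|_{H_0})$ on $(\kh_\gR)_0$ holds by construction. The Lie superalgebra morphism property and the equivariance relation \eqref{Eq:HC-rep} are operator identities that hold on the dense subspace $\ehH^\infty$, since $\piG$ satisfies them; as both sides are continuous on $\ehH^\infty_{H_0}$ --- using the continuity of $\pio(h)$ on $\ehH^\infty_{H_0}$ from Lemma~\ref{Lem:continuitypio} to make sense of the $\Ad$-relation --- they extend to all of $\ehH^\infty_{H_0}$ by density. For the skew-adjointness $\piH(X)^\dag=-\piH(X)$, I would extend the defining sesquilinear identity from $\ehH^\infty$ to $\ehH^\infty_{H_0}$ using the joint continuity of $\langle-,-\rangle$ on $\ehH$ granted by Lemma~\ref{lem-ineq}. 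Uniqueness is then immediate, as any two restrictions agree with $\piG$ on the dense subspace $\ehH^\infty$ and are continuous, hence coincide.

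The substantive content is entirely concentrated in the density statement of Proposition~\ref{prop:density} and the completeness of $\ehH^\infty_{H_0}$; once these are available the extension exists and the algebraic axioms transfer almost automatically. The point requiring the most care is ensuring that the extended operators $\piH(X)$ take values in $\ehH^\infty_{H_0}$ itself, so that compositions such as $\piH(X)\piH(Y)$ entering the commutator relations are well defined there; this is precisely why the extensions must be taken as maps $\ehH^\infty_{H_0}\to\ehH^\infty_{H_0}$ rather than merely into $\ehH$.
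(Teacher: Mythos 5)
Your proof is correct and takes essentially the same approach as the paper's: the forward implication via Remark~\ref{Rmk:piGcontinuous} combined with Equation~\eqref{Eq:restriction}, the converse via continuous extension from the dense subspace $\ehH^\infty$ (Proposition~\ref{prop:density}) to the complete space $\ehH^\infty_{H_0}$, and uniqueness by the same density-plus-continuity argument. The only difference is that you spell out the verification of the SUR axioms (morphism property, equivariance, skew-adjointness) for the extended operators, which the paper's proof leaves implicit.
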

\begin{proof}
Assume that $(\ehH,\pio,\piG)$ admits a restriction to $(H_0,\kh_\gR)$, 
denoted by $(\ehH,\pio,\piH)$. By Remark \ref{Rmk:piGcontinuous}, 
for all $X\in(\kh_\gR)_1$, the operators $\piH(X)\in\End(\ehH^\infty_{H_0})$ 
are continuous w.r.t.\ the Fr\'echet topology of $\ehH^\infty_{H_0}$.
Hence, by Equation \eqref{Eq:restriction}, the same holds for the operators $\piG(X)$.

Conversely, assume that the operators $\piG(X)\in\End(\ehH^\infty)$ are continuous w.r.t.\ 
the Fr\'echet topology of $\ehH^\infty_{H_0}$ for all $X\in(\kh_\gR)_1$. By Lemma \ref{Lem:continuitypio},
for all $X\in(\kh_\gR)_0$, the operator $\piG(X)=d\pio(X)$ can be extended as a continuous endomorphism 
of $\ehH^\infty_{H_0}$, denoted by $\piH(X)$.
We obtain a map $\piH:\kh_\gR\to \End(\ehH^\infty_{H_0})$ such that $(\ehH,\pio,\piH)$ is a restriction
of the SUR $(\ehH,\pio,\piG)$.

Assume that $(\ehH,\pio,\piG)$ admits two restrictions to $(H_0,\kh_\gR)$, 
denoted by $(\ehH,\pio,(\piH)^{(j)})$, with $j=1,2$. For all $X\in\kh_\gR$, the Equation \eqref{Eq:restriction} yields the operator equality $(\piH)^{(1)}(X)=(\piH)^{(2)}(X)$
 on $\ehH^\infty$. Since $\ehH^\infty$ is dense in $\ehH^\infty_{H_0}$ (see Proposition \ref{prop:density}) 
and the two operators are continuous, we conclude that $(\piH)^{(1)}(X)=(\piH)^{(2)}(X)$ on the whole of 
$\ehH^\infty_{H_0}$. This means that the two restrictions coincide.
\end{proof}


\subsection{Strong SUR of super Harish-Chandra pairs}
\label{subsec-strongSURHC}

Now, we introduce a stronger notion of representation for a 
super Harish-Chandra pair $(G_0,\kg_\gR)$. 
It relies on the {\defin odd derived  pair} $(\DGR,\DgR)$, where 
$\DgR:=[(\kg_\gR)_1,(\kg_\gR)_1]\oplus(\kg_\gR)_1$ 
and  $\DGR$ is the connected Lie subgroup of $G_0$ with Lie algebra 
$[(\kg_\gR)_1,(\kg_\gR)_1]$. Therefore, the $\gR$-Lie superalgebra $\DgR$ is an ideal of $\kg_\gR$
satisfying $(\DgR)_1=(\kg_\gR)_1$ and the Lie group $\DGR$ is a normal subgroup of $G_0$.

\begin{definition}
A {\bf strong super\-unitary representation} (strong SUR) of  $(G_0,\kg_\gR)$ 
is a SUR $(\ehH,\pio,\piG)$ such that
\begin{itemize}
\item $(\ehH,\pio)$ is unitarizable;
\item $(\ehH,\pio,\piG)$ admits a  restriction to $(\DGR,\DgR)$.
\end{itemize}
\end{definition}


By Proposition \ref{Prop:restriction}, the restriction to $(\DGR,\DgR)$ is unique.
Moreover, the analog of Remark \ref{Rmk:piGcontinuous} for strong SUR rephrases as follows.

\begin{proposition}
\label{rmk-bounded}
Let $(\ehH,\pio,\piG)$ be a strong SUR of $(G_0,\kg_\gR)$. Then, for any 
$X\in(\kg_\gR)_1$, the operator $\piG(X)$ 
is continuous w.r.t.\ the Fr\'echet topology of $\ehH^\infty_{\DGR}$.
If in addition $\ehH^\infty_{\DGR}=\ehH$, then $\piG(X)$ extends 
uniquely as a bounded operator on $\ehH$.
\end{proposition}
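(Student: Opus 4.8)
The plan is to recognize this proposition as an essentially immediate corollary of Proposition \ref{Prop:restriction}, applied to the odd derived pair $(\DGR,\DgR)$. So I first check that all hypotheses of that proposition are in place. Since $(\ehH,\pio,\piG)$ is a strong SUR, the representation $\pio$ is by definition unitarizable; moreover $\DGR$ is a \emph{normal} subgroup of $G_0$, as recorded just before the definition of strong SUR; and, again by the very definition of strong SUR, the triple $(\ehH,\pio,\piG)$ admits a restriction to $(\DGR,\DgR)$.

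With these observations, I would invoke the ``only if'' direction of Proposition \ref{Prop:restriction} with $(H_0,\kh_\gR)=(\DGR,\DgR)$. The existence of the restriction then forces every operator $\piG(X)$, for $X\in(\DgR)_1$, to be continuous with respect to the Fr\'echet topology of $\ehH^\infty_{\DGR}$. Since the odd derived pair satisfies $(\DgR)_1=(\kg_\gR)_1$, this is precisely the first assertion. In spirit, this upgrades the continuity of $\piG(X)$ for the finer Fr\'echet topology of $\ehH^\infty$ (Remark \ref{Rmk:piGcontinuous}) to continuity for the coarser topology of $\ehH^\infty_{\DGR}$.

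For the second assertion I would proceed as follows. Assume $\ehH^\infty_{\DGR}=\ehH$ as topological spaces; then the Fr\'echet topology of $\ehH^\infty_{\DGR}$ coincides with the single-norm Hilbert topology of $\ehH$. By Proposition \ref{Prop:restriction} the restriction is $(\ehH,\pio,\piH)$, where $\piH(X)$ is the continuous extension of $\piG(X)$ to $\ehH^\infty_{\DGR}$. Under the identification $\ehH^\infty_{\DGR}=\ehH$, this $\piH(X)$ is a continuous endomorphism of $\ehH$ for its Hilbert topology, hence a bounded operator extending $\piG(X)$. Uniqueness is then routine: $\ehH^\infty$ is dense in $\ehH^\infty_{\DGR}=\ehH$ by Proposition \ref{prop:density}, and a bounded operator on $\ehH$ is determined by its values on a dense subspace, where it must agree with $\piG(X)$.

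I do not expect a genuine obstacle in this argument, as the statement is a packaging of earlier results; the only points demanding care are purely organizational, namely confirming the bookkeeping identity $(\DgR)_1=(\kg_\gR)_1$ and checking that the two standing hypotheses of Proposition \ref{Prop:restriction}, unitarizability of $\pio$ and normality of $\DGR$ in $G_0$, are both automatically supplied by the definition of a strong SUR.
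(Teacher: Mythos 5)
Your proposal is correct and follows essentially the route the paper intends: the paper states this proposition without a proof, presenting it as the packaging of the only-if direction of Proposition \ref{Prop:restriction} applied to the odd derived pair (whose hypotheses --- unitarizability of $\pio$, normality of $\DGR$ in $G_0$, and existence of the restriction --- are supplied by the definition of strong SUR together with the identity $(\DgR)_1=(\kg_\gR)_1$) and of the boundedness mechanism of Remark \ref{Rmk:piGcontinuous}, which is exactly what you spell out. The only point to watch is that the hypothesis $\ehH^\infty_{\DGR}=\ehH$ is a priori a set-theoretic equality rather than an equality of topological spaces, but this is harmless: the identity map from the Fr\'echet space $\ehH^\infty_{\DGR}$ to the Hilbert space $\ehH$ is a continuous bijection between Fr\'echet spaces, hence a homeomorphism by the open mapping theorem (equivalently, one applies the closed graph theorem to the everywhere-defined closable operators, exactly as in the last sentence of Remark \ref{Rmk:piGcontinuous}), after which your density argument for unique bounded extension goes through unchanged.
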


A SUR $(\ehH,\pio,\piG)$ might fail to be a strong SUR  because
$\pio$ is not unitarizable, see Example \ref{Ex:nonunitarizable},
or because there exists $X\in(\kg_\gR)_1$ such that $\piG(X)\in\End(\ehH^\infty)$ 
is not continuous w.r.t. to the Fr\'echet topology of $\ehH^\infty_{\DGR}$, 
see Example \ref{Example:tensor}. The case where $\ehH$ is a standard Hilbert superspace 
is remarkable in this respect.

\begin{proposition}\label{Prop:Pre-repToRep}
Let $\ehH$ be a standard Hilbert superspace. 
Then, any SUR $(\ehH,\pio,\piG)$ of any super Harish-Chandra pair $(G_0,\kg_\gR)$ 
is a strong SUR.
\end{proposition}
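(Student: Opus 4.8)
The plan is to verify the two conditions defining a strong SUR: that $\pio$ is unitarizable and that $(\ehH,\pio,\piG)$ admits a restriction to the odd derived pair $(\DGR,\DgR)$. Unitarizability is immediate. For a standard Hilbert superspace the unique fundamental symmetry is $J(x)=(-i)^{|x|}x$, which acts as a scalar on each homogeneous component; since every $\pio(g)$ is a superunitary operator of degree $0$, it preserves the grading and therefore commutes with $J$. By the unitarizability criterion recalled earlier, $\pio$ is thus unitary for the scalar product $(-,-)_J$. Because $\DGR$ is a normal subgroup of $G_0$ and $\pio$ is unitarizable, Proposition \ref{Prop:restriction} reduces the existence of the restriction to a single statement: that each operator $\piG(X)$, with $X\in(\kg_\gR)_1=(\DgR)_1$, is continuous for the Fréchet topology of $\ehH^\infty_{\DGR}$.

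The heart of the argument is an a priori estimate for $\piG(X)$ drawn from the standard structure. With respect to the splitting $\ehH=\ehH_0\oplus\ehH_1$, the odd operator $A:=\piG(X)$ is off-diagonal, $A=\big(\begin{smallmatrix}0&B\\ C&0\end{smallmatrix}\big)$ with $C:\ehH_0^\infty\to\ehH_1^\infty$ and $B:\ehH_1^\infty\to\ehH_0^\infty$. Feeding the skew-superadjointness $\piG(X)^\dag=-\piG(X)$ into the definition of ${}^\dag$ and translating $\langle-,-\rangle$ into the positive scalar product $(-,-)_J=\langle-,J(-)\rangle$ on each homogeneous component yields the relation $B=-iC^{*}$, where ${}^{*}$ denotes the $J$-adjoint. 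Since $\piG$ is a Lie superalgebra morphism, $\piG(X)^2=\tfrac12\piG([X,X])=\tfrac12\,d\pio([X,X])$, and as $[X,X]\in[(\kg_\gR)_1,(\kg_\gR)_1]=\Lie(\DGR)$ the operator $d\pio([X,X])$ is skew-adjoint for $(-,-)_J$ by unitarity of $\pio$. Combining these facts (and the identities $C^{*}C,\ CC^{*}=\tfrac{i}{2}\,d\pio([X,X])$ on the respective components) gives the key identity
\begin{equation*}
\|\piG(X)v\|_J^2=\tfrac{i}{2}\,\big(v,\,d\pio([X,X])\,v\big)_J,\qquad v\in\ehH^\infty,
\end{equation*}
whose right-hand side is automatically real and non-negative.

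From the key identity, Cauchy--Schwarz yields the basic bound $\|\piG(X)v\|_J\le\big(\tfrac12\,\|v\|_J\,\|d\pio([X,X])v\|_J\big)^{1/2}$, whose right-hand side involves only seminorms of $\ehH^\infty_{\DGR}$, as $[X,X]\in\Lie(\DGR)$. To control an arbitrary seminorm $v\mapsto\|d\pio(u)\piG(X)v\|_J$ with $u\in\mathfrak U(\Lie(\DGR))$, I would commute $d\pio(u)$ through $\piG(X)$ using $d\pio(Y)\piG(X)=\piG(X)d\pio(Y)+\piG([Y,X])$ for $Y\in\Lie(\DGR)$. Because $\DgR$ is an ideal with $(\DgR)_1=(\kg_\gR)_1$, every bracket $[Y,X]$ stays in $(\kg_\gR)_1$, so a PBW-type rearrangement writes $d\pio(u)\piG(X)=\sum_i\piG(X_i)\,d\pio(u_i)$ with $X_i\in(\kg_\gR)_1$ and $u_i\in\mathfrak U(\Lie(\DGR))$. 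Applying the basic bound to each summand, and using $[X_i,X_i]\,u_i\in\mathfrak U(\Lie(\DGR))$, expresses $\|d\pio(u)\piG(X)v\|_J$ through finitely many $\ehH^\infty_{\DGR}$-seminorms of $v$. This establishes continuity of $\piG(X)$ for the Fréchet topology of $\ehH^\infty_{\DGR}$, whence Proposition \ref{Prop:restriction} provides the restriction and, together with unitarizability, shows that $(\ehH,\pio,\piG)$ is a strong SUR.

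The main obstacle is the key identity together with its exploitation. Obtaining $B=-iC^{*}$ requires careful bookkeeping of the left-antilinear/right-linear convention and of the factors of $i$ relating $\langle-,-\rangle$ and $(-,-)_J$ on $\ehH_1$, and the subsequent bootstrap hinges on checking that the rearrangement produces only \emph{odd} elements $X_i$ (so that the basic bound applies to every term through $[X_i,X_i]\in\Lie(\DGR)$) and only elements $u_i$ of the $\DGR$-enveloping algebra (so that each resulting factor is a genuine $\ehH^\infty_{\DGR}$-seminorm). Both facts are exactly where the hypothesis that $\DgR$ is an ideal with $(\DgR)_1=(\kg_\gR)_1$ is indispensable.
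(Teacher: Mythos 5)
Your proposal is correct, but it takes a genuinely different route from the paper: the paper's entire proof is a citation (``this is a direct consequence of Proposition 2 in [Carmeli:2006]''), whereas you give a self-contained argument that essentially reconstructs the content of that cited result and then splices it into the present paper's framework via Proposition \ref{Prop:restriction}. Concretely, your key identity
$\Vert\piG(X)v\Vert_J^2=\pm\tfrac{i}{2}\,(v,d\pio([X,X])v)_J$
is exactly the estimate underlying the Carmeli--Cassinelli--Toigo--Varadarajan analysis of odd operators on standard Hilbert superspaces; what you add, and what the paper leaves implicit in the citation, is the bootstrap showing that this estimate propagates through $\mathfrak U(\Lie(\DGR))$: the commutation $d\pio(Y)\piG(X)=\piG(X)d\pio(Y)+\piG([Y,X])$ with $[Y,X]\in(\kg_\gR)_1$ and $[X_i,X_i]u_i\in\mathfrak U(\Lie(\DGR))$ yields Fr\'echet continuity of $\piG(X)$ on $\ehH^\infty_{\DGR}$, which is precisely the hypothesis of Proposition \ref{Prop:restriction} (applicable since $\DGR$ is normal and $\pio$ is unitarizable, the latter being automatic for standard $\ehH$ as noted in the paper's remark). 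Your version buys transparency and shows how the external result interfaces with the paper's restriction machinery; the paper's version buys brevity.

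One bookkeeping remark: with the paper's conventions ($\langle T^\dag x,y\rangle=(-1)^{|T||x|}\langle x,Ty\rangle$, $\langle x,y\rangle=i(x,y)$ on $\ehH_1$, $(x,y)_J=\langle x,J(y)\rangle$), skew-superadjointness forces $B=+iC^{*}$, not $B=-iC^{*}$, and correspondingly the identity reads $\Vert\piG(X)v\Vert_J^2=-\tfrac{i}{2}(v,d\pio([X,X])v)_J=+\tfrac{i}{2}(d\pio([X,X])v,v)_J$. Your two signs are flipped consistently (so they cancel), and the facts actually used downstream --- that $\mp i\,d\pio([X,X])$ is positive and that Cauchy--Schwarz gives $\Vert\piG(X)v\Vert_J^2\le\tfrac12\Vert v\Vert_J\,\Vert d\pio([X,X])v\Vert_J$ --- are unaffected, so this is a slip in conventions rather than a gap.
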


\begin{proof}
This is a direct consequence of Proposition $2$ in \cite{Carmeli:2006}.
\end{proof}

The next Lemma provides an alternative description for strong SUR's.

\begin{lemma}\label{lem:SURong1}
Let $(G_0,\kg_\gR)$ be a super Harish-Chandra pair and $(\ehH,\pio)$ a unitarizable SUR of $G_0$.
Assume there exists a map $\rho:(\kg_\gR)_1\to \End(\ehH^\infty_{D(G_0)})$ such that
\begin{enumerate}[label=(\roman*)]
\item $\forall X,Y \in(\kg_\gR)_1,\; v\in\ehH^\infty, \quad [\rho(X),\rho(Y)]) v = d\pio([X,Y]) v$ ;
\item $\forall X \in(\kg_\gR)_1, \quad\rho(X)^\dag=-\rho(X)$ ;
\item $\forall g\in G_0, \; X\in (\kg_\gR)_1,\quad \pio(g)\rho(X)\pio(g^{-1})=\rho(\Ad_g X)$ .
\end{enumerate} 
Then, there exists a unique map $\piG$ such that $(\ehH,\pio,\piG)$ is a strong SUR
and $\piG(X) v = \rho(X) v$ for all $X\in(\kg_\gR)_1$ and $v\in\ehH^\infty$. Conversely, if $(\ehH,\pio,\piG)$ is a strong SUR 
with $(\ehH,\pio,\piG^{D(G_0)})$ its restriction to 
the odd derived pair,
then $\rho:=(\piG^{D(G_0)})_{|(\kg_\gR)_1}$ satisfies 
$(i)$, $(ii)$ and $(iii)$.
\end{lemma}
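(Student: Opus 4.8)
The plan is to construct $\piG$ from $\rho$ in the direct implication and to recover $\rho$ from the restriction $\piG^{\DGR}$ to the odd derived pair in the converse; the analytic glue throughout is the continuity of the odd operators, coming from $(ii)$, together with the density of $\ehH^\infty$ in $\ehH^\infty_{\DGR}$ (Proposition~\ref{prop:density}). For the direct implication I would set $\piG:=d\pio$ on $(\kg_\gR)_0=\Lie(G_0)$, as forced by Definition~\ref{def-reprhc}, and $\piG(X):=\rho(X)$ for $X\in(\kg_\gR)_1$, then check the axioms. A first step is that each $\rho(X)$ is continuous on the Fréchet space $\ehH^\infty_{\DGR}$: by $(ii)$ it is closable exactly as in Remark~\ref{Rmk:piGcontinuous}, and being everywhere defined it is continuous by the closed graph theorem. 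Since $(\kg_\gR)_1$ is finite dimensional, the evaluation map $(Z,v)\mapsto\rho(Z)v$ from $(\kg_\gR)_1\times\ehH^\infty_{\DGR}$ to $\ehH^\infty_{\DGR}$ is then smooth.

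The heart of the argument is to show that $\rho(X)$ preserves the smaller space $\ehH^\infty$ of $G_0$-smooth vectors, so that $\piG(X)$ is a genuine endomorphism of $\ehH^\infty$. For $v\in\ehH^\infty$ I would rewrite the orbit map, using $(iii)$, as $g\mapsto\pio(g)\rho(X)v=\rho(\Ad_gX)\,\pio(g)v$. The right-hand side is the composite of the smooth map $g\mapsto(\Ad_gX,\pio(g)v)$ into $(\kg_\gR)_1\times\ehH^\infty_{\DGR}$ (smooth because $v\in\ehH^\infty$) with the smooth evaluation map above, hence $g\mapsto\pio(g)\rho(X)v$ is smooth into $\ehH^\infty_{\DGR}$, a fortiori into $\ehH$; this is exactly the statement $\rho(X)v\in\ehH^\infty$. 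Differentiating the same identity at $g=e$ in a direction $Y\in(\kg_\gR)_0$, and applying the product rule for the smooth evaluation map, yields $d\pio(Y)\rho(X)v=\rho([Y,X])v+\rho(X)d\pio(Y)v$, i.e. the mixed even--odd relation $\rho([Y,X])=[d\pio(Y),\rho(X)]$ on $\ehH^\infty$. Together with the even--even relation (automatic, $d\pio$ being a Lie morphism) and the odd--odd relation, which is precisely hypothesis $(i)$ once one notes that the super-bracket of two odd operators is their anticommutator, this shows $\piG$ is a Lie superalgebra morphism; the superadjoint and equivariance axioms are then the restrictions of $(ii)$ and $(iii)$ to $\ehH^\infty$, so $(\ehH,\pio,\piG)$ is a SUR.

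For strongness, since $\pio$ is unitarizable and $\DGR$ is normal, Proposition~\ref{Prop:restriction} reduces the existence of the restriction to $(\DGR,\DgR)$ to the continuity of each $\piG(X)=\rho(X)|_{\ehH^\infty}$ for the topology induced by $\ehH^\infty_{\DGR}$, which is immediate; the unique continuous extension coincides with $\rho(X)$ by density, so $\piG^{\DGR}$ indeed restricts to $\rho$ on $(\kg_\gR)_1$. Uniqueness of $\piG$ is clear, since it is forced to equal $d\pio$ on $(\kg_\gR)_0$ and is prescribed on $\ehH^\infty$ for odd elements. For the converse, $(i)$ and $(ii)$ follow directly from the morphism and superunitarity axioms of the SUR $\piG^{\DGR}$, using that $[X,Y]\in[(\kg_\gR)_1,(\kg_\gR)_1]$ is even so that $\piG^{\DGR}([X,Y])v=d\pio([X,Y])v$ on $\ehH^\infty$ by \eqref{Eq:restriction}. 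The only real point is to upgrade the equivariance $(iii)$ from $g\in\DGR$ to arbitrary $g\in G_0$: on $\ehH^\infty$ the operators $\rho(\Ad_gX)$ and $\pio(g)\rho(X)\pio(g)^{-1}$ agree by the $G_0$-equivariance of the original $\piG$, both are continuous on $\ehH^\infty_{\DGR}$ (using Lemma~\ref{Lem:continuitypio} for the continuity of $\pio(g^{\pm1})$ there), and $\ehH^\infty$ is dense, so they coincide on $\ehH^\infty_{\DGR}$.

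The step I expect to be the main obstacle is the preservation of $\ehH^\infty$ by $\rho(X)$ in the direct implication: $\rho$ is given only on the larger space $\ehH^\infty_{\DGR}$ and need not, a priori, respect $G_0$-smoothness. The identity $\pio(g)\rho(X)v=\rho(\Ad_gX)\pio(g)v$ furnished by $(iii)$ is the device that turns this into a smoothness statement about a composite of smooth maps, and as a bonus its derivative produces the mixed commutation relation required for the Lie morphism property.
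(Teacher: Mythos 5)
Your proof is correct and follows essentially the same route as the paper's: define $\piG$ as $d\pio$ on $(\kg_\gR)_0$ and as $\rho$ on $(\kg_\gR)_1$, use hypothesis $(iii)$ to turn the preservation of $\ehH^\infty$ into smoothness of the composite $g\mapsto\rho(\Ad_gX)\pio(g)v$, and obtain strongness from the closability/closed-graph argument of Remark~\ref{Rmk:piGcontinuous} together with Proposition~\ref{Prop:restriction}. The extra details you supply --- the continuity of the evaluation map, the mixed even--odd bracket relation obtained by differentiating $(iii)$, and the density argument upgrading equivariance from $\DGR$ to $G_0$ in the converse --- are faithful elaborations of steps the paper compresses into ``it is direct to check'' and ``the converse statement is obvious.''
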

\begin{proof}
We first prove the uniqueness of $\piG$. 
If $X\in (\kg_\gR)_0$, by definition of a SUR, we should have $\piG(X):=d\pio(X)\in\End(\ehH^\infty)$.
If $X\in (\kg_\gR)_1$, by hypothesis, we should have $\piG(X)v:=\rho(X)v$ for all $v\in\ehH^\infty$.

We prove now that $(\ehH,\pio,\piG)$ is indeed a strong SUR if $\piG$ is defined as above.
Let $v\in\ehH^\infty$ and $X\in V$. Consider the map
\begin{equation}\label{Eq:rhosmooth}
g\in G_0\quad\mapsto\quad \pi_0(g)\rho(X)v.
\end{equation}
By hypothesis $(iii)$, we have 
$\pi_0(g)\rho(X)v=\rho(\Ad_g X)\pi_0(g)v$.
The map $g\mapsto \pi_0(g)v$ is smooth because $v$ is a smooth vector,
the adjoint action $g\mapsto \Ad_g X$ is smooth and the map $\rho$
is linear. Hence, the map in \eqref{Eq:rhosmooth} is smooth. 
This means that $\piG(X)v:=\rho(X)v\in\ehH^\infty$.
Therefore, $\piG$ defines a map $\piG:\kg_\gR\to\End(\ehH^\infty)$.
The hypothesis $(i)$ and $(iii)$ ensure that $\piG$ is a $\gR$-Lie superalgebra morphism.
Then, it is direct to check that $(\ehH,\pio,\piG)$ is a SUR.
By $(ii)$ and Remark \ref{Rmk:piGcontinuous}, 
for all $X\in (\kg_\gR)_1$, the operators $\piG(X)$ 
are continuous with respect to the Fr\'echet topology of $\ehH ^\infty_{D(G_0)}$.
By applying Proposition \ref{Prop:restriction}, we deduce that $(\ehH,\pio,\piG)$ is a strong SUR of $(G_0,\kg_\gR)$.

The converse statement is obvious.
\end{proof}

Morphisms of strong SUR's are simply morphisms of the underlying SUR's.
The notion of equivalence is the same for SUR's and for strong SUR's.
Note that, if there exists a superunitary intertwiner between a SUR 
$(\ehH^{(1)},\pio^{(1)},\piG^{(1)})$ and
a strong SUR $(\ehH^{(2)},\pio^{(2)},\piG^{(2)})$, then the SUR 
$(\ehH^{(1)},\pio^{(1)},\piG^{(1)})$ is a strong SUR.
In other words, the category of strong SUR's is a strictly full subcategory of 
the category of SUR's. By Proposition \ref{Prop:Pre-repToRep} and Theorem \ref{Thm:IsoHilbert}, 
the category of SUR's on standard Hilbert superspaces, introduced in \cite{Carmeli:2006},
is a strictly full subcategory of the category of strong SUR's.

A strong SUR is indecomposable, graded-irreducible or irreducible if and only if the underlying
SUR does. 

{\defin Schur's Lemma} states that the intertwiners of an irreducible representation
are the scalar multiples of identity. As is well-known, it holds for finite dimensional representations
and for unitary representations of Lie groups. It also holds for strong SUR's
of super Harish-Chandra pairs on standard Hilbert superspaces \cite{Carmeli:2006}.
However, up to our knowledge, no analog of Schur's Lemma is known 
for Krein-unitary representations, and a fortiori for SUR's.
We restrict our attention to strong SUR's $(\ehH,\pio,\piG)$ 
of a super Harish-Chandra pair $(G_0,\kg_\gR)$, where $\ehH$ is a general
Hilbert superspace. Then, by definition,
the $G_0$-representation $(\ehH,\pio)$ admits a unitarization.
Hence, it can be decomposed into a direct integral of unitary
irreducible $G_0$-representations. In the following, we assume
that the involved measure is discrete, i.e.,
\begin{equation}\label{decompoHpio}
\ehH=\bigoplus_{i\in I} \ehH_i\hat{\otimes}\ehF_i,
\end{equation}
where $I$ is a given set (of arbitrary cardinality) and 
we have: 
$(\ehH_i)_{i\in I}$ is a sequence of Hilbert spaces carrying inequivalent unitary irreducible
representations of $G_0$ and $(\ehF_i)_{i\in I}$ is a sequence of Hilbert spaces carrying the trivial 
representation of $G_0$.

\begin{proposition}[Schur's Lemma]\label{Schur}
Let $(\ehH,\pio,\piG)$ be a strong SUR 
which decomposes as in \eqref{decompoHpio} and 
$T$ a homogeneous intertwiner of $(\ehH,\pio,\piG)$.
Assume there exists $i\in I$ such that $\dim\ehF_i<\infty$. If  
$(\ehH,\pio,\piG)$ is irreducible 
then $T$ is a scalar multiple of the identity operator.
If $(\ehH,\pio,\piG)$ is graded-irreducible then $T$ is 
is a scalar multiple of the identity operator $\gone$ or of a fixed involutive odd operator~$A$.
\end{proposition}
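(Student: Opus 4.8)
The plan is to first put $T$ into a block form using only that it intertwines $\pio$, and then to extract a single eigenvalue from the finite-dimensional multiplicity space, letting irreducibility do all the remaining work. Since $(\ehH,\pio,\piG)$ is a strong SUR, $\pio$ is unitarizable; fixing a $\pio$-invariant fundamental symmetry $J$, I would regard $\pio$ as a genuine unitary representation on the Hilbert space $(\ehH,(-,-)_J)$, for which $T\in\caB(\ehH)$ is a bounded operator commuting with all $\pio(g)$ by \eqref{Intertwine:pio}. As the summands $\ehH_i$ in \eqref{decompoHpio} carry pairwise inequivalent irreducible unitary representations while the $\ehF_i$ carry the trivial one, the commutant of $\pio(G_0)$ is $\prod_{i\in I}\gone_{\ehH_i}\hat\otimes\caB(\ehF_i)$; hence $T=\bigoplus_{i\in I}\gone_{\ehH_i}\otimes S_i$ for some homogeneous $S_i\in\caB(\ehF_i)$ of degree $|T|$ (here the $\ehH_i$ are taken purely even, the grading of $\ehH$ being carried by the $\ehF_i$).

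Suppose first that $T$ is even. I would pick the index $i_0$ with $\dim\ehF_{i_0}<\infty$ and an eigenvalue $\lambda\in\gC$ of the finite matrix $S_{i_0}$; this is the only place where the finiteness hypothesis enters, and it is exactly what replaces the spectral theorem, which is unavailable in the Krein/Hilbert-superspace setting. Set $\ehF:=\Ker(T-\lambda\gone)$. This subspace is closed, and graded because $T-\lambda\gone$ is even. It is $(\pio,\piG)$-invariant: it is $\pio$-stable since $T-\lambda\gone$ commutes with $\pio$, and for $v\in\ehF\cap\ehH^\infty$ and $X\in\kg_\gR$ one has $(T-\lambda\gone)\piG(X)v=\piG(X)(T-\lambda\gone)v=0$ by \eqref{Intertwine:piG}, while $\piG(X)v\in\ehH^\infty$ because $\piG(X)\in\End(\ehH^\infty)$, so $\piG(X)v\in\ehF\cap\ehH^\infty$. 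Finally $\ehF$ is nonzero, as it contains $\ehH_{i_0}\hat\otimes\Ker(S_{i_0}-\lambda\gone)\neq\algzero$. Being a nonzero graded closed $(\pio,\piG)$-invariant subspace, irreducibility (resp.\ graded-irreducibility) forces $\ehF=\ehH$, i.e.\ $T=\lambda\gone$.

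For $T$ odd, $T^2$ is an even intertwiner, so by the previous step $T^2=\mu\gone$ for some $\mu\in\gC$, equivalently $S_i^2=\mu\gone_{\ehF_i}$ for all $i$. If $\mu\neq0$, then $A:=\mu^{-1/2}T$ is an odd intertwiner with $A^2=\gone$ and $T=\mu^{1/2}A$. If instead $\mu=0$, then $S_{i_0}$ is nilpotent, so $\Ker T$ contains $\ehH_{i_0}\hat\otimes\Ker S_{i_0}\neq\algzero$; as $\Ker T$ is graded (since $T$ is homogeneous), closed and $(\pio,\piG)$-invariant, (graded-)irreducibility gives $\Ker T=\ehH$, hence $T=0$. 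This settles the graded-irreducible case: every even intertwiner is a multiple of $\gone$ and every odd one a multiple of a fixed odd involution $A$, uniqueness of $A$ following because for two odd intertwiners with invertible squares the product $T'T^{-1}$ is an even intertwiner, hence a scalar. In the ungraded irreducible case it remains to rule out $\mu\neq0$: the $\pm1$-eigenspaces of such an $A$ would be closed, $(\pio,\piG)$-invariant and both nonzero (as $A$ is odd, so $A\neq\pm\gone$), yet non-graded, contradicting irreducibility; thus $\mu=0$ and $T=0$, so every homogeneous intertwiner is a scalar multiple of $\gone$.

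The main obstacle I anticipate is the first reduction, namely justifying rigorously that the intertwiner $T$ has the block-diagonal form $\bigoplus_i\gone_{\ehH_i}\otimes S_i$. This rests on the unitarizability built into the notion of strong SUR, which lets one transport the classical description of the commutant of an isotypic decomposition into the present superspace setting and keep track of the grading correctly. Once this is in place, the eigenvalue argument and the degree bookkeeping are essentially formal, the decisive analytic input being the finite-dimensionality of the single multiplicity space $\ehF_{i_0}$.
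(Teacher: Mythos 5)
Your proof is correct and follows essentially the same route as the paper: block-diagonalize $T$ over the isotypic decomposition \eqref{decompoHpio} using classical Schur theory for the unitarized $\pio$ (the paper cites Warner for this step, you invoke the commutant description), extract an eigenvalue from the finite-dimensional multiplicity space $\ehF_{i_0}$, and let the resulting closed, invariant eigenspace together with (graded-)irreducibility force $T$ to be scalar. Your handling of odd intertwiners, splitting into $T^2=\mu\gone$ with $\mu\neq 0$ versus $\mu=0$ and killing the nilpotent case via $\Ker T$, is in fact slightly more careful than the paper's, which disposes of the odd case only by noting that a product of two odd intertwiners is an even intertwiner (an argument that implicitly assumes invertibility).
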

\begin{proof}
Assume that $T\in\caB(\ehH)$ is an intertwiner of the irreducible SUR $(\ehH,\pio,\piG)$.
By Corollary $4.3.1.3$ in \cite{Warner:1972}, $T$ maps irreducible 
unitary $G_0$-representations to equivalent ones.
As the spaces $\ehH_i$ carry inequivalent $G_0$-representations 
for different $i\in I$, we have 
$T(\ehH_i\hat{\otimes}\ehF_i)\subseteq \ehH_i\hat{\otimes}\ehF_i$,  for all $i\in I$.
Hence, $T$ admits a block-diagonal decomposition
$T=\bigoplus_{i\in I} T_i$ with $T_i\in\caB(\ehH_i\hat{\otimes}\ehF_i)$. 
Applying classical Schur's Lemma to the unitary representations $\ehH_i\hat{\otimes}\ehF_i$
of $G_0$, we get that $T_i=\gone\otimes t_i$ with $t_i\in\caB(\ehF_i)$. 

Consider $i\in I$ such that $\dim\ehF_i<\infty$. Then, the linear operator
$t_i$ admits at least one complex eigenvalue $\l\in\gC$. The eigenspace
$$
\ehH_\l:=\{v\in\ehH \; | \; T v=\l v\}
$$
is then non-zero. As $T$ is a continuous homogeneous operator,
$\ehH_\l$ is a closed subspace of $\ehH$.
Since $T$ intertwines the $(G_0,\kg_\gR)$-action, the space $\ehH_\l$
is $(\pio,\piG)$-stable. Therefore, $(\ehH,\pio,\piG)$ being irreducible, we conclude that
$\ehH_\l=\ehH$ and $T=\l\gone$.

If $(\ehH,\pio,\piG)$ is graded-irreducible, we can formulate the same arguments 
for an intertwiner $T$ of degree 0. Then, $\ehH_\lambda$ is a closed graded subspace 
of $\ehH$, which is $(\pio,\piG)$-stable. So, $\ehH_\l=\ehH$ and $T=\l\gone$.
The composition of two odd intertwiners $T,T'$ is an even intertwiner, 
so that $T$ and $T'$ differ by a scalar multiple.
Hence, every odd intertwiner is a scalar multiple of a fixed involutive odd operator $A$.
\end{proof}


A direct sum  of strong SUR's is automatically a strong SUR. As for the tensor product of 
strong SUR's, it might not be a strong SUR and even not a SUR. 
The restriction of a strong SUR of  $(G_0,\kg_\gR)$ to a sub-pair
$(H_0,\kh_\gR)$ exists and is unique if 
$D(G_0)\subseteq H_0\subseteq G_0$ and it is a strong SUR.
However, restrictions to general sub-pairs might not exist.

Sufficient conditions for a tensor product to be a strong SUR are stated in the next proposition.
 
\begin{proposition}
\label{prop-tensprodSUR}
Let $(\ehH^{(j)},\pi_0^{(j)},\pi_*^{(j)})$ be two strong SUR's of 
$(G_0^{(j)},\kg_\gR^{(j)})$, for $j=1,2$. Set $\ehH:=\ehH^{(1)}\hat\otimes\ehH^{(2)}$
and $(G_0,\kg_\gR):=(G_0^{(1)}\times G_0^{(2)},\kg_\gR^{(1)}\oplus\kg_\gR^{(2)})$.
 If the equalities $(\ehH^{(j)})^\infty_{D(G_0^{(j)})}=\ehH^{(j)}$ hold for $j=1,2$, 
then there exists a unique strong SUR $(\ehH,\pi_0,\pi_*)$ of 
$(G_0,\kg_\gR)$ on $\ehH$ such that
\begin{itemize}
\item for all $(g_1,g_2)\in G_0$ and $v_1\otimes v_2\in \ehH^{(1)}\otimes\ehH^{(2)}$, we have
\begin{equation}\label{Tensor-pio}
\pio(g_1,g_2)\, v_1\otimes v_2=\pio^{(1)}(g_1)v_1\otimes\pio^{(2)}(g_2)v_2\, ;
\end{equation}
\item for all $(X_1,X_2)\in \kg_\gR$ and  $v_1\otimes v_2\in (\ehH^{(1)})^\infty\otimes(\ehH^{(2)})^\infty$, we have
\begin{equation}\label{Tensor-piG}
\piG(X_1,X_2)\, v_1\otimes v_2=\piG^{(1)}(X_1)v_1\otimes v_2+v_1\otimes\piG^{(2)}(X_2) v_2.
\end{equation}
\end{itemize}
The strong SUR $(\ehH,\pi_0,\pi_*)$ is called the tensor product representation.
\end{proposition}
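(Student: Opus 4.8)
The plan is to reduce the whole statement to the alternative characterization of strong SUR's given in Lemma \ref{lem:SURong1}, so that I only have to construct a map $\rho$ on the odd part $(\kg_\gR)_1$ and verify its three conditions $(i)$--$(iii)$. The decisive simplification is furnished by the hypotheses $(\ehH^{(j)})^\infty_{D(G_0^{(j)})}=\ehH^{(j)}$: by Proposition \ref{rmk-bounded}, each odd operator $\piG^{(j)}(X_j)$ with $X_j\in(\kg_\gR^{(j)})_1$ extends uniquely to a bounded operator on all of $\ehH^{(j)}$, which I keep denoting $\piG^{(j)}(X_j)$. This is precisely what lets the formal expression \eqref{Tensor-piG} make sense globally, rather than merely on the algebraic tensor product of smooth vectors.

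First I would set $\pio:=\pio^{(1)}\otimes\pio^{(2)}$ on $\ehH=\ehH^{(1)}\hat\otimes\ehH^{(2)}$ and check that $(\ehH,\pio)$ is a unitarizable SUR of $G_0=G_0^{(1)}\times G_0^{(2)}$: the superunitarity and the Hilbert superspace structure come from Proposition \ref{prop-superhilbert-tensprod}, strong continuity from that of each factor, and unitarizability from taking as fundamental symmetry the tensor product \eqref{Eq:J} of two fundamental symmetries $J_j$ preserved by $\pio^{(j)}$, which then acts on separate factors and is preserved by $\pio$. Next I would record the key observation that $\ehH^\infty_{D(G_0)}=\ehH$. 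Since the bracket on $\kg_\gR=\kg_\gR^{(1)}\oplus\kg_\gR^{(2)}$ has no cross terms, $[(\kg_\gR)_1,(\kg_\gR)_1]$ splits as a direct sum, so $D(G_0)=D(G_0^{(1)})\times D(G_0^{(2)})$; its Lie algebra is generated by elements acting on $\ehH$ as $d\pio^{(1)}(Y_1)\hat\otimes\gone$ and $\gone\hat\otimes d\pio^{(2)}(Y_2)$. As the hypotheses force every vector of $\ehH^{(j)}$ to be smooth over $D(G_0^{(j)})$, the everywhere-defined closable generators $d\pio^{(j)}(Y_j)$ are bounded by the closed graph theorem, hence so is every iterated derivative $d\pio(W)$ on $\ehH$; thus all seminorms of $\ehH^\infty_{D(G_0)}$ are finite on every vector, i.e. $\ehH^\infty_{D(G_0)}=\ehH$.

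I would then define, for $X=(X_1,X_2)\in(\kg_\gR)_1$, the bounded operator
$$\rho(X):=\piG^{(1)}(X_1)\hat\otimes\gone+\gone\hat\otimes\piG^{(2)}(X_2)\in\caB(\ehH)=\End(\ehH^\infty_{D(G_0)}),$$
using the bounded extensions above together with the graded conventions of \eqref{Eq:J}; on $(\ehH^{(1)})^\infty\otimes(\ehH^{(2)})^\infty$ this reproduces \eqref{Tensor-piG}. Condition $(iii)$ follows factorwise from $\Ad_{(g_1,g_2)}(X_1,X_2)=(\Ad_{g_1}X_1,\Ad_{g_2}X_2)$, from $\pio(g)=\pio^{(1)}(g_1)\otimes\pio^{(2)}(g_2)$, and from the equivariance \eqref{Eq:HC-rep} of each $\piG^{(j)}$. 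For $(i)$, using $[X,Y]=([X_1,Y_1],[X_2,Y_2])$, I would expand the anticommutator $\{\rho(X),\rho(Y)\}$: the two diagonal terms give $\piG^{(1)}([X_1,Y_1])\hat\otimes\gone$ and $\gone\hat\otimes\piG^{(2)}([X_2,Y_2])$ because each $\piG^{(j)}$ is a Lie superalgebra morphism, while the two mixed terms, being anticommutators of odd operators acting on different tensor factors, cancel by the Koszul sign rule; the sum is exactly $d\pio([X,Y])$.

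The main obstacle is condition $(ii)$ together with the sign bookkeeping: I must show $\rho(X)^\dag=-\rho(X)$ for the superadjoint attached to the inner product \eqref{Eq:TPsuperH}. This requires first establishing the factorization rule for the superadjoint of a homogeneous tensor operator on $\ehH^{(1)}\hat\otimes\ehH^{(2)}$ (a Koszul-signed form of $(A\hat\otimes B)^\dag=\pm A^\dag\hat\otimes B^\dag$, read off from Proposition \ref{prop-superhilbert-adjoint} and \eqref{Eq:TPsuperH}), then feeding in $\piG^{(j)}(X_j)^\dag=-\piG^{(j)}(X_j)$, which passes to the bounded extensions by continuity; the conventions of \eqref{Eq:J} are arranged precisely so that both summands of $\rho(X)$ are skew. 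Granting $(i)$--$(iii)$, Lemma \ref{lem:SURong1} produces a unique strong SUR $(\ehH,\pio,\piG)$ with $\piG(X)v=\rho(X)v$ for odd $X$ and $v\in\ehH^\infty$, and $\piG=d\pio$ on $(\kg_\gR)_0$, which is exactly \eqref{Tensor-pio}--\eqref{Tensor-piG}. Uniqueness follows since \eqref{Tensor-pio} and $\piG=d\pio$ fix $\pio$ and the even part, while on the odd part any admissible $\piG(X)$ is continuous for the Fréchet topology of $\ehH^\infty_{D(G_0)}=\ehH$, hence bounded, and is therefore determined by its prescribed values on the dense subspace $(\ehH^{(1)})^\infty\otimes(\ehH^{(2)})^\infty$.
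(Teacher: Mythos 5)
Your proposal is correct and takes essentially the same route as the paper's own proof: build $\pio$ as the unitarizable tensor-product representation, use the hypotheses $(\ehH^{(j)})^\infty_{D(G_0^{(j)})}=\ehH^{(j)}$ together with Proposition \ref{rmk-bounded} to get bounded extensions of the odd operators and the equality $\ehH^\infty_{D(G_0)}=\ehH$, define $\rho$ on $(\kg_\gR)_1$ by the (Koszul-signed) sum formula, and invoke Lemma \ref{lem:SURong1} for existence and uniqueness. Your verification of hypotheses $(i)$--$(iii)$ (cancellation of mixed anticommutators, factorization of the superadjoint) is merely more explicit than the paper, which asserts these without detail.
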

\begin{proof}
Since $(\ehH^{(j)},\pio^{(j)})$ are unitarizable for $j=1,2$, there exists a unique 
map $\pio:G_0\to\ehH$ satisfying \eqref{Tensor-pio}. Clearly, $(\ehH,\pio)$ 
is a unitarizable SUR of $G_0$.

As $(\ehH^{(j)})^\infty_{D(G_0^{(j)})}=\ehH^{(j)}$, the operators $\pi_*^{(j)}(X_j)$ 
are bounded on $\ehH^{(j)}$ for all $X_j\in \Lie\left(D(G_0^{(j)})\right)$, with $j=1,2$. 
In view of Formula \eqref{Tensor-piG}, if $X\in \Lie\left(D(G_0)\right)$,
the operator $\pi_*(X)$ is then bounded on 
the dense subspace $(\ehH^{(1)})^\infty\otimes(\ehH^{(2)})^\infty\subseteq\ehH$.
This implies that $\piG(X)\in\caB(\ehH)$ 
and $\ehH^\infty_{D(G_0)}=\ehH$.
By Proposition \ref{rmk-bounded}, for any $X_j\in(\kg_\gR^{(j)})_1$ 
the operators $\pi_*^{(j)}(X_j)$ extend as bounded operators on $\ehH^{(j)}$. 
Therefore, so does $\pi_*(X)$ if $X\in(\kg_\gR)_1$ and we get
 a linear map $\rho:(\kg_\gR)_1\to \caB(\ehH)$ which satisfies
$\rho(X) v=\piG(X) v $ for all $X\in(\kg_\gR)_1$ and $v\in\ehH^\infty$. 
As $(\ehH^{(j)},\pi_0^{(j)},\pi_*^{(j)})$ are SUR's, for $j=1,2$,
the map $\rho$ satisfies the three hypotheses of Lemma \ref{lem:SURong1} on
$(\ehH^{(1)})^\infty\otimes(\ehH^{(2)})^\infty$. Since the latter space is dense in $\ehH$,
Lemma \ref{lem:SURong1} applies and we conclude that there exists a unique strong SUR $(\ehH,\pi_0,\pi_*)$
satisfying Equations \eqref{Tensor-pio}-\eqref{Tensor-piG}.
\end{proof}

Sufficient conditions to have restrictions of a strong SUR to any sub-pair
 are stated in the next proposition.

\begin{proposition}\label{prop:restriction-strong}
If $(\ehH,\pi_0,\pi_*)$  is a strong SUR of  $(G_0,\kg_\gR)$ 
such that $\ehH^\infty_{D(G_0)}=\ehH$, then it admits a
restriction to any sub-pair of  $(G_0,\kg_\gR)$. 
\end{proposition}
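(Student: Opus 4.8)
The plan is to build the restriction $(\ehH,\pio,\piH)$ explicitly, exploiting the hypothesis $\ehH^\infty_{D(G_0)}=\ehH$ to reduce everything to bounded operators. First I would note that, since $\ehH^\infty_{D(G_0)}=\ehH$ as sets, the continuous inclusion of the Fréchet space $\ehH^\infty_{D(G_0)}$ into the Hilbert space $\ehH$ is a bijection, hence a topological isomorphism by the open mapping theorem; consequently every generator $d\pio(W)$ with $W\in[(\kg_\gR)_1,(\kg_\gR)_1]=\Lie(D(G_0))$ is bounded, and by Proposition \ref{rmk-bounded} every $\piG(X)$ with $X\in(\kg_\gR)_1$ extends uniquely to a bounded operator on $\ehH$. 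Fixing a sub-pair $(H_0,\kh_\gR)$, with $(\kh_\gR)_0=\Lie(H_0)$, I then define $\piH$ on $(\kh_\gR)_0$ as $d(\pio|_{H_0})$ and on $(\kh_\gR)_1\subseteq(\kg_\gR)_1$ as the bounded extension of $\piG$.

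The crux is to show that these bounded odd operators stabilise the smaller space $\ehH^\infty_{H_0}$ of $H_0$-smooth vectors; boundedness on $\ehH$ alone does not guarantee this, and since $H_0$ need not be normal I cannot appeal to Proposition \ref{Prop:restriction}. Instead I would use covariance directly. The relation $\pio(h)\piG(X)\pio(h)^{-1}=\piG(\Ad_h X)$ from \eqref{Eq:HC-rep} holds on $\ehH^\infty$, and as all operators involved are now bounded and $\ehH^\infty$ is dense, it extends to all of $\ehH$. Thus for $v\in\ehH^\infty_{H_0}$ and $X\in(\kh_\gR)_1$ I can write $\pio(h)\piG(X)v=\piG(\Ad_h X)\pio(h)v$ and recognise the right-hand side as the diagonal $h\mapsto F(h,h)$ of the map $F(h_1,h_2)=\piG(\Ad_{h_1}X)\pio(h_2)v$. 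This $F$ is jointly smooth, being the composite of the smooth map $(h_1,h_2)\mapsto(\Ad_{h_1}X,\pio(h_2)v)$ into $(\kg_\gR)_1\times\ehH$ with the bounded — hence smooth — bilinear evaluation $(Y,u)\mapsto\piG(Y)u$ (here $(\kg_\gR)_1$ is finite dimensional and $h_2\mapsto\pio(h_2)v$ is smooth because $v\in\ehH^\infty_{H_0}$). Therefore $h\mapsto\pio(h)\piG(X)v$ is smooth, i.e.\ $\piG(X)v\in\ehH^\infty_{H_0}$, so $\piH(X)$ is a well-defined endomorphism of $\ehH^\infty_{H_0}$.

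It then remains to verify the axioms of Definition \ref{def-reprhc} for $(\ehH,\pio,\piH)$. Continuity of $\piH(X)$ for the Fréchet topology of $\ehH^\infty_{H_0}$ follows from the closed graph theorem exactly as in Remark \ref{Rmk:piGcontinuous}, using $\piH(X)^\dag=-\piH(X)$ (inherited from the bounded extension of $\piG$) together with boundedness of $\piG(X)$ on $\ehH$; the skew-adjointness and covariance axioms transfer verbatim from the bounded extensions. The Lie superalgebra morphism property I would check by parity cases: the even–even case is the standard fact about generators of a group representation, the even–odd case follows by differentiating the now global covariance relation at the identity of $H_0$, and in the odd–odd case the key point is that $[X,Y]\in\Lie(D(G_0))$, so $d\pio([X,Y])$ is already bounded and agrees with $\piG(X)\piG(Y)+\piG(Y)\piG(X)$ on the dense subspace $\ehH^\infty$, hence on all of $\ehH^\infty_{H_0}$. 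Finally $\piH=\piG$ on $\ehH^\infty$ by construction, so $(\ehH,\pio,\piH)$ is indeed a restriction, unique by the density-and-continuity argument of Proposition \ref{Prop:restriction}. As indicated, the main obstacle is the stabilisation step of the second paragraph, where non-normality of $H_0$ forces a direct use of covariance rather than the earlier restriction criterion.
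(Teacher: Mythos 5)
Your proof is correct and takes essentially the same route as the paper: the hypothesis $\ehH^\infty_{D(G_0)}=\ehH$ yields $\ehH^\infty_{D(H_0)}=\ehH$ and bounded extensions of the odd operators (Proposition \ref{rmk-bounded}), and your covariance-plus-smoothness stabilization argument for $\ehH^\infty_{H_0}$ is precisely the content of Lemma \ref{lem:SURong1}, which the paper simply cites at this point instead of re-deriving. The only caveat is your closing uniqueness remark, since the density argument of Proposition \ref{Prop:restriction} relies on $H_0$ being normal; but the proposition asserts only existence of the restriction, so this does not affect correctness.
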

\begin{proof}
Consider $(H_0,\kh_\gR)$ a sub-pair of  $(G_0,\kg_\gR)$.
The SUR $(\ehH,\pio)$ of $G_0$ automatically restricts as a SUR of $H_0$
and we have $\ehH^\infty_{D(G_0)}\subseteq\ehH^\infty_{D(H_0)}\subseteq\ehH$.
As $\ehH^\infty_{D(G_0)}=\ehH$, we deduce that 
$\ehH^\infty_{D(H_0)}=\ehH$.
By Proposition \ref{rmk-bounded}, if $X\in(\kg_\gR)_1$ 
the operator $\piG(X)$ extends as a bounded operator on $\ehH$,
denoted by $\rho(X)$. 
Hence, we get a linear map $\rho:(\kg_\gR)_1\to \caB(\ehH)$. 
One easily checks that Lemma \ref{lem:SURong1} applies. As a result, we get a restriction of 
$(\ehH,\pi_0,\pi_*)$ to $(H_0,\kh_\gR)$ which is a strong SUR.
\end{proof}

\subsection{SUR and strong SUR of Lie supergroups}

In this section, we provide a definition
of (strong) SUR for  Lie supergroups
and show that it agrees with the one for super Harish-Chandra pairs via the equivalence 
between both structures.

As a Lie supergroup is a patching of (even part of) 
graded $\superA$-vector spaces, representations of $G$ should be valued 
in graded $\superA$-vector spaces (see section \ref{sec-prelim}). 
Given a Hilbert superspace $(\ehH,\langle-,-\rangle)$,
the superhermitian inner product is extended by 
$\superA$-bilinearity to $\superA\otimes\ehH$:
\begin{equation}
\langle av,bw\rangle=(-1)^{|a|\sigma+|b|(\sigma+|v|)}\overline{a}b\langle v,w\rangle,\label{eq-abilin}
\end{equation}
for $a,b\in\superA_\gC$, $v,w\in \ehH$ and $\sigma$ is the parity of $\ehH$. 
This means that the degree $\sigma$ of the inner product is carried by its left bracket, 
as for an inner product defined by an integral (see Examples \ref{ex-superhilbert} and \ref{ex-holfct}).
If $\ehF$ is a dense subspace of $(\ehH,\langle-,-\rangle)$, 
we will consider the group of superunitary operators 
$\caU_\superA(\superA\otimes\ehF)$, which consists of even invertible 
$\superA$-linear maps $B:\superA\otimes\ehF\to \superA\otimes\ehF$ such that
\begin{equation*}
\forall\vv,\vw\in\ehF,\quad \langle B\vv,
B\vw\rangle=\langle\vv,\vw\rangle.
\end{equation*}
Note that no continuity assumptions are made on the maps $B$. 

\begin{definition}\label{Def:prerep}
A {\bf superunitary representation} (SUR) of 
a Lie supergroup $G$ is a triple $(\ehH,\pi_0,\pi)$ such that
\begin{itemize}
\item $\ehH$ is a Hilbert superspace;
\item $\pio:\gB G\to \caU(\ehH)$ is a superunitary representation 
of $\gB G$ on $\ehH$;
\item $\pi:G\to \caU_\superA(\superA\otimes\ehH^\infty)$ is a group morphism
such that
\begin{align}\nonumber
\forall \vv\in\ehH^\infty,&\quad  
\pi^\vv: g\mapsto\pi(g)\vv \in\caC^\infty(G,\superA\otimes\ehH^\infty),\\  \label{Eq:SG-rep}
\forall g\in G,& \quad \pi(\gB g)\vv=\pio(\gB g)\vv,
\end{align}
where $\ehH^\infty$ is the space of smooth vectors  of the representation $\pio$.
\end{itemize}
\end{definition}

A {\defin morphism} between two SUR's $(\ehH^{(j)},\pio^{(j)},\pi^{(j)})$, with $j=1,2$,
 of a Lie supergroup $G$ is a morphism 
of the underlying $\gB G$-representations $T\in\Hom(\pio^{(1)},\pio^{(2)})$ 
such that
\begin{equation*}
\forall g\in G,\ \forall \vv\in\superA\otimes(\ehH^{(1)})^\infty,\quad
(\gone\otimes T)\circ\pi^{(1)}(g)\vv=\pi^{(2)}(g)\circ(\gone\otimes T)\vv,
\end{equation*}
where $\gone$ denotes the identity operator on $\superA$.

Let $G$ be a Lie supergroup and $H$ be a Lie sub-supergroup.  
A SUR $(\ehH,\pio,\pi)$ of $G$ admits a {\defin restriction} to $H$ if there exists a map 
$\pi^H:H\to \caU_\superA(\superA\otimes\ehH^\infty_{\gB H})$
such that $(\ehH,\pio,\pi^H)$ is a SUR of $H$ and
\begin{equation*}
\forall h\in H,\ \forall \vv\in\superA\otimes\ehH^\infty,\quad \pi^H(h)\vv=\pi(h)\vv.
\end{equation*}
Here, $\ehH^\infty_{\gB H}$ denotes the space of smooth vectors on $\gB H\leq\gB G$. 

Let $G$ be a Lie supergroup. The {\defin odd derived supergroup} $\DG\leq G$
is the connected Lie sub-supergroup of $G$ with Lie superalgebra 
$\Dg:=[\kg_1,\kg_1]\oplus\kg_1\leq \kg=\Lie(G)$. 

\begin{definition}
\label{def-reprsg}
A {\bf strong superunitary representation} (strong SUR)
of  a Lie supergroup $G$ is a SUR $(\ehH,\pio,\pi)$ such that
\begin{itemize}
\item $(\ehH,\pio)$ is unitarizable;
\item $(\ehH,\pio,\pi)$ admits a  restriction to $\DG$.
\end{itemize}
\end{definition}

A morphism between two strong SUR's of a Lie supergroup $G$ is a morphism 
of the underlying SUR's.

\medskip

Recall that a Lie supergroup $G$, of dimension $m|n$, induces
a Lie group $\gB G$, of dimension $m$, and a Lie supergroup
$G^\wod$, of dimension $m|0$, obtained from $\gB G$ by $\superA_0$-scalar
extension. Under the equivalence between Lie supergroups 
and super Harish-Chandra pairs, $G$ corresponds to $(\gB G,\gB\kg)$, 
$G^\wod$ corresponds to $(\gB G,\gB\kg_0)$ and 
the odd derived Lie sub-supergroup $\DG$ corresponds to 
the odd derived sub-pair $(\D(\gB G),\D(\gB\kg))$. 
Recall that $\Lie(\gB G)=\gB\kg_0$, $\Lie(G^\wod)=\superA\otimes\gB\kg_0$, 
$\Lie(G)=\kg=\superA\otimes\gB \kg$ and
$\kg_0=\superA_0\otimes\gB\kg_0\oplus\superA_1\otimes\gB\kg_1$.

\begin{theorem}\label{thm:Equivalence}
Let $G$ be a Lie supergroup and $(\gB G,\gB\kg)$ the corresponding super 
Harish-Chandra pair. 
\begin{enumerate}[label=(\roman*)]
\item The categories of SUR's of $G$ and $(\gB G,\gB\kg)$ are equivalent.
\item The categories of strong SUR's of $G$ and $(\gB G,\gB\kg)$ are equivalent.
\end{enumerate}
\end{theorem}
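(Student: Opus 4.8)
The plan is to lift the established equivalence between Lie supergroups and super Harish-Chandra pairs \cite{Rogers:2007,Tuynman:2016} to the level of representations, verifying at each stage that the superunitary and analytic data are preserved. The structural backbone is the global trivialisation of Proposition \ref{prop-diffeom}: every $g\in G$ factors uniquely as $g=\gamma\, e^{X}$ with $\gamma\in G^{\wod}$ and $X\in\kg^{(1)}=\superA_1\otimes\gB\kg_1\subseteq\kg_0$. Since $G^{\wod}$ is the $\superA_0$-scalar extension of $\gB G$ and $e^{\kg^{(1)}}$ encodes the purely odd directions, one expects the representation $\pi$ of $G$ to be completely determined by the representation $\pio$ of $\gB G$ together with the action of the odd part $(\kg_\gR)_1=\gB\kg_1$ --- which is exactly the data packaged in a SUR of the pair $(\gB G,\gB\kg)$ in the sense of Definition \ref{def-reprhc}.

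First I would build the functor from pair-SUR's to supergroup-SUR's. Starting from $(\ehH,\pio,\piG)$, I extend $\piG$ by $\superA$-linearity to an $\superA$-Lie superalgebra morphism $\kg\to\End_\superA(\superA\otimes\ehH^\infty)$, still denoted $\piG$. For $X\in\kg^{(1)}$ the operator $\piG(X)$ is nilpotent: writing $X=\sum_{i=1}^{k}a_i\otimes Y_i$ with $a_i\in\superA_1$ and $Y_i\in(\kg_\gR)_1$, supercommutativity forces $a_{i_1}\cdots a_{i_n}=0$ as soon as $n>k$, so that $\exp(\piG(X))=\sum_{n}\tfrac{1}{n!}\piG(X)^{n}$ is a finite sum defining an even $\superA$-linear operator. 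Using the factorisation $g=\gamma e^{X}$, I set $\pi(g):=\widetilde{\pio}(\gamma)\exp(\piG(X))$, where $\widetilde{\pio}$ is the canonical $\superA_0$-smooth extension of $\pio$ to $G^{\wod}$ obtained from \eqref{Function:Rm0}. That $\pi(g)\in\caU_\superA(\superA\otimes\ehH^\infty)$ reduces to a sign computation using \eqref{eq-abilin} together with the axiom $\piG(Y)^{\dag}=-\piG(Y)$, which makes $\exp(\piG(X))$ superunitary; smoothness of $g\mapsto\pi(g)v$ is inherited from that of $\pio$ and the polynomial dependence of $\exp(\piG(X))$ on $X$.

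Conversely, from a supergroup-SUR $(\ehH,\pio,\pi)$ I would recover the pair data by differentiation: differentiating $\pi$ produces its associated Lie superalgebra representation $\kg\to\End_\superA(\superA\otimes\ehH^\infty)$, which, being $\superA$-linear and body-compatible (Remark \ref{rmk:smoothlinear}), is the $\superA$-linear extension of a unique map $\piG\colon\kg_\gR\to\End(\ehH^\infty)$ agreeing with $d\pio$ on $(\kg_\gR)_0$. The axioms of Definition \ref{def-reprhc} then translate directly from the group-morphism and superunitarity properties of $\pi$: the $\gR$-Lie superalgebra morphism property and the relation $\piG(\Ad_g X)=\pio(g)\piG(X)\pio(g)^{-1}$ are the infinitesimal and conjugated forms of the multiplicativity of $\pi$, while $\piG(X)^{\dag}=-\piG(X)$ comes from superunitarity. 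Functoriality is immediate, since morphisms on both sides are intertwiners of the common $\gB G$-representation compatible with the extra structure, and the two constructions are mutually inverse by the uniqueness of the exponential/differentiation correspondence; this yields part $(i)$. For part $(ii)$, the extra conditions match on the nose: unitarisability is a condition on the shared datum $\pio$, and under the equivalence $\DG$ corresponds to the odd derived pair $(\DGR,\DgR)$ with $\ehH^\infty_{\gB\DG}=\ehH^\infty_{\DGR}$, so the two notions of restriction (to $\DG$, Definition \ref{def-reprsg}, resp.\ to $(\DGR,\DgR)$, Section \ref{subsec-strongSURHC}) are interchanged by the functors above.

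\emph{The main obstacle} is to verify that the prescription $\pi(\gamma e^{X})=\widetilde{\pio}(\gamma)\exp(\piG(X))$ is genuinely a group morphism. Rewriting a product $g_1 g_2$ in normal form $\gamma_3 e^{X_3}$, one must reproduce $\widetilde{\pio}(\gamma_3)\exp(\piG(X_3))$ out of $\widetilde{\pio}(\gamma_1)\exp(\piG(X_1))\widetilde{\pio}(\gamma_2)\exp(\piG(X_2))$. This forces one to match the Baker--Campbell--Hausdorff recombination governing $(\gamma_1,X_1)(\gamma_2,X_2)$ in $G$ with the corresponding operator identity, the crucial tool being $\piG(\Ad_\gamma X)=\widetilde{\pio}(\gamma)\piG(X)\widetilde{\pio}(\gamma)^{-1}$. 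This is precisely where the Harish-Chandra compatibility axiom is consumed, and where the explicit reconstruction of $G$ from $(\gB G,\gB\kg)$ (Remark \ref{Rmk:SupergrouptoSHC}) enters; carrying it out carefully is the technical heart of the argument.
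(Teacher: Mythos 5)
Your proposal is correct and follows essentially the same route as the paper: the trivialisation $g=g_0e^{X}$ of Proposition \ref{prop-diffeom}, the smooth $\superA_0$-extension of $\pio$ to $G^{\wod}$, the nilpotent exponentiation of the $\superA$-linearly extended $\piG$ on $\kg^{(1)}$, differentiation for the inverse functor, the graded BCH identity combined with $\Ad$-equivariance for multiplicativity, and the observation that unitarisability and restriction to the odd derived object match term by term for part $(ii)$. The step you flag as the technical heart is exactly the computation the paper carries out with Proposition \ref{Prop:BCH} and Equation \eqref{GroupLaw:BCH}.
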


\begin{proof}
We need lemmas.  

\begin{lemma}\label{lem:pitopio}
The SUR's of  $\gB G$ and $G^\wod$ are 
in $1:1$ correspondence via 
the map $(\ehH,\pio)\mapsto(\ehH,\pio,\piwod)$,
where $\piwod$ is given by
\begin{equation}\label{piotopi}
\forall \vv\in\ehH^\infty,\ \forall g\in G^\wod,\quad \piwod(g)\vv:=
\widetilde{\pio^\vv}(g),
\end{equation}
with $\widetilde{\pio^\vv}\in\caC^\infty(G^\wod,\superA\otimes\ehH^\infty)$ 
the smooth extension of the map $\pio^\vv\in\caC^\infty(\gB G,\ehH^\infty)$, see \eqref{Function:Rm0}.
\end{lemma}

\begin{proof}
If $(\ehH,\pio,\pi)$ is a SUR of $G^\wod$
then $(\ehH,\pio)$ is obviously a SUR of $\gB G$.
Conversely, assume that $(\ehH,\pio)$ is a SUR of $\gB G$.
Let $\piwod$ be defined by Equation \eqref{piotopi}.
For all $g,g'\in\gB G$ and $\vv,\vw\in\ehH^\infty$, we have
\begin{equation*}
\piwod(g)(\vv+\vw)=\piwod(g)\vv+\piwod(g)\vw,\quad
\piwod(g)(\piwod(g')\vv)=\piwod(gg')\vv \quad \text{and}\quad
\langle\piwod(g)\vv,\piwod(g)\vw\rangle=\langle\vv,\vw\rangle.
\end{equation*}
By definition, two smooth functions in 
$\caC^\infty(G^\wod,\superA\otimes\ehH^\infty)$
that coincide on $\gB G$ are equal. Hence, the above equalities
hold for any $g,g'\in G^\wod$ and 
$(\ehH,\pio,\piwod)$ is then a SUR of $G^\wod$.
Moreover, if $(\ehH,\pio,\pi)$ is a SUR of $G^\wod$,
then, by definition, $\pi(-)\vv$ must be a smooth extension
of $\pio(-)\vv$. This means that $\pi=\piwod$. Therefore, the correspondence
between SUR's of $\gB G$ and $G^\wod$ is one-to-one.
\end{proof}

\begin{lemma}\label{Lem:expandpi}
Let $(\ehH,\pio,\pi)$ be a SUR of $G$.
There exists a unique map $d\pi:\gB \kg\to\End(\ehH^\infty)$
whose $\superA$-linear extension to $\kg$ satisfies
\begin{equation*}
\forall X\in \kg_0,\ \forall \vv\in\ehH^\infty,\quad d\pi(X)\vv
:=\left.{\frac{d}{dt}}\right|_{0} \pi(e^{tX})\vv.
\end{equation*}
The map $d\pi$ is a $\gR$-Lie superalgebra morphism which satisfies 
Equation \eqref{Eq:HC-rep}. 
\end{lemma}
\begin{proof}
Consider $v\in\ehH^\infty$. Since the exponential map $X\mapsto e^X$ 
is smooth in a neighborhood of $0\in\kg_0$, the map $X\mapsto d\pi(X)\vv$ is smooth on $\kg_0$.
 Routine arguments, similar as for strongly continuous 
representations of Lie groups, show that 
$d\pi:\kg_0\to\End(\superA\otimes\ehH^\infty)$
is a Lie superalgebra morphism which satisfies Equation \eqref{Eq:HC-rep}. 
By Remark \ref{rmk:smoothlinear}, $d\pi$ 
 is the $\superA$-linear extension of a map
$\gB \kg\to\End(\ehH^\infty)$ which shares the same properties.
Namely, this map is a $\gR$-Lie superalgebra morphism satisfying 
Equation \eqref{Eq:HC-rep}. 
\end{proof}

We are ready to prove the theorem. We first show $(i)$. 

Let $(\ehH,\pio,\pi)$ be a SUR of $G$.
By Lemma \ref{Lem:expandpi}, $\pi$ induces a linear map 
$d\pi:\gB\kg\to\End(\ehH^\infty)$ such that $(\ehH,\pio,d\pi)$ 
is a SUR of $(\gB G,\gB\kg)$. 

Conversely, let $(\ehH,\pio,\piG)$ be a SUR of $(\gB G,\gB \kg)$. 
Applying Lemma \ref{lem:pitopio},
one gets a SUR $(\ehH,\pio,\piwod)$ of $G^\wod$.
Extending $\piG$ by $\superA$-linearity, each element 
$X\in\kg^{(1)}=\superA_1\otimes\gB\kg_1$ defines a nilpotent operator
 $\piG(X)$, satisfying $\piG(X)^\dag=-\piG(X)$.
As a result, exponentiation gives a well-defined operator 
$\exp(\piG(X))\in\caU_\superA(\superA\otimes\ehH^\infty)$. 
According to Proposition \ref{prop-diffeom}, 
for each $g\in G$ there exists a unique couple  $(g_0,X)\in G^\wod\times\kg^{(1)}$
such that $g=g_0\, e^{X}$. We define the map 
$\pi:G\to \caU_\superA(\ehH^\infty\otimes\superA)$ by
\begin{equation}\label{piopiH:pi}
\pi(g):=\piwod (g_0)\exp(\piG(X)).
\end{equation}
Consider $g'\in G$ and $(g_0',X')\in G^\wod\times\kg^{(1)}$
such that $g'=g_0'\, e^{X'}$.
We use successively: Equation \eqref{piopiH:pi}, the extension  to $G^\wod$ of 
the second line in Equation \eqref{Eq:HC-rep}, Proposition \ref{Prop:BCH},
the property $\exp(d\piwod(X))=\piwod(e^X)$ if 
$X\in\superA_0\otimes(\kg_\gR)_0$ and Equation \eqref{GroupLaw:BCH},
to get
\begin{align*}
\pi(g)\pi(g')&=\piwod(g_0)\exp\big(\pi_*(X)\big)\piwod(g_0')\exp\big(\pi_*(X')\big),\\ 
&=\piwod(g_0)\piwod(g_0')\exp\big(\pi_*(\Ad_{(g_0')^{-1}}X)\big)\exp\big(\pi_*(X')\big),\\ 
&= \piwod(g_0g_0') \, \exp\big(\pi_*(\BCH_0(\Ad_{(g_0')^{-1}}X,X'))\big) \exp\big(\pi_*(\BCH_1(\Ad_{(g_0')^{-1}}X,X'))\big),\\
&= \piwod\Big(g_0g_0' \, e^{\BCH_0(\Ad_{(g_0')^{-1}}X,X')}\Big) \exp\big(\pi_*(\BCH_1(\Ad_{(g_0')^{-1}}X,X'))\big),\\
&=\pi(gg').
\end{align*}
This means that $\pi$ is a group morphism. Then, one can trivially check
that $(\ehH,\pio,\pi)$ is a SUR of $G$.

We prove now that the two constructions are inverse of each other. 
Start with a SUR $(\ehH,\pio,\piG)$ of a super Harish-Chandra
pair and let $\pi$ be given by Equation \eqref{piopiH:pi}, so that
$(\ehH,\pio,\pi)$ is a SUR of the corresponding Lie supergroup. 
By definition of SUR's of Lie supergroups and of super Harish-Chandra pairs, 
on $\gB \kg_0$, we have $d\pi=d\pi_0=\piG$. On $\gB \kg_1$, $d\pi$ is also equal to $\piG$.
Indeed, by definition of $\pi$, for all $X\in\gB \kg_1$, we have 
$$
\forall \vv\in\ehH^\infty,\quad d\pi(X)\vv=\left.{\frac{d}{dt}}\right|_{0} \pi(e^{tX})\vv=
\left.{\frac{d}{dt}}\right|_{0} \exp(\piG(tX))\vv=\piG(X)\vv.
$$ 
Conversely, consider a SUR $(\ehH,\pio,\pi)$ of a Lie
supergroup and let $\piG:=d\pi$, so that $(\ehH,\pio,\piG)$ is a SUR of 
the corresponding super Harish-Chandra pair. Then, the map defined by Equation \eqref{piopiH:pi}
coincides with $\pi$ on $G^\wod$. It coincides with $\pi$ on all of $G$ because
$\exp(d\pi(X))=\pi(e^X)$ for all $X\in\kg^{(1)}$. 

It remains  to check that, under the above correspondence 
of representations, morphisms of SUR's go to morphisms of SUR's. 
Let $(\ehH^{(j)},\pio^{(j)},\piG^{(j)})$ with $j=1,2$ be two SUR's of a super Harish-Chandra
pair and let $\pi^{(j)}$ be given by Equation \eqref{piopiH:pi}, so that
$(\ehH^{(j)},\pio^{(j)},\pi^{(j)})$ are SUR's of the corresponding Lie supergroups. 
Consider an even bounded operator $T\in\caB(\ehH^{(1)},\ehH^{(2)})$. Then, one can easily
prove the following equivalences
\begin{align*}
\forall g\in G^\wod,&\qquad 
T\circ\pio^{(1)}(\gB g)=\pio^{(2)}(\gB g)\circ T \,\Leftrightarrow\,
T\circ\piwod^{(1)}(g)=\piwod^{(2)}(g)\circ T,\\
\forall X\in \kg^{(1)}=\superA_1\otimes\gB \kg_1,&\quad 
T\circ\piG^{(1)}(\gB X)=\piG^{(2)}(\gB X)\circ T \,\Leftrightarrow\,
T\circ\exp(\piG^{(1)}(X))=\exp(\piG^{(2)}(X))\circ T.
\end{align*}
In view of Equation \eqref{piopiH:pi}, we deduce that $T$
is a morphism between the two SUR's $(\ehH^{(j)},\pio^{(j)},\piG^{(j)})$, with $j=1,2$,
if and only if $T$ is a morphism between the two SUR's $(\ehH^{(j)},\pio^{(j)},\pi^{(j)})$.
Therefore $(i)$ is proved.
\\

Strong SUR's are SUR's such that the underlying Lie group representation  
is unitarizable and the restriction to the odd derived
Lie sub-supergroup/super Harish-Chandra sub-pair is a SUR.
Hence, $(ii)$ follows from $(i)$.
\end{proof}

In the following, we freely use the equivalence of categories between 
Lie supergroups and Harish-Chandra super-pairs, and between their (strong) SUR's.  
They are denoted by $(\ehH,\pio,\pi)$ or $(\ehH,\pio,\piG)$
depending on the context and we freely refer to the other map $\piG$ or $\pi$ 
respectively.
Along the equivalence of categories stated in Theorem \ref{thm:Equivalence}, one can transport 
from super Harish-Chandra pairs to Lie supergroups
the definitions of irreducible, graded-irreducible and indecomposable (strong) SUR 
as well as direct sum or tensor product of (strong) SUR's.
This is obvious and left to the reader.

\subsection{Examples}

We illustrate definitions and properties above on examples. 
The illustrated notions are explicitly mentioned right before
each example. 
\medskip

In the next three examples we use the Abelian supergroup $\gR^{m|n}$,
whose super Harish-Chandra pair is $(\gR^m,\gR^m\oplus\gR^n)$.
Its odd derived supergroup is $\gR^{0|n}$, with super Harish-Chandra pair 
$(\algzero,\gR^n)$.

The necessity of $\superA$-scalar extension for the map $\pi$ in Definition 
\ref{Def:prerep} of SUR is illustrated on the left regular representation of $\gR^{m|n}$,
with $n=0$.

\begin{example}\label{Ex:A-extension}
The Hilbert superspace $\ehH:=L^2(\gR^{m|0})$ 
is isomorphic to $L^2(\gR^m)$ (see Example \ref{ex-superhilbert}).
Each element $\varphi\in L^2(\gR^m)$ corresponds to an element 
$\tilde \varphi\in L^2(\gR^{m|0})$. If $\varphi$ is a smooth function, 
$\tilde\varphi$ is given by Equation \eqref{Function:Rm0}.
The superspace $\ehH$ carries a SUR of the Abelian Lie group $\gR^{m}$, given by 
\begin{equation*}
\forall y\in\gR^m,\quad (\pio(y)\tilde \varphi):=\widetilde{\tau_y\varphi},
\end{equation*}
where $\tau_y\varphi:x\mapsto \varphi(x-y)$.
The space of smooth vectors $\ehH^\infty$ is then the space of smooth 
functions on $\gR^{m|0}$ whose all derivatives are square integrable.
The left regular representation of $\gR^{m|0}$ is the triple $(\ehH,\pio,\pi)$
where $\pi$ is given by
\begin{equation*}
\forall y\in\gR^{m|0},\quad (\pi(y)\tilde \varphi):=\tau_y\tilde{\varphi}=\sum_{\nu\in\gN^m}
\frac{\cN(y)^\nu}{\nu!}\,\widetilde{\big(\partial^\nu \tau_{\gB y}\varphi\big)}.
\end{equation*}
Therefore, $\pi(y)\tilde \varphi$ is not in $\ehH^\infty$ but in $\superA\otimes\ehH^\infty$.
One easily checks that $(\ehH,\pio,\pi)$ is a strong SUR of $\gR^{m|0}$. 
The corresponding infinitesimal representation of $\gR^m$ reads as 
$\piG(e_i)=-\partial_{x^i}$, where $(e_i)$ is the Cartesian basis of 
$\gR^m$ and $(x^i)$ its Cartesian coordinate system.
\end{example}

An indecomposable but reducible strong SUR 
is given by the left regular representation of the Abelian Lie supergroup
$\gR^{m|n}$, with $m=0$.

\begin{example}\label{IndecomposableReducible}
Functions in the Hilbert superspace $\ehH:=L^2(\gR^{0|n})\simeq\bigwedge \gR^n\otimes\gC$
read as $\varphi:\xi\mapsto \varphi(\xi)=\sum_\a \varphi_\a\xi^\a$ in multi-index notation,
with $\xi=(\xi^1,\ldots,\xi^n)\in\gR^{0|n}$.
We set
\begin{equation*}
\forall \eta\in\gR^{0|n},\quad (\pi(\eta) \varphi)(\xi):=\varphi(\xi-\eta).
\end{equation*}
One easily checks that $(\ehH,\gone,\pi)$ is a strong SUR of $\gR^{0|n}$. 
In the Cartesian basis $(e_i)$ of $\gR^{n}$, 
the infinitesimal representation is given by $\pi_*(e_i)=-\partial_{\xi^i}$.
Therefore, constant functions pertain to all $(\pio,\piG)$-invariant subspace
of $L^2(\gR^{0|n})$. In particular, the representation is indecomposable.
As the $1$-dimensional space of constant functions is $(\pio,\piG)$-invariant and of degree~$0$,
the representation is not graded-irreducible if $n\geq 2$.
\end{example}

There are also graded-irreducible strong SUR that admit proper closed invariant subspaces (but they are not graded).
Such SUR are not irreducible.

\begin{example}\label{GradedIrrReducible}
Functions in the Hilbert superspace $\ehH:=L^2(\gR^{0|1})\simeq\bigwedge \gC$
read as $\varphi:\xi\mapsto \varphi(\xi)= \varphi_0+\varphi_1\xi$,
with $\xi\in\gR^{0|1}$. We set
\begin{equation*}
\forall \eta\in\gR^{0|1},\quad (\pi(\eta) \varphi)(\xi):=(1-\eta\xi)\varphi(\xi-\eta).
\end{equation*}
One easily checks that $(\ehH,\gone,\pi)$ is a strong SUR of $\gR^{0|1}$,
whose infinitesimal representation is given by $\pi_*(t)=-t(\xi+\partial_{\xi})$.
Therefore, the space $\{\varphi_0(1+\xi)\,|\,\varphi_0\in\gC\}$ is the only proper 
$(\pio,\piG)$-invariant subspace of $L^2(\gR^{0|1})$. As it is non-graded,
 the representation is graded-irreducible but not irreducible.
\end{example}

The next three examples concern the Heisenberg Lie supergroup $\rH_{0|1,0}$,
whose super Harish-Chandra pair is $(\gR,(\kh_{0|1,0})_\gR)$. 
It is also known as the $1|1$-dimensional one-parameter Lie supergroup and
its odd derived supergroup is itself.
We start with the description of the left regular representation, which is a strong SUR.

\begin{example}
The left regular representation of $\rH_{0|1,0}$ on itself is 
the strong SUR $(\ehH, \pio,\pi)$ with $\ehH=L^2(\gR^{1|1})$,
\begin{align}
\forall \varphi\in \ehH, \forall y\in\gR,&\quad \pio(y)\varphi(x,\xi):=\varphi(x-y,\xi),\\
\forall \varphi\in \ehH^\infty, \forall (y,\eta)\in\rH_{0|1,0},&\quad 
\pi(y,\eta)\varphi(x,\xi):=\varphi(x-y-\frac{1}{2}\eta\xi,\xi-\eta).
\end{align}
Here, the space of smooth vectors $\ehH^\infty$ of $\pio$ is the space of 
smooth functions on $\gR^{1|1}$ whose all derivatives are in $L^2(\gR^{1|1})$.
The associated infinitesimal representation is given by 
$\piG(t,\tau)=-t\partial_x-\tau(\partial_\xi-\frac{1}{2}\xi\partial_x)$, 
for all $(t,\tau)\in(\kh_{0|1,0})_\gR$.
\end{example}

There exists strong SUR's which do not admit restriction to a Lie sub-supergroup.
In particular, we show below that the tensor product of the left regular representation of $\rH_{0|1,0}$
with itself is not a SUR for the diagonal embedding of $\rH_{0|1,0}$ into $\rH_{0|1,0}\times\rH_{0|1,0}$.

\begin{example}\label{Example:tensor}
The tensor product of the left regular representation of $\rH_{0|1,0}$ with itself 
turns out to be a strong SUR of $\rH_{0|1,0}\times\rH_{0|1,0}$.
Its restriction to the diagonal embedding of $\rH_{0|1,0}$ 
reads as 
\begin{align*}
\forall \varphi\in \ehH, \forall y\in\gR,&\quad \pio(y)\varphi(x_1,x_2;\xi_1,\xi_2):=\varphi(x_1-y,x_2-y;\xi_1,\xi_2),\\
\forall (t,\tau)\in(\kh_{0|1,0})_\gR,&\quad 
\piG(t,\tau):=-t(\partial_{x_1}+\partial_{x_2})
-\tau(\partial_{\xi_1}+\partial_{\xi_2})-\frac{1}{2}\tau(\xi_1\partial_{x_1}+\xi_2\partial_{x_2}).
\end{align*}
Positing $x_+=(x_1+x_2)/2$, $x_-=(x_1-x_2)/2$, $\xi_+=(\xi_1+\xi_2)/2$, 
and $\xi_-=(\xi_1-\xi_2)/2$ we get
\begin{align*}
\forall \varphi\in \ehH, \forall y\in\gR,&\quad \pio(y)\varphi(x_+,x_-;\xi_1,\xi_2):=\varphi(x_+-y,x_-;\xi_+,\xi_-),\\
\forall (t,\tau)\in(\kh_{0|1,0})_\gR,&\quad 
\piG(t,\tau):=-t\partial_{x_+}-\tau\big(\partial_{\xi_+}-\frac{1}{2}(\xi_+\partial_{x_+}+\xi_-\partial_{x_-}\big).
\end{align*}
This does not define a SUR of $\rH_{0|1,0}$.
Indeed, the space of smooth vectors on $\rH_{0|1,0}$ is the space of functions
on $\gR^{2|2}$ whose all derivatives along $\partial_{x_+}$ are square-integrable,
while the unbounded operators $\piG(t,\tau)$ are not well defined on this space in general
as they involved the derivatives $\partial_{x_-}$.
\end{example}

There are examples of representations of $\rH_{0|1,0}$ which are SUR or not depending 
on the considered Hilbert superspace structure.

\begin{example}
Let  $(\ehH,\pio)$ be the SUR of the group $\gR$ on $\ehH=L^2(\gR^{1|1})$
given by $\pio(t)=\exp(i\hbar t)\gone$ for all $t\in\gR$.
Its space of smooth vectors is $\ehH^\infty=\ehH$.
We set $\mathcal{D}:=\{\varphi_0+\xi\varphi_1'\ | \ \varphi_0,\varphi_1\in H^1(\gR)\}$, with 
$H^1(\gR)$ the Sobolev space of square integrable functions with square integrable derivative.
We define the map $\piG:(\kh_{0|1,0})_\gR\to \End(\mathcal{D})$ by setting
$$
\piG(t,\tau)\varphi:=t(i \hbar \varphi) + \tau(-\tfrac{i\hbar}{2}\varphi_1+\xi\varphi_0'),
$$
for all $\varphi=\varphi_0+\xi\varphi_1'\in\mathcal{D}$ and $(t,\tau)\in(\kh_{0|1,0})_\gR$.
The domain of $\piG(t,\tau)$ is equal to $\mathcal{D}$ and cannot be extended to $\ehH$,
so that $(\ehH,\pio,\piG)$ does not define a SUR of $\rH_{0|1,0}$ in any way.
Nevertheless, $\mathcal{D}$ is a Hilbert superspace for the inner product
$$
\langle\varphi_0+\xi\varphi_1',\psi_0+\xi\psi_1'\rangle_{\mathcal{D}}:=\int_\gR i(\varphi_0\psi_1-\varphi_1\psi_0+\varphi_0'\psi_1'-\varphi_1'\psi_0'),
$$
and one easily checks that $(\mathcal{D},\pio,\piG)$ is a strong SUR of $\rH_{0|1,0}$.
\end{example}


\section{Schr\"odinger representations of the Heisenberg supergroups}

\subsection{Construction as an induced representation in the even case}

In this section, we construct an induced strong SUR of the Heisenberg supergroup $\rH_{2m|\signp,\signq}$
via the orbit method of Kirillov, in the case $\signp+\signq$ is even.

First, we  build the smooth induced representation 
 of $\rH_{2m|\signp,\signq}$ which is associated with 
the coadjoint orbit $\caO_{\mu_0}$, passing through 
the even linear form $\mu_0=\hbar\dual{Z}$ with $\hbar\in\gR^\times$ (see \eqref{coad-action}).
By Equations \eqref{heis:normal} and \eqref{decompo:E},
we have the decomposition
\begin{equation}\label{decompo:g}
\kh_{2m|\signp,\signq}=(W\oplus W^*)\oplus V\oplus\superA Z,
\end{equation}
where $W=\superA^{m|r}$ and $V=\superA^{0|s'}$, with $r,s'\in\gN$ as in \eqref{def:rs}.
As $\signp+\signq$ is even, $s'=2s$ is also even.
A complex structure on $V$, given by $I\in\End(V)$ such that $I^2=-\gone$,
provides an eigenspace decomposition $V\otimes\gC=V_I\oplus\overline{V_I}$
with $V_I=(\gone+iI)(V\otimes\gC)$.  
We define a complex polarization $\kb$ subordinate to $\mu_0$
by setting
\begin{equation}\label{polab}
\kb:=W^*\otimes\gC\oplus \overline{V_I}\oplus\superA_\gC Z,
\end{equation}
i.e., $\kb$ is a maximal complex Lie sub-superalgebra $\kb\leq(\kh_{2m|\signp,\signq}\otimes\gC)$ 
such that $\mu_0([X,Y])=0$ for all $X,Y\in\kb$.
The real part of the polarization is given by  $\kb\cap\kh_{2m|\signp,\signq}=W^*\oplus\superA Z$
and exponentiates as the Abelian Lie supergroup $B=(W^*)_0\times\gR^{1|0}$.
The latter admits a unitary character $\chi:B\to\superA_\gC$ determined by $\mu_0$:
\begin{equation*}
\forall p\in(W^*)_0,\ \forall t\in\gR^{1|0},\quad \chi(p,t):=e^{i\mu_0(p+tZ)}=e^{i\hbar t}.
\end{equation*}
This permits to define the space of $B$-equivariant functions on $G:=\rH_{2m|\signp,\signq}$,
\begin{equation*}
\caC^\infty(G)^{B}:=\{\hat\varphi\in\caC^\infty(G)\;| \; \forall g\in G,\ 
\forall b\in B,\ \hat\varphi(gb)=\chi(b^{-1})\hat\varphi(g)\}.
\end{equation*}
The left regular $G$-action, given by $(\l(g)\hat\varphi)(g')=\hat\varphi(g^{-1} g')$ 
for all $g,g'\in G$, does not preserve the space $\caC^\infty(G)^{B}$ itself,
but it extends as a $\superA$-linear $G$-action on 
$\superA\otimes\caC^\infty(G)^{B}$.
 This representation can be restricted to the 
subspace of $\superA$-valued $\kb$-equivariant functions, that is the tensor product
of $\superA$ with the space
\begin{equation*}
\caC^\infty(G)^{B,\kb}:=\{\hat\varphi\in\caC^\infty(G)^B\;| \; \forall X\in\kb_0,\ \hat X\hat\varphi+i\mu_0(X)\hat\varphi=0\}.
\end{equation*}
Here, $\hat X$ denotes the left invariant vector field which acts
as $(\hat X\hat\varphi)(g)=\left.\frac{d}{dt}\right|_{0}\hat\varphi( g\, e^{tX})$,
for all $g\in G$.
As the functions $\hat\varphi$ are $B$-equivariant, it is equivalent 
to impose the $\kb$-equivariance condition only for $X\in \big(\overline{V_I}\big)_0$.
The resulting $G$-representation on $\superA\otimes\caC^\infty(G)^{B,\kb}$
is independent of the choice of polarization $\kb$ 
and called the {\defin $\mu_0$-induced representation}.
It admits the following explicit description.

\begin{proposition}\label{Prop:U}
Let $\hbar\in\gR^\times$ and $m,\signp,\signq\in\gN$ with $\signp+\signq$ even.
Set $\mu_0=\hbar\dual{Z}\in(\kh_{2m|\signp,\signq})^*$ and 
$\varepsilon,r,s'$  as in \eqref{def:rs}, so that $s=s'/2\in\gN$.
The $\mu_0$-induced representation is equivalent to
the $\rH_{2m|\signp,\signq}$-representation on 
$\superA\otimes\left(\caC^\infty(\gR^{m|r})\otimes \Hol(\gC^{0|s})\right)$ given by $U:g\mapsto U(g)$ with
\begin{equation}\label{eq-schro}
\left(U(g)\varphi\right)(q_0,\zeta_0):=
e^{i\hbar \Big(t+(\frac12q-q_0)p+\frac{\eps}{2}(\frac12\zeta-\zeta_0)\bzeta\Big)}
\varphi(q_0-q,\zeta_0-\zeta).
\end{equation}
Here, 
$\varphi\in\superA\otimes\left(\caC^\infty(\gR^{m|r})\otimes \Hol(\gC^{0|s})\right)$,
$(q_0,\zeta_0)\in \gR^{m|r}\oplus \gC^{0|s}$ and $g\in \rH_{2m|\signp,\signq}$ decomposes as
\begin{equation}\label{decompog}
g=(\xx,t)\in\gR^{2m|\signp+\signq}\times\gR^{0|1},
\quad \xx=(q,p,\xi)\in \gR^{m|r}\times(\gR^{m|r})^*\times\gR^{0|2s}, \quad \zeta=\xi+iI\xi\in \gC^{0|s},
\end{equation}
with $I=\left(
\begin{smallmatrix} 0 & 1\\ -1 & 0\end{smallmatrix}\right)$
on $\gR^{0|s}\oplus\gR^{0|s}$ and $\gC^{0|s}$ is identified with $(\gone+iI)(\gR^{0|2s})$.
\end{proposition}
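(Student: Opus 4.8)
The plan is to trivialize the induced representation over a configuration section and then transport the left regular action $\l(g)$ along this trivialization. I parametrize $g=(\xx,t)\in\rH_{2m|\signp,\signq}$ by $\xx=(q,p,\xi)\in\gR^{m|r}\times(\gR^{m|r})^*\times\gR^{0|2s}$ as in \eqref{decompog}. Using the group law \eqref{GroupLaw} together with the expression \eqref{decompo:om} for $\om$, any $g$ factors as $g=(q,0,\xi,0)\,(0,p,0,t-\tfrac12 qp)$, where the second factor lies in the real-polarization subgroup $B=(W^*)_0\times\gR^{1|0}$. Since $\chi(0,p,0,s)=e^{i\hbar s}$, the $B$-equivariance of $\hat\varphi\in\caC^\infty(G)^{B}$ yields $\hat\varphi(q,p,\xi,t)=e^{-i\hbar(t-\frac12 qp)}\,\hat\varphi(q,0,\xi,0)$, so that $\hat\varphi$ is completely determined by its restriction to the section $\{(q,0,\xi,0)\}$, a function of $(q,\xi)\in\gR^{m|r}\times\gR^{0|2s}$.

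Next I would feed in the $\kb$-equivariance. As $\mu_0=\hbar\dual{Z}$ vanishes on $V$, the condition $\hat X\hat\varphi+i\mu_0(X)\hat\varphi=0$ reduces, for $X\in(\overline{V_I})_0$, to $\hat X\hat\varphi=0$, i.e.\ to the Cauchy--Riemann equations along the anti-holomorphic directions of the complex structure $I$. This forces the restriction $\hat\varphi(q,0,\xi,0)$ to depend holomorphically on $\zeta=\xi+iI\xi\in\gC^{0|s}$, producing a function $\varphi\in\superA\otimes\left(\caC^\infty(\gR^{m|r})\otimes\Hol(\gC^{0|s})\right)$ (cf.\ Example \ref{ex-holfct}). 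The two equivariance conditions thus set up a linear bijection $\hat\varphi\leftrightarrow\varphi$ between the $\mu_0$-induced representation space and the target space of \eqref{eq-schro}, and it remains only to identify the transported action with $U$.

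To compute $U$, I would evaluate $\l(g)\hat\varphi$ on the section: writing $\zeta_0=\xi_0+iI\xi_0$, one has $(U(g)\varphi)(q_0,\zeta_0)=\hat\varphi\big(g^{-1}(q_0,0,\xi_0,0)\big)$. The group law gives $g^{-1}(q_0,0,\xi_0,0)=\big(q_0-q,\,-p,\,\xi_0-\xi,\,-t+\tfrac12(q_0 p-\eps\,\xi\xi_0)\big)$, and applying the reconstruction formula of the first step then produces the shift $\varphi(q_0-q,\zeta_0-\zeta)$ together with an exponential cocycle whose purely bosonic part is exactly $e^{i\hbar(t+(\frac12 q-q_0)p)}$, matching \eqref{eq-schro}. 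The remaining fermionic contribution $\tfrac12\eps\,\xi\xi_0$ must be rewritten, using the holomorphy of $\varphi$, as $\tfrac{\eps}{2}\big(\tfrac12\zeta-\zeta_0\big)\bzeta$, which is the term appearing in \eqref{eq-schro}; independence of the construction from the auxiliary choice of polarization $I$ (already asserted for the $\mu_0$-induced representation) then finishes the identification.

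The main obstacle is this last reconciliation in the odd directions. Passing from the real polarization $W^*\oplus\superA Z$ to the full complex polarization \eqref{polab} replaces the naive fermionic cocycle $\tfrac12\eps\,\xi\xi_0$ by its holomorphic representative, and I must check that the pieces that are anti-holomorphic in $\zeta_0$ are absorbed by the holomorphicity of $\varphi$ and by the Kähler weight $e^{-\frac{i\hbar\eps}{2}\bzeta\zeta}$ of the Segal--Bargmann inner product \eqref{Hol-scalprod}, leaving the manifestly $\zeta_0$-holomorphic expression of \eqref{eq-schro}. The bosonic half of the argument is the classical Schr\"odinger/Kirillov computation and is routine; essentially all the genuine work sits in handling the complex polarization of the fermionic summand $V$ and in verifying that the resulting operators are well defined on $\superA\otimes\left(\caC^\infty(\gR^{m|r})\otimes\Hol(\gC^{0|s})\right)$.
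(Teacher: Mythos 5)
Your overall route coincides with the paper's: trivialize the induced representation over the section $C=\{(q,0,\xi,0)\}$, transport the left regular action through the $B$-equivariance, then impose the $\kb$-equivariance. Your factorization $g=(q,0,\xi,0)(0,p,0,t-\frac12 qp)$, the reconstruction formula, and the resulting cocycle are all correct: they reproduce exactly the paper's intermediate action $\tilde U$ on functions of $(q_0,\xi_0)$, with multiplier $e^{i\hbar(t+(\frac12 q-q_0)p+\frac{\eps}{2}\xi\xi_0)}$.

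The gap is in the fermionic step. You claim that, since $\mu_0$ vanishes on $V$, the condition $\hat X\hat\varphi=0$ for $X\in(\overline{V_I})_0$ is the plain Cauchy--Riemann equation, so that the restriction of $\hat\varphi$ to the section is holomorphic in $\zeta$. This is false: $\hat X$ is the \emph{left-invariant} vector field of the Heisenberg supergroup, and displacing $(q,\xi)$ in an odd direction $X$ produces a central component $\frac12\om(\xi,X)$ through the group law \eqref{GroupLaw}, which the $B$-equivariance of $\hat\varphi$ converts into a multiplication term. The paper computes $\hat X = X + \frac{\eps}{4}\,\om(\zeta,X)$ at points of $C$: a \emph{twisted} Cauchy--Riemann operator, whose solutions are Gaussian-dressed holomorphic functions $\tilde\varphi(q,\xi)=e^{\frac{i\eps\hbar}{4}\zeta\bzeta}\varphi(q,\zeta)$ with $\varphi\in\caC^\infty(W_0)\otimes\Hol((V_I)_0)$, not plain holomorphic functions. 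This dressing is not a technicality: conjugation by the multiplication operator $e^{\frac{i\eps\hbar}{4}\zeta\bzeta}$ is precisely what converts the real cocycle $\frac{\eps}{2}\xi\xi_0$ into the holomorphic one $\frac{\eps}{2}(\frac12\zeta-\zeta_0)\bzeta$ of \eqref{eq-schro}. Your proposed substitute --- rewriting $\frac{\eps}{2}\xi\xi_0$ ``using the holomorphy of $\varphi$'' --- cannot work: $e^{\frac{i\hbar\eps}{2}\xi\xi_0}$ and $e^{\frac{i\hbar\eps}{2}(\frac12\zeta-\zeta_0)\bzeta}$ are different multiplication operators, so no property of the functions they act on can make them equal; the two actions agree only after intertwining by the Gaussian weight. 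For the same reason the appeal to the Segal--Bargmann weight of \eqref{Hol-scalprod} is misplaced (no inner product enters Proposition \ref{Prop:U}; the relevant weight sits in the pointwise identification of the representation spaces themselves), and independence of the choice of $I$ cannot repair a mismatch that is present for each fixed $I$.
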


\begin{proof}
We proceed in two steps and use same notations as above the proposition.
First we analyze the $G$-action on
$\superA\otimes\caC^\infty(G)^{B}$ and then on 
$\superA\otimes\caC^\infty(G)^{B,\kb}$.

Let $C:=W_0\oplus V_0$. The group multiplication on $G\times G$
restricts as a global diffeomorphism $C\times B \to G$.
The latter induces an isomorphism
\begin{equation}\label{iso:GBC}
\caC^\infty(C)\to\caC^\infty(G)^{B},\quad \tilde\varphi\mapsto \hat \varphi,
\end{equation}
 where $\hat \varphi(cb)=\chi(b^{-1})\tilde\varphi(c)$ for all $c\in C$, $b\in B$.
This permits to transfer the left regular action 
on $\superA\otimes\caC^\infty(G)^{B,\kb}$ into an action 
$\tilde U$ on $\superA\otimes\caC^\infty(C) $. 
Taking $c_0=(q_0,\xi_0)\in W_0\oplus V_0$
and $cb=g=(q,p,\xi,t)\in W_0\oplus(W^*)_0\oplus V_0\oplus\gR^{1|0}$,
we have the equality
$\l(cb)\hat\varphi(c_0)=\chi(c_0^{-1}cbc^{-1}c_0)\hat\varphi(c^{-1}c_0)$.
This yields
\begin{equation*}
\left(\tilde U(g)\tilde\varphi\right)(q_0,\xi_0)=e^{i\hbar\Big(t+(\frac12q-q_0)p+\frac{\eps}{2}\xi\xi_0\Big)}\tilde\varphi(q_0-q,\xi_0-\xi).
\end{equation*}

By definition of $\mu_0$, we have $\mu_0(X)=0$ if $X\in\overline{V_I}$.
Via the map \eqref{iso:GBC}, the subspace $\caC^\infty(G)^{B,\kb}$
is isomorphic to $\caC^\infty(C)^{V_I}$, defined as the subspace of functions 
$\tilde\varphi\in\caC^\infty(C)$ such that $\hat X \tilde\varphi=0$ for all $X\in (V_I)_0$.
At the point $(q,\xi)\in C\subseteq G$, one can compute that 
$\hat X= X+\frac{\varepsilon }{4}\, \om(\zeta, X)$, 
where $X$ is identified with a constant vector field on $C$ and $\zeta=\xi+iI\xi$.
Hence, we end up with an isomorphism
\begin{equation*}
\caC^\infty(W_0)\otimes \Hol((V_I)_0)\to\caC^\infty(C)^{V_I},
\quad \varphi\mapsto \tilde \varphi,
\end{equation*}
where $\tilde\varphi(q,\xi)=e^{\frac{i\eps \hbar}{4}\zeta\bzeta}\varphi(q,\zeta)$.
Now, recall that $W_0=\gR^{m|r}$, $V=\gR^{0|2s}$ and choose
$I=\left(\begin{smallmatrix} 0 & 1\\ -1 & 0\end{smallmatrix}\right)$
as complex structure on $V$. A direct computation shows that the action 
$\tilde{U}$ on $\tilde\varphi$ induces the action $U$ on $\varphi$, as given by \eqref{eq-schro}.
\end{proof}

Using Proposition \ref{Prop:U}, we define now a SUR of the classical
Heisenberg group $\gB \rH_{2m|\signp,\signq}=\rH_{2m}$. 
Decomposing an element $g\in\rH_{2m}$ as 
$g=(q,p,t)\in \gR^m\oplus(\gR^m)^*\oplus \gR$, the operator $U_0(g):=U(g)$ 
defined in Equation \eqref{eq-schro} becomes
\begin{equation}\label{Eq:U0}
\left(U_0(g)\varphi\right)(q_0,\zeta_0)=
e^{i\hbar \Big(t+(\frac12q-q_0)p\Big)}
\varphi(q_0-q,\zeta_0).
\end{equation}
This is a well-defined operator on any function $\varphi$ in the Hilbert superspace
\begin{equation}\label{HS}
\ehH_S:=L^2(\gR^{m|r})\otimes Hol(\gC^{0|s}).
\end{equation}
According to Examples \ref{ex-superhilbert} and \ref{ex-holfct} and to Proposition 
\ref{prop-superhilbert-tensprod}, $\ehH_S$ is of same parity as $r$
and its inner product takes the following form 
\begin{equation}
\langle \varphi,\psi\rangle=(2i)^s\int\dd q\dd\zeta\dd\bzeta\ \overline{\varphi(q,\zeta)}\psi(q,\zeta)e^{\frac{i\hbar\eps}{2}\zeta\bzeta}.\label{eq-innerschro}
\end{equation}
Clearly, $U_0(g)$ is a superunitary operator and the map
$
U_0:\rH_{2m}\to \caU(\ehH_S)
$ 
defines a SUR of $\rH_{2m}$. 
We determine its space of smooth vectors.
Recall that any smooth superfunction $\varphi\in\caC^\infty(\gR^{m|n})$ is determined by its components $(\varphi_\a)_{\a\in(\gZ_2)^n}$ in the decomposition \eqref{Function:Rmn} along odd variables. These are smooth functions over $\gR^m$.

\begin{definition}
The {\defin Schwartz space} $\caS(\gR^{m|n})$ is the Fr\'echet space of 
smooth superfunctions $\varphi\in\caC^\infty(\gR^{m|n})$ whose
components $(\varphi_\a)_{\a\in(\gZ_2)^n}$ are Schwartz functions
on $\gR^m$. Its topology is provided by the seminorms
\begin{equation*}
\norm \varphi\norm_{\nu,\nu'}:=\sum_{\a\in(\gZ_2)^n}\sup_{x\in\gR^m}|x^\nu\partial^{\nu'} \varphi_\a(x)|,
\end{equation*}
indexed by multi-indices $\nu,\nu'\in\gN^m$.
\end{definition}


\begin{proposition}
\label{prop-smoothschwartz}
The space $\ehH_S^\infty$ of smooth vectors of the SUR
$(\ehH_S,U_0)$ of $\rH_{2m}$ is the Schwartz space $\caS(\gR^{m|r+s})\simeq
\caS(\gR^{m|r})\otimes Hol(\gC^{0|s})$.
\end{proposition}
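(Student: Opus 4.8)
The plan is to exploit the fact that the operators $U_0(g)$ in \eqref{Eq:U0} act by translation and multiplication only in the even variable $q_0\in\gR^m$, leaving both the odd coordinates of $\gR^{m|r}$ and the holomorphic coordinates $\zeta_0\in\gC^{0|s}$ untouched. Concretely, under the identifications $L^2(\gR^{m|r})\simeq L^2(\gR^m)\otimes\bigwedge\gR^r$ of Example \ref{ex-superhilbert} and $\Hol(\gC^{0|s})\simeq\bigwedge\gC^s$, together with the tensor-product description of Proposition \ref{prop-superhilbert-tensprod}, the representation reads $U_0=U_0^{\mathrm{cl}}\otimes\gone$, where $U_0^{\mathrm{cl}}$ is the classical Schr\"odinger representation of $\rH_{2m}$ on $L^2(\gR^m)$ and $\gone$ is the identity operator on the finite-dimensional factor $F:=\bigwedge\gR^r\otimes\bigwedge\gC^s$. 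First I would record this factorization carefully from \eqref{Eq:U0}.

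Next I would reduce the computation of $\ehH_S^\infty$ to the one for $U_0^{\mathrm{cl}}$. By Theorem \ref{thm-topo} the Hilbert topology of $\ehH_S$ is canonical, so I may fix the fundamental symmetry $J$ coming from Proposition \ref{prop-superhilbert-tensprod}; the associated scalar product $(-,-)_J$ then factors as a tensor product, and choosing an orthonormal basis $(f_j)$ of $F$ for the induced scalar product makes the decomposition $\ehH_S\simeq\bigoplus_j L^2(\gR^m)$ orthogonal, with $\norm-\norm_J$ equal to the $\ell^2$-sum of the $L^2(\gR^m)$-norms of the components. Writing $\varphi=\sum_j\varphi_j\otimes f_j$, smoothness of $g\mapsto U_0(g)\varphi$ is then equivalent to smoothness of each $g\mapsto U_0^{\mathrm{cl}}(g)\varphi_j$, since $F$ is finite-dimensional and $U_0$ acts diagonally. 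Hence $\ehH_S^\infty=\big(L^2(\gR^m)\big)^\infty\otimes F$, and its Fr\'echet structure is the product one.

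The remaining input is the classical identification $\big(L^2(\gR^m)\big)^\infty=\caS(\gR^m)$ for the Schr\"odinger representation, together with the matching of Fr\'echet topologies. Differentiating \eqref{Eq:U0} along the generators of $\rH_{2m}$ shows that, in the even variable $q_0$, the operators $d\pio$ are equal (up to nonzero constants) to multiplication by the $q_0^j$ and to the $\partial_{q_0^j}$; thus the image of $\mathfrak U(\Lie(\rH_{2m}))$ under $d\pio$ is the algebra of differential operators with polynomial coefficients on $\gR^m$, and the seminorms of $\big(L^2(\gR^m)\big)^\infty$ are the $\norm q_0^\nu\partial^{\nu'}(-)\norm_{L^2}$. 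The main obstacle is precisely this last point: proving that the family of weighted $L^2$-Sobolev seminorms is equivalent to the sup-norm Schwartz seminorms $\norm-\norm_{\nu,\nu'}$. This is the classical characterization of $\caS(\gR^m)$ as the space of smooth vectors of the Schr\"odinger representation, and I would invoke it rather than reprove it.

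Finally I would reassemble the result: $\ehH_S^\infty=\caS(\gR^m)\otimes\bigwedge\gR^r\otimes\bigwedge\gC^s$. Since $\caS(\gR^{m|r})\simeq\caS(\gR^m)\otimes\bigwedge\gR^r$ by the very definition of the Schwartz space and $\Hol(\gC^{0|s})\simeq\bigwedge\gC^s$, this gives the first identification $\ehH_S^\infty\simeq\caS(\gR^{m|r})\otimes\Hol(\gC^{0|s})$; and absorbing the holomorphic odd variables into real ones via $\bigwedge\gR^s\otimes\gC\simeq\bigwedge\gC^s$ yields the second one, $\simeq\caS(\gR^{m|r+s})$. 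All these identifications are through finite-dimensional factors carrying the trivial action, so the Fr\'echet topologies match automatically.
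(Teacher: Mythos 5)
Your proposal is correct and follows essentially the same route as the paper: decompose $\ehH_S$ along the odd variables into finitely many copies of $L^2(\gR^m)$, observe from \eqref{Eq:U0} that $U_0$ acts componentwise as the classical Schr\"odinger representation of $\rH_{2m}$, and invoke the classical fact that its smooth vectors are exactly $\caS(\gR^m)$. The paper compresses this into one sentence (citing Reed--Simon for the classical result), while you additionally spell out the factorization $U_0=U_0^{\mathrm{cl}}\otimes\gone$ and the matching of Fr\'echet topologies, which is a harmless elaboration of the same argument.
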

\begin{proof}
In view of the action \eqref{Eq:U0} of $\rH_{2m}$, the smooth vectors are the functions
$\varphi\in\ehH_S$ whose components $\varphi_\a\in L^2(\gR^m)$ are smooth vectors for the
usual Schr\"odinger representation of $\rH_{2m}$, i.e. Schwartz functions 
(see e.g. \cite{Reed:1980}). 
\end{proof}

We go back to the full Heisenberg supergroup $\rH_{2m|\signp,\signq}$.
One easily shows that the operator $U(g)$, defined by Equation \eqref{eq-schro},
preserves $\superA\otimes\ehH_S^\infty$ and is superunitary. Hence, we get
a group morphism $U:\rH_{2m|\signp,\signq}\to\caU_\superA(\superA\otimes\ehH_S^\infty)$.
Note that the odd derived supergroup satisfies 
$\D(\rH_{2m|\signp,\signq})\simeq\rH_{0|\signp,\signq}$, so that its body is 
the center group $Z(\rH_{2m|\signp,\signq})=\gR^{1|0}$.

\begin{theorem}
\label{thm-suprepheis}
Let $\hbar\in\gR^\times$ and $m,\signp,\signq\in\gN$ with $\signp+\signq$ even.
Set $\varepsilon,r,s'$  as in \eqref{def:rs}. The Equations \eqref{HS}, \eqref{Eq:U0} and \eqref{eq-schro}
define a strong SUR $(\ehH_S,U_0,U)$ of the Heisenberg supergroup $\rH_{2m|\signp,\signq}$, 
called the {\defin Schr\"odinger representation} with parameter $\hbar$.
\end{theorem}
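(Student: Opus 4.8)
The plan is to verify that the triple $(\ehH_S,U_0,U)$ satisfies Definition~\ref{def-reprsg}, i.e.\ that it is first a SUR (Definition~\ref{Def:prerep}) and then that it is strong. Most of the SUR axioms are already in place: by Examples~\ref{ex-superhilbert} and \ref{ex-holfct} together with Proposition~\ref{prop-superhilbert-tensprod}, $\ehH_S$ is a Hilbert superspace of parity $r$ mod $2$; the map $U_0$ is a superunitary representation of $\gB\rH_{2m|\signp,\signq}=\rH_{2m}$ on $\ehH_S$; and, as noted just above the statement, $U$ is a group morphism into $\caU_\superA(\superA\otimes\ehH_S^\infty)$ preserving $\superA\otimes\ehH_S^\infty$. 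It remains to check the two conditions in Definition~\ref{Def:prerep}. The body-compatibility $\pi(\gB g)v=\pio(\gB g)v$ holds by construction, since $U_0$ is exactly the restriction of \eqref{eq-schro} to $\rH_{2m}$, cf.\ \eqref{Eq:U0}. For smoothness, I would fix $\varphi\in\ehH_S^\infty=\caS(\gR^{m|r})\otimes\Hol(\gC^{0|s})$ (Proposition~\ref{prop-smoothschwartz}) and observe that, by \eqref{eq-schro}, the map $g\mapsto U(g)\varphi$ is the composition of a translation in the $(q_0,\zeta_0)$-variables by the supergroup parameters with multiplication by an exponential whose argument is polynomial in those parameters; both operations depend smoothly on $g$ and preserve the Fréchet space $\superA\otimes\ehH_S^\infty$, so $g\mapsto U(g)\varphi$ lies in $\caC^\infty(\rH_{2m|\signp,\signq},\superA\otimes\ehH_S^\infty)$. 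This is routine but is best written out with the Schwartz seminorms. With these two points, $(\ehH_S,U_0,U)$ is a SUR, and by Lemma~\ref{Lem:expandpi} its infinitesimal representation $\piG:=dU$ is an $\gR$-Lie superalgebra morphism satisfying \eqref{Eq:HC-rep}, in particular $\piG(X)^\dag=-\piG(X)$. By Theorem~\ref{thm:Equivalence}(ii) it then suffices to establish that the corresponding super Harish-Chandra pair datum $(\ehH_S,U_0,\piG)$ is strong, i.e.\ that $(\ehH_S,U_0)$ is unitarizable and admits a restriction to the odd derived pair $(\DGR,\DgR)$.

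For unitarizability, writing $L^2(\gR^{m|r})\simeq L^2(\gR^m)\otimes\bigwedge\gR^r$, the inner product factorizes so that a fundamental symmetry of $\ehH_S$ may be taken of the tensor form $J=\gone_{L^2(\gR^m)}\otimes J_1\otimes J_2$, where $J_1$ is the Hodge-type symmetry of $\bigwedge\gR^r$ (Example~\ref{ex-superhilbert}) and $J_2$ the standard symmetry of $\Hol(\gC^{0|s})$ (Example~\ref{ex-holfct}). Formula~\eqref{Eq:U0} shows that every operator $U_0(g)$ acts only on the $L^2(\gR^m)$ tensor factor, by translating the bosonic variable and multiplying by a phase, and leaves the Grassmann and holomorphic variables untouched; hence $U_0(g)$ commutes with $J$. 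By the unitarizability criterion stated in the Remark following the definition of unitarizable SUR, $(\ehH_S,U_0)$ is therefore unitarizable.

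Finally, for the restriction I would use that $\D(\rH_{2m|\signp,\signq})\simeq\rH_{0|\signp,\signq}$, whose body $\DGR$ is the centre $Z(\rH_{2m})=\gR$. On this centre, \eqref{Eq:U0} gives $U_0(0,0,t)=e^{i\hbar t}\gone$, so the action is by scalars and every vector is smooth for it: $(\ehH_S)^\infty_{\DGR}=\ehH_S$ with its Hilbert topology. The decisive point is that the odd generators $\piG(X)$ with $X\in(\kg_\gR)_1=\gR^{\signp+\signq}$ are bounded on $\ehH_S$. Indeed, differentiating \eqref{eq-schro} at the identity shows that these operators involve only the finitely many odd coordinates: multiplication by and differentiation with respect to the Grassmann variables of $\gR^{m|r}$ (coming from $W\oplus W^*$) and the holomorphic variables $\zeta_0$ (coming from $V$). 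Such operators act within the finite-dimensional factors $\bigwedge\gR^r$ and $\bigwedge\gC^s$, hence extend as bounded operators on $\ehH_S$. Consequently each $\piG(X)$, $X\in(\kg_\gR)_1$, is continuous for the Fréchet (here Hilbert) topology of $(\ehH_S)^\infty_{\DGR}=\ehH_S$, and since $\DGR\trianglelefteq\rH_{2m}$, Proposition~\ref{Prop:restriction} yields a unique restriction of the SUR to $(\DGR,\DgR)$, i.e.\ to $\D(\rH_{2m|\signp,\signq})$. Together with unitarizability this shows $(\ehH_S,U_0,U)$ is a strong SUR. The main obstacle is precisely this boundedness verification, where the finite-dimensionality of the fermionic factors $\bigwedge\gR^r$ and $\bigwedge\gC^s$ is what renders the odd generators bounded, in sharp contrast with the unbounded even Schrödinger position and momentum operators; the smoothness check of the first paragraph is the only other place requiring genuine, if routine, analytic care.
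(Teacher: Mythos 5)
Your proof is correct, and its first half (the SUR verification) matches the paper's own argument step for step. Where you diverge is in establishing strongness. The paper stays entirely at the supergroup level: it observes that the centre acts by $e^{i\hbar t}\gone$, so the smooth vectors over $\gB\,\D(\rH_{2m|\signp,\signq})=Z(\rH_{2m|\signp,\signq})$ are all of $\ehH_S$, and then reads off directly from formula \eqref{eq-schro} that $U(g)$ is well defined on $\superA\otimes\ehH_S$ for $g\in\D(\rH_{2m|\signp,\signq})$ and that $g\mapsto U(g)\varphi$ is smooth there, i.e.\ it exhibits the restriction as a supergroup SUR by hand. You instead pass through Theorem \ref{thm:Equivalence}(ii) to the super Harish-Chandra pair and invoke the general criterion of Proposition \ref{Prop:restriction}: $\DGR$ is normal in $\rH_{2m}$, $\ehH^\infty_{\DGR}=\ehH_S$, and the odd generators $\piG(X)$ are bounded because they act only within the finite-dimensional fermionic factors $\bigwedge\gR^r$ and $\bigwedge\gC^s$. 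Both mechanisms are valid; yours leans on the general restriction machinery (which needs unitarizability and normality as inputs), while the paper's is more self-contained and purely computational. A genuine merit of your write-up is that you verify unitarizability explicitly via a commuting fundamental symmetry: the definition of strong SUR requires it, yet the paper's proof never addresses it (it is evident from \eqref{Eq:U0} that $U_0(g)$ acts only on the $L^2(\gR^m)$ factor, but this is left unsaid). One cosmetic caveat: by Equation \eqref{Eq:J} the fundamental symmetry of the tensor product carries degree-dependent signs, so it is not literally $\gone\otimes J_1\otimes J_2$; since those signs depend only on the degrees, which the degree-zero operators $U_0(g)$ preserve while acting trivially on the graded factors, your commutation argument goes through unchanged.
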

\begin{proof}
We already know that $(\ehH_S,U_0)$ is a SUR of $\rH_{2m}$, 
$U$ is a group morphism $U:\rH_{2m|\signp,\signq}\to\caU_\superA(\superA\otimes\ehH_S^\infty)$
and by definition  $U(g)\varphi=U_0(g)\varphi$ if $g\in\rH_{2m}$ and $\varphi\in\ehH_S^\infty$.
Moreover, by Equation \eqref{eq-schro}, the function $g\mapsto U(g)\varphi$ is smooth on 
all of $\rH_{2m|\signp,\signq}$ if $\varphi\in\ehH_S^\infty$. 
Hence, the triple $(\ehH_S,U_0,U)$ is a SUR of $\rH_{2m|\signp,\signq}$. 

The center $Z(\rH_{2m|\signp,\signq})=\gR^{1|0}$ acts by $t\mapsto e^{i\hbar t}\gone$.
Therefore, the space of smooth vectors on the center is $\ehH_S$ itself.
Using Equation \eqref{eq-schro},
we deduce that the operator $U(g)$ is well-defined on all of $\ehH_S$ if $g\in \D(\rH_{2m|\signp,\signq})\simeq\rH_{0|p,q}$.
Hence, the map $U$ restricts as a group morphism
$U^{\D(\rH)}:\D(\rH_{2m|\signp,\signq})\to\caU_\superA(\superA\otimes\ehH_S)$.
Using again Equation \eqref{eq-schro}, one proves easily that 
$g\mapsto U^{\D(\rH)}(g)\varphi$ is a smooth function on 
$\D(\rH_{2m|\signp,\signq})$ if $\varphi\in\ehH_S$. As a result, the triple 
$(\ehH_S,U_0,U)$ is a strong SUR of $\rH_{2m|\signp,\signq}$.
\end{proof}

By Lemma \ref{Lem:expandpi}, the Schr\"odinger representation generates an infinitesimal representation 
$U_*:=dU:\kh_{2m|\signp,\signq}\to\End(\superA\otimes\ehH_S^\infty)$. 
In terms of the decomposition \eqref{decompo:g}, we have 
$\ehH_S^\infty\subseteq\caC^\infty(W_0)\otimes Hol\big((V_I)_0\big)$
and
\begin{equation*}
\forall\varphi\in\ehH_S^\infty,\quad U_*(X)\varphi=
\begin{cases}X\varphi, \quad &\text{if}\; X\in W\otimes\gC\oplus V_I,\\
i\hbar\om(X,-)\varphi, 
\quad &\text{if}\; X\in W^*\otimes\gC\oplus \overline{V_I},\\
i\hbar\varphi,\quad &\text{if}\; X=Z,
\end{cases}
\end{equation*}
where $X$ acts as a constant vector field in the first line and 
$\om(X,-)\in\superA\otimes\caC^\infty(W_0)\otimes Hol\big((V_I)_0\big)$. 
More precisely, we have $\om(X,-):(q_0,\zeta_0)\mapsto\om(X,(q_0,0,\zeta_0,0))$, with 
$(q_0,0,\zeta_0,0)\in(W\oplus W^*)\oplus (V_I\oplus\overline{V_I})$
and $\om$ the $\gC$-linear extension of the defining symplectic form 
of $\kh_{2m|\signp,\signq}$.

The following result will be useful.

\begin{proposition}\label{Prop:pibounded}
Let $(\ehH,\pio,\piG)$ be a strong SUR of $\rH_{2m|\signp,\signq}$ such that $\pio(0,t)=e^{i\hbar t}\gone$
for all $(0,t)\in Z(\rH_{2m|\signp,\signq})$. Then, we have
\begin{itemize}
\item the operators $\piG(X)$ are bounded on $\ehH$ for all $X\in\gB(\kh_{2m|\signp,\signq})_1$,
\item $(\ehH,\pio,\piG)$ restricts to any Lie sub-supergroup of $\rH_{2m|\signp,\signq}$ as a strong SUR,
\item the associated supergroup representation $\pi:\rH_{2m|\signp,\signq}\to\caU_\superA(\superA\otimes\ehH)$ satisfies
$$
\pi(\xx,t)=e^{i\hbar t}\,\widetilde{\pio}(x^\ev)\sum_{\g\in(\gZ_2)^{\signp+\signq}}(x^\od)^\g\, \pi_\g,
$$
where $\xx=x^\ev+x^\od$, with $x^\ev\in\gR^{2m|0}$ and $x^\od\in\gR^{0|\signp+\signq}$, $\widetilde{\pio}(x^\ev)$ is defined in Lemma
\ref{lem:pitopio} and for all $\g\in(\gZ_2)^{\signp+\signq}$
we have $\pi_\g\in\caB(\ehH)$ and $\pi_\g(\ehH^\infty)\subseteq\ehH^\infty$.
\end{itemize}
\end{proposition}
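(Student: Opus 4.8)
The plan is to deduce the first two bullets from the single fact that the space of smooth vectors over the odd derived subgroup equals $\ehH$, and to obtain the third by unwinding the explicit reconstruction of $\pi$ from $\piG$ supplied by the equivalence of Theorem \ref{thm:Equivalence}.

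First I would record that the odd derived supergroup $\D(\rH_{2m|\signp,\signq})\simeq\rH_{0|\signp,\signq}$ has body the centre $Z(\rH_{2m|\signp,\signq})=\gR^{1|0}$, on which $\pio$ acts by $t\mapsto e^{i\hbar t}\gone$. Since $t\mapsto e^{i\hbar t}v$ is smooth for every $v\in\ehH$, every vector is smooth for this one-parameter action, whence $\ehH^\infty_{\D(\rH_{2m})}=\ehH$. The first bullet is then immediate from Proposition \ref{rmk-bounded}: with $\ehH^\infty_{\D(\rH_{2m})}=\ehH$, each operator $\piG(X)$, $X\in\gB(\kh_{2m|\signp,\signq})_1$, extends uniquely to a bounded operator on $\ehH$. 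The second bullet is Proposition \ref{prop:restriction-strong} applied to $(\ehH,\pio,\piG)$, whose hypothesis we have just verified; under Theorem \ref{thm:Equivalence} the restriction to a Lie sub-supergroup corresponds to the restriction to the associated sub-pair, which is a strong SUR.

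For the third bullet I would invoke Equation \eqref{piopiH:pi}, $\pi(g)=\widetilde{\pio}(g_0)\exp(\piG(X))$, where $g=g_0e^{X}$ is the factorisation of Proposition \ref{prop-diffeom}, with $g_0\in G^\wod$ and $X\in\kg^{(1)}=\superA_1\otimes\gB(\kh_{2m|\signp,\signq})_1$. Decomposing $\xx=x^\ev+x^\od$ and using that the defining symplectic form has no even--odd cross terms (cf.\ \eqref{eq-baseheis}) together with $\om(x^\od,x^\od)=0$ (each odd coordinate squares to zero in $\superA_1$ while the quadratic form on the odd sector is symmetric), the group law \eqref{GroupLaw} gives $e^{X}=(x^\od,0)$ with $X=x^\od$ and $g_0=(x^\ev,t)$. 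Factoring $(x^\ev,t)=(x^\ev,0)(0,t)$ and using the central character yields $\widetilde{\pio}(x^\ev,t)=e^{i\hbar t}\,\widetilde{\pio}(x^\ev)$.

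It then remains to expand $\exp(\piG(x^\od))$. Writing $x^\od=\sum_\alpha x^\od_\alpha e_\alpha$ and $Y_\alpha:=\piG(e_\alpha)$, the operator $\piG(x^\od)=\sum_\alpha x^\od_\alpha Y_\alpha$ is even and nilpotent, so its exponential is a finite sum; collecting powers and moving the anticommuting scalars $x^\od_\alpha$ to the left produces $\exp(\piG(x^\od))=\sum_{\g}(x^\od)^\g\pi_\g$, where $\pi_\g$ is, up to a sign, the ordered product of those $Y_\alpha$ with $\g_\alpha=1$. By the first bullet each $Y_\alpha$ is bounded, and by construction of $\piG$ it maps $\ehH^\infty$ into itself, so every $\pi_\g$ lies in $\caB(\ehH)$ and preserves $\ehH^\infty$; in particular the odd factor $\sum_\g(x^\od)^\g\pi_\g$ extends to $\superA\otimes\ehH$. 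I expect the main obstacle to be the sign bookkeeping of this last step: one must check that the Koszul signs from commuting each odd operator $Y_\alpha$ past each odd scalar $x^\od_\beta$ combine with the signs from reordering the product $x^\od_{\alpha_1}\cdots x^\od_{\alpha_k}$ so that all $k!$ orderings of a fixed index set contribute equally, thereby cancelling the factor $1/k!$ and leaving a single ordered monomial $\pi_\g$.
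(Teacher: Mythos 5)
Your proof is correct and follows essentially the same route as the paper's: both deduce the first two bullets from the observation that the body of the odd derived supergroup is the center, on which $\pio$ acts by the character $e^{i\hbar t}$, so that $\ehH^\infty_{\D(G_0)}=\ehH$ and Propositions \ref{rmk-bounded} and \ref{prop:restriction-strong} apply, and both obtain the third bullet from Equation \eqref{piopiH:pi} together with a finite expansion of $\exp(\piG(x^\od))$. The sign bookkeeping you flag as the main remaining obstacle is exactly what the paper disposes of in one stroke, by noting that the even operators $(x^\od)^\a\piG(e_\a)$ form a commuting, square-zero family --- commuting because for $\a\neq\b$ one has $\piG(e_\a)\piG(e_\b)+\piG(e_\b)\piG(e_\a)=\piG([e_\a,e_\b])=\om_{\a\b}\,i\hbar\,\gone=0$ in the diagonal basis \eqref{eq-baseheis}, and square-zero because $((x^\od)^\a)^2=0$ --- whence $\exp\big(\sum_\a(x^\od)^\a\piG(e_\a)\big)=\prod_\a\big(\gone+(x^\od)^\a\piG(e_\a)\big)$ and the $k!$ cancellation you anticipate is automatic.
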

\begin{proof}
The body of the odd derived supergroup $\D(\rH_{2m|\signp,\signq})$ 
is the center of $\rH_{2m|\signp,\signq}$.
By hypothesis, the space of smooth vectors on $Z(\rH_{2m|\signp,\signq})$ is the whole space $\ehH$.
Therefore, Propositions \ref{rmk-bounded} and \ref{prop:restriction-strong}
apply. We deduce that $\piG(X)\in\caB(\ehH)$ 
for all $X\in\gB(\kh_{2m|\signp,\signq})_1$ and $(\ehH,\pio,\piG)$ restricts to 
any Lie sub-supergroup of $\rH_{2m|\signp,\signq}$ as a strong SUR.
By Equation \eqref{piopiH:pi}, the couple $(\pio,\piG)$ defines a supergroup representation
given by
$$
\pi(\xx,t)=e^{i\hbar t}\,\widetilde{\pio}(x^\ev)\exp(\piG(x^\od)).
$$
Choosing a basis $(e_i)$ of $\gB(\kh_{2m|\signp,\signq})_1\cong\gR^{\signp+\signq}$,
we get a decomposition of $x^\od\in\gR^{0|\signp+\signq}$ as 
$x^\od=\sum_{i=1}^{\signp+\signq}(x^\od)^ie_i$, with $(x^\od)^i\in\caA_1$.
Since $((x^\od)^i\pi_*(e_i))_i$ is a family of bounded, nilpotent and commutative operators,
the exponential series admits a finite expansion:
$$
\exp(\piG(x^\od))=\sum_{\g\in(\gZ_2)^{\signp+\signq}}(x^\od)^\g\, \pi_\g,
$$
where, for all $\g\in(\gZ_2)^{\signp+\signq}$,
 $\pi_\g=\pm\pi_*(e_1)^{\g_1}\ldots\pi_*(e_{\signp+\signq})^{\g_{\signp+\signq}}$ 
are bounded operators which satisfy $\pi_\g(\ehH^\infty)\subseteq\ehH^\infty$, 
just as the operators $\pi_*(e_i)$.
This concludes the proof.
\end{proof}

We write explicitly the Schr\"odinger representation for $(\signp,\signq)=(1,1)$ and $(2,0)$ or $(0,2)$.

\begin{example}
If $(\signp,\signq)=(1,1)$, then we have the decomposition $\rH_{0|1,1}=W_0\oplus(W^*)_0\oplus\gR^{1|0}$, with $W_0\simeq (W^*)_0\simeq \gR^{0|1}$. The Hilbert superspace is $\ehH_S:=L^2(\gR^{0|1})$ of parity $1$ 
and its elements $\varphi$ can be decomposed as $\varphi(q_0):=\varphi_0+\varphi_1q_0$, with $q_0\in W_0$. 
The inner product has the form
\begin{equation*}
\langle\varphi,\psi\rangle=\overline{\varphi_0}\psi_1+\overline{\varphi_1}\psi_0,
\end{equation*}
and a fundamental symmetry is provided by $J\varphi(q_0)=\varphi_1+\varphi_0q_0$. 
The Schr\"odinger representation reads as $U(q,p,t)=e^{i\hbar t}(\gone+q\, U_{10}+p\, U_{01}+qp\, U_{11})$, for $q\in W_0$, $p\in(W^*)_0$, $t\in\gR^{1|0}$, with
\begin{equation*}
U_{10}:=-\partial_{q_0}=-U_{10}^\dag,\qquad U_{01}:=i\hbar q_0, 
\qquad U_{11}:=\frac{i\hbar}{2}(1-2q_0\partial_{q_0}).
\end{equation*}
The infinitesimal representation satisfies $U_*(q,p,t)=q\, U_{10}+p\, U_{01}+i\hbar t\gone$.
\end{example}

\begin{example}
If $(\signp,\signq)=(2,0)$ or $(0,2)$, then we have the decomposition 
$\rH_{0|\signp,\signq}=V\oplus\gR^{1|0}$, with $V\simeq \gR^{0|2}$. 
The Hilbert superspace is $\ehH_S:=Hol(\gC^{0|1})$ and its elements 
can be decomposed as $\varphi(\zeta_0):=\varphi_0+\varphi_1\zeta_0$,
where $\zeta_0=\xi_0+iI\xi_0\in V_I\simeq\gC^{0|1}$. 
The inner product has the form
\begin{equation*}
\langle\varphi,\psi\rangle=\eps \hbar\overline{\varphi_0}\psi_0+2i\overline{\varphi_1}\psi_1,
\end{equation*}
where $\varepsilon=1$ if $(\signp,\signq)=(2,0)$ and $\varepsilon=-1$ if $(\signp,\signq)=(0,2)$.
 A fundamental symmetry is provided by $J\varphi(\zeta_0)=\eps\varphi_0-i\varphi_1\zeta_0$. 
Moreover, $\ehH_S$ is a Hilbert superspace of parity 0, with $\text{sgn}(\ehH_S)=(1,1,0,0)$ if $\hbar\eps>0$
and $\text{sgn}(\ehH_S)=(0,1,1,0)$ if $\hbar\eps<0$. 
The representation reads as $U(\zeta,\bzeta,t)=e^{i\hbar t}(\gone+\zeta\, U_{10}
+\bzeta\, U_{01}+\zeta\bzeta\, U_{11})$, with
\begin{equation*}
U_{10}:=-\partial_{\zeta_0}=-U_{01}^\dag,\qquad U_{01}:=\frac{i\hbar\eps}{2}\zeta_0,\qquad U_{11}:=\frac{i\hbar\eps}{4}(1-2\zeta_0\partial_{\zeta_0}).
\end{equation*}
The infinitesimal representation satisfies $U_*(\zeta,\bzeta,t)=\zeta\, U_{10}
+\bzeta\, U_{01}+i\hbar t\gone$.
\end{example}

\subsection{Definition in the odd case}
\label{subsec-oddalg}

In this section, we consider the Heisenberg supergroup $\rH_{2m|\signp,\signq}$ 
with the odd dimension $\signp+\signq$ which is odd. 
The notations of Equation \eqref{decompo:g} for the decomposition of 
the Heisenberg Lie superalgebra $\kh_{2m|\signp,\signq}$ should be adapted as follows
$$
\kh_{2m|\signp,\signq}=(W\oplus W^*)\oplus V\oplus V'\oplus\superA Z,
$$
with $W=\superA^{m|r}$, $V=\superA^{0|2s}$ and  $V'=\superA^{0|1}$, where the integers $r,s'=2s+1$ and $\eps$ are as in \eqref{def:rs}.

Performing the Orbit Method of Kirillov with respect to the same complex polarization $\kb$ (see \eqref{polab}), 
we obtain a superunitary representation map
\begin{equation}
U(g)\varphi(q_0,\zeta_0,\tau_0)=e^{i\hbar\Big(t+(\frac12q-q_0)p+\frac{\eps}{2}(\frac12\zeta-\zeta_0)\bzeta+\frac{\eps}{2}\tau\tau_0\Big)}\varphi(q_0-q,\zeta_0-\zeta,\tau_0-\tau),\label{eq-schro1}
\end{equation}
where $g=(q,p,\xi,\tau,t)\in W_0\oplus(W^*)_0\oplus V\oplus V'\oplus\gR^{1|0}$ 
and $\zeta=\xi+iI\xi\in V_I$ as in Proposition \ref{Prop:U}. The extra variable $\tau$
can be considered either as a real or as a complex variable. Accordingly, the Hilbert superspace
of this representation is either $L^2(\gR^{m|r+1})\otimes Hol(\gC^{0|s})$, of parity $r+1$, 
or $L^2(\gR^{m|r})\otimes Hol(\gC^{0|s+1})$, of parity $r$.
In each case the obtained representation is a strong SUR. If $\tau$ is real and $\eps=1$ (resp. $\eps=-1$), 
this is the restriction of the Schr\"odinger representation of the supergroup $\rH_{2m|\signp+1,\signq}$ 
(resp. $\rH_{2m|\signp,\signq+1}$) to $\rH_{2m|\signp,\signq}$.
If $\tau$ is complex  and $\eps=1$ (resp. $\eps=-1$), this is the restriction of the Schr\"odinger representation of the supergroup 
$\rH_{2m|\signp,\signq+1}$ (resp. $\rH_{2m|\signp+1,\signq}$) to $\rH_{2m|\signp,\signq}$. 
Accordingly, we introduce the following definition.

\begin{definition}\label{Def:SchrodingerRep}
Let $\hbar\in\gR^\times$ and $m,\signp,\signq\in\gN$ with $\signp+\signq$ odd. 
\begin{itemize}
\item 
The {\defin Schr\"odinger representation} of $\rH_{2m|\signp,\signq}$ 
with parameter $\hbar$ and parity $0$ 
is the restriction to $\rH_{2m|\signp,\signq}$ 
of the Schr\"odinger representation  with same parameter of $\rH_{2m|\signp,\signq+1}$  if $\max(k,\ell+1)$ is even
and of $\rH_{2m|\signp+1,\signq}$ otherwise.
\item 
The {\defin Schr\"odinger representation} of $\rH_{2m|\signp,\signq}$ 
with parameter $\hbar$ and parity $1$ 
is the restriction to $\rH_{2m|\signp,\signq}$ of the Schr\"odinger representation 
 with same parameter of $\rH_{2m|\signp+1,\signq}$ if $\max(k,\ell+1)$ is odd
and of $\rH_{2m|\signp,\signq+1}$ otherwise.
\end{itemize}
\end{definition}

We write explicitly the Schr\"odinger representation for $(\signp,\signq)=(1,0)$ or $(0,1)$.

\begin{example}
\label{ex-oddschro}
If $(\signp,\signq)=(1,0)$ or $(0,1)$, then we have the decomposition $\rH_{0|\signp,\signq}=V'\oplus\gR^{0|1}$ with $V'=\gR^{0|1}$ 
and $\eps=1$ or $-1$ respectively. The elements of the representation superspace $\ehH_S$ decompose as $\varphi(\tau_0):=\varphi_0+\varphi_1\tau_0$ and the representation map reads as $U(\tau,t)=e^{i\hbar t}(\gone+\tau \, U_{1})$, with
\begin{equation*}
U_{1}:=-\partial_{\tau_0}+\frac{i\hbar}{2}\eps \tau_0.
\end{equation*}
It is superunitary with respect to two inner products of opposite parity $\sigma$. 
If $\sigma=0$, the inner product has the form
\begin{equation*}
\langle\varphi,\psi\rangle=\eps\hbar \overline{\varphi_0}\psi_0+2i\overline{\varphi_1}\psi_1,
\end{equation*}
so that $\ehH_S=Hol(\gC^{0|1})$ is of parity $0$.  If $\sigma=1$, the inner product has the form
\begin{equation*}
\langle\varphi,\psi\rangle= \overline{\varphi_0}\psi_1+\overline{\varphi_1}\psi_0,
\end{equation*}
so that $\ehH_S=L^2(\gR^{0|1})$  is of parity $1$.
In both parity, $\ehH_S$ admits exactly two $(U_0,U)$-invariant subspaces given by 
$\{(1\pm\sqrt{\frac{-i\hbar\eps}{2}}\tau_0)\varphi_0 \; | \; \varphi_0\in\gC\}$. They are not graded. Hence, in both parity the Schr\"odinger representation $(\ehH_S,U_0,U)$ is graded-irreducible but not irreducible.
\end{example}

\subsection{Algebraic properties}
\label{subsec-spinor}

In this section, we show that the Schr\"odinger representation space $\ehH_S$ of the 
Heisenberg supergroup $G=\rH_{0|\signp,\signq}$, with $\signp+\signq=2n$,
carries a natural action of the complex Clifford algebra $\mathbb{C}\ell(2n)$
and identifies with its spinor module. Then, we prove
that the Schr\"odinger representations of any Heisenberg supergroup $\rH_{2m|\signp,\signq}$ are irreducible if $\signp+\signq$ is even and graded-irreducible if $\signp+\signq$ is odd.

The infinitesimal Schr\"odinger representation of $\kg_\gR:=\gB\kh_{0|\signp,\signq}$,
with parameter $\hbar\in\gR^\times$ and parity $\sigma\in\gZ_2$, induces algebra morphisms
\begin{align*}
U_*&:\kU(\kg_\gR)\rightarrow\End_\gC(\ehH_S),\\
U_*\otimes\gone_\gC&:\kU(\kg_\gR\otimes\gC)\rightarrow\End_\gC(\ehH_S),
\end{align*}
where $\kU$ stands for the universal enveloping algebra. 

\begin{proposition}\label{Schrodinger-Clifford}
Let $\signp+\signq\in 2\gN$. The algebra morphisms $U_*$ and $U_*\otimes\gone_\gC$ are surjective, 
and induce the following algebra isomorphisms 
$$
\forall \hbar\in\gR^\times,\quad\kU(\kg_\gR)/\langle Z^2+\hbar^21_\kU\rangle\simeq\End_\gC(\ehH_S)\simeq\kU(\kg_\gR\otimes\gC)/\langle Z-i\hbar1_\kU\rangle\simeq \mathbb{C}\ell(\signp+\signq),
$$
where $Z\in Z(\kg_\gR)$ is the generator of the center of $\kg_\gR$ and $1_\kU$ is the unit of $\kU(\kg_\gR)$ and $\kU(\kg_\gR\otimes\gC)$.
\end{proposition}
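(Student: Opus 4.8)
The plan is to identify the infinitesimal Schrödinger representation with the standard realization of the Clifford algebra on its spinor module, and then track the resulting ideals. First I would recall the structure of $\kg_\gR = \gB\kh_{0|\signp,\signq}$: its odd part has basis $(e_\alpha)_{1\le\alpha\le\signp+\signq}$ with $[e_\alpha,e_\beta]=\omega_{\alpha\beta}Z$ (see \eqref{eq-baseheis}), where $\omega_{\alpha\beta}=\pm\delta_{\alpha\beta}$ encodes the signature $(\signp,\signq)$, and $Z$ is central. Setting $\Gamma_\alpha := U_*(e_\alpha)$, the morphism property together with $U_*(Z)=i\hbar\gone$ (read off from the formula $U_*(Z)=i\hbar$ in the displayed infinitesimal action) yields the relations $\Gamma_\alpha\Gamma_\beta+\Gamma_\beta\Gamma_\alpha = U_*([e_\alpha,e_\beta]_{\text{symm}})= i\hbar\,\omega_{\alpha\beta}\gone$. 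After rescaling $\gamma_\alpha := (i\hbar)^{-1/2}\Gamma_\alpha$ (absorbing the sign of $\hbar\eps$ as in Example \ref{ex-holfct}), these become the defining anticommutation relations of the complex Clifford algebra $\mathbb{C}\ell(\signp+\signq)$, which is signature-independent over $\gC$. This gives a canonical algebra morphism $\mathbb{C}\ell(\signp+\signq)\to\End_\gC(\ehH_S)$ factoring $U_*\otimes\gone_\gC$ through the quotient by $\langle Z-i\hbar 1_\kU\rangle$.

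Next I would compute dimensions to upgrade this morphism to an isomorphism. By Equation \eqref{HS}, in the purely odd case $m=0$ the Schrödinger space is $\ehH_S\simeq\Hol(\gC^{0|s})$ (parity-$0$ case) or a $\bigwedge$-space of analogous total dimension; in all cases $\dim_\gC\ehH_S = 2^{n}$ where $\signp+\signq=2n$. Hence $\End_\gC(\ehH_S)$ is the matrix algebra $\Matr_{2^n}(\gC)$ of dimension $4^n$, which matches $\dim_\gC\mathbb{C}\ell(2n)=2^{2n}=4^n$. The key point is that $\mathbb{C}\ell(2n)$ is \emph{simple} (isomorphic to $\Matr_{2^n}(\gC)$): therefore the nonzero algebra morphism $\mathbb{C}\ell(2n)\to\End_\gC(\ehH_S)$ must be injective, and injectivity plus equality of dimensions forces surjectivity and bijectivity. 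This simultaneously proves the surjectivity of $U_*\otimes\gone_\gC$ asserted in the statement, and the isomorphism $\kU(\kg_\gR\otimes\gC)/\langle Z-i\hbar 1_\kU\rangle\simeq\End_\gC(\ehH_S)\simeq\mathbb{C}\ell(\signp+\signq)$.

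For the real form I would argue as follows. The morphism $U_*:\kU(\kg_\gR)\to\End_\gC(\ehH_S)$ is $\gR$-linear and its complexification is $U_*\otimes\gone_\gC$, which is surjective; since $\End_\gC(\ehH_S)$ is already a complex algebra and the image of $U_*$ contains $U_*(Z)=i\hbar\gone$, hence contains $i\gone$, the image is a complex subalgebra, so $U_*$ itself is surjective. The kernel must contain $Z^2+\hbar^2 1_\kU$ because $U_*(Z)^2=(i\hbar)^2\gone=-\hbar^2\gone$. To see that this single element generates the kernel, I would use that $\kU(\kg_\gR)/\langle Z^2+\hbar^2\rangle$ already surjects onto $\End_\gC(\ehH_S)$ and compare dimensions of the finite-dimensional quotient, or alternatively pass to the complexification where $Z^2+\hbar^2=(Z-i\hbar)(Z+i\hbar)$ and the central character is fixed. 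The main obstacle I anticipate is precisely this last bookkeeping: verifying that $\langle Z^2+\hbar^2 1_\kU\rangle$ is the \emph{full} kernel over $\gR$ (not merely contained in it), which requires either an explicit PBW-dimension count of the quotient of $\kU(\kg_\gR)$ by this two-sided ideal showing it has $\gR$-dimension $2\cdot 4^n$, matching $\End_\gC(\ehH_S)$ viewed as a real algebra, or a clean descent argument from the complex case. Everything else is a direct translation of the Heisenberg anticommutation relations into Clifford relations together with the simplicity of $\mathbb{C}\ell(2n)$.
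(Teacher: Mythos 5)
Your proposal is correct and takes essentially the same route as the paper's proof: extract the Clifford relations from the Heisenberg bracket together with $U_*(Z)=i\hbar\gone$, invoke the universal property of $\mathbb{C}\ell(\signp+\signq)$, and conclude by dimension counting ($\dim_\gC\ehH_S=2^n$, so $\dim_\gC\End_\gC(\ehH_S)=4^n=\dim_\gC\mathbb{C}\ell(2n)$), with your appeal to simplicity of $\mathbb{C}\ell(2n)$ merely making explicit an injectivity step the paper leaves implicit. The ``main obstacle'' you flag over $\gR$ is resolved in the paper by its very first step: mapping $Z\mapsto i\hbar 1_\kU$ identifies $\kU(\kg_\gR)/\langle Z^2+\hbar^2 1_\kU\rangle$ with $\kU(\kg_\gR\otimes\gC)/\langle Z-i\hbar 1_\kU\rangle$ (the real quotient is already a complex algebra via the central element $Z/\hbar$), which is exactly the descent argument you proposed as one of your two options.
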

\begin{proof}
The isomorphism $\kU(\kg_\gR)/\langle Z^2+\hbar^21_\kU\rangle\simeq
\kU(\kg_\gR\otimes\gC)/\langle Z-i\hbar1_\kU\rangle$ is obtained by mapping $Z$ to $i\hbar1_\kU$.
The two other isomorphisms are obtained via the map $U_*\otimes  \gone_\gC$ as follows.
Since $U_*(Z)=i\hbar\gone_{\ehH_S}$, the kernel of $U_*\otimes  \gone_\gC$
contains the ideal $\langle Z-i\hbar\gone\rangle$.
By the Heisenberg commutation relation \eqref{LieHeisenberg}, 
the Clifford relations 
$$
U_*(X)U_*(Y)+U_*(Y)U_*(X)=i\hbar\om(X,Y)
$$
hold for any $X,Y\in \kg_\gR\otimes\gC/\langle Z\rangle$.
By universal property of $\mathbb{C}\ell(\signp+\signq)$, the image of 
$U_*\otimes  \gone_\gC$ is isomorphic to $\mathbb{C}\ell(\signp+\signq)$. 
This means there exists a surjection from $\kU(\kg_\gR\otimes\gC)/\langle Z-i\hbar1_\kU\rangle$
to $\mathbb{C}\ell(\signp+\signq)$ and an injection from $\mathbb{C}\ell(\signp+\signq)$
to $\End(\ehH_S)$.
The result follows then by dimension counting.
\end{proof}

We still assume that $\signp+\signq=2n$ is even. 
As in \eqref{def:rs}, we consider $\varepsilon\in\{-1,+1\}$
and $(r,s)\in\gN^2$ such that $\varepsilon=1$ and $(\signp,\signq)=(r+2s,r)$, or, $\varepsilon=-1$ and 
$(\signp,\signq)=(r,r+2s)$. 
In both cases, the Schr\"odinger representation space of $\rH_{0|\signp,\signq}$
is $\ehH_S=L^2(\gR^{0|r})\otimes Hol(\gC^{0|s})$, endowed with 
the inner product \eqref{eq-innerschro} which depends on $\varepsilon$ and the parameter $\hbar\in\gR^\times$.
As a consequence of Proposition~\ref{Schrodinger-Clifford}, 
the Hilbert superspace $\ehH_S$ satisfies $\End(\ehH_S)\simeq\mathbb{C}\ell(2n)$,
i.e.\ $\ehH_S$ is the spinor module of~$\mathbb{C}\ell(2n)$. 
There are four classes of isomorphisms of such Hilbert superspaces $\ehH_S$,
depending on the signs of $\signp\signq$ and $\eps\hbar$ :
\begin{itemize}
\item if $\signp\signq=0$ and $\hbar\eps>0$, then $\text{sgn}(\ehH_S)=(2^{n-1},2^{n-1},0,0)$;
\item if $\signp\signq=0$ and $\hbar\eps<0$, then $\text{sgn}(\ehH_S)=(2^{n-1},0,0,2^{n-1})$
if $n$ is even and $\text{sgn}(\ehH_S)=(0,2^{n-1},2^{n-1},0)$ if $n$ is odd;
\item if $\signp\signq>0$ and $\signp$ odd, then $\ehH_S$ is of parity $1$;
\item if $\signp\signq>0$ and $\signp$ even, then $\text{sgn}(\ehH_S)=(2^{n-2},2^{n-2},2^{n-2},2^{n-2})$.
\end{itemize}

\begin{remark}
The superhermitian product on $\ehH_S$ is symmetric on $(\ehH_S)_0$ and antisymmetric 
on $(\ehH_S)_1$. Therefore, this inner product is in general not a bilinear invariant of 
the spinor representation (as classified in \cite{Alekseevsky:1997}), since it is neither 
symmetric nor antisymmetric.
\end{remark}

Recall that the center of the Heisenberg supergroup is 
$Z(\rH_{2m|\signp,\signq})=\gR^{1|0}$. Consider the antidiagonal
subgroup 
\begin{equation}
K=\{(t,-t)\in Z(\rH_{2m|\signp,\signq})\times Z(\rH_{2m'|\signp',\signq'})\ | \ t\in\gR^{1|0}\}.\label{eq-antidiag}
\end{equation}
Then, we have the group isomorphism
$$
(\rH_{2m|\signp,\signq}\times \rH_{2m'|\signp',\signq'})/K\simeq \rH_{2(m+m')|\signp+\signp',\signq+\signq'}.
$$
Therefore, if a strong SUR of the direct product $\rH_{2m|\signp,\signq}\times \rH_{2m'|\signp',\signq'}$
has kernel $K$, it descends to a strong SUR of $\rH_{2(m+m')|\signp+\signp',\signq+\signq'}$.

\begin{proposition}
\label{prop-tensprodheis}
The tensor product of the Schr\"odinger representations of $\rH_{2m|\signp,\signq}$
and $\rH_{2m'|\signp',\signq'}$, with same parameter $\hbar\in\gR^\times$ and parities $\sigma$ and $\sigma'$ repsectively, is a strong SUR. It descends as a strong SUR of $\rH_{2(m+m')|\signp+\signp',\signq+\signq'}$,
 isomorphic to its Schr\"odinger representation with same parameter $\hbar$ and parity $\sigma+\sigma'$. 
\end{proposition}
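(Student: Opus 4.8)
The plan is to combine the general tensor-product result for strong SUR's with the polarization-independence of the induced construction, in three steps. First I would check that the tensor product is a strong SUR of $\rH_{2m|\signp,\signq}\times\rH_{2m'|\signp',\signq'}$ by invoking Proposition~\ref{prop-tensprodSUR}. Its only hypothesis is $(\ehH_S)^\infty_{\D(\rH_{2m})}=\ehH_S$ for each factor; this holds because $\D(\rH_{2m})$ is the center $Z(\rH_{2m})=\gR$ (as $[(\kg_\gR)_1,(\kg_\gR)_1]=\gR Z$), and by the proof of Theorem~\ref{thm-suprepheis} the center acts through the character $t\mapsto e^{i\hbar t}\gone$, so every vector is smooth for that one-parameter group. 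Thus Proposition~\ref{prop-tensprodSUR} yields a strong SUR $(\ehH_S\hat\otimes\ehH_S',\,U_0\otimes U_0',\,\piG)$, with $\ehH_S\hat\otimes\ehH_S'$ of parity $\sigma+\sigma'$ by Proposition~\ref{prop-superhilbert-tensprod}. Since the two factors share the parameter $\hbar$, the centers act by $e^{i\hbar t}\gone$ and $e^{i\hbar t'}\gone$, so the antidiagonal $K$ of \eqref{eq-antidiag} acts trivially at the level of $\pio$; moreover, because $U_*(Z)=i\hbar\gone$ in each factor, $\piG$ annihilates $(Z,-Z)$ and hence $\Lie(K)\subseteq\ker\piG$. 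By the observation preceding the statement, the strong SUR then descends to a strong SUR of $(\rH\times\rH')/K\simeq\rH_{2(m+m')|\signp+\signp',\signq+\signq'}$.

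The heart of the proof is the identification of this descended representation with the Schr\"odinger representation of the combined supergroup. Using the isomorphism $[(\xx,t),(\xx',t')]\mapsto(\xx\oplus\xx',t+t')$, I would lift $(\xx\oplus\xx',T)$ to $((\xx,T),(\xx',0))$ and compute the descended operator by multiplying the two copies of \eqref{eq-schro}; the product of exponentials reorganizes exactly into the shape of \eqref{eq-schro} for the block-diagonal symplectic form $\om\oplus\om'$. Under the identification $\ehH_S\hat\otimes\ehH_S'\simeq L^2(\gR^{m+m'|r+r'})\otimes\Hol(\gC^{0|s+s'})$ (Proposition~\ref{prop-superhilbert-tensprod} together with \eqref{Eq:TPsuperH}), this is precisely the $\mu_0$-induced representation of $\rH_{2(m+m')|\signp+\signp',\signq+\signq'}$ attached to the polarization $\kb\oplus\kb'$ (central directions identified). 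I would then verify that $\kb\oplus\kb'$ is an admissible complex polarization subordinate to $\mu_0=\hbar\dual{Z}$: it is a complex Lie sub-superalgebra, it is isotropic since each summand is isotropic in its own factor and the factors are $\om\oplus\om'$-orthogonal, and a dimension count gives maximality. Since the $\mu_0$-induced representation is independent of the chosen polarization (as recorded before Proposition~\ref{Prop:U}), it is isomorphic to the Schr\"odinger representation built from the normal-form polarization, with the same parameter $\hbar$. The parity is $\sigma+\sigma'$, which I would match with the parity of the combined Schr\"odinger representation by a short bookkeeping of signatures, using $r''=\min(\signp+\signp',\signq+\signq')\equiv r+r'\pmod2$ (the definite ranks $|\signp-\signq|$ and $|\signp'-\signq'|$ being even in the even-dimensional case); the odd-dimensional cases reduce to this one through Definition~\ref{Def:SchrodingerRep}.

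The main obstacle I anticipate is this last step: confirming that $\kb\oplus\kb'$ is a genuine polarization for the combined group and that the descended tensor product literally coincides with its induced representation, so that polarization-independence applies — together with the signature and parity bookkeeping, which is delicate precisely in the odd-dimensional cases where the parity of the Schr\"odinger representation is a matter of choice rather than forced by the dimension.
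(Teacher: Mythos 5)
Your skeleton coincides with the paper's own proof: Proposition \ref{prop-tensprodSUR} for the product group (the hypothesis $(\ehH_S)^\infty_{D(\rH_{2m})}=\ehH_S$ that you verify via the central character is exactly the point the paper settles inside the proof of Theorem \ref{thm-suprepheis}), then descent through the antidiagonal $K$, then identification of the descended representation with the Schr\"odinger representation of the combined supergroup, which the paper compresses into ``a direct computation shows''. Up to and including the descent, your argument is the paper's argument carried out more carefully.

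The gap is in the identification step. First, the polarization-independence recorded before Proposition \ref{Prop:U} is a statement about the smooth $G$-representations on $\superA\otimes\caC^\infty(G)^{B,\kb}$; no inner products or completions enter there, while the proposition asserts an isomorphism of strong SUR's, i.e., the existence of a \emph{superunitary} intertwiner. When $\eps=\eps'$ this costs nothing: the product of the two copies of \eqref{eq-schro} is literally \eqref{eq-schro} for the combined normal form and the inner products match through \eqref{Eq:TPsuperH}, which is presumably the paper's ``direct computation''. But when $\eps\neq\eps'$ (say $(\signp,\signq)=(2,0)$ and $(\signp',\signq')=(0,2)$, so that the combined normal form has $r''=2$, $s''=0$ and no $V$-part at all), the polarization $\kb\oplus\kb'$ differs genuinely from the normal-form one, and you must actually produce the intertwiner $\Hol_+(\gC^{0|1})\otimes \Hol_-(\gC^{0|1})\to L^2(\gR^{0|2})$ and check that it is superunitary; polarization-independence as stated gives you none of that. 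Second, your reduction of the odd cases ``through Definition \ref{Def:SchrodingerRep}'' cannot close the case where both factors have odd fermionic dimension: writing $\signp+\signq=2n+1$ and $\signp'+\signq'=2n'+1$, the fermionic dimension of the tensor product is $2^{n+1}\cdot 2^{n'+1}=2^{n+n'+2}$, twice that of the Schr\"odinger representation of $\rH_{2(m+m')|\signp+\signp',\signq+\signq'}$, which is $2^{n+n'+1}$; the tensor product is then Schr\"odinger tensored with a two-dimensional trivial multiplicity space, not the Schr\"odinger representation itself. This is a defect of the statement rather than of your plan (the paper only ever invokes the proposition with at least one even factor, as in Corollary \ref{cor-irreduct}), but it means no parity or signature bookkeeping can rescue that case, and your argument should be restricted to even$\otimes$even and even$\otimes$odd.
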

\begin{proof}
According to Proposition \ref{prop-tensprodSUR}, the tensor product of
any two Schr\"odinger representations of $\rH_{2m|\signp,\signq}$
and $\rH_{2m'|\signp',\signq'}$ is a strong SUR
of the direct product $\rH_{2m|\signp,\signq}\times \rH_{2m'|\signp',\signq'}$. 
If both Schr\"odinger representations have the same parameter $\hbar$, the centers
of $\rH_{2m|\signp,\signq}$ and $\rH_{2m'|\signp',\signq'}$ act  in the same way and the 
representation descends to the quotient group $\rH_{2(m+m')|\signp+\signp',\signq+\signq'}$.
A direct computation shows that the tensor product representation is indeed
the Schr\"odinger representation for the latter supergroup, with parameter $\hbar$ and parity $\sigma+\sigma'$.
\end{proof}

As a particular case of the above proposition, the Schr\"odinger representations of $\rH_{2m|\signp,\signq}$
are tensor products of Schr\"odinger representations of $\rH_{2m}$
and $\rH_{0|\signp,\signq}$.
The first one is the classical Schr\"odinger representation of $\rH_{2m}$ on $L^2(\gR^{m})$, see e.g. \cite{Folland:1989}. By Proposition \ref{Schrodinger-Clifford}, if $\signp+\signq$ is even,
the second one is the restriction to $\kh_{0|\signp,\signq}$ of the classical spinor representation of $\mathbb{C}\ell(\signp+\signq)$.
The spinor module is denoted by $S_\gC$ in the following.

\begin{theorem}
\label{thm-irreduct}
Let $\signp+\signq\in 2\gN$. The Schr\"odinger representation of $\rH_{2m|\signp,\signq}$ with parameter $\hbar\in\gR^\times$ is  irreducible.
\end{theorem}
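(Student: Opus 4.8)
The plan is to reduce everything to the tensor product structure given by Proposition~\ref{prop-tensprodheis}, which identifies the Schr\"odinger representation of $\rH_{2m|\signp,\signq}$ with the Hilbert tensor product $\ehH_S=L^2(\gR^m)\hat\otimes S_\gC$, where $L^2(\gR^m)$ carries the classical Schr\"odinger representation $U_0^{(1)}$ of $\rH_{2m}$ and $S_\gC=L^2(\gR^{0|r})\otimes\Hol(\gC^{0|s})$ is the \emph{finite-dimensional} spinor module of $\mathbb{C}\ell(\signp+\signq)$. Under this identification the body group $\rH_{2m}=\gB\rH_{2m|\signp,\signq}$ acts only on the first factor, $U_0(g)=U_0^{(1)}(g)\otimes\gone$ (visible from \eqref{Eq:U0}, where $g=(q,p,t)$ shifts only the even variables), while the odd generators $e_\alpha$ act as $\gone\otimes\gamma_\alpha$ on $S_\gC$. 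Two earlier results do the heavy lifting: the Clifford generators $\gamma_\alpha$ generate all of $\End_\gC(S_\gC)$ (Proposition~\ref{Schrodinger-Clifford}), and the operators $U_*(e_\alpha)$ are \emph{bounded} on $\ehH_S$, since the center acts by the scalar $i\hbar$ (Proposition~\ref{Prop:pibounded}).

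Now let $\ehF\leq\ehH_S$ be a nonzero closed $(U_0,U)$-invariant subspace; the goal is $\ehF=\ehH_S$. First I would note that $\ehF\cap\ehH_S^\infty$ is dense in $\ehF$, by the usual G\aa rding mollifier argument $U_0(f_j)v\to v$ applied to the closed $U_0$-invariant subspace $\ehF$. Since each bounded operator $U_*(e_\alpha)=\gone\otimes\gamma_\alpha$ preserves $\ehF\cap\ehH_S^\infty$, continuity together with this density propagates the invariance to all of $\ehF$; iterating over products, $\ehF$ is stable under the whole algebra generated by the $\gone\otimes\gamma_\alpha$, which equals $\gone\otimes\End_\gC(S_\gC)$ by Proposition~\ref{Schrodinger-Clifford}.

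The next step is to show that $\gone\otimes\End_\gC(S_\gC)$-invariance forces $\ehF=\ehF_1\hat\otimes S_\gC$ for a closed $\ehF_1\leq L^2(\gR^m)$. Fixing a basis $(s_k)$ of the finite-dimensional space $S_\gC$ and writing $w=\sum_k w_k\otimes s_k\in\ehF$ (a finite sum, as $\dim S_\gC<\infty$), application of the matrix units $\gone\otimes e_{ij}$ gives $w_j\otimes s_i\in\ehF$ for all $i,j$; setting $\ehF_1:=\{v\in L^2(\gR^m)\mid v\otimes s_1\in\ehF\}$ then yields $\ehF=\ehF_1\otimes S_\gC$, with $\ehF_1$ closed because it is the preimage of $\ehF$ under the bounded embedding $v\mapsto v\otimes s_1$. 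Finiteness of $\dim S_\gC$ is exactly what makes $\ehF_1\otimes S_\gC$ already complete. Finally, since $U_0$ acts only on the first tensor factor, invariance of $\ehF_1\hat\otimes S_\gC$ under $U_0(\rH_{2m})$ is equivalent to invariance of $\ehF_1$ under the classical Schr\"odinger representation of $\rH_{2m}$; the latter being irreducible, $\ehF_1\in\{0,L^2(\gR^m)\}$, whence $\ehF\in\{0,\ehH_S\}$.

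The only delicate point, and the one I would treat most carefully, is the transition from the smooth-vector domain $\ehH_S^\infty$ on which $U_*$ is \emph{a priori} defined to the whole of $\ehF$: this is precisely where boundedness of the odd operators (Proposition~\ref{Prop:pibounded}) and finite-dimensionality of $S_\gC$ are indispensable, guaranteeing that no unbounded-operator domain issue obstructs the matrix-unit manipulations or the propagation of invariance. Everything else is formal; note in particular that the argument handles \emph{non-graded} invariant subspaces directly, so it yields full irreducibility rather than merely graded-irreducibility.
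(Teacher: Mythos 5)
Your proof is correct and follows essentially the same route as the paper's: both exploit the tensor decomposition $\ehH_S\cong L^2(\gR^m)\hat\otimes S_\gC$, use Proposition~\ref{Schrodinger-Clifford} to upgrade $U_*$-stability to stability under $\gone\otimes\End_\gC(S_\gC)$, and then conclude via irreducibility of the classical Schr\"odinger representation of $\rH_{2m}$ on $L^2(\gR^m)$. The only difference is presentational: you spell out the density-plus-boundedness argument (G\aa rding mollifiers, Proposition~\ref{Prop:pibounded}) needed to pass from invariance on $\ehF\cap\ehH_S^\infty$ to invariance of all of $\ehF$, a point the paper's proof leaves implicit, and you package the matrix-unit step as $\ehF=\ehF_1\hat\otimes S_\gC$ rather than showing $L^2(\gR^m)\otimes\langle s\rangle\subseteq\ehF$ directly.
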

\begin{proof}
Let $(\ehH_S,U_0,U)$ be the Schr\"odinger representation of $\rH_{2m|\signp,\signq}$.
Consider $\varphi\in\ehH_S\setminus\algzero$ and $\ehF$ the smallest
closed subspace of $\ehH_S$ containing $\varphi$ and stable under
$U_0$ and $U_*$. By definition of $\ehH_S$, we have $\varphi=
\sum_\a\varphi_\a \otimes s^\a$ with $\varphi_\a\in L^2(\gR^{2m})$ and $(s^\a)_\a$
a basis of $S_\gC$. By Proposition \ref{Schrodinger-Clifford}, stability under $U_*$
means that $\ehF$ is stable under $\End(S_\gC)$. Hence, for all index $\a$ and $s\in S_\gC$, 
we have $\varphi_\a \otimes s\in\ehF$. The representation $U_0$, restricted to 
$L^2(\gR^{m})\otimes\langle s\rangle$, coincides with the Schr\"odinger representation 
of $\rH_{2m}$, which is irreducible. Considering $\a$ such that $\varphi_\a\neq 0$,
we obtain that $L^2(\gR^{m})\otimes\langle s\rangle\subseteq\ehF$ for all $s\in S_\gC$. 
Hence, $\ehF=\ehH_S$ and the result follows.
\end{proof}

\begin{corollary}
\label{cor-irreduct}
Let $\signp+\signq\in 2\gN+1$. The Schr\"odinger representation of $\rH_{2m|\signp,\signq}$ with parameter $\hbar\in\gR^\times$ and parity $\sigma\in\gZ_2$ is  graded-irreducible.
\end{corollary}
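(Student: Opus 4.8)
The plan is to reduce graded-irreducibility in the odd case to the (ungraded) irreducibility already proved for the even case in Theorem \ref{thm-irreduct}. By Definition \ref{Def:SchrodingerRep}, the Schr\"odinger representation $(\ehH_S,U_0,U)$ of $\rH_{2m|\signp,\signq}$ with $\signp+\signq=2n-1$ and parity $\sigma$ is obtained by restricting to $\rH_{2m|\signp,\signq}$ the Schr\"odinger representation of a Heisenberg supergroup $\tilde\rH=\rH_{2m|\signp'',\signq''}$ with $\signp''+\signq''=2n$, sharing the same parameter $\hbar$, the same Hilbert superspace $\ehH_S$, and the same underlying $\rH_{2m}$-representation $U_0$. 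I would fix a graded basis $(c_i,e_\alpha,Z)$ as in \eqref{eq-baseheis} for $\gB\kh_{2m|\signp'',\signq''}$, with $(e_\alpha)_{1\le\alpha\le 2n}$ spanning the odd part and the first $2n-1$ of them spanning the odd part of the sub-pair $\gB\kh_{2m|\signp,\signq}$.

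First I would record the relevant Clifford structure. Since $\hbar\neq 0$, Proposition \ref{Prop:pibounded} shows that each $U_*(e_\alpha)$ extends to a bounded operator on $\ehH_S$, and by Proposition \ref{Schrodinger-Clifford} the operators $U_*(e_1),\dots,U_*(e_{2n})$ generate the Clifford algebra $\End_\gC(S_\gC)\simeq\mathbb{C}\ell(2n)$ acting on the spinor factor $S_\gC$ of $\ehH_S\simeq L^2(\gR^m)\hat\otimes S_\gC$. Set $\omega:=U_*(e_1)\cdots U_*(e_{2n})$ and $\omega':=U_*(e_1)\cdots U_*(e_{2n-1})$, both invertible by the Clifford relations. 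The key observation is that the parity operator $\gP$ is proportional to the volume element $\omega$: each $U_*(e_\alpha)$ is an odd operator (as $\piG$ has degree $0$), so $\gP$ anticommutes with every $U_*(e_\alpha)$; in even rank $\omega$ also anticommutes with every generator, hence $\gP\omega^{-1}$ commutes with all of $\mathbb{C}\ell(2n)\simeq\End_\gC(S_\gC)$ and is therefore a scalar. Consequently the missing generator factors as
\begin{equation*}
U_*(e_{2n})=(\omega')^{-1}\omega\in\gC^\times\cdot\omega'\,\gP,
\end{equation*}
so $U_*(e_{2n})$ lies in the algebra generated by $\gP$ and $U_*(e_1),\dots,U_*(e_{2n-1})$.

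Now I would run the irreducibility argument. Let $\ehF\le\ehH_S$ be a closed graded $(\pio,\piG)$-invariant subspace for the odd-case representation. Being graded, $\ehF$ is $\gP$-stable; being $(\pio,\piG)$-invariant, it is stable under the bounded operators $U_*(e_1),\dots,U_*(e_{2n-1})$ (using that $\ehF\cap\ehH_S^\infty$ is dense in $\ehF$ and that these operators are bounded), hence under $\omega'$. By the displayed factorization $\ehF$ is then stable under $U_*(e_{2n})$ as well, so it is stable under the whole of $\End_\gC(S_\gC)$ together with $U_0$. In other words, $\ehF$ is a closed $(\pio,\tilde\piG)$-invariant subspace for the even-case Schr\"odinger representation of $\tilde\rH$, which is irreducible by Theorem \ref{thm-irreduct}; hence $\ehF=\algzero$ or $\ehF=\ehH_S$, i.e.\ the odd-case representation is graded-irreducible.

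The main obstacle is precisely the algebraic step identifying $\gP$ with the volume element and thereby expressing the omitted Clifford generator through $\gP$ and the sub-algebra $\mathbb{C}\ell(2n-1)$: this is what upgrades ``graded plus $\mathbb{C}\ell(2n-1)$-invariant'' to ``$\mathbb{C}\ell(2n)$-invariant'' and makes the reduction to the even case go through. The only other point requiring care is the standard density-and-boundedness argument that lets one pass from invariance of $\ehF\cap\ehH_S^\infty$ under $\piG$ to invariance of the closed subspace $\ehF$ under the bounded Clifford operators; everything else transcribes the proof of Theorem \ref{thm-irreduct} verbatim.
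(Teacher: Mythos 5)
Your proof is correct, but it takes a genuinely different route from the paper's. The paper argues ``bottom-up'': via Proposition \ref{prop-tensprodheis} it realizes the Schr\"odinger representation of $\rH_{2m|\signp,\signq}$ (with $\signp+\signq$ odd) as the tensor product of the Schr\"odinger representation of $\rH_{2m|\signp-1,\signq}$ (irreducible by Theorem \ref{thm-irreduct}) with one of the two Schr\"odinger representations of $\rH_{0|1,0}$, whose graded-irreducibility is checked by hand in Example \ref{ex-oddschro}, and then lets the tensor product descend to $\rH_{2m|\signp,\signq}$. You argue ``top-down'': by Definition \ref{Def:SchrodingerRep} the representation is the restriction of the even-case one, and you show that a closed graded invariant subspace of the restriction is automatically invariant for the larger supergroup, because the parity operator $\gP$ acts on the spinor factor as a nonzero multiple of the Clifford volume element, so the missing generator $U_*(e_{2n})$ lies in the algebra generated by $\gP$ and $U_*(e_1),\dots,U_*(e_{2n-1})$; a single application of Theorem \ref{thm-irreduct} then concludes. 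Both reductions are sound, and each has its merits: your chirality identity makes transparent why gradedness is exactly the right hypothesis (the grading operator supplies the missing odd direction, and its absence is precisely what allows the non-graded invariant lines of Example \ref{ex-oddschro}), it treats $\signp=0$ and $\signp\neq 0$ uniformly where the paper splits cases, and it avoids the paper's unproved assertion that the tensor product of an irreducible with a graded-irreducible strong SUR is graded-irreducible. The price is operator-theoretic care: besides Proposition \ref{Prop:pibounded} for boundedness of the $U_*(e_\alpha)$, the density of $\ehF\cap\ehH_S^\infty$ in $\ehF$ deserves one explicit sentence --- it follows from the G\r{a}rding smoothing argument, since for $v\in\ehF$ the vectors $U_0(f_j)v$ lie in $\ehF\cap\ehH_S^\infty$ ($\ehF$ being closed and $U_0$-invariant) and converge to $v$ --- after which closedness of $\ehF$ and boundedness of the Clifford operators finish the passage from invariance on smooth vectors to invariance of $\ehF$ itself.
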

\begin{proof}
We assume that $\signp\neq 0$.
According to Example \ref{ex-oddschro}, Schr\"odinger representations of $\rH_{0|1,0}$ of both parity are graded-irreducible. Moreover, by Theorem \ref{thm-irreduct}, the Schr\"odinger representation of $\rH_{2m|\signp-1,\signq}$ is irreducible. 
Hence, its tensor product with any of the two Schr\"odinger representations of $\rH_{0|1,0}$ 
is a graded-irreducible strong SUR of $\rH_{2m|\signp-1,\signq}\times \rH_{0|1,0}$. 
By Proposition \ref{prop-tensprodheis}, both descend as the Schr\"odinger representations of $\rH_{2m|\signp,\signq}$, 
of parity $0$ and $1$, which are then graded-irreducible.

If $\signp= 0$, Schr\"odinger representations of $\rH_{2m|\signp,\signq}$ are obtained as tensor products of Schr\"odinger representations of $\rH_{0|0,1}$ 
and $\rH_{2m|\signp,\signq-1}$ and the conclusion is the same.
\end{proof}

\subsection{Analytic properties}\label{Sec:AnalyticProp}

We focus now on analytic properties of the Schr\"odinger representation $(\ehH_S,U_0,U)$
of the Heisenberg supergroup $\rH_{2m|\signp,\signq}$ with $\signp+\signq$ even.

Using the manifold decomposition $\rH_{2m|\signp,\signq}=E_0\times \gR^{1|0}$,
with $E_0=\gR^{2m|\signp+\signq}$,
we set $U(\xx):=U(\xx,0)$ for all $\xx\in E_0$.
According to the group law \eqref{GroupLaw}, we have
\begin{equation}\label{Eq:UU}
\forall \xx,\xx'\in E_0,\quad U(\xx)U(\xx')=e^{\frac{i\hbar}{2}\omega(\xx,\xx')}U(\xx+\xx').
\end{equation}
Hence, $\xx\mapsto U(\xx)$ defines a projective representation of the Abelian 
Lie supergroup $E_0$. 

We recall notation in \eqref{decompog}. Set $r,s',\eps$ as in \eqref{def:rs} and $s=\frac{s'}{2}\in\gN$. Each element $\xx\in E_0$ can be decomposed as $\xx=(q,p,\xi)$
with $q=(q^1,\ldots,q^{m+r})\in\gR^{m|r}$, $p=(p^1,\ldots,p^{m+r})\in\gR^{m|r}$
and $\xi=(\xi^1,\ldots,\xi^{2s})\in\gR^{0|2s}$.
Further, $\zeta=(\zeta^1,\ldots,\zeta^{s})\in\gC^{0|s}$  is defined by 
$\zeta^\a=\xi^\a+i\xi^{\a+s}$  for all $\a=1,\ldots,s$.
Then, the invariant measure on $E_0$ is chosen as
\begin{equation}\label{Eq:ddx}
\dd \xx:=(-1)^{\frac{s(s-1)}{2}}\dd q\dd p\dd\xi = (2i)^s\dd q\dd p\dd\zeta\dd\bzeta,
\end{equation}
where $\dd q\dd p:=\dd q^{m+r}\cdots \dd q^1\dd p^{m+r}\cdots \dd p^1$, 
$\dd\xi:=\dd\xi^{2s}\cdots\dd\xi^{1}$ and 
$\dd\zeta\dd\bzeta:=\dd\zeta^s\dd\bzeta^s\cdots\dd\zeta^1\dd\bzeta^1$. 
The corresponding Berezin integration, defined in \eqref{eq-berezin}, reads as 
$$\int\dd\xx \ f(\xx):=
(2i)^s\int_{\gR^{2m}}\dd q^{m}\dd p^{m}
\cdots \dd q^1 \dd p^1\ \left(\partial_{q^{m+r}}\cdots \partial_{q^{m+1}}
\partial_{p^{m+r}}\cdots\partial_{p^{m+1}}\partial_{\zeta^s}\partial_{\bzeta^s} 
\cdots\partial_{\zeta^1}\partial_{\bzeta^1} \, f(\xx)\right)$$
with $f\in L^1(E_0)$.
The next Lemma is obtained by direct computations.
\begin{lemma}\label{lem:integrals}
Let $\kappa_\hbar=\left( \frac{2\pi}{\hbar}\right)^m(i\hbar)^r (\eps\hbar)^s(-1)^{\frac{r(r+1)}{2}}$.
The following identities hold
\begin{align}
\forall \varphi\in\caS(\gR^{m|r}),\quad\int\dd q\dd p\ e^{i\hbar qp}\varphi(q)&=\frac{\kappa_\hbar}{(\eps\hbar)^s}\varphi(0)\label{eq-dirac0},\\
\forall \varphi\in Hol(\gC^{0|s}),\quad (2i)^s\int\dd\zeta\dd\bzeta\ e^{\frac{i\hbar\eps}{2}\zeta\bzeta} \varphi(\zeta)&=(\hbar\eps)^s\varphi(0),\label{eq-dirac1}\\
\forall f\in \caS(\modE_0),\quad \int \dd \xx\dd \yy\ e^{i\hbar\omega(\xx,\yy)}f(\xx)&=\kappa_\hbar^2 f(0)\label{eq-dirac2}.
\end{align}
\end{lemma}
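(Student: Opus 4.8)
The plan is to reduce each identity to two independent computations, one over the even (commuting, real) coordinates and one over the odd (Grassmann) coordinates, using two elementary facts. On the even variables, the exponential of a purely imaginary bilinear pairing is an oscillatory integral which, tested against a Schwartz function, reproduces a Dirac mass at the origin: $\int_{\gR^m}e^{i\hbar q_{ev}p_{ev}}\dd p_{ev}=(2\pi/\hbar)^m\delta(q_{ev})$ in the sense of tempered distributions. On the odd variables, every such exponential truncates to a finite product by nilpotency, and Berezin integration—being the extraction of the top monomial, see \eqref{eq-berezin}—then selects a single surviving term.

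First, for \eqref{eq-dirac0} I would split $q=(q_{ev},q_{od})$ and $p=(p_{ev},p_{od})$ into their $m$ even and $r$ odd components, so that $qp=q_{ev}p_{ev}+q_{od}p_{od}$ and the exponential factorizes. The Lebesgue integral in $p_{ev}$ produces $(2\pi/\hbar)^m\delta(q_{ev})$, which, paired with the (Schwartz) components of $\varphi$ and integrated in $q_{ev}$, restricts $\varphi$ to $q_{ev}=0$ with prefactor $(2\pi/\hbar)^m$. On the odd side $e^{i\hbar q_{od}p_{od}}=\prod_{j=1}^{r}(1+i\hbar q^{m+j}p^{m+j})$, and Berezin integration over $p_{od}$ forces the top term $(i\hbar)^r$ together with all the $q^{m+j}$; since each then occurs once, only the $q_{od}$-independent part of $\varphi$ survives the subsequent integration in $q_{od}$, leaving $\varphi(0)$. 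Reordering the monomial to the reference ordering of \eqref{eq-berezin} yields the sign $(-1)^{r(r+1)/2}$, so that the total factor is $(2\pi/\hbar)^m(i\hbar)^r(-1)^{r(r+1)/2}=\kappa_\hbar/(\eps\hbar)^s$. Identity \eqref{eq-dirac1} is purely Grassmann: writing $\zeta\bzeta=\sum_{\a=1}^{s}\zeta^\a\bzeta^\a$ gives $e^{\frac{i\hbar\eps}{2}\zeta\bzeta}=\prod_{\a=1}^{s}(1+\tfrac{i\hbar\eps}{2}\zeta^\a\bzeta^\a)$; as $\varphi$ is holomorphic it carries no $\bzeta$, so the top form demanded by $\int\dd\zeta\dd\bzeta$ forces every $\bzeta^\a$, and hence every $\zeta^\a$, to come from the exponential, leaving $\varphi(0)$. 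The normalization of the Berezin measure fixed in Example \ref{ex-holfct} then gives $(2i)^s(\tfrac{i\hbar\eps}{2})^s(\pm1)\varphi(0)=(\hbar\eps)^s\varphi(0)$, the sign $(-1)^s$ from reordering $\zeta^\a\bzeta^\a$ combining with $(2i)^s(\tfrac{i\hbar\eps}{2})^s=(-\hbar\eps)^s$.

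For \eqref{eq-dirac2} I would integrate in $\yy$ first. By \eqref{decompo:om} the pairing splits as $\omega(\xx,\yy)=\omega_W(\xx_W,\yy_W)+\eps\,\xi\xi'$ along the $\om$-orthogonal decomposition $E_0=(W_0\oplus W_0^*)\oplus V_0$, so that both the exponential and the measure factorize. Each block is then handled by exactly the mechanism of \eqref{eq-dirac0} and \eqref{eq-dirac1}, now with the test function replaced by the constant $1$: the $2m$ even variables contribute $(2\pi/\hbar)^{2m}$ and a Dirac mass on $W_0\oplus W_0^*$, the $2r$ odd $W$-variables contribute $(i\hbar)^{2r}$, and the $2s$ odd $V$-variables contribute $(\eps\hbar)^{2s}$. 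Since $r(r+1)$ is even the accumulated signs cancel, yielding $\int\dd\yy\,e^{i\hbar\omega(\xx,\yy)}=\kappa_\hbar^{2}\,\delta(\xx)$, where $\delta(\xx)$ is the even Dirac mass tensored with the top-form Berezin functional, normalized so that $\int\dd\xx\,\delta(\xx)g(\xx)=g(0)$. Integrating this against $f\in\caS(E_0)$ then gives $\kappa_\hbar^2 f(0)$.

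The hard part will be purely bookkeeping rather than conceptual: tracking the Koszul signs produced when the monomials generated by the truncated exponentials are brought to the reference ordering of \eqref{eq-berezin}, and combining them with the explicit powers of $i$ and $2$ and the sign convention in the measure \eqref{Eq:ddx}, so as to land exactly on $(-1)^{r(r+1)/2}$ and on the stated powers of $\hbar$ and $\eps$. One must also justify that the oscillatory identity for $\int_{\gR^m}e^{i\hbar q_{ev}p_{ev}}\dd p_{ev}$ may legitimately be used inside the finite Berezin expansion; this is licit precisely because the components of $\varphi$ and of $f$ are Schwartz, so every coefficient appearing is a tempered function to which the Fourier inversion theorem applies. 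I expect the sign and constant accounting in \eqref{eq-dirac0} and \eqref{eq-dirac2} to be the main chore.
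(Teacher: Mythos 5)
Your proposal is correct, and it is precisely the ``direct computation'' that the paper invokes for this lemma without writing it out: factorize into even and odd variables, apply Fourier inversion against the Schwartz components on the even side, and extract the Berezin top form from the finitely truncated exponential on the odd side. Checking your sign bookkeeping against the paper's conventions \eqref{eq-berezin} and \eqref{Eq:ddx} (e.g.\ the odd $p$-derivatives acting in the order $\partial_{p^{m+1}},\ldots,\partial_{p^{m+r}}$ produce exactly $(-1)^{1+2+\cdots+r}=(-1)^{r(r+1)/2}$, and the $V$-block of \eqref{eq-dirac2} gives $(i\hbar\eps)^{2s}(-1)^{s(2s+1)}=(\eps\hbar)^{2s}$), your stated constants $(2\pi/\hbar)^m(i\hbar)^r(-1)^{r(r+1)/2}$, $(\hbar\eps)^s$ and $\kappa_\hbar^2$ all come out as claimed.
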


For $\varphi,\psi\in\ehH_S=L^2(\gR^{m|r})\otimes Hol(\gC^{0|s})$, 
with $\varphi$ or $\psi$ in $\ehH_S^\infty=\caS(\gR^{m|r})\otimes Hol(\gC^{0|s})$, 
the associated {\defin Wigner function} on $E_0$ is defined as
\begin{equation}
V\big(\varphi,\psi\big)(\xx):=\langle \varphi,U(\xx)\psi\rangle,
\label{eq-wigner}
\end{equation}
where $\langle-,-\rangle$ is the inner product of $\ehH_S$ defined in~\eqref{eq-innerschro}.
We derive several results about Wigner functions.
The next Lemma concerns their regularity.

\begin{lemma}
\label{lem-gaussian}
Let $\varphi,\psi\in\ehH_S^\infty$. Then, the Wigner function $V(\varphi,\psi)$ is in $L^1(\modE_0)\cap L^2(\modE_0)$.
\end{lemma}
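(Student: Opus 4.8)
The plan is to reduce the statement to the classical fact that matrix coefficients of Schwartz vectors under the Schr\"odinger representation of $\rH_{2m}$ are Schwartz, the purely odd directions contributing only a harmless polynomial factor.

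First I would unwind what membership in $L^1(\modE_0)\cap L^2(\modE_0)$ means. By the definition of the Lebesgue superspace $L^p(\gR^{2m|\signp+\signq})$ introduced after \eqref{eq-berezin}, it suffices to check that each body-component of the superfunction $V(\varphi,\psi)$---that is, each coefficient in its expansion along the $\signp+\signq$ odd coordinates of $\modE_0$---is a function of the $2m$ even coordinates lying in $L^1(\gR^{2m})\cap L^2(\gR^{2m})$. I will in fact show that each such component is a Schwartz function on $\gR^{2m}$, which is enough since $\caS(\gR^{2m})\subseteq L^1(\gR^{2m})\cap L^2(\gR^{2m})$.

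Next I would exploit the factorisation of the Schr\"odinger representation. Writing $\xx=x^\ev+x^\od$ with $x^\ev\in\gR^{2m|0}$ and $x^\od\in\gR^{0|\signp+\signq}$, the symplectic form is block diagonal, so $\omega(x^\ev,x^\od)=0$ and \eqref{Eq:UU} gives $U(\xx)=U(x^\ev)U(x^\od)$. Under the identification $\ehH_S=L^2(\gR^m)\,\hat\otimes\,S_\gC$ coming from Proposition \ref{prop-tensprodheis}, with $S_\gC$ the finite-dimensional spinor module of Proposition \ref{Schrodinger-Clifford}, this operator equals $U_{2m}(x^\ev)\otimes U_{0|\signp,\signq}(x^\od)$, where $U_{2m}$ is the classical Schr\"odinger representation of $\rH_{2m}$ on $L^2(\gR^m)$ read off from \eqref{Eq:U0}. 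By sesquilinearity of $V(-,-)$ and finite-dimensionality of $S_\gC$, it suffices to treat homogeneous simple tensors $\varphi=\varphi_1\otimes\varphi_2$, $\psi=\psi_1\otimes\psi_2$; for these \eqref{Eq:TPsuperH} yields
\begin{equation*}
V(\varphi,\psi)(\xx)=c\,\langle\varphi_1,U_{2m}(x^\ev)\psi_1\rangle\,\langle\varphi_2,U_{0|\signp,\signq}(x^\od)\psi_2\rangle,
\end{equation*}
with $c\in\{\pm1\}$ a fixed sign. Then I would treat the two factors. The second factor $\langle\varphi_2,U_{0|\signp,\signq}(x^\od)\psi_2\rangle$ is computed by a finite Berezin integral over finitely many odd variables, hence is a superpolynomial $\sum_\gamma c_\gamma\,(x^\od)^\gamma$ with constants $c_\gamma\in\gC$, carrying no analytic content. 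The first factor $\langle\varphi_1,U_{2m}(x^\ev)\psi_1\rangle$, with $\varphi_1,\psi_1\in\caS(\gR^m)$, is---after reading off the change of variables from \eqref{Eq:U0}---the cross-ambiguity function of two Schwartz functions, which is Schwartz on $\gR^{2m}$; this is classical, see \cite{Folland:1989}. Substituting back, the $(x^\od)^\gamma$-component of $V(\varphi,\psi)$ equals $c\,c_\gamma\,\langle\varphi_1,U_{2m}(-)\psi_1\rangle$, a Schwartz function on $\gR^{2m}$; summing over the finitely many simple tensors preserves this, and the claim follows.

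The main obstacle is the bookkeeping in the factorisation: one must verify that in \eqref{eq-schro} both the phase and the shift of the argument of $\psi$ genuinely decouple into an $x^\ev$-part (over the bosonic integration variables) and an $x^\od$-part (over the odd ones), and keep track of the Koszul signs produced by \eqref{Eq:TPsuperH} and by reordering the odd integration variables in the Berezin integral. Once this is in place, the only analytic input---the Schwartz regularity of the bosonic matrix coefficient---is entirely standard.
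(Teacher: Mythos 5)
Your proof is correct, and its skeleton matches the paper's: the odd directions of $\modE_0$ contribute only polynomial factors, so everything reduces to classical cross-ambiguity (Wigner) functions of Schwartz pairs on $\gR^{2m}$. The mechanics differ, though. The paper never leaves the model $\ehH_S=L^2(\gR^{m|r})\otimes\Hol(\gC^{0|s})$: it expands $\varphi,\psi$ along the odd variables, expands $U(\xx)=\widetilde{U_0}(x^\ev)\sum_\gamma(x^\od)^\gamma U_\gamma$ via Proposition \ref{Prop:pibounded}, performs the Berezin integral, and identifies the components of $V(\varphi,\psi)$ with classical Wigner functions $V_0\big(\varphi_\alpha,(U_\gamma\psi)_{\bar{\alpha}}\big)$, citing their membership in $L^1(\gR^{2m})\cap L^2(\gR^{2m})$. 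You instead route through the tensor factorization $\ehH_S\cong L^2(\gR^m)\hat{\otimes}S_\gC$ and $U\cong U_{2m}\otimes U_{0|\signp,\signq}$ of Proposition \ref{prop-tensprodheis} (legitimate here, since Section 5.3 precedes this lemma), combined with $U(\xx)=U(x^\ev)U(x^\od)$, which indeed holds by \eqref{Eq:UU} because $\omega(x^\ev,x^\od)=0$ by \eqref{eq-baseheis}; and you invoke the stronger classical input that the cross-ambiguity function of two Schwartz functions is Schwartz. Your packaging buys a structurally cleaner even/odd separation and a stronger output (all components Schwartz, not merely $L^1\cap L^2$); the paper's stays within a single model, avoids invoking the superunitary equivalence of Proposition \ref{prop-tensprodheis}, and directly produces the expressions $V_0(\varphi_\alpha,(U_\gamma\psi)_{\bar{\alpha}})$ that it reuses in its $L^2$ estimate. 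The two bookkeeping items you defer are genuinely harmless but should be stated: the prefactor $c$ is not a single sign, it depends on $\gamma$ and on the parities of the vectors involved, so it must be absorbed into the coefficients $c_\gamma$ of the odd polynomial (as you implicitly do); and the even factor is really the $\superA_0$-extension $\widetilde{U_{2m}}(x^\ev)$ with $x^\ev\in\gR^{2m|0}$, which changes nothing since membership in $L^p(\modE_0)$ only tests the body components, exactly as in the paper's convention for Berezin integration.
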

\begin{proof}
According to \eqref{Function:Rmn}, any function $\varphi\in\ehH_S^\infty$ 
can be decomposed along odd variables:
\begin{equation*}
\varphi(q,\zeta):=\sum_{\a\in(\gZ_2)^{r+s}}\widetilde{\varphi_\a}(q^\ev)
\Upsilon^\a, 
\end{equation*}
where $q^\ev=(q^1,\dots,q^m)$ denote the even variables of $q$
and $\Upsilon:=(q^{m+1},\dots,q^{m+r},\zeta^1,\dots,\zeta^s)$ 
the odd variables of $(q,\zeta)$. That is $\Upsilon^\a=(q_{m+1})^{\a_{1}}\cdots(q_{m+r})^{\a_{r}}(\zeta_{1})^{\a_{r+1}}\cdots(\zeta_{s})^{\a_{r+s}}$. Each $\xx\in\modE_0$ 
decomposes as $\xx=x^\ev+x^\od$, with $x^\ev\in\gR^{2m|0}$ 
and $x^\od\in\gR^{0|\signp+\signq}$. By Proposition \ref{Prop:pibounded}, we have
\begin{equation*}
U(\xx)=\widetilde{U_0}(x^\ev)\,\sum_{\gamma\in(\gZ_2)^{2(r+s)}} (x^\od)^\gamma \, U_\gamma,
\end{equation*}
where $U_\gamma$ is an operator preserving $\ehH_S^\infty$. If $\psi\in\ehH_S^\infty$, the superfunction $U_\gamma\psi$ is then smooth and decomposes along odd variables as $U_\gamma\psi(q,\zeta) = \sum_{\a\in(\gZ_2)^{2(r+s)}}\widetilde{(U_\gamma\psi)_\a}(q^\ev)\Upsilon^\a$. After integration of the odd variables $\Upsilon$, the Wigner function reads then as
\begin{equation*}
V\big(\varphi,\psi\big)(\xx)=\sum_{\a,\gamma\in(\gZ_2)^{2(r+s)}} 
\varepsilon(\a)\,(x^\od)^\gamma\, \widetilde{\big(V_0(\varphi_\a,(U_\gamma\psi)_{\bar{\a}})\big)}(x^\ev),
\end{equation*}
where $\varepsilon(\a)=\pm 1$, $\bar{\a}=(1,\ldots,1)-\a\in(\gZ_2)^{2(r+s)}$ and  
$$
V_0\big(\varphi_\a,(U_\gamma\psi)_{\bar{\a}}\big)(\gB \xx):=\int_{\gR^m} \dd\, \gB q\ 
\overline{\varphi_\a}(q^\ev)\,\big(U_0(\gB\xx)\, (U_\gamma\psi)_{\bar{\a}}\big)(q^\ev)
$$
is the classical Wigner function over $\gR^{2m}$.
Since $\varphi,\psi\in\ehH_S^\infty$, the components 
$\varphi_\a$ and $(U_\gamma\psi)_{\bar{\a}}$ are in $\caS(\gR^{m|0})\simeq\caS(\gR^m)$. 
According to \cite[chap 11]{Groechenig:2001}, we have then 
$V_0(\varphi_\a,(U_\gamma\psi)_{\bar{\a}})\in L^1(\gR^{2m})$ for all
possible indices $\a,\gamma\in(\gZ_2)^{2(r+s)}$. Therefore, 
the Wigner function $V(\varphi,\psi)$ is in $L^1(\modE_0)$.

Let $J$ be the fundamental symmetry of $\ehH_S$, obtained from the ones in Examples \ref{ex-superhilbert} and \ref{ex-holfct} 
via the tensor product rule \eqref{Eq:J}. After integration of the odd variables $x^\od$, we get
\begin{multline*}
\int\dd\xx\ \overline{V(\varphi,\psi)(\xx)} JV(\varphi,\psi)(\xx) 
\leq \sum_{\alpha,\gamma,\alpha',\gamma'}  \int_{\gR^{2m}}\dd x\ |V_0(\varphi_\alpha,(U_\gamma \psi)_{\bar{\a}})(x)| | V_0(\varphi_{\alpha'},(U_{\gamma'}\psi)_{\bar{\a'}})(x)|.
\end{multline*}
The functions $\varphi_\alpha$, $(U_\gamma \psi)_{\bar{\a}}$, $\varphi_{\alpha'}$,
$(U_{\gamma'}\psi)_{\bar{\a'}}$ are in $\caS(\gR^m)$ for all $\a,\a',\g,\g'\in(\gZ_2)^{2(r+s)}$. Hence, according to \cite[chap 11]{Groechenig:2001}, 
the Wigner functions $V_0(\varphi_\alpha,(U_\gamma \psi)_{\bar{\a}})$ and 
$V_0(\varphi_{\alpha'},(U_{\gamma'}\psi)_{\bar{\a'}})$ are in $L^2(\gR^{2m})$. 
Therefore, the left hand side of the display is bounded and $V(\varphi,\psi)$ is in $L^2(\modE_0)$.
\end{proof}

The next result is of crucial importance for the proof of the Stone-Von Neumann Theorem.

\begin{proposition}[Resolution of the identity]
\label{prop-resol}
Let $\varphi,\psi\in\caS(\gR^{m|r})$.
For any $\varphi',\psi'\in\ehH_S$, we have the following identities: 
\begin{equation}\label{Eq:resolutionId}
\int\dd \xx\ V\big(\varphi,\varphi'\big)(-\xx)\,V\big(\psi',\psi\big)(\xx)
=\int\dd \xx\ \langle U(\xx)\varphi,\varphi'\rangle\langle\psi',U(\xx)\psi\rangle
=\kappa_\hbar\langle\varphi,\psi\rangle\langle\psi',\gP\varphi'\rangle,
\end{equation}
where $\gP\varphi':=(-1)^{|\varphi'|}\varphi'$ is  the parity operator.
\end{proposition}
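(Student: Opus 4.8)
The first equality is an immediate consequence of the superunitarity of the operators $U(\xx)$. Since $\xx\in\modE_0$ is even we have $\om(\xx,\xx)=0$, so the projective relation \eqref{Eq:UU} gives $U(\xx)U(-\xx)=U(0)=\gone$, i.e.\ $U(-\xx)=U(\xx)^{-1}$. As each $U(\xx)$ is superunitary of degree $0$ it is an isometry, and writing $\varphi'=U(\xx)\big(U(\xx)^{-1}\varphi'\big)$ yields $\langle U(\xx)\varphi,\varphi'\rangle=\langle\varphi,U(\xx)^{-1}\varphi'\rangle=\langle\varphi,U(-\xx)\varphi'\rangle=V(\varphi,\varphi')(-\xx)$. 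Combined with $V(\psi',\psi)(\xx)=\langle\psi',U(\xx)\psi\rangle$, which is just the definition \eqref{eq-wigner}, this establishes the first equality.

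For the second equality the plan is to evaluate the integral explicitly. I would insert the formula \eqref{eq-schro} for $U(\xx)$ (with $t=0$) and the inner product \eqref{eq-innerschro} into each of $\langle U(\xx)\varphi,\varphi'\rangle$ and $\langle\psi',U(\xx)\psi\rangle$, writing them as Berezin integrals over two spatial variables $u=(q_0,\zeta_0)$ and $v=(q_0',\zeta_0')$. After multiplying, the two copies of $U(\xx)$ contribute a single phase whose $\xx$-dependence is linear in the momentum directions, of the form $e^{i\hbar(q_0-q_0')p}\,e^{i\hbar\frac{\eps}{2}(\zeta_0-\zeta_0')\bzeta}$, whereas $\varphi$ and $\psi$ depend only on the position directions $q,\zeta$ of $\xx$. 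By Lemma \ref{lem-gaussian} the Wigner functions lie in $L^1(\modE_0)$, which together with the Schwartz decay of $\varphi,\psi$ legitimates interchanging the order of integration and performing the $\xx$-integral first.

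Integrating over the momentum directions $p$ and $\bzeta$ by means of the delta-type identities \eqref{eq-dirac0} and \eqref{eq-dirac1} of Lemma \ref{lem:integrals} collapses the double spatial integral onto its diagonal $u=v$ and extracts the global constant $\kappa_\hbar$. The residual single integral over $q,\zeta$ and the diagonal variable $u$ then factorizes: the $(q,\zeta)$-part reassembles, after the shifts $q\mapsto q_0-q$ and $\zeta\mapsto\zeta_0-\zeta$, into $\langle\varphi,\psi\rangle$, while the $u$-part reproduces the inner product of $\varphi'$ and $\psi'$, giving the claimed value $\kappa_\hbar\langle\varphi,\psi\rangle\langle\psi',\gP\varphi'\rangle$.

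The delicate point, and the step I expect to be the main obstacle, is the sign bookkeeping that produces the parity operator $\gP$ rather than the identity on $\varphi'$. This sign arises when the odd Grassmann position and momentum variables are reordered in order to apply the Berezin identities of Lemma \ref{lem:integrals}, and when the odd spatial variables of the diagonal integral are commuted past the odd part of $\varphi'$; tracking these carefully shows that $\varphi'$ is replaced by $\gP\varphi'=(-1)^{|\varphi'|}\varphi'$ (see \eqref{eq-parity}). One must also handle with care the complex conjugation of the odd variables inside $\overline{(U(\xx)\varphi)}$ together with the Gaussian weights coming from the holomorphic factor of the inner product \eqref{eq-innerschro}, checking that they recombine exactly into the two inner products on the right-hand side. (An operator-theoretic route through Schur's Lemma, averaging $U(\xx)\,|\varphi\rangle\langle\psi|\,U(\xx)^{-1}$ over the group, looks tempting but is treacherous here: the super signs in the Berezin integral are precisely what turn the naive scalar multiple of $\gone$ into a multiple of $\gP$, so the direct computation is the safer path.)
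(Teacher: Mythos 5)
Your strategy is the same as the paper's: the first equality via superunitarity of $U(\xx)$ (your argument there is correct and complete), and the second via a direct Berezin-integral computation using the delta-type identities \eqref{eq-dirac0}--\eqref{eq-dirac1} of Lemma \ref{lem:integrals}. However, as written the proposal has two genuine gaps.

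First, you work directly with arbitrary $\varphi',\psi'\in\ehH_S$ and invoke Lemma \ref{lem-gaussian} to place the Wigner functions in $L^1(\modE_0)$ and justify Fubini. That lemma requires \emph{both} arguments to be smooth vectors, so it does not apply to $V(\varphi,\varphi')$ or $V(\psi',\psi)$ when $\varphi',\psi'$ are merely in $\ehH_S$; indeed, for non-smooth $\varphi'$ the expression $U(\xx)\varphi'$ is itself problematic in the even directions, since the shift $q_0\mapsto q_0-q$ involves a Taylor expansion. The paper resolves this by proving the identity for $\varphi',\psi'\in\ehH_S^\infty$ and then extending to all of $\ehH_S$ by density, using the global continuity of the inner product (Lemma \ref{lem-ineq}). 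This reduction step is absent from your argument and cannot be skipped.

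Second, and more seriously, the core of the proof is not carried out: you explicitly flag the sign bookkeeping that produces $\gP$ as ``the main obstacle'' and then only assert that careful tracking yields $\gP\varphi'$. That assertion is exactly what must be proved. Moreover, the one concrete claim you make about the computation is inaccurate: after multiplying the two Wigner factors, the $\xx$-dependence of the phase is \emph{not} confined to the momentum directions $(p,\bzeta)$, because complex conjugation in $\overline{\langle U(\xx)\varphi,\varphi'\rangle}$ turns $\bzeta$-terms into $\zeta$-terms (the paper's intermediate phase contains $-\eps(\tfrac12\bzeta-\bzeta_0)\zeta$ as well as Gaussian weights $\eps\zeta_0\bzeta_0$, $\eps\zeta_1\bzeta_1$). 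Consequently the odd integrations do not reduce to a single clean delta-collapse onto a diagonal; the paper needs three successive changes of variables, in $(q_1,p)$, then $(\zeta_0,\zeta,\bzeta)$, then $(q,q_0)$, each followed by \eqref{eq-dirac0} or \eqref{eq-dirac1}, producing $\kappa_\hbar$ together with the residual sign $(-1)^{r(|\varphi|+|\varphi'|)}(-1)^{r|\varphi|}(-1)^{|\psi|(r+|\varphi'|+|\psi'|)}(-1)^{|\varphi'||\psi'|}$ in front of $\langle\varphi,\psi\rangle\langle\psi',\varphi'\rangle$. One then needs the homogeneity of the inner product --- $\langle\varphi,\psi\rangle=0$ unless $|\varphi|+|\psi|=r$, and $\langle\psi',\varphi'\rangle=0$ unless $|\varphi'|+|\psi'|=r$ --- to collapse this sign into exactly $(-1)^{|\varphi'|}$, i.e.\ into $\gP\varphi'$. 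Without this verification, the precise right-hand side (the constant $\kappa_\hbar$ and the appearance of $\gP$ rather than $\gone$) remains unestablished.
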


Note that the first equality in \eqref{Eq:resolutionId} results directly from the definition of Wigner functions while the second one is commonly named a resolution of identity.

\begin{proof}
We show below the resolution of the identity for $\varphi',\psi'\in\ehH^\infty_S$. Since the inner product $\langle -,-\rangle$ is continuous, see Lemma \ref{lem-ineq}, it suffices to prove \eqref{Eq:resolutionId} for $\varphi',\psi'\in\ehH_S^\infty$.
The extension to $\varphi',\psi'\in\ehH_S$ follows then by a density argument. 

Assume $\varphi,\psi,\varphi',\psi'\in\ehH_S^\infty$.
By Lemma \ref{lem-gaussian}, the Wigner functions $V\big(\varphi,\varphi'\big)$ and 
$V\big(\psi',\psi\big)$ are in $L^2(\modE_0)$.
The left hand side of the identity \eqref{Eq:resolutionId} is then well-defined.
Due to the definitions \eqref{eq-schro}, \eqref{eq-innerschro} and \eqref{Eq:ddx}, 
 it reads as
\begin{multline*}
LHS=(2i)^{3s}(-1)^{r(|\varphi|+|\varphi'|)}\int\dd q\dd p\dd\zeta\dd\bzeta \dd q_0\dd\zeta_0\dd\bzeta_0\dd q_1\dd\zeta_1\dd\bzeta_1\\
 e^{\frac{i\hbar}{2}\big(2(q_0-q_1)p-\eps(\frac12\bzeta-\bzeta_0)\zeta+ \eps(\frac12\zeta-\zeta_1)\bzeta+\eps\zeta_0\bzeta_0+\eps\zeta_1\bzeta_1\big)}
 \overline{\varphi(q_0-q)}\varphi'(q_0,\zeta_0)\overline{\psi'(q_1,\zeta_1)}\psi(q_1-q,\zeta_1-\zeta).
\end{multline*}
We perform the change of variables $(q_1,p)\mapsto(q_1+q_0,p)$ and integrate over
the variables $(q_1,p)$. By Equation \eqref{eq-dirac0}, we obtain  
\begin{multline*}
LHS=\frac{\kappa_\hbar}{(\eps \hbar)^s}(2i)^{3s}(-1)^{r(|\varphi|+|\varphi'|)}(-1)^r
\int\dd q\dd\zeta\dd\bzeta \dd q_0\dd\zeta_0\dd\bzeta_0\dd\zeta_1\dd\bzeta_1\\
 e^{\frac{i\hbar\eps}{2}\big((\zeta-\zeta_1) (\bzeta-\bzeta_0)+(\zeta_0-\zeta_1)\bzeta_0+\zeta_1\bzeta_1\big)}
\overline{\varphi(q_0-q)}\varphi'(q_0,\zeta_0)\overline{\psi'(q_0,\zeta_1)}\psi(q_0-q,\zeta_1-\zeta).
\end{multline*}
By Equation \eqref{eq-dirac1}, the change of variables 
$(\zeta_0,\zeta,\bzeta)\mapsto (\zeta_0+\zeta_1,-\zeta+\zeta_1,-\bzeta+\bzeta_0)$ 
and integration over $(\zeta_0,\bzeta_0)$ give
\begin{multline*}
LHS=\kappa_\hbar(2i)^{2s}(-1)^{r(|\varphi|+|\varphi'|)}(-1)^r
\int\dd q\dd\zeta\dd\bzeta \dd q_0\dd\zeta_1\dd\bzeta_1\\
 e^{\frac{i\hbar\eps}{2}\big(\zeta\bzeta+\zeta_1\bzeta_1\big)}
\overline{\varphi(q_0-q)}\varphi'(q_0,\zeta_1)\overline{\psi'(q_0,\zeta_1)}\psi(q_0-q,\zeta).
\end{multline*}
Then, the change of variables $(q,q_0)\mapsto (q_0-q,q_1)$ yields 
\begin{equation*}
LHS=\kappa_\hbar(2i)^{2s}(-1)^{r(|\varphi|+|\varphi'|)}
\int\dd q\dd\zeta\dd\bzeta \dd q_1\dd\zeta_1\dd\bzeta_1\ \left(
 e^{\frac{i\hbar\eps}{2}\big(\zeta\bzeta+\zeta_1\bzeta_1\big)}
\overline{\varphi(q)}\varphi'(q_1,\zeta_1)\overline{\psi'(q_1,\zeta_1)}\psi(q,\zeta)\right).
\end{equation*}
By definition \eqref{eq-innerschro} of the inner product, this means
$$
LHS=\kappa_\hbar(-1)^{r(|\varphi|+|\varphi'|)}(-1)^{r|\varphi|} (-1)^{|\psi|(r+|\varphi'|+|\psi'|)} (-1)^{|\varphi'| |\psi'|}
\langle\varphi,\psi\rangle \langle\psi',\varphi'\rangle.
$$
Since $\langle\varphi',\psi'\rangle$ is zero unless the degrees satisfy $|\varphi'|+|\psi'|=r$, 
we obtain the announced resolution of identity.
\end{proof}

Using the parity operator $\gP$, introduced in Equation \eqref{eq-parity}, we define 
the operator $\gM$, which acts on $\varphi\in \ehH_S$ by
\begin{equation}\label{Def:M}
(\gM\varphi)(q,\zeta):=\gP\varphi(-q,-\zeta).
\end{equation}
If we split $q\in\gR^{m|r}$ as $q=\qev+\qod$ with $\qev\in\gR^{m|0}$ 
and $\qod\in\gR^{0|r}$, we have $(\gM\varphi)(q,\zeta)=\varphi(-\qev,\qod,\zeta)$ 
and $(\gP\varphi)(q,\zeta)=\varphi(\qev,-\qod,-\zeta)$.

\begin{lemma}
\label{lem-minus}
Let $\varphi,\psi\in\ehH_S$ with $\varphi$ or $\psi$ in $\ehH_S^\infty$. The Wigner function satisfies the following property:
\begin{equation*}
\forall \xx\in\modE_0,\quad V\big(\varphi,\psi\big)(-\xx)
=(-1)^{r+|\varphi|+|\psi|}V\big(\gM\varphi,\gM\psi\big)(\xx). 
\end{equation*}
\end{lemma}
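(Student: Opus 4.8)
The plan is to prove the identity by a single change of variables inside the Berezin integral defining the Wigner function. By sesquilinearity of the inner product it is enough to treat homogeneous $\varphi,\psi$, so that $(-1)^{|\varphi|}$ and $(-1)^{|\psi|}$ are meaningful. Using \eqref{eq-wigner}, \eqref{eq-schro} (with $t=0$) and the inner product \eqref{eq-innerschro}, I would write out both $V(\varphi,\psi)(-\xx)$ and $V(\gM\varphi,\gM\psi)(\xx)$ as explicit integrals over the internal variables $(q_0,\zeta_0,\bzeta_0)$, inserting on the right-hand side the coordinate formula $(\gM f)(q,\zeta)=f(-\qev,\qod,\zeta)$ recorded just after \eqref{Def:M}.

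The main step is the substitution $(q_0,\zeta_0)\mapsto(-q_0,-\zeta_0)$ in the integral representing $V(\gM\varphi,\gM\psi)(\xx)$. I would track its sign carefully: the $m$ even $q_0$-variables are integrated against Lebesgue measure and contribute no sign, the $r$ odd $q_0$-variables each contribute $-1$ under reflection of a Berezin variable, and the $2s$ variables $\zeta_0,\bzeta_0$ contribute $(-1)^{2s}=1$; hence the total Jacobian sign is $(-1)^r$. One then checks term by term that the substituted integrand is exactly the integrand of $V(\gP\varphi,\gP\psi)(-\xx)$: the phase $e^{i\hbar((\frac12 q-q_0)p+\frac{\eps}{2}(\frac12\zeta-\zeta_0)\bzeta)}$ becomes the phase attached to $U(-\xx)$, the factor $e^{\frac{i\hbar\eps}{2}\zeta_0\bzeta_0}$ is unchanged, and the shifted arguments of $\varphi,\psi$ reassemble into $\gP\varphi,\gP\psi$ evaluated at the points prescribed by $U(-\xx)$, using $(\gP f)(q,\zeta)=f(\qev,-\qod,-\zeta)$. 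This yields the intermediate identity $V(\gM\varphi,\gM\psi)(\xx)=(-1)^r\,V(\gP\varphi,\gP\psi)(-\xx)$.

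To finish, I would use that for homogeneous elements $\gP\varphi=(-1)^{|\varphi|}\varphi$ and $\gP\psi=(-1)^{|\psi|}\psi$; since these are real scalars they factor out of the inner product defining $V$ (antilinear in the first slot, but $\pm1$ is real), yielding $V(\gP\varphi,\gP\psi)(-\xx)=(-1)^{|\varphi|+|\psi|}V(\varphi,\psi)(-\xx)$. Combining the two displays and noting that $(-1)^{r+|\varphi|+|\psi|}$ is its own inverse produces the stated formula $V(\varphi,\psi)(-\xx)=(-1)^{r+|\varphi|+|\psi|}V(\gM\varphi,\gM\psi)(\xx)$.

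The one genuinely delicate point is the Berezin sign bookkeeping under reflection of the odd integration variables: separating the contribution of the $r$ odd $q_0$-coordinates (which produce the $(-1)^r$) from that of the $2s$ coordinates $\zeta_0,\bzeta_0$ (which cancel), and verifying that the odd arguments inside $\varphi$ and $\psi$ recombine precisely into the parity operator $\gP$ rather than into a spurious sign. The remaining work, matching exponents and function arguments, is entirely routine.
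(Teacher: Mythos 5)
Your proof is correct and is essentially the paper's own argument: the paper likewise writes the Wigner function as an explicit Berezin integral and performs the reflection $(q_0,\zeta_0,\bzeta_0)\mapsto(-q_0,-\zeta_0,-\bzeta_0)$, with the sign $(-1)^r$ coming from the $r$ odd $q_0$-variables and $(-1)^{|\varphi|+|\psi|}$ from reassembling the reflected odd arguments via $\gP$. The only difference is bookkeeping: the paper applies the substitution directly to $V(\varphi,\psi)(-\xx)$, whereas you apply it to $V(\gM\varphi,\gM\psi)(\xx)$ and pass through the intermediate identity $V(\gM\varphi,\gM\psi)(\xx)=(-1)^r V(\gP\varphi,\gP\psi)(-\xx)$ — the same computation organized in reverse.
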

\begin{proof}
Let $\xx=(q,p,\xi)\in\modE_0$ and $\zeta=\xi+i I\xi\in\gC^{0|s}$.
By Equations \eqref{eq-wigner}, \eqref{eq-innerschro} and \eqref{eq-schro}, we have
$$
V\big(\varphi,\psi\big)(-\xx)=(2i)^s\int \dd q_0\dd\zeta_0\dd\bzeta_0\
 e^{\frac{i\hbar}{2}\big( \zeta_0\bzeta_0+(q+2q_0)p+(\frac12\zeta+\zeta_0)\bzeta \big)} 
\overline{\varphi(q_0,\zeta_0)}
\psi(q_0+q,\zeta_0+\zeta).
$$
The result follows from the change of variables $(q_0,\zeta_0,\bzeta_0)\mapsto(-q_0,-\zeta_0,-\bzeta_0)$.
\end{proof}

The {\defin twisted convolution} on $\modE_0$ is defined as
\begin{equation}\label{Def:TwistedConvol}
\big(f^{(1)}\convol f^{(2)}\big)(\xx):=\int_{\modE_0}\dd \yy\ f^{(1)}(\yy) f^{(2)}(\xx-\yy)
 e^{\frac{i\hbar}{2}\omega(\yy,\xx)},
\end{equation}
for any smooth functions with compact support $f^{(1)},f^{(2)}\in \caD(E_0)$ 
(see \cite{Folland:1989} for the non-graded case).

\begin{lemma}
\label{lem-twistedl1}
The twisted convolution extends uniquely to $L^1(\modE_0)$ and turns it into a Banach algebra.
\end{lemma}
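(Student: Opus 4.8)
The plan is to reduce the statement to the classical twisted convolution on $L^1(\gR^{2m})$, which is a Banach algebra by \cite{Folland:1989}, the odd directions contributing only a finite-dimensional algebraic operation. First I would use the identification $L^1(\modE_0)\simeq L^1(\gR^{2m})\otimes\bigwedge\gR^{\signp+\signq}$ (the $L^1$ analogue of \eqref{eq-decompRmn}) and expand $f\in\caD(\modE_0)$ along the odd coordinates as $f(\xx)=\sum_{\a}\widetilde{f_\a}(x^\ev)(x^\od)^\a$, where $\xx=x^\ev+x^\od$ with $x^\ev\in\gR^{2m|0}$, $x^\od\in\gR^{0|\signp+\signq}$ and $f_\a\in\caD(\gR^{2m})$. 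Since the symplectic form is block-diagonal, the phase factorizes as $e^{\frac{i\hbar}{2}\omega(\yy,\xx)}=e^{\frac{i\hbar}{2}\omega_0(y^\ev,x^\ev)}\,e^{\frac{i\hbar}{2}\omega_1(y^\od,x^\od)}$ and the Berezin measure splits as $\dd\yy=\dd y^\ev\,\dd y^\od$. Because the components $\widetilde{f_\a}$ are even (scalar-valued), they commute past the odd monomials, so $\convol$ separates into the classical twisted convolution $\convol_0$ on the even variables and a purely algebraic bilinear product $\star$ on $\bigwedge\gR^{\signp+\signq}$ obtained by Berezin-integrating $(y^\od)^\beta(x^\od-y^\od)^\gamma\,e^{\frac{i\hbar}{2}\omega_1(y^\od,x^\od)}$ over $y^\od$. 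Writing $e_\a=(x^\od)^\a$ for the Grassmann monomials, this reads $(\widetilde{f_\beta}\,e_\beta)\convol(\widetilde{g_\gamma}\,e_\gamma)=(\widetilde{f_\beta}\convol_0\widetilde{g_\gamma})\,(e_\beta\star e_\gamma)$, and since $\bigwedge\gR^{\signp+\signq}$ is finite-dimensional the operation $\star$ is automatically bounded.

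This factorization immediately yields boundedness. Indeed, $\convol_0$ satisfies $\|\widetilde{f_\beta}\convol_0\widetilde{g_\gamma}\|_{L^1(\gR^{2m})}\le\|\widetilde{f_\beta}\|_{L^1}\|\widetilde{g_\gamma}\|_{L^1}$, so summing over the finitely many multi-indices gives $\|f^{(1)}\convol f^{(2)}\|\le C\,\|f^{(1)}\|\,\|f^{(2)}\|$ for a constant $C$ depending only on $\signp+\signq$. As $\caD(\modE_0)$ is dense in $L^1(\modE_0)$ (by component-wise density of $\caD(\gR^{2m})$ in $L^1(\gR^{2m})$), this bounded bilinear map extends uniquely by continuity to a bounded bilinear map $\convol:L^1(\modE_0)\times L^1(\modE_0)\to L^1(\modE_0)$; uniqueness of the extension is forced by density and continuity. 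Replacing the norm by the equivalent norm $C\,\norm\cdot\norm$ makes $\convol$ submultiplicative, so $(L^1(\modE_0),\convol)$ is a Banach algebra once associativity is checked.

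For associativity, I would first verify it on $\caD(\modE_0)$ by a direct Fubini computation: writing out $(f\convol g)\convol h$ and $f\convol(g\convol h)$ and performing the substitution $\mathbf z=\yy+\mathbf w$ reduces the equality of the two phases to the cocycle identity $\omega(\yy,\mathbf w)+\omega(\yy+\mathbf w,\xx)=\omega(\mathbf w,\xx-\yy)+\omega(\yy,\xx)$, which in turn follows from the super-antisymmetry $\omega(\yy,\mathbf w)=-\omega(\mathbf w,\yy)$ on $\modE_0$. The latter holds also along the odd directions: although $\omega_1$ is symmetric on the odd basis, the odd coordinates anticommute, so the two signs cancel, and one likewise obtains $\omega(\yy,\yy)=0$, which is used in the substitution. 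Associativity then extends from $\caD(\modE_0)$ to $L^1(\modE_0)$ by continuity of $\convol$ and density. The main obstacle I anticipate is the careful bookkeeping of signs in the Berezin integration defining $\star$ and in the cocycle computation; once the super-antisymmetry $\omega(\yy,\mathbf w)=-\omega(\mathbf w,\yy)$ on $\modE_0$ is secured, every remaining step is a routine adaptation of the classical proof.
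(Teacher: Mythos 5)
Your proof is correct and follows essentially the same route as the paper's: expand along the odd variables, reduce the twisted convolution to classical twisted convolutions of the components on $\gR^{2m}$ (the Berezin integration of the odd part of the phase contributing only finitely many constant coefficients), and conclude by the $L^1$ estimate and density. The only difference is that you additionally spell out the associativity check via the cocycle identity and the renorming needed for exact submultiplicativity, points which the paper's proof leaves implicit.
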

\begin{proof}
Let $f^{(1)},f^{(2)}\in \caD(\modE_0)$ and recall that $\modE_0=\gR^{2m|\signp+\signq}$. 
Using the decomposition \eqref{Function:Rmn}
of functions along odd variables, we get 
\begin{gather*}
f^{(1)}(\xx)=\sum_{\b\in(\gZ_2)^{\signp+\signq}}\widetilde{f^{(1)}_\b}(x^\ev)\,(x^\od)^\b,\qquad
f^{(2)}(\xx)=\sum_{\g\in(\gZ_2)^{\signp+\signq}}\widetilde{f^{(2)}_\g}(x^\ev)\,(x^\od)^\g, \\
f^{(1)}*f^{(2)}(\xx)=\sum_{\a\in(\gZ_2)^{\signp+\signq}}\widetilde{(f^{(1)}*f^{(2)})_\a}(x^\ev)\,(x^\od)^\a, 
\end{gather*}
where $\xx\in \modE_0$ is decomposed as $\xx=x^\ev+x^\od$ with $x^\ev\in\gR^{2m|0}$ and $x^\od\in\gR^{0|\signp+\signq}$. In view of \eqref{Def:TwistedConvol}, for any $\a\in(\gZ_2)^{\signp+\signq}$, there exist positive constants $c_{\a,\b,\g}$ such that
\begin{multline*}
\int_{\gR^{2m}}\dd x\ |\big(f^{(1)}\ast f^{(2)}\big)_\a (x)|\leq \sum_{\b,\g}c_{\a,\b,\g}\int_{\gR^{4m}} \dd x\dd y\ |f^{(1)}_\b(y)||f^{(2)}_\g(x-y)| \\
\leq \sum_{\b,\g}c_{\a,\b,\g}\int_{\gR^{4m}} \dd x\dd y\ |f^{(1)}_\b(y)| |f^{(2)}_\g(x)| = \sum_{\b,\g}c_{\a,\b,\g}\norm f^{(1)}_\b\norm_{L^1} \norm f^{(2)}_\g\norm_{L^1}.
\end{multline*}
This exactly means that $f^{(1)}\ast f^{(2)}$ is in $L^1(\modE_0)$ (see Equation \eqref{eq-berezin} and below).
\end{proof}

On Wigner functions, the twisted convolution can be explicitly computed
in some cases.

\begin{lemma}
\label{lem-vphi}
Let $\varphi,\psi\in\caS(\gR^{m|r})$. For any $\varphi',\psi'\in\ehH_S$, we have
\begin{equation*}
\forall \xx\in \modE_0,\quad \Big(V(\varphi',\psi)\convol V(\varphi,\psi')\Big)(\xx)
=\kappa_\hbar \langle\varphi',\psi'\rangle\ V(\varphi,\gP\psi)(\xx).
\end{equation*}
\end{lemma}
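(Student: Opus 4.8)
The plan is to unfold both the twisted convolution \eqref{Def:TwistedConvol} and the Wigner functions \eqref{eq-wigner}, and then reduce the remaining $\yy$-integral to the resolution of the identity (Proposition \ref{prop-resol}). First I would write, for $\xx\in\modE_0$,
\begin{equation*}
\big(V(\varphi',\psi)\convol V(\varphi,\psi')\big)(\xx)=\int_{\modE_0}\dd\yy\ \langle\varphi',U(\yy)\psi\rangle\,\langle\varphi,U(\xx-\yy)\psi'\rangle\,e^{\frac{i\hbar}{2}\omega(\yy,\xx)},
\end{equation*}
which is integrable by Lemma \ref{lem-gaussian} and Lemma \ref{lem-twistedl1}. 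The key device is the Weyl relation \eqref{Eq:UU}: taking $\aa=\xx$, $\bb=-\yy$ gives $U(\xx-\yy)=e^{-\frac{i\hbar}{2}\omega(\yy,\xx)}U(\xx)U(-\yy)$, where $U(-\yy)=U(\yy)^{-1}$ since $\omega(\yy,\yy)=0$. The phase $e^{-\frac{i\hbar}{2}\omega(\yy,\xx)}$ so produced exactly cancels the cocycle phase $e^{\frac{i\hbar}{2}\omega(\yy,\xx)}$ of the twisted convolution, leaving a phase-free integrand.

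Next I would use superunitarity \eqref{Eq:Isometry} of the degree-$0$ operators $U(\xx),U(\yy)$ to bring the integrand into the shape occurring in Proposition \ref{prop-resol}. Concretely, $\langle\varphi,U(\xx)U(-\yy)\psi'\rangle=\langle U(-\xx)\varphi,U(-\yy)\psi'\rangle=\langle U(\yy)U(-\xx)\varphi,\psi'\rangle$, so that the integrand reads $\langle U(\yy)U(-\xx)\varphi,\psi'\rangle\,\langle\varphi',U(\yy)\psi\rangle$ (after reordering the two $\superA_\gC$-valued factors). This matches $\langle U(\yy)\mathbf a,\mathbf b\rangle\langle\mathbf c,U(\yy)\mathbf d\rangle$ with $\mathbf a=U(-\xx)\varphi$, $\mathbf b=\psi'$, $\mathbf c=\varphi'$, $\mathbf d=\psi$. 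By Proposition \ref{Prop:pibounded} the vector $U(-\xx)\varphi$ is still a (superfunction-valued) smooth vector, so Proposition \ref{prop-resol}, extended $\superA$-bilinearly, applies and yields $\kappa_\hbar\,\langle U(-\xx)\varphi,\psi\rangle\,\langle\varphi',\gP\psi'\rangle$. Finally, superunitarity gives $\langle U(-\xx)\varphi,\psi\rangle=\langle\varphi,U(\xx)\psi\rangle=V(\varphi,\psi)(\xx)$, and one converts the parity operator from the right-hand argument $\gP\psi'$ into $\gP\psi$ using that $\langle-,-\rangle$ is homogeneous of degree $\sigma$, recovering $\kappa_\hbar\,\langle\varphi',\psi'\rangle\,V(\varphi,\gP\psi)(\xx)$.

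The main obstacle will be the systematic bookkeeping of Koszul signs, which is the only nontrivial part. Three sources must be reconciled: the supercommutativity sign incurred when the two $\superA_\gC$-valued Wigner functions are transposed; the signs in the $\superA$-bilinear extension \eqref{eq-abilin} of the inner product, which enter because the smooth vector $U(-\xx)\varphi$ is genuinely $\superA$-valued; and the relocation of the parity operator $\gP$ \eqref{eq-parity} between the two arguments of an inner product. I would handle this by reducing to homogeneous $\varphi,\psi,\varphi',\psi'$ and exploiting the degree constraints: $\langle\varphi',\psi'\rangle=0$ unless $|\varphi'|+|\psi'|=\sigma$, while the Wigner function $V(\varphi,\psi)$ is homogeneous of $\superA$-degree $\sigma+|\varphi|+|\psi|$ (as follows from the expansion in Lemma \ref{lem-gaussian} together with Proposition \ref{Prop:pibounded}). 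Substituting these relations collapses every accumulated sign to the single factor relating $\gP\psi$ to $\psi$, and the claimed identity follows; the degenerate cases where either side vanishes are then immediate.
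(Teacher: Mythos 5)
Your proposal is correct and follows essentially the same route as the paper's own proof: both unfold the definitions, use the cocycle relation \eqref{Eq:UU} together with $\omega(\yy,\xx)=-\omega(\xx,\yy)$ to cancel the twisted-convolution phase and rewrite the integrand as $\langle\varphi,U(\xx)U(-\yy)\psi'\rangle=V\big(U(-\xx)\varphi,\psi'\big)(-\yy)$, then invoke the resolution of the identity (Proposition \ref{prop-resol}) with $U(-\xx)\varphi$ in the Schwartz slot and finally relocate $\gP$ between the two inner products using the degree constraints. Your explicit handling of the Koszul signs and of the $\superA$-bilinear extension of Proposition \ref{prop-resol} to the $\superA$-valued vector $U(-\xx)\varphi$ is, if anything, more careful than the paper's, which leaves both points implicit.
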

\begin{proof}
By definition, the left hand side of the above display is equal to
\begin{equation*}
LHS=\int\dd \yy\ V\big(\varphi',\psi\big)(\yy)\, V\big(\varphi,\psi'\big)(\xx-\yy)\, 
e^{\frac{i\hbar}{2}\omega(\yy,\xx)}.
\end{equation*}
Using the definition of Wigner functions, Equation \eqref{Eq:UU} and 
$\omega(\yy,\xx)=-\omega(\xx,\yy)$, we get
$$
V\big(\varphi,\psi'\big)(\xx-\yy)\, e^{\frac{i\hbar}{2}\omega(\yy,\xx)}=\langle\varphi,U(\xx)U(-\yy)\psi'\rangle
=V\big(U(-\xx)\varphi,\psi'\big)(-\yy).
$$
The resolution of the identity \eqref{Eq:resolutionId} 
yields then
$
LHS=\kappa_\hbar \langle\varphi',\psi'\rangle\ \langle U(-\xx)\varphi,\gP\psi\rangle
$
and the result follows.
\end{proof}

\section{Stone-von Neumann theorem}

From now on, $\hbar$ is a fixed non-zero real.
The aim of this section is to prove the theorem below.

\begin{theorem}[extended Stone-von Neumann]\label{thm:StonevN}
Let $\ehH$ be a Hilbert superspace of parity $\sigma\in\{0,1\}$ 
and $(\ehH,\pio,\pi)$ be a strong SUR of the Heisenberg supergroup  $\rH_{2m|\signp,\signq}$
such that central elements act by $\pi(0,t)=e^{i\hbar t}\gone$, for all $t\in\gR$.
Consider the Schr\"odinger representation $(\ehH_S,U_0,U)$ of $\rH_{2m|\signp,\signq}$, 
with parameter $\hbar$, and parity $\sigma$ if $\signp+\signq$ is odd.
 Then, $(\ehH,\pio,\pi)$ is equivalent to the tensor product of $(\ehH_S,U_0,U)$ 
with the trivial representation on a Hilbert superspace $\ehH_R$.
\end{theorem}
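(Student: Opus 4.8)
The plan is to adapt the projection / matrix-unit proof of the classical Stone--von Neumann theorem (as in the Folland account cited here) to the super setting, with the twisted convolution and the resolution of the identity as the main engines. Throughout write $\pi(\xx):=\pi(\xx,0)$ for $\xx\in\modE_0$; by Proposition \ref{Prop:pibounded} these are bounded operators on $\ehH$, and the group law \eqref{GroupLaw} together with the hypothesis $\pi(0,t)=e^{i\hbar t}\gone$ yields the Weyl relation $\pi(\xx)\pi(\xx')=e^{\frac{i\hbar}{2}\omega(\xx,\xx')}\pi(\xx+\xx')$, exactly as in \eqref{Eq:UU}. First I would introduce the integrated operators $\pi(f):=\int_{\modE_0}\dd\xx\,f(\xx)\,\pi(\xx)$ for $f\in L^1(\modE_0)$; using the Weyl relation and Lemma \ref{lem-twistedl1} one checks $\pi(f^{(1)})\pi(f^{(2)})=\pi(f^{(1)}\convol f^{(2)})$, while superunitarity of the $\pi(\xx)$ produces an involution $f\mapsto f^{*}$ with $\pi(f)^\dag=\pi(f^{*})$. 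Since the Wigner functions lie in $L^1(\modE_0)$ (Lemma \ref{lem-gaussian}), the operators $W(\varphi,\psi):=\pi\big(V(\varphi,\psi)\big)$ are then well defined on $\ehH$ for $\varphi,\psi\in\ehH_S^\infty$.

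The core of the argument is that the $W(\varphi,\psi)$ behave like super matrix units. Feeding Lemma \ref{lem-vphi} through the multiplicativity of the integrated representation gives
\[
W(\varphi',\psi)\,W(\varphi,\psi')=\kappa_\hbar\,\langle\varphi',\psi'\rangle\,W(\varphi,\gP\psi),
\]
and Lemma \ref{lem-minus} combined with the superunitarity of the $U(\xx)$ identifies the superadjoint $W(\varphi,\psi)^\dag$ with a sign times $W(\psi,\varphi)$. I would then fix a homogeneous $\varphi_0\in\ehH_S^\infty$ which is a $\gP$-eigenvector with $\langle\varphi_0,\varphi_0\rangle$ a nonzero real of the parity matching $\sigma$ in the odd case; the signatures computed in Section \ref{subsec-spinor} guarantee such a vector exists. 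A direct computation of constants, reconciling the powers of $i$ in $\kappa_\hbar$ with the parity sign of Lemma \ref{lem-minus}, fixes a normalisation of $\varphi_0$ and of the scalar in front of $W$ for which $P:=W(\varphi_0,\varphi_0)$ satisfies $P^2=P=P^\dag$. By Corollary \ref{cor-subspace}, $\ehH_R:=\Imag P$ is a Hilbert sub-superspace; it will serve as the multiplicity space carrying the trivial representation.

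Next I would build the intertwiner $\Phi:\ehH_S\hat\otimes\ehH_R\to\ehH$ by $\Phi(\varphi\otimes w):=W(\varphi,\varphi_0)w$, extended by continuity (boundedness follows from $V(\varphi,\varphi_0)\in L^1\cap L^2$ via Lemma \ref{lem-gaussian} and density of $\ehH_S^\infty$ in $\ehH_S$). That $\Phi$ is an isometry is precisely the resolution of the identity: expanding $\langle\Phi(\varphi\otimes w),\Phi(\varphi'\otimes w')\rangle$ and using the matrix-unit relation above together with $Pw=w$, $Pw'=w'$ collapses it to $\langle\varphi,\varphi'\rangle_{\ehH_S}\langle w,w'\rangle_{\ehH_R}$, i.e. the tensor-product inner product \eqref{Eq:TPsuperH}. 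The intertwining identity $\pi(\xx)\,\Phi=\Phi\,\big(U(\xx)\otimes\gone\big)$ follows from $\pi(\xx)W(\varphi,\varphi_0)=W(U(\xx)\varphi,\varphi_0)$, which is the $G$-covariance of the Wigner functions fed through the Weyl relation; extending the intertwining from $\modE_0$ to all of $\rH_{2m|\signp,\signq}$ and to the odd directions is then automatic by Theorem \ref{thm:Equivalence}, both sides being strong SUR's with the same infinitesimal action.

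The main obstacle is surjectivity of $\Phi$, i.e. that the images $W(\varphi,\varphi_0)\,\ehH_R$ span a dense subspace of $\ehH$; this is the genuine content of the theorem. I would argue it by irreducibility: the closed span $\caF$ of all $W(\varphi,\varphi_0)v$ with $v\in\ehH$ is a $(\pio,\pi)$-invariant Hilbert sub-superspace, and the matrix-unit relations exhibit its decomposition, under the Clifford algebra $\mathbb{C}\ell(\signp+\signq)$ generated by $U_*$, as copies of $\ehH_S$; by Theorem \ref{thm-irreduct} in the even case and Corollary \ref{cor-irreduct} in the odd case, no proper invariant subspace can omit a vector, so $\caF=\ehH$ and $\Imag\Phi=\ehH$. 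Finally Proposition \ref{prop:isosuperunitary} promotes the bijective isometry $\Phi$ to a superunitary operator, yielding the claimed equivalence. In the odd case one may alternatively reduce to the even case by viewing the parity-$\sigma$ Schr\"odinger representation as a restriction along $\rH_{2m|\signp,\signq}\hookrightarrow\rH_{2m|\signp+1,\signq}$ (Definition \ref{Def:SchrodingerRep}) and invoking Proposition \ref{prop-tensprodheis}, which keeps the parity bookkeeping transparent.
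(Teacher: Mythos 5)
Your proposal founders on the parity--$1$ case, which is precisely where the super theorem departs from Folland's classical argument. You propose to fix a homogeneous $\gP$-eigenvector $\varphi_0\in\ehH_S^\infty$ with $\langle\varphi_0,\varphi_0\rangle$ a nonzero real and to take $P:=W(\varphi_0,\varphi_0)$ as an orthogonal projector with $P^2=P=P^\dag$. But when $r$ is odd (e.g.\ signature $(\signp,\signq)=(1,1)$, where $\ehH_S=L^2(\gR^{0|1})$), the Hilbert superspace $\ehH_S$ has parity $1$, and for a superhermitian inner product of parity $1$ the homogeneous subspaces are \emph{isotropic}: $\langle x,x\rangle=0$ for every homogeneous $x$. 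No vector of the kind you require exists, and Lemma \ref{lem-vphi} then gives $W(\varphi_0,\varphi_0)^2=\kappa_\hbar\langle\varphi_0,\varphi_0\rangle\,W(\varphi_0,\gP\varphi_0)=0$, so your $P$ is nilpotent, not a projector. This is exactly the obstruction the paper's proof is organized around: it takes \emph{two distinct} states, an odd $\varphi_\kP$ and an even $\psi_\kP$ with $\kappa_\hbar\langle\varphi_\kP,\psi_\kP\rangle=1$, obtains a projector $\kP=\pi\big(V(\varphi_\kP,\psi_\kP)\big)$ which is \emph{not} orthogonal ($\kP\kP^\dag=\kP^\dag\kP=0$ when $r$ is odd, so $\Imag\kP$ is an isotropic subspace of $\ehH$), and must then build the Hilbert superspace structure on $\ehH_R=\Imag\kP$ by hand, via the auxiliary operator $\kV=\kappa_\hbar\,\pi\big(V(\varphi_\kP,\varphi_\kP)\big)$ and the modified inner product $\langle v,v'\rangle_R=(-1)^{r\sigma}\langle\kV v,v'\rangle_\ehH$ (Lemma \ref{HR}). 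Your isometry computation also silently assumes the inner product on $\ehH_R$ is the restriction of $\langle-,-\rangle_\ehH$, which fails for the same reason. Your approach and the paper's coincide only when $r$ is even, where one may indeed take $\psi_\kP=\varphi_\kP$ and Corollary \ref{cor-subspace} applies.

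Two further steps are not sound as written. First, your surjectivity argument infers $\caF=\ehH$ from irreducibility of the Schr\"odinger representation, but irreducibility of $(\ehH_S,U_0,U)$ says nothing about invariant subspaces of the \emph{given} space $\ehH$ (which is generically reducible, being $\ehH_S\hat\otimes\ehH_R$); the actual content is faithfulness of the integrated representation on $L^1(\modE_0)$ (Lemma \ref{lem-pi-faithful}, proved by Fourier inversion on the body), which rules out $\kP$ vanishing on $(\Imag\Phi)^\perp$, together with Proposition \ref{prop:ImIso} to know $\Imag\Phi\cap(\Imag\Phi)^\perp=\algzero$ in the first place. Second, for $\signp+\signq$ odd, restricting the Schr\"odinger representation of $\rH_{2m|\signp+1,\signq}$ is the easy direction; the hard direction, which your reduction skips, is to \emph{extend} the arbitrary given strong SUR $(\ehH,\pio,\pi)$ of $\rH_{2m|\signp,\signq}$ to the larger supergroup. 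The paper does this by manufacturing the missing odd generator as $[\tfrac{i}{2}\gP,\piG(e_1)]$ or $[\tfrac12\gP,\piG(e_1)]$ depending on the parity of $\ehH$ (Lemmas \ref{lem:commutation}--\ref{lem:oddtoeven}); without this step, the analytic machinery (Wigner functions, resolution of the identity), which the paper develops only for $\signp+\signq$ even, is not available in the odd case.
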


The next section deals with the notion of integrated representation. 
This and the results obtained in Section \ref{Sec:AnalyticProp} are the foreground
for the proof of the theorem in the case $\signp+\signq$ even,
which is performed in Section \ref{subsec-even}. Eventually, in Section \ref{subsec-odd}, 
we reduce the case $\signp+\signq$ odd to the case $\signp+\signq$ even, which concludes the proof.

\subsection{Preliminaries}

We refer to elements in the Heisenberg supergroup $\rH_{2m|\signp,\signq}$
as couples $(\xx,t)\in\modE_0\times\gR^{1|0}$, with $\modE_0=\gR^{2m|\signp+\signq}$ 
and we suppose that $\signp+\signq$ is even.
Let $(\ehH,\pio,\pi)$ be a strong SUR of $\rH_{2m|\signp,\signq}$
such that, for all $t\in\gR$,
$\pi(0,t)=e^{i\hbar t}\gone$, with $\hbar \in\gR^\times$. 
We set $\pi(\xx):=\pi(\xx,0)$ 
for all $\xx\in\modE_0$. According to the group law \eqref{GroupLaw} of $\rH_{2m|\signp,\signq}$, this
defines a projective representation of the Abelian supergroup $\modE_0$ such that
\begin{equation}\label{Eq:pipi}
\pi(\xx)\pi(\xx')=e^{\frac{i\hbar}{2}\omega(\xx,\xx')}\pi(\xx+\xx').
\end{equation}
The {\defin integrated representation} 
$\pi:\caD(E_0)\to\End(\ehH^\infty)$ is the map defined by the following expression,
\begin{equation*}
\pi(f):=\int_{\modE_0}\dd \xx\ f(\xx)\pi(\xx).
\end{equation*}
\begin{lemma}
\label{lem-extendpi}
The integrated representation extends uniquely as a Banach algebra morphism $\pi:L^1(E_0)\to \caB(\ehH)$. 
\end{lemma}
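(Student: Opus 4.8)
The plan is to prove the two defining properties of a Banach algebra morphism on the dense subalgebra $\caD(E_0)\subseteq L^1(E_0)$ and then pass to the completion. Concretely, I would first show that $\pi$ is multiplicative for the twisted convolution $\convol$ and bounded for the $L^1$-norm on $\caD(E_0)$; the unique extension to $L^1(E_0)$ and the morphism property then follow by density, completeness of $\caB(\ehH)$, and continuity of $\convol$ (Lemma~\ref{lem-twistedl1}).

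For multiplicativity I would compute directly from the projective relation~\eqref{Eq:pipi}. For $f^{(1)},f^{(2)}\in\caD(E_0)$,
\begin{equation*}
\pi(f^{(1)})\pi(f^{(2)})=\int \dd\xx\,\dd\yy\ f^{(1)}(\xx)f^{(2)}(\yy)\,e^{\frac{i\hbar}{2}\omega(\xx,\yy)}\,\pi(\xx+\yy).
\end{equation*}
After the substitution $\yy\mapsto\yy-\xx$ (a translation, so the measure is preserved) and using that $\omega(\xx,\xx)=0$ for every $\xx\in E_0$ (super-skewsymmetry forces $\omega(\xx,\xx)=-\omega(\xx,\xx)$ on even elements), the phase collapses to $e^{\frac{i\hbar}{2}\omega(\xx,\yy)}$ and the inner integral over $\xx$ is exactly the twisted convolution~\eqref{Def:TwistedConvol}; hence $\pi(f^{(1)})\pi(f^{(2)})=\pi(f^{(1)}\convol f^{(2)})$.

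The core step is the bound $\|\pi(f)\|\leq M\,\|f\|_{L^1(E_0)}$. Since $(\ehH,\pio,\pi)$ is a strong SUR with $\pi(0,t)=e^{i\hbar t}\gone$, Proposition~\ref{Prop:pibounded} gives $\pi(\xx)=\widetilde{\pio}(x^\ev)\sum_\gamma (x^\od)^\gamma\pi_\gamma$ with $\pi_\gamma\in\caB(\ehH)$. Expanding $f(\xx)=\sum_\beta \widetilde{f_\beta}(x^\ev)(x^\od)^\beta$ with $f_\beta\in\caD(\gR^{2m})$ and carrying out the Berezin integration over the odd variables $x^\od$, only the complementary terms $\gamma=\bar\beta$ survive, so that
\begin{equation*}
\pi(f)=\sum_{\beta}\pm\Big(\int_{\gR^{2m}} f_\beta(x)\,\pio(x)\,\dd x\Big)\pi_{\bar\beta}.
\end{equation*}
Here I use $\widetilde{f_\beta}\,\widetilde{\pio}=\widetilde{f_\beta\,\pio}$ and that Berezin integration over the purely even superspace $\gR^{2m|0}$ reduces to the ordinary Lebesgue integral of the body, which discards the a priori unbounded nilpotent tail of $\widetilde{\pio}$. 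Fixing a fundamental symmetry $J$ for which the operators $\pio(g)$ are unitary (possible because $\pio$ is unitarizable), each classical integrated operator satisfies $\|\int_{\gR^{2m}} f_\beta(x)\pio(x)\,\dd x\|_J\leq \|f_\beta\|_{L^1(\gR^{2m})}$, and the finitely many $\pi_{\bar\beta}$ are bounded; summing over $\beta$ yields $\|\pi(f)\|\leq M\sum_\beta\|f_\beta\|_{L^1(\gR^{2m})}=M\,\|f\|_{L^1(E_0)}$, and in particular each $\pi(f)$ is a bounded operator on all of $\ehH$.

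The main obstacle is precisely this uniform bound: although the operators $\pi(\xx)$ carry nilpotent, potentially unbounded contributions from the odd directions, Proposition~\ref{Prop:pibounded} combined with the body-reduction of the Berezin integral collapses $\pi(f)$ into a finite sum of genuinely bounded classical integrated operators composed with the bounded $\pi_{\bar\beta}$, restoring the estimate familiar from the classical Heisenberg group. With boundedness and multiplicativity established on $\caD(E_0)$, the unique continuous extension $\pi\colon L^1(E_0)\to\caB(\ehH)$ is a Banach algebra morphism, since both $\convol$ and operator composition are continuous and $\caD(E_0)$ is dense.
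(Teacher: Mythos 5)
Your proof is correct and follows essentially the same route as the paper: you use Proposition \ref{Prop:pibounded} to expand $\pi(\xx)$ along odd variables, let the Berezin integration collapse $\pi(f)$ into a finite sum of classical integrated operators $\int_{\gR^{2m}} f_\beta(x)\pio(x)\,\dd x$ composed with the bounded $\pi_{\bar\beta}$, bound these via a unitarizing fundamental symmetry, and obtain multiplicativity from the change of variables together with \eqref{Eq:pipi}. The only cosmetic difference is that you first work on $\caD(E_0)$ and extend by density, whereas the paper runs the same estimates directly on $L^1(E_0)$; this changes nothing of substance.
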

\begin{proof}
The decomposition along the odd variables 
of $\pi(\xx)$ reads as (see Proposition \ref{Prop:pibounded}):
$\pi(\xx)=\pi(x^{ev})\sum_{\a\in(\gZ_2)^{\signp+\signq}}\,(x^{od})^\a\pi_\a$. 
After integration of the odd variables $x^\od$, this yields to
\begin{equation*}
\pi(f)=\sum_{\a\in(\gZ_2)^{\signp+\signq}} \eps(\alpha)\Big(\int_{\gR^{2m}}\dd x\  f_\alpha(x)\pi_0(x)\Big)\pi_{\bar{\a}},
\end{equation*}
where $\bar{\a}=(1,\ldots,1)-\a\in(\gZ_2)^{\signp+\signq}$ and 
$\eps(\alpha)=\pm 1$ is defined in Example \ref{ex-superhilbert}.
Since $(\ehH,\pio,\pi)$ is a strong SUR, there exists a fundamental symmetry $J$ such that $(\ehH,\pio)$ is unitary w.r.t.\ the scalar product $(-,-)_J$. Denoting by $\norm-\norm$ the operator norm associated to $(-,-)_J$, we have $\norm\pi_0(x)\norm=1$ for all $x\in\gR^{2m}$ and then
\begin{equation*}
\norm\pi(f)\norm\leq \sum_{\a\in(\gZ_2)^{\signp+\signq}} \int_{\gR^{2m}}\dd x\ |f_\alpha(x)| \norm\pi_{\bar{\a}}\norm \, \leq \, C\sum_{\alpha\in(\gZ_2)^{\signp+\signq}}\norm f_\alpha\norm_{L^1},
\end{equation*}
where $C=\sup_{\a}\norm\pi_\a\norm$ is a finite number by Proposition \ref{Prop:pibounded}. 
This shows both that $\pi(f)$ is a well-defined bounded operator and that $\pi$ is a continuous map.

By definition, for any $f_1,f_2\in L^1(E_0)$, we have 
$$
\pi(f_1\convol f_2)=\int\dd\xx\dd\yy\ f_1(\yy)f_2(\xx-\yy)  e^{\frac{i\hbar}{2}\omega(\yy,\xx)} \pi(\xx).
$$
The change of variables $\xx\mapsto \xx+\yy$ and Equation \eqref{Eq:pipi} lead
then to $\pi(f_1\convol f_2)=\pi(f_1)\pi(f_2)$.
\end{proof}

According to Equation \eqref{Eq:pipi}, the following equalities hold
\begin{equation}\label{eq-pr1}
\pi(f)\pi(\yy)=\pi\left(e^{\frac{i\hbar}{2}\omega(\fois,\yy)}f(\fois-\yy)\right),\qquad \pi(\yy)\pi(f)=\pi\left(e^{-\frac{i\hbar}{2}\omega(\fois,\yy)}f(\fois-\yy)\right).
\end{equation}
Following \cite{Folland:1989}, we obtain
\begin{lemma}
\label{lem-pi-faithful}
The representation $\pi$ is faithful on $L^1(E_0)$.  
\end{lemma}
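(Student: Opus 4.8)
The plan is to argue by contradiction, following the classical time--frequency argument of \cite{Folland:1989}. Suppose $\pi(f)=0$ for some $f\in L^1(E_0)$ with $f\neq 0$; I will show that this forces $\pi$ to vanish on all of $L^1(E_0)$, which is absurd. The starting point is purely algebraic: since $\pi(f)=0$, one has $\pi(\yy')\,\pi(f)\,\pi(\yy)=0$ for all $\yy,\yy'\in E_0$. Expanding the left-hand side by two applications of \eqref{eq-pr1} and using $\omega(\yy,\yy)=0$, a short computation gives
\begin{equation*}
\pi(\yy')\,\pi(f)\,\pi(\yy)=e^{-\frac{i\hbar}{2}\omega(\yy',\yy)}\,\pi\big(e^{\frac{i\hbar}{2}\omega(\fois,\,\yy-\yy')}\,f(\fois-(\yy'+\yy))\big).
\end{equation*}
As $(\yy',\yy)$ runs over $E_0\times E_0$, the pair $(a,b):=(\yy'+\yy,\,\yy-\yy')$ runs over all of $E_0\times E_0$, so $\pi$ annihilates every twisted time--frequency shift $\xx\mapsto e^{\frac{i\hbar}{2}\omega(\xx,b)}f(\xx-a)$ of $f$. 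By linearity and the continuity of $\pi$ on $L^1(E_0)$ (Lemma \ref{lem-extendpi}), $\pi$ then vanishes on the closed linear span $S$ of all these shifts.

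The second step is to prove that $S=L^1(E_0)$, and this is where $\hbar\neq 0$ and the non-degeneracy of $\omega$ enter. Dualizing, it suffices to show that any $g\in L^\infty(E_0)$ annihilating $S$ is zero. For fixed $a$, the condition $\int g(\xx)\,e^{\frac{i\hbar}{2}\omega(\xx,b)}f(\xx-a)\,\dd\xx=0$ for all $b$ says that the symplectic Fourier transform of $\xx\mapsto g(\xx)f(\xx-a)$ vanishes identically in $b$; since $\hbar\neq0$ and $\omega$ is non-degenerate, $b\mapsto e^{\frac{i\hbar}{2}\omega(\fois,b)}$ exhausts all characters, whence $g(\xx)f(\xx-a)=0$ for a.e.\ $\xx$ and every $a$. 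As $f\neq0$, a Fubini argument then forces $g=0$. In the even variables this is exactly the Fourier-analytic completeness used in \cite{Folland:1989}; in the odd variables the corresponding statement is the finite-dimensional bijectivity of the Grassmann--Fourier transform, and the two are combined by running the argument on the components of the expansion \eqref{Function:Rmn} of $f$, consistently with the formula for $\pi(f)$ recorded in Lemma \ref{lem-extendpi}. Thus $\pi\equiv 0$ on $L^1(E_0)$.

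Finally I would derive the contradiction. Specializing $\pi\equiv0$ to functions of the form $f_0\cdot(x^\od)^{\mathbf 1}$, where $\mathbf 1=(1,\dots,1)$ selects the top odd monomial and $f_0\in L^1(\gR^{2m})$, the component formula of Lemma \ref{lem-extendpi} collapses to $\pi(f_0\cdot(x^\od)^{\mathbf 1})=\eps(\mathbf 1)\,\pio(f_0)$, since $\pi_{\mathbf 0}=\gone$ by Proposition \ref{Prop:pibounded}. Hence $\pio(f_0):=\int_{\gR^{2m}}f_0(x)\pio(x)\,\dd x=0$ for every $f_0\in L^1(\gR^{2m})$; but for a classical approximate identity $(g_n)$ on $\gR^{2m}$ one has $\pio(g_n)\to\gone$ strongly, by strong continuity of the SUR $\pio$ of $\rH_{2m}$. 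This contradiction shows $f=0$, so $\pi$ is faithful on $L^1(E_0)$. I expect the main obstacle to be the density step: making the completeness of the twisted time--frequency shifts rigorous across the Berezin integration, for which the reduction to the odd components and the separate treatment of the Fourier-analytic (even) and Grassmann (odd) directions is essential.
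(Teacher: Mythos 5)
Your strategy is viable and genuinely different from the paper's, but one step is a real gap as written. The claim ``$g(\xx)f(\xx-a)=0$ for a.e.\ $\xx$ and every $a$, together with $f\neq 0$, forces $g=0$ by Fubini'' is false at face value for $\superA_\gC$-valued superfunctions, because the Grassmann factor has zero divisors: for example $g=f=\xi_1 h(x^\ev)$ with $h\neq 0$ satisfies $g\cdot\tau_a f=0$ for every purely even shift $a$, with both factors nonzero. What rescues the argument is precisely the presence of the odd translations, and they must be used explicitly, not just invoked: since the identity holds for all $a$, you may differentiate it with respect to the odd components of $a$ at $a^\od=0$, which yields $g(\xx)\,(\partial^{\gamma}_{\od}f)(\xx-a^\ev)=0$ for every multi-index $\gamma$ and every $a^\ev$. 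Choosing $\gamma$ of maximal length among those with $f_\gamma\neq 0$ in the expansion \eqref{Function:Rmn}, one gets $\partial^{\gamma}_{\od}f=\pm f_\gamma$, an ordinary nonzero function of the even variables alone, and only at that point does the classical Fubini argument apply componentwise to give $g_\beta=0$ for all $\beta$. So your ``reduction to the odd components'' is not a routine verification but the missing core of the density step; without it the proof does not go through.

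You should also compare with the paper's proof, which shows the entire density detour (and hence the zero-divisor issue, the Hahn--Banach dualization, and the final approximate-identity contradiction) is unnecessary. The paper conjugates with the same element on both sides, $0=\pi(\yy)\pi(f)\pi(-\yy)$, which in your parametrization is exactly the slice $a=\yy'+\yy=0$, $b=-2\yy$: the translations cancel and only the phase survives, giving $0=\int\dd\xx\ e^{-i\hbar\omega(\xx,\yy)}\langle f(\xx)\pi(\xx)u,v\rangle_\ehH$ for all $u,v\in\ehH$ and all $\yy$. The same Fourier uniqueness you invoke (classical in the even variables, bijectivity of the Grassmann--Fourier transform in the odd ones) then gives $\langle f(\xx)\pi(\xx)u,v\rangle_\ehH=0$ a.e., hence $f(\xx)\pi(\xx)=0$ for a.e.\ $\xx$, and invertibility of $\pi(\xx)$ (its inverse is $\pi(-\xx)$ by \eqref{Eq:pipi}) yields $f=0$ directly. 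One never multiplies two unknown superfunctions, so no zero-divisor problem arises, and no contradiction argument is needed.
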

\begin{proof}
Let $f\in L^1(E_0)$ such that $\pi(f)=0$. We have to prove that $f=0$ almost everywhere.
Denoting by $\langle-,-\rangle_\ehH$ the inner product on $\ehH$ and using Equation \eqref{eq-pr1},
we have, for all $u,v\in\ehH$, $\yy\in E_0$,
\begin{align*}
0&=\langle \pi(\yy)\pi(f)\pi(-\yy)u,v\rangle_\ehH,\\
&=\langle\pi(e^{-\frac{i\hbar}{2}\omega(\fois,\yy)}e^{\frac{i\hbar}{2}\omega(\fois-\yy,-\yy)}f)u,v\rangle_\ehH,\\
&=\int \dd\xx\ e^{-i\hbar\omega(\xx,\yy)} \langle f(\xx) \pi(\xx)u,v\rangle_\ehH.
\end{align*}
Thus by the Fourier inversion theorem, $\langle f(\xx) \pi(\xx)u,v\rangle_\ehH=0$ almost everywhere.
Since this is true for any $u,v\in\ehH$, we conclude that $f=0$ almost everywhere.
\end{proof}

\subsection{Proof of the theorem in the even case}\label{subsec-even}

In this section, we prove Theorem \ref{thm:StonevN} under the assumption that 
the odd dimension $\signp+\signq$ of the Heisenberg supergroup $\rH_{2m|\signp,\signq}$
is even. The strategy of our proof is adapted from
the one in the non-graded case given in \cite{Folland:1989}. 
Namely, we start by choosing 
special states $\varphi_\kP$, $\psi_\kP$, so that 
their Wigner function $V(\varphi_\kP,\psi_\kP)$ defines a projector 
$\kP=\pi\big(V(\varphi_\kP,\psi_\kP)\big)$ via the integrated representation. 
Contrary to the non-graded case, the projector $\kP$ may not be orthogonal.
Therefore, the image $\ehH_R=\Imag(\kP)$ may not inherit a Hilbert superspace structure from $\ehH$.
We construct one  in \eqref{InnerHR}, from the integrated representation of another Wigner function.
Finally, we define an operator $\Phi:\ehH_S\hat{\otimes}\ehH_R\to\ehH$ in Equation \eqref{IsoH-HSHR} 
and prove in Lemmas \ref{lem-defphi}-\ref{PhiSuperU} that $\Phi$ is a superunitary intertwiner between 
the representation maps $U\otimes \gone$ and $\pi$.
\medskip

In the whole section, the inner products are denoted by $\langle-,-\rangle$ on $\ehH_S$ and $\langle-,-\rangle_\ehH$ on $\ehH$.
Recall that $\ehH_S^\infty=\caS(\gR^{m|r})\otimes Hol(\gC^{0|s})$.
Let $\varphi_\kP,\psi_\kP\in\caS(\gR^{m|r})\subseteq\ehH_S^\infty$ 
be two functions such that
\begin{equation}\label{phiPpsiP}
\gM\varphi_\kP=\varphi_\kP,\quad \gM\psi_\kP=\psi_\kP,\quad |\varphi_\kP|=r\hspace{-8pt}\mod 2,\quad |\psi_\kP|=0,\qquad \kappa_\hbar\langle\varphi_\kP,\psi_\kP\rangle=1,
\end{equation}
where $\gM$ is the operator defined in \eqref{Def:M} and $\kappa_\hbar\in\gC$ is defined
in Lemma \ref{lem:integrals}. For instance, we can choose any non-zero 
function $\psi_\kP\in\caS(\gR^{m|r})$ of degree 0 and take $\varphi_\kP=bJ\psi_\kP$ 
with $J$ a fundamental symmetry of $\ehH_S$ and $b\in\gC$ defined by
$b:=(\kappa_\hbar\langle J\psi_\kP,\psi_\kP\rangle)^{-1}$.

\begin{lemma}\label{lem:kP}
The operator $\kP:=\pi\big(V(\varphi_\kP,\psi_\kP)\big) \in \caB(\ehH)$ 
is a bounded projector satisfying 
$$
\forall \yy\in E_0,\quad\kP\pi(\yy)\kP=\kappa_\hbar V\big(\varphi_\kP,\psi_\kP\big)(\yy)\kP.
$$ 
Moreover, the superadjoint operator of $\kP$ is given by
$\kP^\dag=\pi\big(V(\psi_\kP,\varphi_\kP)\big)$.
\end{lemma}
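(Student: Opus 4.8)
The plan is to read off all four assertions from the fact that $\pi$ is an algebra morphism for the twisted convolution (Lemma \ref{lem-extendpi}), combined with the convolution identity for Wigner functions (Lemma \ref{lem-vphi}), the reflection rule (Lemma \ref{lem-minus}), and the normalisations \eqref{phiPpsiP}. Boundedness is immediate: since $\varphi_\kP,\psi_\kP\in\caS(\gR^{m|r})\subseteq\ehH_S^\infty$, Lemma \ref{lem-gaussian} gives $V(\varphi_\kP,\psi_\kP)\in L^1(\modE_0)$, and Lemma \ref{lem-extendpi} then yields $\kP=\pi\big(V(\varphi_\kP,\psi_\kP)\big)\in\caB(\ehH)$.

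For the projector property I would write $\kP^2=\pi\big(V(\varphi_\kP,\psi_\kP)\convol V(\varphi_\kP,\psi_\kP)\big)$ using Lemma \ref{lem-extendpi}, and apply Lemma \ref{lem-vphi} with $\varphi'=\varphi=\varphi_\kP$ and $\psi=\psi'=\psi_\kP$, which gives $\kappa_\hbar\langle\varphi_\kP,\psi_\kP\rangle\,V(\varphi_\kP,\gP\psi_\kP)$. Since $|\psi_\kP|=0$ forces $\gP\psi_\kP=\psi_\kP$, and $\kappa_\hbar\langle\varphi_\kP,\psi_\kP\rangle=1$ by \eqref{phiPpsiP}, the right-hand side collapses to $V(\varphi_\kP,\psi_\kP)$, whence $\kP^2=\kP$.

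The relation $\kP\pi(\yy)\kP=\kappa_\hbar V(\varphi_\kP,\psi_\kP)(\yy)\kP$ is the central computation. First I would move $\pi(\yy)$ through the right-hand factor: using the projective relations \eqref{Eq:UU} and \eqref{Eq:pipi}, a change of variables in the integral shows $\pi(\yy)\kP=\pi\big(V(\varphi_\kP,U(-\yy)\psi_\kP)\big)$, the twisting phase cancelling against the one hidden in $U(\xx-\yy)=e^{\frac{i\hbar}{2}\omega(\xx,\yy)}U(\xx)U(-\yy)$. Lemma \ref{lem-extendpi} and Lemma \ref{lem-vphi}, now with $\psi'=U(-\yy)\psi_\kP$, then give $\kP\,\pi(\yy)\kP=\kappa_\hbar\langle\varphi_\kP,U(-\yy)\psi_\kP\rangle\,\kP=\kappa_\hbar V(\varphi_\kP,\psi_\kP)(-\yy)\,\kP$. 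Finally, the symmetry conditions $\gM\varphi_\kP=\varphi_\kP$, $\gM\psi_\kP=\psi_\kP$, together with $|\varphi_\kP|=r\bmod 2$ and $|\psi_\kP|=0$, feed into Lemma \ref{lem-minus}: the sign is $(-1)^{r+|\varphi_\kP|+|\psi_\kP|}=(-1)^{2r}=1$, so $V(\varphi_\kP,\psi_\kP)(-\yy)=V(\varphi_\kP,\psi_\kP)(\yy)$ and the stated formula follows.

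For the superadjoint, the key input is that the supergroup operators are superunitary, so that $\pi(\xx)^\dag=\pi(-\xx)$ (as $(\xx,0)^{-1}=(-\xx,0)$). Taking $\dag$ inside the integral $\kP=\int\dd\xx\,V(\varphi_\kP,\psi_\kP)(\xx)\pi(\xx)$ and then substituting $\xx\mapsto-\xx$ produces $\pi$ of a transformed symbol; crucially, the Berezin measure is invariant under this substitution because $\signp+\signq$ is even. Using the superhermitian identity $\overline{\langle x,y\rangle}=(-1)^{|x||y|}\langle y,x\rangle$ and $|\psi_\kP|=0$, I would identify the transformed symbol as $\overline{V(\varphi_\kP,\psi_\kP)(-\xx)}=V(\psi_\kP,\varphi_\kP)(\xx)$, giving $\kP^\dag=\pi\big(V(\psi_\kP,\varphi_\kP)\big)$. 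I expect the main obstacle to lie precisely in this last step: controlling all the sign factors arising from the $\superA$-bilinear extension \eqref{eq-abilin} of the inner product, from the superadjoint of a scalar-times-operator, and from the Berezin integral under $\xx\mapsto-\xx$. The parity hypotheses in \eqref{phiPpsiP} are exactly what is needed to make every such sign equal to $+1$.
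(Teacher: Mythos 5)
Your proof is correct and follows essentially the same route as the paper's: boundedness from Lemmas \ref{lem-gaussian} and \ref{lem-extendpi}, the identity $\kP\pi(\yy)\kP=\kappa_\hbar V(\varphi_\kP,\psi_\kP)(\yy)\kP$ from Equation \eqref{eq-pr1} combined with Lemmas \ref{lem-vphi} and \ref{lem-minus} and the normalisations \eqref{phiPpsiP}, and the superadjoint from $\pi(\xx)^\dag=\pi(-\xx)$ together with $\overline{V(\varphi,\psi)(-\xx)}=(-1)^{|\varphi||\psi|}V(\psi,\varphi)(\xx)$. The only cosmetic differences are that you establish $\kP^2=\kP$ directly via Lemma \ref{lem-vphi} while the paper obtains it by setting $\yy=0$ in the displayed identity, and that you explicitly note the invariance of the Berezin measure under $\xx\mapsto-\xx$ (valid since $\signp+\signq$ is even), a detail the paper leaves implicit.
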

\begin{proof}
By Lemma \ref{lem-gaussian} the Wigner function $V(\varphi_\kP,\psi_\kP)$ 
is in $L^1(\modE_0)$,
and by Lemma \ref{lem-extendpi} its integrated representation $\kP=\pi\big(V(\varphi_\kP,\psi_\kP)\big)$ 
is a bounded operator.

By Equation \eqref{eq-pr1}, we have 
$$
\kP \pi(\yy)\kP=\kP\pi\left(e^{-\frac{i\hbar}{2}\omega(\fois,\yy)}V\big(\varphi_\kP,\psi_\kP\big)(\fois-\yy)\right)
=\pi\big(V(\varphi_\kP,\psi_\kP)\big)\;\pi\big(V(\varphi_\kP,U(-\yy)\psi_\kP)\big).
$$
Using successively Lemma \ref{lem-extendpi} and Lemma \ref{lem-vphi}, we get 
$$
\kP \pi(\yy)\kP
=\kappa_\hbar V\big(\varphi_\kP,\psi_\kP\big)(-\yy)\, \pi\big( V(\varphi_\kP,\gP\psi_\kP)\big),
$$
and, by Lemma \ref{lem-minus} and Equation \eqref{phiPpsiP}, we obtain that
$\kP\pi(\yy)\kP=\kappa_\hbar V\big(\varphi_\kP,\psi_\kP\big)(\yy)\kP$. Taking $\yy=0$,
this implies that $\kP^2=\kP$ is a projector.

For any $\varphi,\psi\in\caS(\gR^{m|r})$, we have 
$\overline{V\big(\varphi,\psi\big)(-\xx)}=\overline{\langle \varphi,U(-\xx)\psi\rangle}
=(-1)^{|\varphi||\psi|}V\big(\psi,\varphi\big)(\xx)$. As $\pi(\xx)^\dag=\pi(-\xx)$, we deduce that
\begin{equation}\label{pif:dag}
\pi\big(V(\varphi,\psi)\big)^\dag=(-1)^{|\varphi||\psi|}\pi\big(V(\psi,\varphi)\big).
\end{equation}
Taking $\varphi=\varphi_\kP$ and $\psi=\psi_\kP$, the expression of $\kP^\dag$ follows.
\end{proof}

Since $\kP$ is an operator of degree $0$, the space $\ehH_R:=\Imag(\kP)$
inherits of a $\gZ_2$-grading from $\ehH$. Thanks to Lemma \ref{lem-extendpi} 
and Lemma \ref{lem-gaussian}, the operator $\kV=\kappa_\hbar\,\pi\big(V(\varphi_\kP,\varphi_\kP)\big)$
is bounded on $\ehH$. It allows us to introduce the following inner product on $\ehH_R$, 
\begin{equation}\label{InnerHR}
\forall v,v'\in\ehH_R,\quad \langle v,v'\rangle_R:= (-1)^{r\sigma} \langle \kV  v,v'\rangle_{\ehH},
\end{equation}
where $\sigma$ is the parity of $\ehH$.
Note that,
extending  $\langle -,-\rangle_{\ehH}$ by $\superA$-bilinearity as in Equation \eqref{eq-abilin}, 
the inner product reads also as 
$\langle v,v'\rangle_R=\kappa_\hbar\int\dd\xx\ \langle\varphi_\kP,U(\xx)\varphi_\kP\rangle\langle v,\pi(\xx)v'\rangle_\ehH$.

\begin{lemma}\label{HR}
The superspace $\ehH_R$ endowed with the inner product $\langle \cdot,\cdot\rangle_R$
is a Hilbert superspace.
\end{lemma}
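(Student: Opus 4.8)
The plan is to verify the two clauses of Definition \ref{Def:SuperHilbert} for $(\ehH_R,\langle-,-\rangle_R)$, after first settling the underlying topology. Since $\kP\in\caB(\ehH)$ is a bounded idempotent of degree $0$, its image $\ehH_R=\Imag(\kP)=\Ker(\gone-\kP)$ is a closed graded subspace of $\ehH$; it therefore inherits the ambient Hilbert topology of $\ehH$ and is complete for it. So the whole problem reduces to producing, on this fixed topological vector space, a superhermitian inner product together with a compatible fundamental symmetry. I would record at the outset that the target parity is $\sigma_R=\sigma+r\bmod 2$, forced by the degree of $\kV$.

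First I would check that $\langle-,-\rangle_R$ is a homogeneous superhermitian inner product. Sesquilinearity is immediate from that of $\langle-,-\rangle_\ehH$, and homogeneity of degree $\sigma_R$ follows once one knows $\kV=\kappa_\hbar\,\pi\big(V(\varphi_\kP,\varphi_\kP)\big)$ is an operator of degree $r\bmod 2$, which in turn comes from $|\varphi_\kP|=r\bmod 2$. The superhermitian symmetry reduces to showing that $\kV$ is self-superadjoint. Here I would apply \eqref{pif:dag} with $\varphi=\psi=\varphi_\kP$ to get $\pi\big(V(\varphi_\kP,\varphi_\kP)\big)^\dag=(-1)^{r}\,\pi\big(V(\varphi_\kP,\varphi_\kP)\big)$, and combine it with the explicit value of $\kappa_\hbar$ in Lemma \ref{lem:integrals}, which lies in $i^{r}\gR^\times$ and hence satisfies $\overline{\kappa_\hbar}=(-1)^{r}\kappa_\hbar$. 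The two signs cancel, giving $\kV^\dag=\kV$. Feeding this into the superadjoint identity of Proposition \ref{prop-superhilbert-adjoint} and using the superhermiticity of $\langle-,-\rangle_\ehH$ then yields $\overline{\langle v,v'\rangle_R}=(-1)^{|v||v'|}\langle v',v\rangle_R$.

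The heart of the matter is the second clause: exhibiting a fundamental symmetry $J_R$ for which $(v,v')_{J_R}:=\langle v,J_R v'\rangle_R$ is a positive definite scalar product and $\ehH_R$ is complete. Because $\kP$ is in general not an orthogonal projector, the form $\langle-,-\rangle_\ehH$ restricted to $\ehH_R$ may be indefinite or degenerate, so positivity cannot be read off directly, and this is where the real work lies. My plan is to exploit unitarizability of $\pio$: fix a fundamental symmetry $J$ of $\ehH$ with $\pio$ unitary for $(-,-)_J$, and rewrite $\langle v,v'\rangle_R$ in the integral form noted after \eqref{InnerHR}, namely $\kappa_\hbar\int\dd\xx\,\langle\varphi_\kP,U(\xx)\varphi_\kP\rangle\,\langle v,\pi(\xx)v'\rangle_\ehH$. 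Applying the resolution of the identity of Proposition \ref{prop-resol}, transported to the representation $\pi$ on $\ehH$, should turn $\langle v,v\rangle_R$, up to the twist by $J$, into a manifestly nonnegative expression of the shape $\int\dd\xx\,|\langle\text{(coherent vector)},v\rangle_J|^2$, vanishing only for $v=0$; this simultaneously gives non-degeneracy and positivity of a candidate $(-,-)_{J_R}$.

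Finally I would pin down the fundamental symmetry itself, the natural candidate being the one induced on $\ehH_R$ by $J$, suitably corrected by $\kV$ so that it maps $\ehH_R$ into itself and squares correctly, and then check the algebraic conditions $J_R^{2}=\gone$ on the relevant eigenspaces and $\langle J_R x,J_R y\rangle_R=\langle x,y\rangle_R$. Equivalence of the topology defined by $(-,-)_{J_R}$ with the ambient one, needed to inherit completeness from the first step, would follow from Theorem \ref{thm-topo} together with the boundedness of $\kV$ and $\kP$. The main obstacle is thus the positivity and non-degeneracy step: controlling the sign of $\langle v,v\rangle_R$ through the resolution of identity, with particular care in the parity-$1$ case $r$ odd, where the superhermitian form on $\ehH_S$ pairs the two graded pieces and the bookkeeping of signs coming from the $\superA$-bilinear extension \eqref{eq-abilin} is most delicate.
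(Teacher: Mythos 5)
Your preliminary steps are sound: $\ehH_R=\Imag(\kP)$ is a closed graded subspace since $\kP$ is a bounded even idempotent, the form $\langle-,-\rangle_R$ is sesquilinear and homogeneous of degree $\sigma+r \bmod 2$, and your verification that $\kV^\dag=\kV$ (combining \eqref{pif:dag} with $\overline{\kappa_\hbar}=(-1)^r\kappa_\hbar$) is correct and matches what the paper uses. The problem lies in what you yourself identify as the heart of the matter. Your positivity argument invokes ``the resolution of the identity of Proposition \ref{prop-resol}, transported to the representation $\pi$ on $\ehH$.'' No such transported statement is available: Proposition \ref{prop-resol} is proved by explicit Berezin integration for the Schr\"odinger representation $U$ on $\ehH_S$, and asserting its analogue for the abstract representation $\pi$ presupposes that $\pi$ is equivalent to $U\otimes\gone$ --- which is precisely the Stone--von Neumann theorem this lemma is a step toward. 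The argument is circular at exactly the point where all the difficulty sits. A secondary obstruction: the fundamental symmetry $J$ only makes $\pio$ unitary; the odd-direction operators $\pi_\gamma$ appearing in $\pi(\xx)=\widetilde{\pio}(x^{\mathrm{ev}})\sum_\gamma (x^{\mathrm{od}})^\gamma\pi_\gamma$ are bounded but in no way $J$-unitary or $J$-normal, so even formally one cannot reorganize $\langle v,J\pi(\xx)v\rangle_\ehH$ into a square modulus of a coherent-state pairing.

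For comparison, the paper circumvents positivity entirely. It splits on the parity of $r$. If $r$ is even, one may take $\psi_\kP=\varphi_\kP$ of degree $0$, which makes $\kP$ an orthogonal projector ($\kP^\dag=\kP$) and reduces everything to Corollary \ref{cor-subspace}. If $r$ is odd, it introduces the second odd operator $\kV'=-\frac{1}{\kappa_\hbar}\pi\big(V(\psi_\kP,\psi_\kP)\big)$ and derives, purely from the twisted-convolution calculus (Lemmas \ref{lem-extendpi} and \ref{lem-vphi}), the relations $\kP\kP^\dag=\kP^\dag\kP=0$, $\kV'\kV=\kP$, $\kV\kV'=\kP^\dag$, $\kV^2=(\kV')^2=0$. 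Then $\kP+\kP^\dag$ is an orthogonal projector, so $\ehH_P=\Imag(\kP)\oplus\Imag(\kP^\dag)$ is a Hilbert sub-superspace, and the isometric injection $\Psi(v)=\frac{1}{\sqrt2}\big(\psi_\kP\otimes v+(-1)^\sigma\overline{\kappa_\hbar}\,\varphi_\kP\otimes\kV v\big)$ realizes $(\ehH_R,\langle-,-\rangle_R)$ inside the known Hilbert superspace $\mathrm{Span}(\varphi_\kP,\psi_\kP)\otimes\ehH_P$; the criterion $\caF\oplus\caF^\perp=\ehH$ of Theorem \ref{thm-subspace} is then checked for $\caF=\Psi(\ehH_R)$ by exhibiting an explicit complement. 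This replaces any positivity computation by algebraic identities among $\kP,\kP^\dag,\kV,\kV'$ plus the already-established subspace criterion --- that substitution is the missing idea in your proposal, and without it (or some genuinely new positivity mechanism not relying on Proposition \ref{prop-resol} for $\pi$) your proof cannot be completed.
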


\begin{proof}
We split the proof into two cases, according to the parity of $\ehH_S$, i.e.\ the parity of $r$.

If $r$ is even, we can choose an element $\psi_\kP=\varphi_\kP$ of degree 0, 
invariant under the operator $\gM$ and such that $\kappa_\hbar\langle\varphi_\kP,\varphi_\kP\rangle=1$. 
Then, we have $\langle v,v'\rangle_R=\langle \kP v, v'\rangle_\ehH=\langle v,v'\rangle_\ehH$
and  $\kP^\dag=\kP$. Hence, $\ehH_R$ is a Hilbert sub-superspace of $\ehH$ by Corollary \ref{cor-subspace}.

If $r$ is odd, the function $\varphi_\kP$ is of degree 1 and the function $\psi_\kP$ is of degree 0. 
As a consequence, we have $\langle\varphi_\kP,\varphi_\kP\rangle=\langle\psi_\kP,\psi_\kP\rangle=0$.
Then, both operators $\kV$ and
$\kV':=-\frac{1}{\kappa_\hbar}\, \pi\big(V(\psi_\kP,\psi_\kP)\big)$ are odd.
Using Lemma \ref{lem-extendpi} and Lemma \ref{lem-vphi}, we obtain that
\begin{equation*}
\kP\kP^\dag=\kP^\dag\kP=0,\quad \kV\kV'=\kP^\dag,\quad \kV'\kV=\kP, \quad \kV^2=(\kV')^2=0, 
\end{equation*}
and Equation \eqref{pif:dag} yields to 
\begin{equation*}
\kV^\dag=\kV,\quad (\kV')^\dag=-\kV'.
\end{equation*}
In particular, we have
\begin{equation*}
\kV(\Imag(\kP))\subseteq\Imag(\kP^\dag),\qquad \kV(\Imag(\kP^\dag))=\algzero,\qquad \kV'(\Imag(\kP))=\algzero,\qquad \kV'(\Imag(\kP^\dag))\subseteq\Imag(\kP).
\end{equation*}
As $\kP\kP^\dag=\kP^\dag\kP=0$, we deduce that $\kP+\kP^\dag$ is an orthogonal 
projector. By Corollary \ref{cor-subspace}, this means that 
$\ehH_P:=\Imag(\kP)\oplus\Imag(\kP^\dag)$ is a Hilbert sub-superspace of $\ehH$. 
Note that, as $\kP^\dag\kP=0$, $\ehH_R=\Imag\kP$ is an isotropic subspace of $\ehH_P$
for the inner product of $\ehH$.

We define a degree zero operator $\Psi$ as follows
\begin{equation*}
\Psi:\ehH_R\to \text{Span}(\varphi_\kP,\psi_\kP)\otimes\ehH_P,\qquad v\mapsto \frac{1}{\sqrt 2}\big(\psi_\kP\otimes v+(-1)^{\sigma}\,\overline{\kappa_\hbar}\ \varphi_\kP\otimes \kV v\big),
\end{equation*}
where $\sigma\in\gZ_2$ is the parity of $\ehH$. 
The operator $\Psi$ is injective and satisfies
\begin{equation*}
\forall v,v'\in\ehH_R,\quad \langle\Psi(v),\Psi(v')\rangle_\ehH=\langle v,v'\rangle_R.
\end{equation*}
Therefore, to prove that $(\ehH_R,\langle -,-\rangle_R)$ 
is a Hilbert superspace, it suffices to prove that 
$\Psi(\ehH_R)$ is a Hilbert sub-superspace of $\text{Span}(\varphi_\kP,\psi_\kP)\otimes\ehH_P$.

Consider $w\in\Psi(\ehH_R)\cap\Psi(\ehH_R)^\perp$. 
As $w\in\Psi(\ehH_R)$, there exists $v\in\ehH_R$ such that $w=\psi(v)$.
The relation $w\in\Psi(\ehH_R)^\perp$ yields then
$\langle \kV  v,v'\rangle_{\ehH}=0$ for all $v'\in\ehH_R=\Imag\kP$.
By the equality $\kV^\dag\kP^\dag=\kV\kP^\dag=0$, we have that 
$\langle \kV  v,v''\rangle_{\ehH}=0$ for all $v''\in\Imag\kP^\dag$.
Hence, $\kV v$ is orthogonal to all elements in the Hilbert superspace $\ehH_P=\Imag(\kP)\oplus\Imag(\kP^\dag)$, that is $\kV v=0$.
As a consequence, we get that $v=\kP v = \kV'\kV v = 0$ and then
$w=\psi(v)=0$. This proves that $\Psi(\ehH_R)\cap\Psi(\ehH_R)^\perp
=\algzero$.

For any $u_0,u_1\in\ehH_P$ we set
\begin{equation*}
v=\frac{1}{\sqrt2}\left(\kP u_0+(-1)^{\sigma}\kV' u_1\right),\qquad w_0=u_0-\frac{1}{\sqrt 2}v,\qquad w_1=\kP u_1 -(-1)^{\sigma}\kV w_0,
\end{equation*}
so that $v\in\ehH_R$ and $w_0,w_1\in\ehH_P$. Direct computations show that
\begin{align*}
& w=\psi_\kP\otimes w_0+\overline{\kappa_\hbar}\ \varphi_\kP\otimes w_1\in\Psi(\ehH_R)^\perp, \\
& \psi_\kP\otimes u_0+\overline{\kappa_\hbar}\ \varphi_\kP\otimes u_1= 
w+\Psi(v).
\end{align*}
Together with $\Psi(\ehH_R)\cap\Psi(\ehH_R)^\perp=\algzero$, this shows that
$\text{Span}(\varphi_\kP,\psi_\kP)\otimes\ehH_P=\Psi(\ehH_R)^\perp \oplus \Psi(\ehH_R)$. 
By Theorem \ref{thm-subspace}, 
$\Psi(\ehH_R)$ is then a Hilbert sub-superspace and the conclusion follows.
\end{proof}

We introduce a homogeneous linear map of degree 0 defined by
\begin{equation}
\begin{aligned}\label{IsoH-HSHR}
\Phi:& \ehH_S\hat{\otimes}\ehH_R\to\ehH, \\ 
& \varphi\otimes v\mapsto (-1)^{|\varphi|}\int\dd \xx\ \langle U(\xx)\varphi_\kP,\varphi\rangle\pi(\xx)v.
\end{aligned}
\end{equation}
In the remaining of the section, we prove that this operator is a superunitary intertwiner between the representations $\pi$ and $U\otimes\gone$. 

\begin{lemma}
\label{lem-defphi}
The operator $\Phi$ is defined and bounded on the whole Hilbert superspace $\ehH_S\hat{\otimes}\ehH_R$.
\end{lemma}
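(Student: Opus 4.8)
The plan is to work on the dense algebraic tensor product $\ehH_S^\infty\otimes\ehH_R$ (with $\ehH_S^\infty=\caS(\gR^{m|r})\otimes\Hol(\gC^{0|s})$), show $\Phi$ is well-defined there by an $L^1$ argument, then obtain a bound on the Hilbert norm that extends by density. First I would rewrite, for $\varphi\in\ehH_S^\infty$ and $v\in\ehH_R$, the value $\Phi(\varphi\otimes v)=(-1)^{|\varphi|}\pi(h_\varphi)v$, where $h_\varphi(\xx):=\langle U(\xx)\varphi_\kP,\varphi\rangle$ and $\pi$ is the integrated representation of Lemma~\ref{lem-extendpi}. Since $\varphi_\kP\in\caS(\gR^{m|r})$, the function $h_\varphi$ is, up to a sign and a complex conjugation, the Wigner function $V(\varphi,\varphi_\kP)$, which lies in $L^1(\modE_0)$ by Lemma~\ref{lem-gaussian}. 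By Lemma~\ref{lem-extendpi}, $\pi(h_\varphi)\in\caB(\ehH)$, so $\Phi(\varphi\otimes v)$ is a well-defined element of $\ehH$; this settles definedness on the dense subspace.

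The core computation is to evaluate the inner product $\langle\Phi u,\Phi u'\rangle_\ehH$ for $u,u'$ in this dense subspace. Expanding the two defining integrals, using $\pi(\xx)^\dag=\pi(-\xx)$ together with the projective relation \eqref{Eq:pipi} to collapse $\langle\pi(\xx)v,\pi(\yy)v'\rangle_\ehH$ into a single $\pi(\yy-\xx)$, and then applying the resolution of identity of Proposition~\ref{prop-resol} to the $\varphi_\kP$-dependent factor, the computation should reduce to $\langle\varphi,\varphi'\rangle$ times the inner product $\langle v,v'\rangle_R$ of \eqref{InnerHR}; that is, one expects $\langle\Phi u,\Phi u'\rangle_\ehH=\langle u,u'\rangle$ for the tensor-product superhermitian form \eqref{Eq:TPsuperH}. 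The definition of $\langle\cdot,\cdot\rangle_R$ from $\kV=\kappa_\hbar\,\pi\big(V(\varphi_\kP,\varphi_\kP)\big)$ is tailored precisely so that the residual $v$-dependence reassembles into $\langle v,v'\rangle_R$, the convolution identity of Lemma~\ref{lem-vphi} (and the sign rule of Lemma~\ref{lem-minus}, using $\gM\varphi_\kP=\varphi_\kP$ from \eqref{phiPpsiP}) being the tool that produces this reassembly.

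The hard part will be that, over an indefinite inner product, the isometry $\langle\Phi u,\Phi u'\rangle_\ehH=\langle u,u'\rangle$ does \emph{not} by itself imply continuity for the Hilbert topologies, so the norm bound must be extracted separately. To obtain it I would fix a fundamental symmetry $J$ of $\ehH$ for which $\pio$ is unitary (available since the representation is strong, hence $\pio$ is unitarizable) and estimate the definite norm $\norm{\Phi u}_J^2=\langle\Phi u,J\Phi u\rangle_\ehH$ directly. Because $J$ commutes with the even factor $\widetilde{\pio}(x^\ev)$ of $\pi(\xx)$, while the odd factor contributes only the finitely many bounded operators $\pi_\gamma$ of Proposition~\ref{Prop:pibounded}, one can repeat the resolution-of-identity computation with $J$ inserted and control the resulting expression by $(u,u)_{J_S\otimes J_R}$, using the Cauchy--Schwarz inequality of Lemma~\ref{lem-ineq} for the remaining terms. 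Managing the interaction of $J$ with the odd operators $\pi_\gamma$ in this $J$-twisted resolution of identity is the delicate point; once an estimate $\norm{\Phi u}_J\le C\,\norm{u}_{J_S\otimes J_R}$ holds on the dense subspace, Theorem~\ref{thm-topo} and a density argument extend $\Phi$ to a bounded operator on all of $\ehH_S\hat\otimes\ehH_R$.
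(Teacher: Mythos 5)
Your proposal is correct and follows essentially the same route as the paper: definedness via the $L^1$ Wigner-function/integrated-representation argument, then a norm bound obtained by computing the $(-,-)_J$-scalar product for a fundamental symmetry $J$ preserved by $\pio$, decomposing $U(\xx)$ and $\pi(\yy)$ along odd variables via Proposition \ref{Prop:pibounded}, and combining the classical resolution of identity with the boundedness of the operators $\pi_\gamma$ and the $L^1$-integrability of the classical Wigner functions. The isometry computation in your middle paragraph is not needed for this lemma (it is the content of the later Lemma \ref{PhiSuperU}), but you correctly recognize that it does not by itself yield continuity and supply the separate $J$-norm estimate, which is exactly the paper's argument.
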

\begin{proof}
Let $\varphi\in\ehH_S^\infty$ and $v\in \ehH_R$.
By Lemma \ref{lem-gaussian}, the function 
$f_\varphi:\xx\mapsto\langle U(\xx)\varphi_\kP,\varphi\rangle
=V(\varphi_\kP,\varphi)(-\xx)$ is in $L^1(\modE_0)$.
The operator $\pi(f_\varphi)$ is then bounded by Lemma \ref{lem-extendpi}. Hence, $\Phi(\varphi\otimes v)=\pi(f_\varphi)(v)$
pertains to $\ehH$ and $\Phi$ is a well-defined operator on the
algebraic tensor product of $\ehH_S^\infty$ and $\ehH_R$.

Since $(\ehH,\pio,\pi)$ is a strong SUR, there exists a fundamental symmetry $J$ such that $(\ehH,\pio)$ is unitary w.r.t.\ the scalar product $(-,-)_J$.
For all $\varphi,\varphi'\in\ehH_S^\infty$ and $v,v'\in \ehH_R$, 
the operator $\Phi$ satisfies
\begin{multline*}
\left(\Phi(\varphi\otimes v),\Phi(\varphi'\otimes v')\right)_J\\
=(-1)^{|\varphi|+|\varphi'|+r(\sigma+|v|)+\sigma|\varphi|+|\varphi'||v|}\int\dd \xx\dd \yy\ \overline{\langle U(\xx)\varphi_\kP,\varphi\rangle}\langle U(\yy)\varphi_\kP,\varphi'\rangle \langle \pi(\xx)v,J\pi(\yy)v'\rangle.
\end{multline*}
The operators $U(\xx)$ and $\pi(\xx)$ can be decomposed along odd variables 
(see Proposition \ref{Prop:pibounded}): $U(\xx)=U_0(x^\ev)\sum_{\gamma}(x^{od})^\gamma U_\gamma$ and $\pi(\yy)
=\pio(y^\ev)\sum_{\gamma}(y^{od})^{\gamma'} \pi_{\g'}$. Setting $x:=\gB\xx$ and $y:=\gB\yy$
and integrating over the odd variables $x^\od$ and $y^\od$, we get
\begin{multline*}
\left(\Phi(\varphi\otimes v),\Phi(\varphi'\otimes v')\right)_J=\\
=\sum_{\a,\g,\a',\g'} \eps_{\a,\g,\a',\g'} \int_{\gR^{4m}}\dd x\dd y\ \overline{\left( U_0(x) (U_\gamma\, \varphi_\kP)_\alpha,\varphi_{\bar{\a}}\right)}\left(U_0(y) (U_{\g'}\,\varphi_\kP)_{\a'},\varphi'_{\bar{\a'}}\right) \big\langle \pio(x)\pi_{\bar{\gamma}}v,J\pio(y)\pi_{\bar{\g'}}v'\big\rangle,
\end{multline*}
where $\eps_{\a,\g,\a',\g'}=\pm 1$. Using successively the change of variables $y\mapsto y+x$ 
and the resolution of the identity in the variable $x$ (see Proposition \ref{prop-resol} in the case $r=s=0$), we obtain
\begin{multline*}
\left(\Phi(\varphi\otimes v),\Phi(\varphi'\otimes v')\right)_J=\\
=\sum_{\a,\g,\a',\g'} \eps_{\a,\g,\a',\g'}\int\dd x\dd y\ \left( \varphi_{\bar{\a}},U_0(x) (U_\gamma\, \varphi_\kP)_\alpha\right)\left( U_0(x)U_0(y) (U_{\g'}\,\varphi_\kP)_{\a'},\varphi'_{\bar{\a'}}\right)  \left(\pi_{\bar{\gamma}}v,\pio(y)\pi_{\bar{\g'}}v'\right)_J\\
=\sum_{\a,\g,\a',\g'} \kappa_\hbar\, \eps_{\a,\g,\a',\g'}\int\dd y\ \left( \varphi_{\bar{\a}},\gP\varphi'_{\bar{\a'}}\right) \left( U_0(y) (U_{\g'}\,\varphi_\kP)_{\a'}, (U_\gamma\,\varphi_\kP)_\alpha\right)  \left(\pi_{\bar{\gamma}}v,\pio(y)\pi_{\bar{\g'}}v'\right)_J.
\end{multline*}
Since the operators $U_\g$ stabilize the space of smooth vectors $\ehH^\infty_S$,
Lemma \ref{lem-gaussian} ensures that 
$\int\dd y\ \left| \left( U_0(y) (U_{\g'}\,\varphi_\kP)_{\a'}, (U_\gamma\,\varphi_\kP)_\alpha\right)\right|<\infty$.
Moreover, the operators $\pio(y)$ being $(-,-)_J$-unitary, we have 
$\left|\left(\pi_{\bar{\gamma}}v,\pio(y)\pi_{\bar{\g'}}v'\right)_J\right|\leq \norm \pi_{\bar{\gamma}}v\norm_J\norm \pi_{\bar{\g'}}v'\norm_J$. The operators $\pi_\g$ being bounded, we finally obtain that
$$
|(\Phi(\varphi\otimes v),\Phi(\varphi'\otimes v'))_J|\leq 
C\norm \varphi\norm_S\norm \varphi'\norm_S \norm v\norm_J\norm v'\norm_J,
$$ 
where $C$ is a constant and $\norm -\norm_S$ is a norm on $\ehH_S$, defined by a choice of fundamental symmetry. 
This means that $\Phi$ is continuous and 
can be extended to the Hilbert tensor product $\ehH_S\hat{\otimes}\ehH_R$.
\end{proof}

\begin{lemma}\label{lem:Phi}
The operator $\Phi$ intertwines the representations $U\otimes\gone$ and $\pi$.
\end{lemma}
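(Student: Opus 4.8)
The plan is to verify the intertwining identity $\pi(g)\circ\Phi = \Phi\circ(U(g)\otimes\gone)$ directly, after reducing to the generating elements of $\rH_{2m|\signp,\signq}$. Every element factorizes as $(\xx,t) = (\xx,0)(0,t)$, and both representations send the central element $(0,t)$ to the scalar $e^{i\hbar t}\gone$, so the central part of the identity is immediate; it remains to treat $\pi(\yy) := \pi(\yy,0)$ for $\yy\in\modE_0$, where the projective relations \eqref{Eq:pipi} and \eqref{Eq:UU} are available. The intertwining for an arbitrary $g$ then follows from the group-morphism property and this factorization.

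First I would recast $\Phi$ through the integrated representation. Setting $f_\varphi(\xx) := \langle U(\xx)\varphi_\kP,\varphi\rangle$, the definition \eqref{IsoH-HSHR} reads $\Phi(\varphi\otimes v) = (-1)^{|\varphi|}\pi(f_\varphi)v$, where $f_\varphi\in L^1(\modE_0)$ by Lemma \ref{lem-gaussian} and $\pi(f_\varphi)\in\caB(\ehH)$ by Lemma \ref{lem-extendpi}; this is exactly the boundedness already used in Lemma \ref{lem-defphi}. With this reformulation the claim becomes the operator identity $\pi(\yy)\pi(f_\varphi) = \pi(f_{U(\yy)\varphi})$ on $\ehH$, together with the parity bookkeeping $|U(\yy)\varphi| = |\varphi|$, valid since $U(\yy)$ is an even superunitary of degree $0$. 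To establish it I would apply the second relation in \eqref{eq-pr1}, which gives $\pi(\yy)\pi(f_\varphi) = \pi(g)$ with $g(\xx) = e^{-\frac{i\hbar}{2}\omega(\xx,\yy)}f_\varphi(\xx-\yy)$, and then identify $g$ with $f_{U(\yy)\varphi}$. Computing the latter, I would use the degree-$0$ superunitarity of $U(\yy)$ to move it into the first slot, $\langle U(\xx)\varphi_\kP, U(\yy)\varphi\rangle = \langle U(-\yy)U(\xx)\varphi_\kP,\varphi\rangle$, then \eqref{Eq:UU} in the form $U(-\yy)U(\xx) = e^{-\frac{i\hbar}{2}\omega(\yy,\xx)}U(\xx-\yy)$, and finally the antisymmetry $\omega(\yy,\xx) = -\omega(\xx,\yy)$ together with the reality of $\hbar\,\omega(\yy,\xx)\in\superA$ to conjugate the resulting scalar correctly. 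This yields $f_{U(\yy)\varphi}(\xx) = e^{-\frac{i\hbar}{2}\omega(\xx,\yy)}f_\varphi(\xx-\yy) = g(\xx)$, closing the identity; hence $\pi(\yy)\circ\Phi = \Phi\circ(U(\yy)\otimes\gone)$ on the dense subspace $\ehH_S^\infty\otimes\ehH_R$, and it extends to $\ehH_S\hat\otimes\ehH_R$ by the continuity of $\Phi$ from Lemma \ref{lem-defphi}.

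The main obstacle I anticipate is not the cocycle algebra, which is short, but the careful tracking of super-signs and complex conjugations once $\yy$ carries odd components. In particular one must check that pulling $U(-\yy)$ across the inner product is licit even after $\superA$-linear extension, that the even scalar $e^{-\frac{i\hbar}{2}\omega(\yy,\xx)}$ comes out of the left slot merely conjugated with no extra sign (the $|a|\sigma$ contribution in \eqref{eq-abilin} vanishes because the scalar is even), and that the prefactor $(-1)^{|\varphi|}$ in the definition of $\Phi$ is consistent with $|U(\yy)\varphi|=|\varphi|$. Once these bookkeeping points are settled, the verification is a one-line application of \eqref{eq-pr1} and \eqref{Eq:UU}.
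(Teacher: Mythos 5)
Your proposal is correct and takes essentially the same route as the paper's proof: both arguments hinge on moving $U$ across the inner product via degree-$0$ superunitarity and the cocycle relation \eqref{Eq:UU}, and on the same change of variables on the $\pi$ side, which the paper performs explicitly as $\yy\mapsto\yy+\xx$ inside the integral defining $\Phi$ while you import it pre-packaged as the covariance identity \eqref{eq-pr1} for the integrated representation. The remaining differences — recasting $\Phi(\varphi\otimes v)$ as $(-1)^{|\varphi|}\pi(f_\varphi)v$ and splitting off the central factor $(0,t)$ — are organizational only, and your sign/conjugation bookkeeping (evenness of the scalar $e^{-\frac{i\hbar}{2}\omega(\yy,\xx)}$, reality of $\omega$, $|U(\yy)\varphi|=|\varphi|$) is exactly what makes the computation close.
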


\begin{proof}
Let $g=(\xx,t)\in \rH_{2m|\signp,\signq}$. For $\varphi\in\ehH_S^\infty$, we compute
\begin{multline*}
\Phi((U(g)\otimes\gone)(\varphi\otimes v))=(-1)^{|\varphi|}\int\dd \yy\ e^{i\hbar t}\langle U(\yy)\varphi_\kP,U(\xx)\varphi\rangle\pi(\yy)v\\
= (-1)^{|\varphi|}\int\dd \yy\ e^{i\hbar t}e^{\frac{i\hbar}{2}\omega(\xx,\yy)}\langle U(\yy-\xx)\varphi_\kP,\varphi\rangle\pi(\yy)v=\pi(g)\Phi(\varphi\otimes v),
\end{multline*}
by performing the change of variables $\yy\mapsto \yy+\xx$. Hence, $\Phi$ intertwines
$U\otimes\gone$ and $\pi$.
\end{proof}

By Proposition \ref{prop-superhilbert-tensprod}, $\ehH_S\hat\otimes\ehH_R$ is a 
Hilbert superspace whose inner product is given by
\begin{equation}\label{InnerHSHR}
\forall \varphi,\varphi'\in\ehH_S, \; v,v'\in\ehH_R,\quad\langle \varphi\otimes v,\varphi'\otimes v'\rangle=(-1)^{r(\sigma+r)+|v||\varphi'|}\langle\varphi,\varphi'\rangle\langle v,v'\rangle_R,
\end{equation}
where $r$ is the parity of $\ehH_S$ and $\sigma$ the parity of $\ehH$.

\begin{lemma}\label{PhiSuperU}
The operator $\Phi:\ehH_S\hat{\otimes}\ehH_R\to\ehH$ is superunitary.
\end{lemma}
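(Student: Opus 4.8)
The plan is to invoke Proposition \ref{prop:isosuperunitary}: since $\Phi$ is a bounded linear map of degree $0$ defined on all of $\ehH_S\hat{\otimes}\ehH_R$ (Lemma \ref{lem-defphi}), it suffices to prove that $\Phi$ is a bijective isometry. I would treat the isometry and the bijectivity separately. For the isometry, I compute $\langle\Phi(\varphi\otimes v),\Phi(\varphi'\otimes v')\rangle_\ehH$ for $\varphi,\varphi'\in\ehH_S^\infty$ and $v,v'\in\ehH_R$, extending afterwards to all of $\ehH_S\hat{\otimes}\ehH_R$ by continuity (Lemma \ref{lem-ineq}) and density of $\ehH_S^\infty$. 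The computation runs exactly parallel to the boundedness estimate in the proof of Lemma \ref{lem-defphi}, but with the superhermitian product $\langle-,-\rangle_\ehH$ in place of $(-,-)_J$: expand into a double integral over $\modE_0\times\modE_0$, decompose $U(\xx)$ and $\pi(\yy)$ along the odd variables as in Proposition \ref{Prop:pibounded}, perform the change of variables $\yy\mapsto\yy+\xx$, and apply the resolution of the identity (Proposition \ref{prop-resol}) to collapse the $\xx$-integral. This produces the factor $\kappa_\hbar\langle\varphi,\varphi'\rangle$ (carrying a parity operator $\gP$), while the surviving $\yy$-integral is recognized, through the alternative expression for $\langle-,-\rangle_R$ stated just after \eqref{InnerHR}, as $\langle v,v'\rangle_R$. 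Collecting the signs and matching them against the inner product \eqref{InnerHSHR} of $\ehH_S\hat{\otimes}\ehH_R$ yields $\langle\Phi(\varphi\otimes v),\Phi(\varphi'\otimes v')\rangle_\ehH=\langle\varphi\otimes v,\varphi'\otimes v'\rangle$. The only delicate point here is the bookkeeping of signs coming from the $\superA$-bilinear extension \eqref{eq-abilin} and the odd-variable integrations. Injectivity is then immediate: if $\Phi(x)=0$ then $\langle x,y\rangle=\langle\Phi(x),\Phi(y)\rangle_\ehH=0$ for all $y$, so $x=0$ by non-degeneracy of the inner product.

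For surjectivity I would first record the explicit identity $\Phi(\psi_\kP\otimes v)=v$ for every $v\in\ehH_R$. Indeed, since $|\psi_\kP|=0$ one has $\Phi(\psi_\kP\otimes v)=\int\dd\xx\,\langle U(\xx)\varphi_\kP,\psi_\kP\rangle\,\pi(\xx)v$, and, using $U(\xx)^\dag=U(-\xx)$ together with $|U(\xx)|=0$, the coefficient equals $\langle U(\xx)\varphi_\kP,\psi_\kP\rangle=V(\varphi_\kP,\psi_\kP)(-\xx)$. Lemma \ref{lem-minus} combined with the normalization \eqref{phiPpsiP} (so that $\gM\varphi_\kP=\varphi_\kP$, $\gM\psi_\kP=\psi_\kP$, $|\varphi_\kP|\equiv r$ and $|\psi_\kP|=0$, whence the sign $(-1)^{r+|\varphi_\kP|}=1$) shows $V(\varphi_\kP,\psi_\kP)(-\xx)=V(\varphi_\kP,\psi_\kP)(\xx)$. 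Therefore the integral is $\pi\big(V(\varphi_\kP,\psi_\kP)\big)v=\kP v=v$, proving $\ehH_R\subseteq\Imag\Phi$.

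By Lemma \ref{lem:Phi} the subspace $\Imag\Phi$ is $\pi$-invariant, and, $\Phi$ being a bounded isometry, Proposition \ref{prop:ImIso} gives that $\Imag\Phi$ is a Hilbert sub-superspace with $\ehH=\Imag\Phi\oplus(\Imag\Phi)^\perp$. Consequently $(\Imag\Phi)^\perp$ is itself $\pi$-invariant (for $w\in(\Imag\Phi)^\perp$ and $u\in\Imag\Phi$, $\langle\pi(\xx)w,u\rangle=\pm\langle w,\pi(-\xx)u\rangle=0$), and $\Imag\Phi$ coincides with the closed $\pi$-invariant span of $\ehH_R$. It remains to show $(\Imag\Phi)^\perp=\algzero$, which is where the real work lies. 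Let $w\in(\Imag\Phi)^\perp$. Since $\kP=\pi\big(V(\varphi_\kP,\psi_\kP)\big)$ is an integrated representation (Lemma \ref{lem-extendpi}), it preserves the $\pi$-invariant space $(\Imag\Phi)^\perp$; as $\kP\pi(\yy)w\in\Imag\kP=\ehH_R\subseteq\Imag\Phi$ while also $\kP\pi(\yy)w\in(\Imag\Phi)^\perp$, the direct-sum decomposition forces $\kP\pi(\yy)w=0$ for all $\yy\in\modE_0$; likewise $\langle\kP^\dag\pi(\yy)w,u\rangle=\pm\langle\pi(\yy)w,\kP u\rangle=0$ for all $u$, so $\kP^\dag\pi(\yy)w=0$ as well.

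To conclude I would convert these relations, via the twisted-translation identities \eqref{eq-pr1}, into $\pi(h)w=0$ for every $h$ in the two-sided ideal generated, for the twisted convolution, by $V(\varphi_\kP,\psi_\kP)$ (integrating $\kP\pi(\yy)w=0$ against test functions gives $\pi(g\convol V(\varphi_\kP,\psi_\kP)\convol f)w=0$ for all $f,g\in L^1(\modE_0)$). This ideal is dense in $L^1(\modE_0)$, so faithfulness of the integrated representation (Lemma \ref{lem-pi-faithful}) together with an approximate-identity argument forces $w=0$. This density step, resting on the simplicity of the twisted convolution algebra, is the main obstacle of the proof. Once $(\Imag\Phi)^\perp=\algzero$, we obtain $\Imag\Phi=\ehH$, so $\Phi$ is a bijective isometry of degree $0$ and hence superunitary by Proposition \ref{prop:isosuperunitary}.
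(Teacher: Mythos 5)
Your isometry step and the first half of your surjectivity argument (the identity $\Phi(\psi_\kP\otimes v)=v$, the decomposition $\ehH=\Imag\Phi\oplus(\Imag\Phi)^\perp$ from Proposition \ref{prop:ImIso}, the $\pi$-invariance of both summands, and the conclusion that $\kP$ annihilates $(\Imag\Phi)^\perp$) coincide with the paper's proof. The problem is your final step. You reduce everything to the claim that the twisted-convolution two-sided ideal generated by $V(\varphi_\kP,\psi_\kP)$ is dense in $L^1(\modE_0)$, and you yourself flag this as ``the main obstacle''. That claim is a simplicity-type statement about the graded twisted convolution Banach algebra $(L^1(\modE_0),\convol)$; it is not proved anywhere in the paper, it does not follow from Lemma \ref{lem-pi-faithful} (faithfulness says $\pi(f)=0$ on all of $\ehH$ forces $f=0$, which is a statement about functions, not about ideals or vectors), and establishing it would require a separate, nontrivial argument about closed ideals of an $L^1$-algebra (even in the classical ungraded case this does not reduce to the simplicity of the compacts, since closed ideals of an $L^1$-algebra and of its $C^*$-completion need not correspond). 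So as written the proof is incomplete at precisely the point where the work lies.

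The paper closes this gap with a short contradiction argument using only tools you already cite, but deployed differently. Since $(\Imag\Phi)^\perp$ is a $\pi$-invariant Hilbert sub-superspace, the restriction of $\pi$ to it is again a representation with central character $e^{i\hbar t}$ and non-degenerate inner product, and the proof of Lemma \ref{lem-pi-faithful} applies verbatim to this subrepresentation: the single operator identity $\kP=\pi\big(V(\varphi_\kP,\psi_\kP)\big)=0$ on $(\Imag\Phi)^\perp$ then yields a dichotomy --- either $(\Imag\Phi)^\perp=\algzero$, or $V(\varphi_\kP,\psi_\kP)=0$ almost everywhere. The second alternative is excluded by Proposition \ref{prop-resol}, which gives
\begin{equation*}
\int\dd \xx\ V\big(\varphi_\kP,\psi_\kP\big)(-\xx)\,V\big(\varphi_\kP,\psi_\kP\big)(\xx)
=\kappa_\hbar\langle\varphi_\kP,\psi_\kP\rangle\langle\varphi_\kP,\gP\psi_\kP\rangle=\frac{1}{\kappa_\hbar}\neq 0.
\end{equation*}
Hence $(\Imag\Phi)^\perp=\algzero$ and $\Phi$ is surjective, with no ideal-density input needed. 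If you replace your last paragraph by this argument (keeping everything you did before it), your proof becomes complete and is then essentially the paper's.
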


\begin{proof}
The proof is in two steps: \textbf{1.} $\Phi$ is an isometry, \textbf{2.} $\Phi$ is surjective.

\textbf{1.} Consider $\varphi,\varphi'\in\ehH_S$ and $v,v'\in\ehH_R$ four homogeneous elements.
The definition of $\Phi$ leads to
\begin{equation*}
\langle\Phi(\varphi\otimes v),\Phi(\varphi'\otimes v')\rangle= (-1)^{(|\varphi|+|\varphi'|)(\sigma+1)+|v||\varphi'|} \int\dd \xx\dd \xx'\ \overline{\langle U(\xx)\varphi_\kP,\varphi\rangle} \langle U(\xx')\varphi_\kP,\varphi'\rangle \langle \pi(\xx)v,\pi(\xx')v'\rangle  .
\end{equation*}
The change of variables $\xx'=\xx+\yy$ and the resolution of the identity in the variable $\xx$ yield successively
\begin{multline*}
\langle\Phi(\varphi\otimes v),\Phi(\varphi'\otimes v')\rangle\\
= (-1)^{(|\varphi|+|\varphi'|)(\sigma+1)+|v||\varphi'|+r|\varphi|} \int\dd \xx\dd \yy\ \langle \varphi,U(\xx)\varphi_\kP\rangle \langle U(\xx)U(\yy)\varphi_\kP,\varphi'\rangle \langle v,\pi(\yy)v'\rangle_\ehH ,\\
= (-1)^{(|\varphi|+|\varphi'|)(\sigma+1)+|v||\varphi'|+r|\varphi|+|\varphi||\varphi'|} \kappa_\hbar\int\dd \yy\ \langle U(\yy)\varphi_\kP,\varphi_\kP\rangle\langle \varphi,\gP\varphi'\rangle\langle v,\pi(\yy)v'\rangle_\ehH.
\end{multline*}
By Lemma \ref{lem-minus} and the equality $\gM\varphi_\kP=\varphi_\kP$, we have
\begin{equation*}
\langle U(\yy)\varphi_\kP,\varphi_\kP\rangle=V(\varphi_\kP,\varphi_\kP)(-\yy)=(-1)^rV(\varphi_\kP,\varphi_\kP)(\yy)=(-1)^r\langle \varphi_\kP,U(\yy)\varphi_\kP\rangle.
\end{equation*}
Considering Equation \eqref{InnerHSHR} and the defintion of $\langle-,-\rangle_R$, see below Equation \eqref{InnerHR},  we get
\begin{multline*}
\langle\Phi(\varphi\otimes v),\Phi(\varphi'\otimes v')\rangle= 
(-1)^{(|\varphi|+|\varphi'|)(\sigma+1)+|v||\varphi'|+r|\varphi|+|\varphi||\varphi'|}(-1)^{r} \langle \varphi,\gP\varphi'\rangle\langle v,v'\rangle_R,\\
= (-1)^{(|\varphi|+|\varphi'|+r)\sigma+|\varphi|(1+r+|\varphi'|)} \langle \varphi\otimes v,\varphi'\otimes v'\rangle.
\end{multline*}
If $|\varphi|+|\varphi'|=r$, then we obtain that
$\langle\Phi(\varphi\otimes v),\Phi(\varphi'\otimes v')\rangle=\langle \varphi\otimes v,\varphi'\otimes v'\rangle$. 
Otherwise, we have $|\varphi|+|\varphi'|=r+1$ and both inner products are zero. Hence, $\Phi$ is an isometry. 

\textbf{2.} By Proposition \ref{prop:ImIso}, the continuous isometry $\Phi$ satisfies
$\Imag\Phi\oplus(\Imag\Phi)^\perp=\ehH$ and $\Imag\Phi$, $(\Imag\Phi)^\perp$ are Hilbert superspaces. 
The source space of $\Phi$ is stable under the representation $U\otimes\gone$. Hence, by Lemma \ref{lem:Phi},
the image $\Imag\Phi$ is stable under the representation $\pi$. As $\pi$ is a superunitary representation,
$(\Imag\Phi)^\perp$ is also stable under $\pi$. In particular, if $f\in L^1(\modE_0)$, the operator $\pi(f)$ restricts
to $(\Imag\Phi)^\perp$. Taking $f=V(\varphi_\kP,\psi_\kP)$, we conclude that
 $\kP\left((\Imag\Phi)^\perp\right)\subseteq(\Imag\Phi)^\perp$. 
Using successively Equations \eqref{IsoH-HSHR}, \eqref{eq-wigner}, Lemma \ref{lem-minus} and Equation \eqref{phiPpsiP}, we get
\begin{align}\nonumber
\Phi(\psi_\kP\otimes v)&=\int\dd\xx \ \langle U(\xx) \varphi_\kP,\psi_\kP\rangle\pi(\xx) v 
=\int\dd\xx\ V(\varphi_\kP,\psi_\kP)(-\xx) \pi(\xx) v\\ \label{Eq:Phi-HR}
&=\pi\big(V(\varphi_\kP,\psi_\kP)\big) v = v,
\end{align}
for all $v\in\ehH_R=\Imag\kP$.
Thus, $\Imag\kP\subseteq\Imag\Phi$ and $\kP(\Imag\Phi)^\perp\subseteq \Imag\Phi\cap(\Imag\Phi)^\perp=\algzero$. 
That is $\kP=\pi(V(\varphi_\kP,\psi_\kP))=0$ on $(\Imag\Phi)^\perp$. 
Hence, either $(\Imag\Phi)^\perp=\algzero$ or, by Lemma \ref{lem-pi-faithful}, 
$V(\varphi_\kP,\psi_\kP)=0$ almost everywhere. By  Proposition \ref{prop-resol}, we obtain that 
\begin{equation}
\int\dd \xx\ V\big(\varphi_\kP,\psi_\kP\big)(-\xx)\,V\big(\varphi_\kP,\psi_\kP\big)(\xx)
=\kappa_\hbar\langle\varphi_\kP,\psi_\kP\rangle\langle\varphi_\kP,\gP\psi_\kP\rangle=\frac{1}{\kappa_\hbar}\neq 0.
\end{equation}
Therefore, $V(\varphi_\kP,\psi_\kP)$ does not vanish almost everywhere and we have $(\Imag\Phi)^\perp=\algzero$. 
This means that $\Phi$ is surjective.
\end{proof}

Lemmas \ref{HR}-\ref{PhiSuperU} show that $\Phi$
is a superunitary operator between the Hilbert superspaces
$\ehH_S\otimes\ehH_R$ and $\ehH$, which intertwines the
unitary representations $U\otimes\gone$ and $\pi$. 
This proves the Theorem \ref{thm:StonevN} under the assumption
$\signp+\signq$ even. 
\subsection{Proof of the theorem in the odd case}
\label{subsec-odd}

In this section, we prove Theorem \ref{thm:StonevN} under the assumption that 
the odd dimension $\signp+\signq$ of the Heisenberg supergroup $\rH_{2m|\signp,\signq}$ is odd. All the way through,  $\hbar$ denotes a non zero real number.
We need three preliminary Lemmas.
The first one relies on the proof of Theorem \ref{thm:StonevN} for $\signp+\signq$ even.

\begin{lemma}\label{lem:commutation}
Let $\signp+\signq$ be even and $(\ehH,\pio,\pi)$ be a strong SUR of $\rH_{2m|\signp,\signq}$
such that central elements act by $\pi(0,t)=e^{i\hbar t}\gone$, for all $t\in\gR$.
Consider the Hilbert superspace $\ehH_R$ (see Lemma \ref{HR} and paragraph above)  
and the superunitary intertwinner $\Phi:\ehH_S\hat{\otimes}\ehH_R\to\ehH$ (see \eqref{IsoH-HSHR}).
If $A\in\caB(\ehH)$ satisfies $[\pi(\xx,t),A]=0$  for all $(\xx,t)\in\rH_{2m|\signp,\signq}$,
then there exists $A_R\in\caB(\ehH_R)$ such that
$\Phi^{-1}\circ A\circ \Phi=\gone\otimes A_R$.
\end{lemma}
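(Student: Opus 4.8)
The plan is to use the explicit integral formula \eqref{IsoH-HSHR} for the superunitary intertwiner $\Phi$, so that the statement reduces to a direct manipulation and neither Schur's Lemma nor irreducibility of the Schr\"odinger representation is invoked. Setting $t=0$ in the hypothesis, the bounded operator $A$ commutes with every $\pi(\xx)=\pi(\xx,0)$, $\xx\in\modE_0$; these operators are bounded by Proposition \ref{Prop:pibounded}, thanks to the assumption $\pi(0,t)=e^{i\hbar t}\gone$ on the center. Since $A$ is continuous and $\superA$-linear, it commutes with each Berezin integral $\pi(f)=\int_{\modE_0}\dd\xx\ f(\xx)\pi(\xx)$ defining the integrated representation (Lemma \ref{lem-extendpi}), so $A\,\pi(f)=\pi(f)\,A$ for all $f\in L^1(\modE_0)$. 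Applying this to $f=V(\varphi_\kP,\psi_\kP)$, which lies in $L^1(\modE_0)$ by Lemma \ref{lem-gaussian}, gives $A\kP=\kP A$ for the projector $\kP=\pi\big(V(\varphi_\kP,\psi_\kP)\big)$ of Lemma \ref{lem:kP}. Hence $A$ preserves the closed subspace $\ehH_R=\Imag(\kP)$, and I define $A_R:=A|_{\ehH_R}$, at first merely as a linear endomorphism of $\ehH_R$.

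The core of the argument is to carry $A$ inside the integral \eqref{IsoH-HSHR}. For $\varphi\in\ehH_S^\infty=\caS(\gR^{m|r})\otimes\Hol(\gC^{0|s})$ and $v\in\ehH_R$ the right-hand side of \eqref{IsoH-HSHR} is a genuine Berezin integral, through which the continuous $\superA$-linear map $A$ passes; inside the integral one uses $A\pi(\xx)=\pi(\xx)A$ and $Av=A_Rv\in\ehH_R$ to obtain
\begin{equation*}
A\,\Phi(\varphi\otimes v)=\Phi(\varphi\otimes A_Rv).
\end{equation*}
Thus $A\Phi$ and the map $\varphi\otimes v\mapsto\Phi(\varphi\otimes A_Rv)$ agree on the dense algebraic tensor product $\ehH_S^\infty\otimes\ehH_R$.

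To conclude I would use that $\Phi$ is superunitary, hence bounded with bounded inverse. The operator $A\Phi$ is bounded, so $\Phi^{-1}\circ A\circ\Phi$ extends from the dense domain to a bounded operator $C\in\caB(\ehH_S\hat\otimes\ehH_R)$ satisfying $C(\varphi\otimes v)=\varphi\otimes A_Rv$ there. Evaluating on $\psi_\kP\otimes v$ and using the tensor-product scalar product of Proposition \ref{prop-superhilbert-tensprod}, the inequality $\norm\psi_\kP\otimes A_Rv\norm\le\norm C\norm\,\norm\psi_\kP\otimes v\norm$ forces $A_R\in\caB(\ehH_R)$. Then $C$ and $\gone\otimes A_R$ are two bounded operators agreeing on a dense subspace, so $C=\gone\otimes A_R$, i.e. $\Phi^{-1}\circ A\circ\Phi=\gone\otimes A_R$, as required.

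I expect the only delicate point to be the sign bookkeeping attached to the $\superA$-linear extension of $A$: if $A$ is odd, moving it across the (possibly odd-valued) coefficient $\langle U(\xx)\varphi_\kP,\varphi\rangle$ and across the odd coordinates involved in the Berezin integral produces signs. I would handle this by splitting $A=A_0+A_1$ into homogeneous components and treating each separately, the even case being sign-free, and checking that the signs reorganize precisely into the displayed identity. Everything else is a routine consequence of the integral identities already established for $\Phi$, $\kP$ and the integrated representation.
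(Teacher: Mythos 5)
Your proof is sound in substance and takes a genuinely different route from the paper's. The two arguments share the first step: from $[\pi(\xx,t),A]=0$ one deduces $[A,\kP]=0$, so $A$ preserves $\ehH_R=\Imag(\kP)$ and one sets $A_R:=A|_{\ehH_R}$. After that the paper argues abstractly: using Equation \eqref{Eq:Phi-HR} and the intertwining property of $\Phi$, it verifies $\Phi^{-1}\circ A\circ\Phi=\gone\otimes A_R$ on the vectors $U(\xx,t)\psi_\kP\otimes w$, and then invokes irreducibility of the Schr\"odinger representation (Theorem \ref{thm-irreduct}) to get density of the span of such vectors. You instead push $A$ through the defining integral \eqref{IsoH-HSHR} and conclude by density of the algebraic tensor product $\ehH_S^\infty\otimes\ehH_R$. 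Your route avoids Theorem \ref{thm-irreduct} altogether, so the lemma becomes self-contained and would survive even without knowing irreducibility; it also produces the boundedness of $A_R$ explicitly via the factorization of the tensor scalar product \eqref{Eq:scalarproduct12}, a point the paper asserts without comment (it implicitly uses that the Hilbert topology of $\ehH_R$ agrees with the subspace topology). The paper's proof is shorter because it reuses machinery already in place.

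Two of the signs you defer are genuinely load-bearing, and one of your intermediate claims is false as stated. First, for odd $A$ the blanket assertion ``$A\,\pi(f)=\pi(f)\,A$ for all $f\in L^1(\modE_0)$'' fails: moving the $\superA$-linearly extended $A$ past the coefficient $f(\xx)$ costs $(-1)^{|A|\,|f(\xx)|}$, so one only gets $A\,\pi(f)=\pi(f^{(0)}-f^{(1)})\,A$, where $f^{(0)},f^{(1)}$ are the even and odd parts of $f$. What rescues your argument is exactly the observation the paper makes: $V(\varphi_\kP,\psi_\kP)$ is homogeneous of degree $0$ (since $|\varphi_\kP|+|\psi_\kP|=r$ equals the parity of the inner product of $\ehH_S$), so the one instance you use, $A\kP=\kP A$, is valid. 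Second, in your core identity the coefficient $\langle U(\xx)\varphi_\kP,\varphi\rangle$ is homogeneous of $\superA$-degree $|\varphi|$, so for odd $A$ one gets $A\,\Phi(\varphi\otimes v)=(-1)^{|A||\varphi|}\Phi(\varphi\otimes A_Rv)$ rather than your unsigned display; this is precisely the sign built into the graded convention $(\gone\otimes A_R)(\varphi\otimes v)=(-1)^{|A_R||\varphi|}\varphi\otimes A_Rv$, under which the stated conclusion $\Phi^{-1}\circ A\circ\Phi=\gone\otimes A_R$ holds (and under which the paper's own proof must also be read). Both points are repairable exactly along the lines you sketch, by splitting $A$ into homogeneous parts, so I regard the proposal as correct modulo this bookkeeping.
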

\begin{proof}
Since $\kP=\pi(V(\varphi_\kP,\psi_\kP))$ and $V(\varphi_\kP,\psi_\kP)$ is of degree $0$,
the hypothesis implies that $[A,\kP]=0$.
In particular, if $w\in\ehH_R$ then $Aw\in \ehH_R$, and the operator $A_R:=A_{|\ehH_R}$ is in $\caB(\ehH_R)$. 
By Equation \eqref{Eq:Phi-HR}, we have 
$$
\Phi^{-1}(Aw)=\psi_\kP\otimes Aw=\psi_\kP\otimes A_R w.
$$
As $\Phi^{-1}$ is an intertwining operator, the following equalities hold
 for all $(\xx,t)\in\rH_{2m|\signp,\signq}$ and all $w\in\ehH_R$,
$$
\Phi^{-1}(\pi(\xx,t)w)=U(\xx,t)\psi_\kP\otimes w \quad\text{and}\quad
\Phi^{-1}(A\pi(\xx,t)w)=U(\xx,t)\psi_\kP\otimes A_R w.
$$
Since $\Phi$ is an intertwining operator, Equation \eqref{Eq:Phi-HR} and the previous equality 
yield
\begin{align*}
\Phi^{-1}\circ A\circ \Phi\left(U(\xx,t)\psi_\kP\otimes w\right)&=\Phi^{-1}(A\pi(\xx,t)  w),\\
&=\left(\gone\otimes A_R\right)\left(U(\xx,t)\psi_\kP\otimes w\right).
\end{align*}
By Theorem \ref{thm-irreduct}, the Schr\"odinger representation is irreducible, i.e., 
$\mathrm{span}\{U(\xx,t)\psi_\kP \ | \ (\xx,t)\in\rH_{2m|\signp-1,\signq}\}$
is dense in $\ehH_S$. Therefore, $A$ being bounded, we deduce that
$\Phi^{-1}\circ A\circ \Phi=\gone\otimes A_R$.
\end{proof}

\begin{lemma}\label{lem:1to2}
Let $\gP$ be the parity operator on a Hilbert superspace $\ehH_R$
of parity $\sigma\in\{0,1\}$ and $(\ehH_R,\pio^R,\pi^R)$ be a strong SUR of $\rH_{0|1,0}$ such that central elements act by $\pi(0,t)=e^{i\hbar t}\gone$, for all $t\in\gR$. We have two cases:
\begin{itemize}
\item if $\sigma=0$, then
$(\ehH_R,\pio^R,\pi^R)$ extends into a strong SUR of $\rH_{0|2,0}$;
\item  if $\sigma=1$, then $(\ehH_R,\pio^R,\pi^R)$ extends into a strong SUR of $\rH_{0|1,1}$.
\end{itemize}
\end{lemma}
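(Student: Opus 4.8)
The plan is to encode the given strong SUR by a single bounded odd operator and then to adjoin a second one, built from the parity operator $\gP$, so that Lemma~\ref{lem:SURong1} applies to the larger Heisenberg supergroup.

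First I would record the structure of the input. Since $m=0$, the body $\gB\rH_{0|1,0}=\rH_0$ is the line $\gR$, acting by the scalars $\pi^R(0,t)=e^{i\hbar t}\gone$; hence every vector is smooth, $\ehH_R^\infty=\ehH_R$, and the odd derived subgroup coincides with all of $\gR$. By Lemma~\ref{lem:SURong1}, the whole datum is therefore carried by the single bounded odd operator $A:=d\pi^R(e_1)\in\caB(\ehH_R)$, which satisfies $A^\dag=-A$ and, from condition $(i)$ applied to $[e_1,e_1]=Z$ together with $d\pi^R(Z)=i\hbar\gone$,
\begin{equation*}
2A^2=i\hbar\gone,\qquad\text{i.e.}\qquad A^2=\tfrac{i\hbar}{2}\gone .
\end{equation*}
Condition $(iii)$ is vacuous here, the centre acting trivially on the odd part under $\Ad$.

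The key step is to represent the second odd generator $e_2$ of $\kh_{0|2,0}$ (resp.\ $\kh_{0|1,1}$) by an odd operator $A_2$. I set $A_2:=i\,\gP A$ when $\sigma=0$ and $A_2:=\gP A$ when $\sigma=1$. The parity operator is bounded, preserves $\ehH_R^\infty=\ehH_R$, squares to $\gone$, anticommutes with the odd operator $A$, i.e.\ $\gP A=-A\gP$, and obeys $\gP^\dag=(-1)^\sigma\gP$. From $\gP A=-A\gP$ one gets $(\gP A)^2=-A^2$ and $\{A,\gP A\}=0$, whence $\{A,A_2\}=0$ in both cases, matching $[e_1,e_2]=0$. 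Moreover
\begin{equation*}
A_2^2=(\gP A)^2=-A^2=-\tfrac{i\hbar}{2}\gone\ \ (\sigma=1),\qquad
A_2^2=i^2(\gP A)^2=A^2=\tfrac{i\hbar}{2}\gone\ \ (\sigma=0),
\end{equation*}
which matches $[e_2,e_2]=-Z$ of $\kh_{0|1,1}$ (signature $(\signp,\signq)=(1,1)$) when $\sigma=1$, and $[e_2,e_2]=Z$ of $\kh_{0|2,0}$ (signature $(2,0)$) when $\sigma=0$. Finally, using $\gP^\dag=(-1)^\sigma\gP$ and $(\gP A)^\dag=A^\dag\gP^\dag$, a direct computation gives $A_2^\dag=-A_2$ in each case. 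Thus, defining $\rho$ on $(\kg_\gR)_1=\mathrm{span}(e_1,e_2)$ by $\rho(e_1)=A$, $\rho(e_2)=A_2$, the three hypotheses of Lemma~\ref{lem:SURong1} hold ($(iii)$ again automatic since $G_0=\gR$ acts by scalars). As the odd derived subgroup of the larger supergroup is once more all of $\gR$, so that $\ehH_R^\infty=\ehH_R$ is the relevant domain, Lemma~\ref{lem:SURong1} produces a strong SUR on $\ehH_R$ of $\rH_{0|2,0}$ (resp.\ $\rH_{0|1,1}$) whose odd generators act by $A$ and $A_2$; by construction its restriction to $\rH_{0|1,0}$ returns $(\ehH_R,\pio^R,\pi^R)$, which is the desired extension.

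The only delicate point is the scalar in front of $\gP A$: it must at once render $A_2$ anti-superadjoint and give $A_2^2$ the sign dictated by the signature of the target. Both constraints are governed by the single identity $\gP^\dag=(-1)^\sigma\gP$, which is exactly why the natural extension is to $\rH_{0|2,0}$ for $\sigma=0$ and to $\rH_{0|1,1}$ for $\sigma=1$; the remaining verifications are routine.
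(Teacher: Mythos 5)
Your proposal is correct and follows essentially the same route as the paper: the paper extends $\piG^R$ by setting $\piG^R(e_2)=[\tfrac{i}{2}\gP,\piG^R(e_1)]$ and $\piG^R(e_{-1})=[\tfrac12\gP,\piG^R(e_1)]$, which (since $\gP$ anticommutes with odd operators) are exactly your $i\gP A$ and $\gP A$, with the same case split driven by $\gP^\dag=(-1)^\sigma\gP$. The only cosmetic differences are that the paper verifies the relations via $2\times 2$ block decompositions while you use anticommutation identities, and that you justify the strong-SUR conclusion by invoking Lemma \ref{lem:SURong1} explicitly.
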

\begin{proof}
By Theorem \ref{thm:Equivalence}, $(\ehH_R,\pio^R,\pi^R)$ corresponds to a strong SUR
$(\ehH_R,\pio^R,\piG^R)$ of the super Harish-Chandra pair $(\gR,(\kh_{0|1,0})_\gR)$
associated to $\rH_{0|1,0}$.
Denote by $(e_1,e_2,Z)$ a basis of $(\kh_{0|2,0})_\gR$ and by $(e_1,e_{-1},Z)$
 a basis of $(\kh_{0|1,1})_\gR$, so that $(e_1,Z)$ is a basis of $(\kh_{0|1,0})_\gR$
and the non-vanishing commutators are $[e_1,e_1]=Z$, $[e_2,e_2]=Z$ and
$[e_{-1},e_{-1}]=-Z$.
Then, extend $\piG^R$ by $\gR$-linearity to $(\kh_{0|2,0})_\gR$ by setting
$\piG^R(e_2)=[\frac{i}{2}\gP,\piG^R(e_1)]$ and 
extend it to $(\kh_{0|1,1})_\gR$ by setting
$\piG^R(e_{-1})=[\frac12 \gP,\piG^R(e_1)]$.

As $\piG^R(e_1)$ is an odd operator, it admits a block decomposition 
along the decomposition $\ehH_R=(\ehH_R)_0\oplus(\ehH_R)_1$, namely
$
\piG^R(e_1)=\left(\begin{smallmatrix}
0 & A \\
B & 0
\end{smallmatrix}\right)
$. The block decomposition of the parity operator is 
$\gP=\left(\begin{smallmatrix}
1 & 0 \\
0 & -1
\end{smallmatrix}\right)$, so that 
$$
\piG^R(e_2)=i\begin{pmatrix}
0 & A \\
-B & 0
\end{pmatrix}
\quad \text{and}\quad
\piG^R(e_{-1})=\begin{pmatrix}
0 & A \\
-B & 0
\end{pmatrix} \ .
$$
A direct consequence is 
$$
[\piG^R(e_1),\piG^R(e_2)]=0=[\piG^R(e_1),\piG^R(e_{-1})]\ .
$$
Since $[\piG^R(e_1),\piG^R(e_1)]=\piG^R(Z)=i\hbar\gone$, we have 
$ 2\left(\begin{smallmatrix}
AB & 0 \\
0 & BA
\end{smallmatrix}\right)=i\hbar\gone$ and then
$$
[\piG^R(e_2),\piG^R(e_2)]=i\hbar\gone 
\quad\text{and}\quad 
[\piG^R(e_{-1}),\piG^R(e_{-1})]=-i\hbar\gone \ .
$$
This means that the extensions of $\piG^R$ to $(\kh_{0|2,0})_\gR$ and
to $(\kh_{0|1,1})_\gR$ are Lie superalgebra morphisms. Moreover,
using the equalities $\piG^R(e_1)^\dag=-\piG^R(e_1)$ and $\gP^\dag=(-1)^\sigma\gP$,
we get 
$$
\piG^R(e_2)^\dag=[\piG^R(e_1)^\dag,(\frac{i}{2}\gP)^\dag]=-(-1)^\sigma \piG^R(e_2)
\quad\text{and}\quad
\piG^R(e_{-1})^\dag=[\piG^R(e_{1})^\dag,(\frac12\gP)^\dag]=(-1)^\sigma \piG^R(e_2).
$$
Hence, the extensions of $\piG^R$ yield strong SUR's of the super Harish-Chandra pairs 
$(\gR,(\kh_{0|2,0})_\gR)$ if $\sigma=0$ and
$(\gR,(\kh_{0|1,1})_\gR)$ if $\sigma=1$.
Using again the equivalence stated in Theorem \ref{thm:Equivalence}, 
these become strong SUR's of $\rH_{0|2,0}$ and $\rH_{0|1,1}$ respectively.
\end{proof}

\begin{lemma}\label{lem:oddtoeven}
Let $\signp+\signq$ be odd, $\ehH$ be a Hilbert superspace of parity $\sigma\in\{0,1\}$ 
and $(\ehH,\pio,\pi)$ be a strong SUR of $\rH_{2m|\signp,\signq}$
such that central elements act by $\pi(0,t)=e^{i\hbar t}\gone$, for all $t\in\gR$. We have two cases:
\begin{itemize}
\item if $\sigma=\signq\mod 2$, then
$(\ehH,\pio,\pi)$ extends into a strong SUR of $\rH_{2m|\signp+1,\signq}$
\item  if $\sigma=\signq+1\mod 2$, then $(\ehH,\pio,\pi)$ extends into a strong SUR 
of $\rH_{2m|\signp,\signq+1}$. 
\end{itemize}
\end{lemma}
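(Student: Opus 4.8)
The plan is to reduce to the even case treated in Section~\ref{subsec-even} by splitting off one odd generator. Assume $\signp\geq 1$ (the case $\signp=0$, which forces $\signq$ odd, is symmetric after exchanging the roles of $\signp$ and $\signq$). Let $G':=\rH_{2m|\signp-1,\signq}$, a sub-supergroup of $\rH_{2m|\signp,\signq}$ whose odd dimension $\signp-1+\signq$ is even. Since central elements act by $\pi(0,t)=e^{i\hbar t}\gone$, Proposition~\ref{Prop:pibounded} guarantees that $(\ehH,\pio,\pi)$ restricts to a strong SUR of $G'$ with the same parameter $\hbar$. Applying Theorem~\ref{thm:StonevN} in the (already proven) even case then provides a superunitary intertwiner $\Phi:\ehH_{S'}\hat\otimes\ehH_R\to\ehH$, where $\ehH_{S'}$ carries the Schr\"odinger representation of $G'$ (irreducible by Theorem~\ref{thm-irreduct}) and $\ehH_R$ carries the trivial $G'$-action.

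Next I would feed the leftover generator $e_\signp$ into the commutation Lemma~\ref{lem:commutation}. Although $\piG(e_\signp)$ anticommutes with the odd generators of $G'$ and hence does not commute with the $G'$-action, the dressed operator $A:=\piG(e_\signp)\,\gP$ does: it commutes with $\pio(g)$ and with $\piG(c_i)$ because $e_\signp$ is $\Ad$-fixed and $\omega$-orthogonal to the even part, while the parity operator $\gP$ turns the anticommutation of $\piG(e_\signp)$ with each $\piG(e_\alpha)$ ($\alpha\neq\signp$) into a commutation. By Lemma~\ref{lem:commutation} there is $A_R\in\caB(\ehH_R)$ with $\Phi^{-1}A\Phi=\gone\otimes A_R$. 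A short computation using $\gP^2=\gone$, $\piG(e_\signp)^2=\tfrac{i\hbar}{2}\gone$ and $\gP^\dag=(-1)^\sigma\gP$ determines $A_R^2$ and $A_R^\dag$; after normalising $A_R$ to be skew-adjoint this exhibits $\ehH_R$ as a strong SUR of a one-generator odd Heisenberg supergroup, namely $\rH_{0|1,0}$ or $\rH_{0|0,1}$, the type being forced by the parity of $\ehH_R$.

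I would then apply Lemma~\ref{lem:1to2} (in the stated $\rH_{0|1,0}$ form or its evident $\signp\leftrightarrow\signq$ mirror) to extend this representation of $\ehH_R$ by a second odd generator, obtaining a strong SUR of $\rH_{0|2,0}$ when $\ehH_R$ has parity $0$ and of $\rH_{0|1,1}$ when $\ehH_R$ has parity $1$. Tensoring this with the Schr\"odinger representation on $\ehH_{S'}$ (Proposition~\ref{prop-tensprodheis}) and descending along the central identification \eqref{eq-antidiag} yields a strong SUR of $\rH_{2m|\signp+1,\signq}$ or of $\rH_{2m|\signp,\signq+1}$ on $\ehH_{S'}\hat\otimes(\text{extended }\ehH_R)\cong\ehH$. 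One checks that, restricted to the original $\signp+\signq$ generators (the $G'$-generators on $\ehH_{S'}$ together with the $A_R$-generator on $\ehH_R$, i.e.\ $e_\signp$), this extension recovers $(\ehH,\pio,\pi)$; throughout one moves freely between supergroups and Harish--Chandra pairs via Theorem~\ref{thm:Equivalence}.

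The delicate point, and the main obstacle, is the parity/sign bookkeeping that identifies which of the two groups actually occurs and matches it to the stated dichotomy $\sigma\equiv\signq$ versus $\sigma\equiv\signq+1\pmod 2$. By additivity of parity under the Hilbert tensor product (Proposition~\ref{prop-superhilbert-tensprod}), $\mathrm{par}(\ehH_R)\equiv\sigma-\mathrm{par}(\ehH_{S'})\pmod 2$, and from \eqref{def:rs} one computes $\mathrm{par}(\ehH_{S'})\equiv\min(\signp-1,\signq)\pmod 2$, which---crucially using that $\signp+\signq$ is odd---is congruent to $\signq$ modulo $2$. Hence $\mathrm{par}(\ehH_R)\equiv\sigma-\signq$, so the two cases of Lemma~\ref{lem:1to2} line up exactly with the two cases of the statement. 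The only remaining care is to track the sign of $A_R^2$ against the skew-adjointness constraint coming from $\gP^\dag=(-1)^\sigma\gP$, precisely the consistency already verified inside the proof of Lemma~\ref{lem:1to2}, so that the adjoined generator acquires the correct square $\pm\tfrac{i\hbar}{2}\gone$.
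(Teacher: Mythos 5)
Your proposal follows the same architecture as the paper's proof — restriction to $G'=\rH_{2m|\signp-1,\signq}$, the even case of Theorem \ref{thm:StonevN}, Lemma \ref{lem:commutation} to transfer the leftover odd generator to $\ehH_R$, Lemma \ref{lem:1to2}, tensoring via Proposition \ref{prop-tensprodheis} with descent along \eqref{eq-antidiag}, and the same parity bookkeeping $\sigma(\ehH_R)\equiv\sigma-\signq\pmod 2$. However, the central step, the parity dressing $A:=\piG(e_\signp)\gP$, contains a genuine error, and it is precisely the step where your route diverges from the paper's.

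The premise for the dressing is mistaken, and the dressed operator then fails the hypothesis of Lemma \ref{lem:commutation}. What that lemma requires is commutation with the supergroup operators $\pi(\xx,t)$, $(\xx,t)\in G'$, and at that level no correction is needed: the elements of the centralizer $\rH_{0|1,0}$ commute with those of $G'$ inside $\rH_{2m|\signp,\signq}$ because $\omega$ vanishes between the corresponding subspaces (see \eqref{GroupLaw}), so $[\pi(\xx,t),\pi(\yy,t')]=0$ holds automatically — this is exactly how the paper verifies the hypothesis. Equivalently, writing $\pi(\xx,t)=e^{i\hbar t}\,\widetilde{\pio}(x^\ev)\sum_\gamma (x^\od)^\gamma\pi_\gamma$ as in Proposition \ref{Prop:pibounded}, the $\superA$-linear extension of the odd operator $\piG(e_\signp)$ picks up a Koszul sign $(-1)^{|\gamma|}$ when crossing the Grassmann scalar $(x^\od)^\gamma$, and this sign exactly compensates the anticommutation of $\piG(e_\signp)$ with the odd components $\pi_\gamma$; hence the \emph{undressed} operator already commutes with every $\pi(\xx,t)$. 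Your operator $A=\piG(e_\signp)\gP$ has the opposite behaviour: it commutes ordinarily with every $\pi_\gamma$ and with $\pio$, but being odd it still produces the Koszul sign when crossing $(x^\od)^\gamma$, so that $A\,\pi(\xx,t)=\pi(\check{\xx},t)\,A$ with $\check{\xx}=(x^\ev,-x^\od)$, rather than $[A,\pi(\xx,t)]=0$. With this twisted relation the proof of Lemma \ref{lem:commutation} breaks down: on the dense span of the vectors $U(\xx,t)\psi_\kP\otimes w$ one obtains $\Phi^{-1}A\Phi\left(U(\xx,t)\psi_\kP\otimes w\right)=U(\check{\xx},t)\psi_\kP\otimes A_R w$, which is not of the form $(\gone\otimes A_R)\left(U(\xx,t)\psi_\kP\otimes w\right)$; at best one could hope for a twisted identity involving the parity operator of $\ehH_S$, which would require proving a new version of the lemma and re-deriving the brackets and adjoints of $A_R$ through that twist. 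The repair is simply to undo the dressing: apply Lemma \ref{lem:commutation} to the centralizer operators $\pi(\yy,t')$ (equivalently to $\piG(e_\signp)$ with its correct $\superA$-linear extension), as the paper does. With that correction, the remainder of your argument — Lemma \ref{lem:1to2}, the tensor-and-descend step, the verification via \eqref{Eq:Phi-HR} that the restriction recovers $(\ehH,\pio,\pi)$, and your parity computation, which agrees with the paper's — goes through and coincides with the published proof.
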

\begin{proof}
For this proof,  we assume that $\signp>0$ for simplicity of presentation. 
The remaining case ($\signp=0$ and $\signq>0$) is similar and left to the reader.

According to Proposition \ref{Prop:pibounded}, the restrictions of $(\ehH,\pio,\pi)$ to the subgroup 
$\rH_{2m|\signp-1,\signq}\leq\rH_{2m|\signp,\signq}$ and to its centralizer $\rH_{0|1,0}\leq\rH_{2m|\signp,\signq}$ are strong SUR's.
The central elements also act by $\pi(0,t)=e^{i\hbar t}\gone$ for both sub-representations. According to the previous section, there exists a Hilbert superspace $\ehH_R$ (see Lemma \ref{HR} and paragraph above) 
and a superunitary operator $\Phi:\ehH_S\hat{\otimes}\ehH_R\to \ehH$ (see \eqref{IsoH-HSHR}) which
intertwines the representation $(\ehH,\pio,\pi)$, restricted to $\rH_{2m|\signp-1,\signq}$, 
and the tensor product of the Schr\"odinger representation $(\ehH_S,U_0,U)$ of $\rH_{2m|\signp-1,\signq}$
with the trivial representation on $\ehH_R$.

Since $[\pi(\xx,t),\pi(\yy,t')]=0$ for all $(\xx,t)\in\rH_{2m|\signp-1,\signq}$ and $(\yy,t')\in\rH_{0|1,0}$,
Lemma \ref{lem:commutation} applies to the operators $\pi(\yy,t')$. 
As a result, there exists a strong SUR $(\ehH_R,\pio^R,\pi^R)$ of $\rH_{0|1,0}$ such that $\Phi^{-1}\circ\pi(\yy,t')\circ\Phi=\gone\otimes\pi^R(\yy,t')$ for all $(\yy,t')\in\rH_{0|1,0}$. Of course, we have 
$\pi(0,t)=e^{i\hbar t}\gone$ for all $t\in\gR$.

According to Lemma \ref{lem:1to2}, the strong SUR $(\ehH_R,\pio^R,\pi^R)$ of $\rH_{0|1,0}$ extends as a strong SUR
of $\rH_{0|2,0}$ (if $\sigma(\ehH_R)=0$) or $\rH_{0|1,1}$ (if $\sigma(\ehH_R)=1$).
Then, the map 
$$
\widehat{\pi}:(\xx,t)\times (\yy,t')\mapsto \Phi \circ\left(U(\xx,t)\otimes\gone+\gone\otimes \pi^R(\yy,t')\right)\circ\Phi^{-1},
$$
defines a strong SUR $(\ehH,\pio,\widehat{\pi})$ on $\ehH$ of the direct product
$\rH_{2m|\signp-1,\signq}\times\rH_{0|2,0}$ (if $\sigma(\ehH_R)=0$) 
or $\rH_{2m|\signp-1,\signq}\times\rH_{0|1,1}$ (if $\sigma(\ehH_R)=1$).
Since the center of both factors acts in the same way, $(\ehH,\pio,\widehat{\pi})$ descends as a 
strong SUR of $\rH_{2m|\signp+1,\signq}$ (if $\sigma(\ehH_R)=0$) or
$\rH_{2m|\signp,\signq+1}$ (if $\sigma(\ehH_R)=1$). 
By definition of $\Phi^{-1}$ and $\pi^R$,
the restriction of that strong SUR to $\rH_{2m|\signp,\signq}$ is $(\ehH,\pio,\pi)$.
As the parity of $\ehH_S$ is given by $\signp-1\mod 2=\signq\mod 2$,
the represented supergroup is $\rH_{2m|\signp+1,\signq}$ if $\sigma(\ehH)=\signq\mod 2$
and $\rH_{2m|\signp,\signq+1}$ if $\sigma(\ehH)=\signq+1\mod 2$.
\end{proof}

Consider  a strong SUR $(\ehH,\pio,\pi)$ of $\rH_{2m|\signp,\signq}$,
with $\signp+\signq$ odd and $\ehH$ of parity $\sigma\in\{0,1\}$, 
such that central elements act by $\pi(0,t)=e^{i\hbar t}\gone$ for all $t\in\gR$.
By Lemma \ref{lem:oddtoeven}, $(\ehH,\pio,\pi)$
admits an extension as a strong SUR of $\rH_{2m|\signp+1,\signq}$ if $\sigma=\signq\mod 2$
 or of $\rH_{2m|\signp,\signq+1}$ if $\sigma=\signq+1\mod 2$.
Denote by $(\ehH_S,U_0,U)$ the Schr\"odinger representation with parameter $\hbar\in\gR^\times$ of the extended supergroup
$\rH_{2m|\signp+1,\signq}$ or $\rH_{2m|\signp,\signq+1}$.
As the odd dimension of both supergroups is even, Theorem \ref{thm:StonevN}  applies to the extension of $(\ehH,\pio,\pi)$.
Hence, there exists a Hilbert superspace $\ehH_R$ and a superunitary intertwiner $\Phi:\ehH_S\otimes\ehH_R\to\ehH$ between the strong SUR's
$(\ehH,\pio,\pi)$ and $(\ehH_S\otimes\ehH_R,U_0\otimes\gone,U\otimes\gone)$  of the extended supergroup
($\rH_{2m|\signp+1,\signq}$ or $\rH_{2m|\signp,\signq+1}$).
The operator $\Phi$ interwines automatically the restrictions of both strong SUR's to $\rH_{2m|\signp,\signq}$.
According to Definition \ref{Def:SchrodingerRep}, 
the restriction of $(\ehH_S,U_0,U)$ to $\rH_{2m|\signp,\signq}$ is the Schr\"odinger representation of $\rH_{2m|\signp,\signq}$
with parameter $\hbar$ and parity $\sigma$.
This concludes the proof of Theorem \ref{thm:StonevN}.

\section{Applications}

We provide now several applications of the Stone-von Neumann Theorem \ref{thm:StonevN}.

\subsection{Strong superunitary dual}
\label{subsec-dual}


Recall that two strong SUR's of the Heisenberg supergroup $\rH_{2m|\signp,\signq}$ 
are equivalent if and only if there exists a superunitary intertwiner between them.

\begin{definition}
The {\defin strong superunitary dual} of the Heisenberg supergroup $\rH_{2m|\signp,\signq}$ 
is the set of equivalence classes of graded-irreducible strong SUR's of $\rH_{2m|\signp,\signq}$.
\end{definition}

In the following, we will prove that every graded-irreducible strong SUR (strong SUIR) 
of $\rH_{2m|\signp,\signq}$  is built from the following ones:
\begin{itemize}
\item the trivial representations $(\gC_j,\gone,\gone)$, where $j\in\{0,1,2,3\}$
and $\gC_j$ is the Hilbert superspace of dimension $1$ such that a normalized vector
$e_j\in\gC_j$ satisfies $\langle e_j,e_j\rangle=i^j$;
\item the representations $(\gC_j,\pio^{a,b},\pi^{a,b})$, where $j\in\{0,1,2,3\}$
and $a,b\in\gR^m$ are such that
$$
\pi_0^{a,b}(\gB q,\gB p,t)=e^{i(a\gB q+b\gB p)} \quad\text{and}\quad
\pi^{a,b}(q,p,\zeta,\bzeta,t)=e^{i(aq^{ev}+bp^{ev})},
$$
for all $(q,p,\zeta,\bzeta,t)\in\rH_{2m|\signp,\signq}$ (see notations of \eqref{Def:M});
\item the Schr\"odinger representations with parameter $\hbar\in\gR^\times$, 
and parity $\sigma\in\{0,1\}$ if $\signp+\signq$ is odd, 
denoted by $(\ehH_S^{(\sigma)},U_0^\hbar,U^\hbar)$.
\end{itemize}

\begin{lemma}
\label{lem-centerdual}
Let $(\ehH,\pio,\pi)$ be a strong SUIR of $\rH_{2m|\signp,\signq}$. 
There exists $\hbar\in\gR$  such that, for all $t\in\gR$, the elements $(0,t)$ in the center of 
$\rH_{2m|\signp,\signq}$ act by $\pi(0,t)=e^{i\hbar t}\gone$.
\end{lemma}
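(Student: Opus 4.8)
The plan is to exploit that central elements commute with the whole representation and to pin down their action by a Schur-type argument adapted to graded-irreducibility. First I would use that $(\ehH,\pio,\pi)$ is a strong SUR, hence $\pio$ is unitarizable: fix a fundamental symmetry $J$ so that $(\ehH,(-,-)_J)$ is a genuine Hilbert space on which all $\pio(g)$ are unitary. The body of the center is $\gR\cdot(0,1)\subseteq\rH_{2m}$, so $t\mapsto\pio(0,t)$ is a strongly continuous one-parameter unitary group, and by Stone's theorem $\pio(0,t)=e^{itA}$ for a (possibly unbounded) self-adjoint operator $A$ on $(\ehH,(-,-)_J)$. Since $(0,t)$ is central in $\rH_{2m}$ we have $\pio(0,t)\pio(g)=\pio(g)\pio(0,t)$ for all $g$, and since the adjoint action of the center is trivial (see \eqref{coad-action}), the axiom \eqref{Eq:HC-rep} gives $\pio(0,t)\piG(X)=\piG(X)\pio(0,t)$ on $\ehH^\infty$ for every $X\in\kg_\gR$. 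Note that $\pi(0,t)=\pio(0,t)$, as $(0,t)$ is a body element, and that the $\pio(0,t)$, hence $A$ and all its spectral projections, are even operators of degree $0$.

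The heart of the argument is to show that the spectral projections $P(\Omega):=E_A(\Omega)$ of $A$ are $(\pio,\piG)$-invariant; being even, their images are closed graded subspaces. Commutation of $\pio(0,t)$ with every $\pio(g)$ passes to the bounded functional calculus, so each $P(\Omega)$ commutes with all $\pio(g)$; consequently $P(\Omega)$ preserves $\ehH^\infty$ (if $g\mapsto\pio(g)v$ is smooth then so is $g\mapsto\pio(g)P(\Omega)v=P(\Omega)\pio(g)v$), and each $\Imag P(\Omega)$ is $\pio$-invariant. For the odd generators I would first treat functions $f$ with $\hat f\in\caD(\gR)$: writing $f(A)=\tfrac{1}{2\pi}\int\hat f(t)\,\pio(0,t)\,dt$ as an $\ehH^\infty$-valued integral (using Lemma \ref{Lem:continuitypio} with $H_0=G_0$ to get continuity of $t\mapsto\pio(0,t)v$ into the Fréchet space $\ehH^\infty$) and using that $\piG(X)$ is continuous on $\ehH^\infty$ (Remark \ref{Rmk:piGcontinuous}), one obtains $\piG(X)f(A)v=f(A)\piG(X)v$ for all $v\in\ehH^\infty$. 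Approximating $\chi_\Omega$, for $\Omega$ a half-line, by uniformly bounded $f_n\to\chi_\Omega$ pointwise with $\hat{f_n}\in L^1$, the calculus gives $f_n(A)v\to P(\Omega)v$ and $\piG(X)f_n(A)v=f_n(A)\piG(X)v\to P(\Omega)\piG(X)v$ in $\ehH$; since $\piG(X)$ is closable (Remark \ref{Rmk:piGcontinuous}, via Lemma \ref{lem-ineq} and non-degeneracy) and $P(\Omega)v\in\ehH^\infty$, this forces $\piG(X)P(\Omega)v=P(\Omega)\piG(X)v$. Hence $\piG(X)$ maps $\Imag P(\Omega)\cap\ehH^\infty$ into itself and $\Imag P(\Omega)$ is $(\pio,\piG)$-invariant.

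Finally, graded-irreducibility forces each closed graded $(\pio,\piG)$-invariant subspace $\Imag P(\Omega)$ to be $\algzero$ or $\ehH$, i.e.\ $P(\Omega)\in\{0,\gone\}$ for every half-line $\Omega$; thus the spectral measure of the self-adjoint operator $A$ is concentrated at a single real point $\hbar\in\gR$, so $A=\hbar\gone$ and $\pi(0,t)=\pio(0,t)=e^{i\hbar t}\gone$. The main obstacle is the middle step: transferring commutation from the bounded unitaries $\pio(0,t)$ to the unbounded generators $\piG(X)$ at the level of spectral projections. I resolve it through the integral representation of $f(A)$ on smooth vectors together with the closability of $\piG(X)$, thereby avoiding any delicate convergence argument in the Fréchet topology of $\ehH^\infty$.
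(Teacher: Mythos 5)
Your proposal is correct and follows essentially the same route as the paper: unitarize via a fundamental symmetry $J$, pass to the self-adjoint generator of the central one-parameter unitary group $t\mapsto\pio(0,t)$, show its spectral projections are even and $(\pio,\piG)$-invariant, and invoke graded-irreducibility to force the spectral measure to be a point mass $\hbar$. The only difference is one of detail rather than strategy: the paper simply asserts that the spectral projections of $i\piG(Z)$ commute with all representing operators, whereas you actually prove the delicate commutation with the unbounded odd operators $\piG(X)$ via the Fourier-integral formula for $f(A)$ on smooth vectors and closability (just make sure the commutation step is stated for $\hat f\in L^1$ rather than only $\hat f\in\caD(\gR)$, which works verbatim since the Fr\'echet seminorms of $\pio(0,t)v$ are constant in $t$ by centrality, so the integral still converges in $\ehH^\infty$ and your approximants $f_n\in\caD(\gR)$ on the spectral side are covered).
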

\begin{proof}
By definition of a strong SUR, the representation $(\ehH,\pio)$ is unitarizable.
Namely, there exists a fundamental symmetry $J$ turning $(\ehH,(-,-)_J)$ into a Hilbert
space and such that the operators $\pio(x,t)$ are $J$-unitary (i.e. unitary w.r.t. $(-,-)_J$)
for all $(x,t)$ in the body of $\rH_{2m|\signp,\signq}$. 
The operators $i\piG(X)$, with $X\in(\gB\kh_{2m|\signp,\signq})_0$, are then $J$-selfadjoint operators. 
In particular, denoting by $Z$ the infinitesimal generator of the center, with $e^Z=(0,1)$, 
the operator $i\pi_*(Z)$ is a $J$-selfadjoint operator of degree $0$. 

Assume that the spectrum of $i\pi_*(Z)$ is not a singleton. Then, its spectral measure 
splits into two non-trivial parts, which  produce two $J$-orthogonal projectors $P_1$, $P_2$ of degree $0$.
Since $Z$ is in the center, the operators $P_1$ and $P_2$ commute with 
all the representing operators $\pi(\xx,t)$, with $(\xx,t)\in\rH_{2m|\signp,\signq}$. 
Hence, $P_1\ehH$ and $P_2\ehH$ are proper closed $(\pio,\pi)$-invariant graded 
subspaces of $\ehH$. It is a contradiction with the graded-irreducibility of $\pi$. 
Therefore, the spectrum of $i\pi_*(Z)$ is a singleton, 
that we denote by $\hbar\in\gR$. This means that $\pi(0,t)=e^{i\hbar t}\gone$
for all $t\in\gR$.
\end{proof}

\begin{Theorem}
\label{prop-suirdual}
Up to equivalence, the strong SUIR's of $\rH_{2m|\signp,\signq}$ are given by 
\begin{enumerate}[label=(\roman*)]
\item the strong SUIR's $(\gC_j,\pio^{a,b},\pi^{a,b})$ with $j\in\{0,1,2,3\}$
and $a,b\in\gR^m$;
\item the strong SUIR's $(\ehH_S^{(\sigma)}\otimes\gC_j,U_0^\hbar\otimes\gone,U^\hbar\otimes\gone)$, 
with $j\in\{0,1,2,3\}$ and $\hbar\in\gR^\times$, as well as $\sigma\in\{0,1\}$ if $\signp+\signq$ is odd.
\end{enumerate}
Moreover, the above strong SUIR's are all inequivalent if  $\signp+\signq$ is even.
If $\signp+\signq$ is odd, the only isomorphic ones are those of type $(ii)$ 
with same parameters $\sigma\in\{0,1\}$ and $\hbar\in\gR^\times$,
and whose parameters $j,j'\in\{0,1,2,3\}$ satisfy
$i^{ j'+j-1}=(-1)^{\signq} \mathrm{sign}(\hbar)$.
\end{Theorem}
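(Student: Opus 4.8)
The plan is to split the classification according to the value $\hbar\in\gR$ of the central character, which exists by Lemma \ref{lem-centerdual}, and then to analyse the possible even superunitary intertwiners (recall that equivalences of SUR's are even superunitary operators).

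First I would treat the case $\hbar=0$, which should produce exactly the type $(i)$ representations. By Proposition \ref{Prop:pibounded} the operators $A:=\piG(e_\alpha)$ are bounded; the superunitarity axiom gives $A^\dag=-A$, while $\piG([e_\alpha,e_\beta])=i\hbar\,\omega_{\alpha\beta}\gone$ gives $A^2=0$, and $\Ad_g e_\alpha=e_\alpha$ gives $\pio(g)A\pio(g)^{-1}=A$. Hence $\overline{\Imag A}$ is a closed graded $(\pio,\piG)$-invariant subspace contained in the closed set $\Ker A$; graded-irreducibility leaves only $\overline{\Imag A}\in\{0,\ehH\}$, and $\Imag A\subseteq\Ker A$ excludes $\overline{\Imag A}=\ehH$, so $A=0$. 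The representation then factors through the abelian even quotient $\gR^{2m}$; since the even and odd parts are separately invariant, graded-irreducibility forces $\dim\ehH=1$, so $\ehH=\gC_j$ and $\pio$ is a character $e^{i(aq+bp)}$, yielding $(\gC_j,\pio^{a,b},\pi^{a,b})$.

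Next, for $\hbar\in\gR^\times$, Theorem \ref{thm:StonevN} identifies $(\ehH,\pio,\pi)$ with $(\ehH_S^{(\sigma)}\hat{\otimes}\ehH_R,\,U_0^\hbar\otimes\gone,\,U^\hbar\otimes\gone)$, where $\sigma$ is the parity of $\ehH$ and $\ehH_R$ carries the trivial action. As the action lives on the first factor, every Hilbert sub-superspace $\ehF\leq\ehH_R$ gives an invariant $\ehH_S^{(\sigma)}\hat{\otimes}\ehF$, so graded-irreducibility forces $\dim\ehH_R=1$, i.e. $\ehH_R=\gC_j$; conversely these are graded-irreducible by Theorem \ref{thm-irreduct} (even case) or Corollary \ref{cor-irreduct} (odd case), using that graded-irreducibility is preserved under tensoring with a one-dimensional space. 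This gives the list $(ii)$. For the equivalences, the central character $\hbar$ and the parity $\sigma$ are invariants, so only representations with equal $\hbar,\sigma$ can be related. Identifying the one-dimensional factors with $\gC$, any even intertwiner $T:\ehH_S^{(\sigma)}\hat{\otimes}\gC_j\to\ehH_S^{(\sigma)}\hat{\otimes}\gC_{j'}$ corresponds to a self-intertwiner of $\ehH_S^{(\sigma)}$; Schur's Lemma \ref{Schur} applies since the underlying $\rH_{2m}$-representation has finite-dimensional multiplicity space $S_\gC$ (cf. Proposition \ref{Schrodinger-Clifford}). When $\gC_j,\gC_{j'}$ share a parity, $T$ is a scalar, and isometry forces $i^j=i^{j'}$, i.e. $j=j'$; when they have opposite parity, $T$ must be a multiple of an odd self-intertwiner. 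In the even case $\ehH_S$ is irreducible (Theorem \ref{thm-irreduct}), so no odd self-intertwiner exists and all type $(ii)$ representations are inequivalent.

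In the odd case the odd self-intertwiner is the image $A=\prod_\alpha U_*(e_\alpha)$ of the Clifford volume element, supplied by Corollary \ref{cor-irreduct} and Schur's Lemma. The candidate equivalence is $T=\mu\,(A\otimes\phi)$ with $\phi:\gC_j\to\gC_{j'}$ the odd identification of opposite-parity generators; by Proposition \ref{prop:isosuperunitary} it suffices that $T$ be isometric. Expanding with the tensor inner product \eqref{Eq:TPsuperH} and the superadjoint rule (Proposition \ref{prop-superhilbert-adjoint}), isometry collapses to one scalar identity of the form $|\mu|^2\,(A^\dag A)\,i^{j'}=i^j$ up to explicit signs, where $A^\dag A=dc\gone$ with $A^\dag=dA$ and $A^2=c\gone$. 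The normalisation-independent phase of $dc$ is then computed from $\{U_*(e_\alpha),U_*(e_\beta)\}=i\hbar\omega_{\alpha\beta}$ and $U_*(e_\alpha)^\dag=-U_*(e_\alpha)$: the signature enters through $\prod_\alpha\omega_{\alpha\alpha}=(-1)^{\signq}$ and the sign of $\hbar$ through $(i\hbar)^{\signp+\signq}$, while the reordering sign in $A^2$ and the superadjoint of the product $A$ contribute a further controlled sign. Requiring $|\mu|^2>0$ should then reduce exactly to $i^{j'+j-1}=(-1)^{\signq}\mathrm{sign}(\hbar)$, with the $\sigma$-dependence cancelling. I expect the main obstacle to be precisely this sign bookkeeping (tensor Koszul signs, the $(-1)^{n}$ from reordering $A^2$, and the superadjoint of $A$): it must be tracked with care so as to land on the clean stated condition rather than a $\sigma$- or $\eps$-dependent variant. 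Once established, this condition pairs the four labels $j\in\{0,1,2,3\}$ into two equivalence classes, while the type $(i)$ representations stay pairwise inequivalent since a one-dimensional space admits no nonzero odd endomorphism and only same-label even isometries can occur.
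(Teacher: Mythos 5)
Your overall strategy coincides with the paper's: split on the central character via Lemma \ref{lem-centerdual}, apply the Stone--von Neumann Theorem \ref{thm:StonevN} when $\hbar\neq 0$, force $\dim\ehH_R=1$ by graded-irreducibility, and classify intertwiners via Schur's Lemma. Your treatment of $\hbar=0$ is a genuinely different, more elementary route than the paper's (which picks an element of maximal Grassmann degree in $\kU(\gR^{0|\signp+\signq})$ and squares it); it is sound up to one repairable point: the closure $\overline{\Imag A}$ is not obviously stable under the unbounded operators $d\pio(X)$, since limits do not commute with unbounded operators, but $\Ker A$ is closed, graded and manifestly $(\pio,\piG)$-invariant, and $\Imag A\subseteq\Ker A$ then forces $A=0$ by graded-irreducibility just as well. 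The even case of the equivalence analysis also matches the paper.

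The genuine gap is in the odd case, precisely at the step you defer. Your candidate odd self-intertwiner $A=\prod_\alpha U_*(e_\alpha)$, the Clifford volume element, is not the operator the paper uses, and the two are not interchangeable. For $\signp+\signq$ odd the volume element is central in the Clifford algebra generated by the $U_*(e_\alpha)$, so it \emph{commutes} with every $U_*(e_\alpha)$; the paper's $A$ is the representing operator of the extra odd direction produced by Lemma \ref{lem:oddtoeven}, which \emph{anticommutes} with every $U_*(e_\alpha)$. Up to scale the paper's operator is $\gP$ times your volume element, and it does not lie in the Clifford image at all: no nonzero element of that algebra anticommutes with all generators, since such an element would both commute (by centrality of the volume element) and anticommute (by moving it across the $\signp+\signq$ factors) with the volume element. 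This difference is invisible when $\sigma=0$ but changes the answer when $\sigma=1$. The $\sigma$-cancellation you are counting on works in the paper because its operator satisfies $A^2=(-1)^{\sigma+\signq}\tfrac{i\hbar}{2}\gone$ --- the $(-1)^{\sigma}$ appears because \emph{which} extension ($\rH_{2m|\signp+1,\signq}$ or $\rH_{2m|\signp,\signq+1}$) exists depends on $\sigma$ --- and this $(-1)^{\sigma}$ cancels the $(-1)^{(j+\sigma)(j'-j)}$ coming from $\Phi_{j,j'}^{\dag}\Phi_{j,j'}$. Your volume element has $A^{\dag}=-A$ and $A^2$ fixed by the Clifford relations alone, hence $\sigma$-independent, so the $\sigma$ from $\Phi_{j,j'}^{\dag}\Phi_{j,j'}$ survives and the computation lands on $i^{j+j'-1}=(-1)^{\sigma+\signq}\,\mathrm{sign}(\hbar)$ rather than the stated condition. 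You can check this on the smallest example $\rH_{0|1,0}$ with $\hbar>0$ and parity $\sigma=1$ (so $\ehH_S=L^2(\gR^{0|1})$ with $\langle\varphi,\psi\rangle=\overline{\varphi_0}\psi_1+\overline{\varphi_1}\psi_0$, and the volume element is $U_*(e_1)$ itself): a two-by-two computation pairs the labels as $(0,3)$ and $(1,2)$, i.e.\ $i^{j+j'-1}=-1$, whereas the theorem pairs $(0,1)$ and $(2,3)$. So the sign bookkeeping you flag as ``the main obstacle'' is not mere bookkeeping: with your choice of $A$ the $\sigma$-dependence does not cancel, and to prove the theorem as stated you must replace the volume element by the anticommuting operator supplied by the extension lemma, carrying its properties $A^{\dag}=-A$ and $A^2=(-1)^{\sigma+\signq}\tfrac{i\hbar}{2}\gone$, which is how the paper proceeds.
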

\begin{proof}
\textbf{I.} We start by proving that any strong SUIR of 
the Heisenberg supergroup $\rH_{2m|\signp,\signq}$ is of the type $(i)$ or $(ii)$.
Consider a strong SUIR $(\ehH,\pio,\pi)$ of $\rH_{2m|\signp,\signq}$.
By Lemma \ref{lem-centerdual}, there exists $\hbar\in\gR$ such that, for all $t\in\gR$, 
the elements $(0,t)$ in the center of $\rH_{2m|\signp,\signq}$ act by $\pi(0,t)=e^{i\hbar t}\gone$.

Assume that $\hbar\neq 0$. Then, the Stone-von Neumann Theorem \ref{thm:StonevN} 
applies. As a result, $(\ehH,\pio,\pi)$ is equivalent to the tensor product
$(\ehH_S^{(\sigma)}\otimes\ehH_R,U_0^\hbar\otimes\gone,U^\hbar\otimes\gone)$,
with $\sigma\in\{0,1\}$ if $\signp+\signq$ is odd. Since $(\ehH,\pio,\pi)$ is graded-irreducible, 
$(\ehH_R,\gone,\gone)$ is also graded-irreducible. This implies that $\ehH_R$ is one-dimensional,
equal to $\gC_j$ for some $j\in\{0,1,2,3\}$. 

Assume that $\hbar=0$, i.e., the center acts trivially. Then, 
$(\ehH,\pio,\pi)$ descends as a strong SUIR of the Abelian supergroup $\gR^{2m|\signp+\signq}$. 
The infinitesimal representation induces a representation $\pi_*$ of the 
universal enveloping algebra 
$\kU(\gR^{2m|\signp+\signq})=\kU(\gR^{2m|0})\otimes\kU(\gR^{0|\signp+\signq})$, 
where $\kU(\gR^{0|\signp+\signq})\simeq\bigwedge \gR^{\signp+\signq}$ is a Grassmann algebra. 
Consider an element $X\in\kU(\gR^{0|\signp+\signq})$ of maximal (tensor) degree~$j$ such that
\begin{equation*}
\pi_*(X)\neq 0,\qquad \pi_*(\bigwedge{}^{j+1}\,\gR^{\signp+\signq})=\algzero.
\end{equation*}
Then, $\pi_*(X)\ehH$ is a closed $(\pio,\pi)$-invariant graded subspace of $\ehH$.
Since $(\ehH,\pio,\pi)$ is graded-irreducible and $\piG(X)\neq 0$, this implies that $\pi_*(X)\ehH=\ehH$.
Hence, $\pi_*(X^2)=\pi_*(X)^2\neq 0$. As $X^2$ is of degree $2j$, this is possible only if $j=0$. 
In other words, $\kU(\gR^{0|\signp+\signq})$ acts trivially on $\ehH$ and $(\ehH,\pio)$
is then an irreducible representation of $\gR^{2m}$. 
As $(\ehH,\pio)$ is unitarizable, the space $\ehH$ is of dimension $1$, equal to $\gC_j$ for some $j\in\{0,1,2,3\}$, 
and $\pio=\pio^{a,b}$ for some $a,b\in\gR^m$ (see e.g \cite{Folland:1989}).
\\

\textbf{II.} Now, we prove that any two strong SUIR's of type $(i)$ or $(ii)$
are inequivalent, except in the stated case.

If two strong SUIR's are equivalent, the action of the central elements is the same.
Hence, strong SUIR's of type $(i)$ and $(ii)$ are inequivalent and two strong SUIR's
of type $(ii)$ are equivalent only if they have same parameter $\hbar$.
Moreover, equivalent strong SUIR's have isometric Hilbert superspaces.
Hence, two strong SUIR's of type $(i)$ are isometric  only if they have same parameter $j$
and, if $\signp+\signq$ is odd, two strong SUIR's of type $(ii)$ are isometric 
only if they have same parameter $\sigma$.
Clearly, two equivalent strong SUIR's restrict as equivalent unitarizable representations
of the body group. Hence, $(\gC_j,\pio^{a,b})$ and $(\gC_{j},\pio^{a',b'})$ are equivalent
only if $a=a'$ and $b=b'$ (see e.g \cite{Folland:1989}).

Consider two equivalent strong SUIR's of type $(ii)$, 
$(\ehH_S^{(\sigma)}\otimes\gC_j,U_0^\hbar\otimes\gone,U^\hbar\otimes\gone)$ and
$(\ehH_S^{(\sigma)}\otimes\gC_{j'},U_0^{\hbar}\otimes\gone,U^{\hbar}\otimes\gone)$,
intertwined by 
$$
\Psi:\ehH_S^{(\sigma)}\otimes\gC_j\to\ehH_S^{(\sigma)}\otimes\gC_{j'}.
$$
It only remains to determine which are the possible values of $j,j'\in\{0,1,2,3\}=\gZ_4$.
We define the maps $\Phi_{j,j'}:\ehH_S\otimes\gC_{j}\to\ehH_S\otimes\gC_{j'}$, 
given by $\Phi_{j,j'}(\varphi\otimes e_{j}):=\varphi\otimes e_{j'}$. These maps are invertible,
and satisfy
\begin{equation*}
(\Phi_{j,j'})^\dag\Phi_{j,j'}=(-1)^{(j+\sigma)(j'-j)} i^{j'-j}\gone.
\end{equation*}
There exists an operator $T$ such that $\Psi=\Phi_{j,j'}\circ (T\otimes\gone)$, 
with $T\in\caB(\ehH_S^{(\sigma)})$. It satisfies $|T|=j'-j\mod 2$ and $TU^\hbar(g)=U^\hbar(g)T$ 
for any $g\in\rH_{2m|\signp,\signq}$. 

If $\signp+\signq$ is even, then $T$ is a homogeneous intertwiner of the Schr\"odinger representation.
The latter is irreducible by Theorem \ref{thm-irreduct}, so that Schur's Lemma (Proposition \ref{Schur})
applies. Therefore, we have $T=\lambda\gone$ for some $\lambda\in\gC$. 
The superunitarity of $\Psi=\lambda\Phi_{j,j'}$ implies that $j=j'$. Hence, all listed SUIR's are inequivalent
if $\signp+\signq$ is even.

If $\signp+\signq$ is odd, then $T$ is an intertwiner of degree $j'-j$ of the Schr\"odinger representation,
which is graded-irreducible by Corollary \ref{cor-irreduct}.
By Proposition \ref{Schur}, either $T=\lambda\gone$ if $j'-j$ is even 
or $T=\lambda A$ for some fixed odd operator $A$ if $j'-j$ is odd. 
In the first case, the superunitarity of $\Psi=\lambda\Phi_{j,j'}$ implies that $j=j'$.
In the remaining case, the operator $A$ can be defined from the extension of 
the Schr\"odinger representation to the supergroups $\rH_{2m|\signp+1,\signq}$ or $\rH_{2m|\signp,\signq+1}$
(see Lemma \ref{lem:oddtoeven}), as  the representing operator of the extra dimension of those supergroups.
It satisfies
$$
A^2=(-1)^{\sigma+\signq}\frac{i\hbar}{2}\gone,\quad A^\dag=-A \quad \text{and}
\quad AU^\hbar(g)=U^\hbar(g)A,
$$
for all $g\in\rH_{2m|\signp,\signq}$. 
Then, the operator $\lambda\Phi_{j,j'}\circ (A\otimes\gone)$ is a superunitary 
intertwiner if and only if the following operator is the identity:
\begin{align*}
(\lambda\Phi_{j,j'}\circ A)^\dag(\lambda\Phi_{j,j'}\circ A)&=
-|\lambda|^2 A\circ\Phi_{j,j'}^\dag\circ\Phi_{j,j'}\circ A\\
&=(-1)^{\sigma+j+1} i^{j'-j}|\lambda|^2 A\circ A\\
&=(-1)^{\signq+j+1} i^{j'-j+1} |\lambda|^2 \frac{\hbar}{2}\gone.
\end{align*}
This means that $|\lambda|=\sqrt{\frac{2}{|\hbar|}}$ and $i^{ j'+j-1}=(-1)^{\signq} \mathrm{sign}(\hbar)$.
Hence, the only non-trivial superunitary intertwiner occurs for $\signp+\signq$ odd 
and relates representations with same parameters $\hbar$ and
$\sigma$, whose parameters $j,j'$ satisfy $i^{ j'+j-1}=(-1)^{\signq} \mathrm{sign}(\hbar)$.
\end{proof}

\subsection{Fourier transformation}

In this section, we construct the group Fourier transformation and exhibit the Plancherel measure 
of the Heisenberg supergroup $G=\rH_{2m|\signp,\signq}$, with $\signp+\signq$ even. 
This extends the previous work \cite{Alldridge:2013} where $\signq$ is assumed to be $0$.
 Other works on the Fourier transformation over supermanifolds can be found in 
\cite{Rempel:1983,Brackx:2005,DeBie:2008}. 

The Schr\"odinger representation of $G$ with parameter $\hbar$ is denoted here
by $(\ehH^\hbar_S,U_0^\hbar,U^\hbar)$. 

\begin{definition}
The {\defin group Fourier transformation} of a function $f\in L^1(G)$ is the following operator family,
indexed by $\hbar\in\gR^\times$,
\begin{equation*}
\hat f(\hbar):=\int_G\dd g\ f(g)U^{-\hbar}(g)\in\caB(\ehH_S^{-\hbar}).
\end{equation*}
\end{definition}
Due to the unimodularity of $G$, the group Fourier transformation obviously satisfies 
$\hat{\overline{f}}(\hbar)=(\hat f(\hbar))^\dag$ for all $\hbar\in\gR^\times$. 
Following \cite{Bieliavsky:2010su}, we define the supertrace of rank one operators by
\begin{equation*}
\Str(|\psi\rangle\langle\varphi|):=\langle\varphi,\gP\psi\rangle.
\end{equation*}
The resolution of the identity (Proposition \ref{prop-resol}) permits to extend 
the supertrace to other operators.

\begin{definition}
Let $T\in\caB(\ehH_\hbar)$ with trace class, and $\varphi,\psi\in\caS(\gR^{m|r})$ 
with $\langle\varphi,\psi\rangle\neq 0$. We define the {\defin supertrace} of $T$ as
\begin{equation*}
\Str(T):=\frac{1}{\kappa_\hbar\langle\varphi,\psi\rangle}\int\dd \xx\ \langle U^\hbar(\xx)\varphi,TU^\hbar(\xx)\psi\rangle.
\end{equation*}
\end{definition}

The above definition does not depend on the choice of $\varphi,\psi$.
The group Fourier transformation admits an  inversion formula, as given below.

\begin{theorem}[Inversion formula]
\label{thm-inv}
Let $\mu$ be the measure on $\gR$
given by $\dd\mu(\hbar):=\frac{|\hbar|}{2\pi\kappa_{-\hbar}}\dd \hbar$.
For any $f\in L^1(G)$ and $g\in G$, we have
\begin{equation*}
f(g)=\int_\gR \dd\mu(\hbar)\ \Str\big(\hat f(\hbar)U^{-\hbar}(g^{-1})\big).
\end{equation*}
The above measure $\mu$ is called the Plancherel measure of $G$.
\end{theorem}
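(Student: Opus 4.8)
The plan is to reduce the inversion formula to ordinary Fourier inversion in the central variable, once the group Fourier transform and the supertrace have been made completely explicit on the Schr\"odinger model. Throughout I write $g=(\yy,s)\in\modE_0\times\gR^{1|0}$ for the point at which $f$ is to be reconstructed, and I introduce the partial Fourier transform in the central variable, $f^\hbar(\xx):=\int_\gR\dd t\ f(\xx,t)e^{-i\hbar t}$. Since $U^{-\hbar}(\xx,t)=e^{-i\hbar t}U^{-\hbar}(\xx)$, integrating first over $t$ identifies the group Fourier transform with an integrated Schr\"odinger representation, $\hat f(\hbar)=\int_{\modE_0}\dd\xx\ f^\hbar(\xx)\,U^{-\hbar}(\xx)=U^{-\hbar}(f^\hbar)$, which is a bounded operator by Lemma \ref{lem-extendpi} applied to the Schr\"odinger representation at parameter $-\hbar$. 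Writing $U^{-\hbar}(g^{-1})=e^{i\hbar s}U^{-\hbar}(-\yy)$ and using the projective relation \eqref{eq-pr1} with $\hbar$ replaced by $-\hbar$, I would rewrite $\hat f(\hbar)U^{-\hbar}(g^{-1})=e^{i\hbar s}\,U^{-\hbar}\big(h^\hbar_\yy\big)$, where $h^\hbar_\yy(\xx)=e^{\frac{i\hbar}{2}\omega(\xx,\yy)}f^\hbar(\xx+\yy)$ is again an $L^1$ function on $\modE_0$.

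The crux is to compute the supertrace of a single representation operator and show that it localizes at the origin. Using the defining formula for $\Str$ on $\ehH_S^{-\hbar}$ with auxiliary states $\varphi,\psi\in\caS(\gR^{m|r})$, I would first simplify $\langle U^{-\hbar}(\xx)\varphi,\,U^{-\hbar}(\mathbf z)U^{-\hbar}(\xx)\psi\rangle$: superunitarity of $U^{-\hbar}(\xx)$ together with the projective law \eqref{Eq:pipi} collapses the two outer operators into a single scalar phase, leaving a Gaussian phase in $(\xx,\mathbf z)$ times the Wigner function $V(\varphi,\psi)(\mathbf z)$ of \eqref{eq-wigner}. Integrating the remaining phase over $\xx\in\modE_0$ by the super Dirac identity \eqref{eq-dirac2} of Lemma \ref{lem:integrals} produces a Dirac mass at $\mathbf z=0$, and after using $V(\varphi,\psi)(0)=\langle\varphi,\psi\rangle$ the auxiliary states cancel. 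This yields a clean localization $\Str\big(U^{-\hbar}(h)\big)=c_{-\hbar}\,h(0)$ for every sufficiently regular $h$, equivalently $\Str\big(U^{-\hbar}(\mathbf z)\big)=c_{-\hbar}\,\delta(\mathbf z)$, with an explicit constant $c_{-\hbar}$ assembled from $\kappa_{-\hbar}$ and the bosonic Fourier normalization. As a consistency check one recovers $\Str(|\psi\rangle\langle\varphi|)=\langle\varphi,\gP\psi\rangle$ directly from the resolution of the identity \eqref{Eq:resolutionId} of Proposition \ref{prop-resol}.

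Combining the two steps gives $\Str\big(\hat f(\hbar)U^{-\hbar}(g^{-1})\big)=c_{-\hbar}\,e^{i\hbar s}\,h^\hbar_\yy(0)=c_{-\hbar}\,e^{i\hbar s}\,f^\hbar(\yy)$, since $\omega(0,\yy)=0$. Substituting into the right-hand side of the inversion formula, the $\hbar$-integral becomes $\int_\gR\dd\mu(\hbar)\,c_{-\hbar}\,e^{i\hbar s}f^\hbar(\yy)$, and the role of the Plancherel measure is precisely to cancel the representation-theoretic constant: $\dd\mu$ is normalized so that $c_{-\hbar}\,\dd\mu(\hbar)=\tfrac{1}{2\pi}\dd\hbar$, whereupon the expression reduces to the one-dimensional Fourier inversion $\tfrac{1}{2\pi}\int_\gR\dd\hbar\ e^{i\hbar s}f^\hbar(\yy)=f(\yy,s)$ in the dual pair $(t,\hbar)$. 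The main obstacle is the bookkeeping in the middle step: one must track all the super-Berezin normalizations, the sign factors coming from the odd coordinates, and the powers of $\hbar$ hidden in $\kappa_{\pm\hbar}$, so as to verify that $c_{-\hbar}$ is exactly the reciprocal of $2\pi\,\dd\mu/\dd\hbar$ appearing in the theorem. A secondary, analytic point is that the manipulations (Fubini across $\Str$, the Gaussian integral, and the final Fourier inversion) converge absolutely only on a dense class of sufficiently regular $f$ (for instance Schwartz functions on $G$ with $\hat f(\hbar)$ trace class), the general statement then following by the usual density and continuity arguments.
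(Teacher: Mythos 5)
Your proposal follows the same two-step architecture as the paper's proof: first show that the supertrace of a single representation operator localizes as a Dirac distribution at the identity of $\rH_{2m|\signp,\signq}$ (this is the paper's Lemma \ref{lem-supertrace}), then exchange $\Str$ with the group integral and reduce the $\hbar$-integral to one-dimensional Fourier inversion in the central variable, the Plancherel measure being designed to cancel the constant. The differences are purely executional, and mostly to your advantage: the paper proves Lemma \ref{lem-supertrace} by a brute-force computation with the explicit Schr\"odinger kernel and the identities \eqref{eq-dirac0}--\eqref{eq-dirac1}, whereas you obtain the localization from superunitarity, the projective law and the single identity \eqref{eq-dirac2}; likewise you absorb the translation by $g^{-1}$ into a twisted convolution via \eqref{eq-pr1}, where the paper simply substitutes $g'\mapsto g'g$ under the Haar integral. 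Both routes need the same Fubini-type justification for commuting $\Str$ with the integral; note the paper chooses the auxiliary state $\varphi$ of degree $r$ precisely so that $\Str$ also commutes with the odd components of $f$, a sign bookkeeping you should make explicit.

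The one genuine gap is that the ``bookkeeping'' you defer at the end is not a routine verification: it fails as you state it. Carrying out your own computation, one gets
$\langle U^{-\hbar}(\xx)\varphi, U^{-\hbar}(\mathbf z)U^{-\hbar}(\xx)\psi\rangle = e^{-i\hbar\omega(\mathbf z,\xx)}\langle\varphi,U^{-\hbar}(\mathbf z)\psi\rangle$, and integrating over $\xx$ with \eqref{eq-dirac2} (at parameter $-\hbar$) and dividing by the factor $\kappa_{-\hbar}\langle\varphi,\psi\rangle$ in the definition of $\Str$ yields exactly $c_{-\hbar}=\kappa_{-\hbar}$ --- in agreement with Lemma \ref{lem-supertrace}, whose Dirac distributions carry no $\hbar$-dependent normalization. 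Your required identity $c_{-\hbar}\,\dd\mu(\hbar)=\tfrac{1}{2\pi}\dd\hbar$ therefore forces $\dd\mu(\hbar)=\tfrac{1}{2\pi\kappa_{-\hbar}}\dd\hbar$, which differs from the measure in the statement by the factor $|\hbar|$; no amount of Berezin or sign bookkeeping will produce $c_{-\hbar}=\kappa_{-\hbar}/|\hbar|$. This mismatch is internal to the paper as well: its own last line combines Lemma \ref{lem-supertrace} with ``the definition of the Plancherel measure'', and that step is genuine Fourier inversion only if the $|\hbar|$ is absent. (Sanity check: for $r=s=0$ the measure $\tfrac{1}{2\pi\kappa_{-\hbar}}\dd\hbar$ reproduces, up to sign conventions, the classical Plancherel measure $\tfrac{|\hbar|^m}{(2\pi)^{m+1}}\dd\hbar$ of $\rH_{2m}$, while the stated measure carries an extra $|\hbar|$.) So your argument does prove the inversion formula, but with $\dd\mu(\hbar)=\tfrac{1}{2\pi\kappa_{-\hbar}}\dd\hbar$; you should not present the final normalization as a check that will confirm the displayed measure, because it will not.
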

\begin{proof}
We need a Lemma.

\begin{lemma}
\label{lem-supertrace}
For any $\hbar\in\gR^\times$ and $g=(q,p,\zeta,\bzeta,t)\in G$, we have
\begin{equation*}
\Str(U^\hbar(g))=\kappa_\hbar e^{i\hbar t}\delta(p)\delta(q)\delta(\zeta,\bzeta),
\end{equation*}
with the following Dirac distributions $\delta(q)=\delta(\gB q)q_{m+1}\cdots q_{m+r}$, $\delta(p)=\delta(\gB p)p_{m+1}\cdots p_{m+r}$, and $\delta(\zeta,\bzeta)=\frac{1}{(2i)^s}\bzeta_1\zeta_1\cdots\bzeta_s\zeta_s$.
\end{lemma}
\begin{proof}
By definition, we have
\begin{multline*}
\Str(U^\hbar(g_1))=(2i)^{2s}\int\dd q\dd p\dd\zeta\dd\bzeta\dd q_0\dd\zeta_0\dd\bzeta_0\ \overline{\varphi(q_0-q)}\psi(q_0-q-q_1,\zeta_0-\zeta-\zeta_1)\\
 e^{i\hbar (t_1+(\frac12q_1-q_0)p_1+\frac12(\frac12\zeta_1-\zeta_0)\bzeta_1+q_1p+\frac12(\zeta+\zeta_1-\zeta_0)\bzeta +\frac12(\zeta_0-\zeta)\bzeta_0)}.
\end{multline*}
Using the identities \eqref{eq-dirac0} on the variable $p$ and \eqref{eq-dirac1} on the variable $\bzeta_0$, we obtain
\begin{equation*}
\Str(U^\hbar (g_1))=(2i)^{s}\kappa_\hbar\int\dd q\dd\zeta\dd\bzeta\dd q_0\ \delta(q_1)\overline{\varphi(q)}\psi(q,-\zeta_1)  e^{i\hbar (t_1-q_0p_1+\frac12\zeta_1\bzeta+\frac12(\frac12\zeta_1-\zeta)\bzeta_1)}.
\end{equation*}
The result follows then from the use of Equations \eqref{eq-dirac0}-\eqref{eq-dirac1}.
\end{proof}

By definition of the group Fourier transformation, the right hand side
 of the inversion formula reads as
\begin{equation*}
RHS=\int\dd\mu(\hbar)\ \Str\Big(\int\dd g'\ f(g')U^{-\hbar}(g')U^{-\hbar}(g^{-1})\Big)=\int\dd\mu(\hbar)\ \Str\Big(\int\dd g'\ f(g'g)U^{-\hbar}(g')\Big).
\end{equation*}
If we choose a function $\varphi\in\ehH_{-\hbar}$ of degree $r$ in the definition of the supertrace, 
we can commute the supertrace with the integration over $g'$ and with the function $f$. 
Then, we use Lemma \ref{lem-supertrace} to get
\begin{multline*}
RHS=\int\dd\mu(\hbar)\dd g'\ f(g'g)\kappa_{-\hbar}e^{-i\hbar t'}\delta(p')\delta(q')\delta(\zeta',\bzeta')\\
=\int\dd\mu(\hbar)\dd t'\ f((0,t').g)\kappa_{-\hbar}e^{-i\hbar t'}=f(g),
\end{multline*}
due to the definition of the Plancherel measure $\mu$.
\end{proof}

\begin{corollary}
We have the Parseval-Plancherel formula for the superhermitian inner product on $G$: 
\begin{equation*}
\int\dd g\ \overline{f_1(g)}f_2(g)=\int\dd\mu(\hbar)\ \Str\big(\hat f_1(\hbar)^\dag\hat f_2(\hbar)\big),
\end{equation*}
for any $f_1,f_2\in L^2(G)$.
\end{corollary}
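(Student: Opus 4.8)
The plan is to deduce the Parseval--Plancherel identity from the inversion formula (Theorem \ref{thm-inv}) by evaluating it at the neutral element $e\in G$ on a suitable auxiliary function. For $f_1,f_2$ in a dense subspace of $L^2(G)$, say $\caD(G)\subseteq L^1(G)\cap L^2(G)$, I introduce the involution $\overline{f}(g):=\overline{f(g^{-1})}$ and the group convolution $(f_1\convol f_2)(g):=\int_G\dd g'\ f_1(g')f_2(g'^{-1}g)$, and I set $h:=\overline{f_1}\convol f_2$. Since $G$ is unimodular, its invariant measure is stable under $g\mapsto g^{-1}$, whence
\[
h(e)=\int_G\dd g'\ \overline{f_1(g'^{-1})}f_2(g'^{-1})=\int_G\dd g\ \overline{f_1(g)}f_2(g),
\]
which is exactly the left-hand side of the claimed identity. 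Thus it suffices to compute $h(e)$ through the inversion formula.

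The second step is to identify $\hat h(\hbar)$. The group Fourier transformation turns convolution into operator product: using that $U^{-\hbar}$ is a group morphism together with the left invariance of $\dd g$, the substitution $g\mapsto g'g$ in $\widehat{f_1\convol f_2}(\hbar)=\int_G\dd g\dd g'\ f_1(g')f_2(g'^{-1}g)U^{-\hbar}(g)$ yields $\widehat{f_1\convol f_2}(\hbar)=\hat f_1(\hbar)\hat f_2(\hbar)$ (equivalently, after integrating out the central variable this is Lemma \ref{lem-extendpi} for the twisted convolution with parameter $-\hbar$). Combining this with the relation $\widehat{\overline{f}}(\hbar)=\hat f(\hbar)^\dag$ recorded after the definition of the Fourier transformation gives $\hat h(\hbar)=\widehat{\overline{f_1}}(\hbar)\hat f_2(\hbar)=\hat f_1(\hbar)^\dag\hat f_2(\hbar)$. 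Applying Theorem \ref{thm-inv} to $h$ at $g=e$, where $U^{-\hbar}(e^{-1})=\gone$, then produces
\[
\int_G\dd g\ \overline{f_1(g)}f_2(g)=h(e)=\int_\gR\dd\mu(\hbar)\ \Str\big(\hat h(\hbar)\big)=\int_\gR\dd\mu(\hbar)\ \Str\big(\hat f_1(\hbar)^\dag\hat f_2(\hbar)\big),
\]
which is the asserted formula on the dense subspace. Both sides being continuous sesquilinear forms in $(f_1,f_2)$, the identity then extends to all of $L^2(G)$ by density.

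The main obstacle is analytic rather than formal. One must ensure that $h=\overline{f_1}\convol f_2$ lies in $L^1(G)$ and, more delicately, that $\hat h(\hbar)=\hat f_1(\hbar)^\dag\hat f_2(\hbar)$ is of trace class, so that $\Str(\hat h(\hbar))$ is legitimately defined and so that the fibrewise supertrace may be exchanged with the $\dd\mu$-integration. As in the classical case, this hinges on showing that the operators $\hat f_i(\hbar)$ are super Hilbert--Schmidt for $f_i\in\caD(G)$, whence their product is trace class; the resolution of the identity (Proposition \ref{prop-resol}), which already underlies the very definition of $\Str$, is the natural tool for controlling these trace norms and for verifying continuity of the right-hand side. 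A secondary point that is specific to the graded setting is the Koszul sign bookkeeping in the Berezin changes of variables and in moving odd-valued superfunction coefficients past the odd operators $U^{-\hbar}(g)$; these signs must be checked to cancel, exactly as they are arranged to do in the scalar relation $\widehat{\overline f}(\hbar)=\hat f(\hbar)^\dag$.
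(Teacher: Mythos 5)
Your proposal is correct and takes essentially the same route as the paper: the paper's first display is precisely your identity $\hat f_1(\hbar)^\dag\hat f_2(\hbar)=\widehat{\overline{f_1}\convol f_2}(\hbar)$ written out as a double integral, and its remaining steps (Lemma \ref{lem-supertrace} followed by integration against $\dd\mu$) are exactly the proof of Theorem \ref{thm-inv} specialized to the correlation function $\overline{f_1}\convol f_2$ at the identity, which you instead invoke as a black box. Your packaging is slightly more modular and explicitly flags the trace-class, Koszul-sign and $L^1$-to-$L^2$ density issues that the paper's terse computation leaves implicit, but the mathematical content coincides.
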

\begin{proof}
Indeed, we have
\begin{equation*}
\hat f_1(\hbar)^\dag\hat f_2(\hbar)=\int\dd g\dd g'\ \overline{f(g)}f(gg') U^{-\hbar}(g').
\end{equation*}
Using Lemma \ref{lem-supertrace}, we get
\begin{equation*}
\Str(\hat f_1(\hbar)^\dag\hat f_2(\hbar))=\int\dd g\dd t'\ \overline{f(g)} f(g.(0,t'))\kappa_{-\hbar} e^{-i\hbar t'}.
\end{equation*}
The result follows from integration of the $\hbar$-variable with respect to the Plancherel measure.
\end{proof}

The regular representation $(L^2(G),L_0,L)$ of the Heisenberg supergroup $G$ is defined by
\begin{align*}
\forall g_0\in \gB G=\rH_{2m},\  g'\in G,\ f\in L^2(G),\quad  L_0(g_0)\left(f(g')\right)&:=f((g_0)^{-1}g'),\\ 
\forall g, g'\in G,\ f\in (L^2(G))^\infty,\quad L(g)\left(f(g')\right)&:=f((g)^{-1}g),\\ 
\end{align*}
with $(L^2(G))^\infty$ the space of smooth functions on $G$, whose all derivatives are square integrable.

\begin{corollary}
 The regular representation $(L^2(G),L_0,L)$
can be decomposed 
via the group Fourier transformation: 
\begin{align*}
\forall\hbar\in\gR^\times,\ g\in \gB G,\ f\in L^2(G),\ \quad\widehat{L_0(g)f}\, (\hbar)&=U^{-\hbar}_0(g)\circ \hat f(\hbar),\\
\forall\hbar\in\gR^\times,\ g\in G,\ f\in (L^2(G))^\infty,\ \quad\widehat{L(g)f}\, (\hbar)&=U^{-\hbar}(g)\circ \hat f(\hbar).
\end{align*}
\end{corollary}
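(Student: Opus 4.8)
The plan is to derive both decomposition formulas from a single change of variables, relying only on the left-invariance of the Haar (Berezin) measure $\dd g$ on $G$ and on the fact that $U^{-\hbar}$ and $U_0^{-\hbar}$ are group morphisms. Since $G$ is unimodular, the left-invariance of $\dd g$ under the group law \eqref{GroupLaw} is immediate: left translation by a fixed element acts on $\modE_0\times\gR^{1|0}$ as a shift in $\xx$ composed with a shift in $t$ that is affine in $\xx$, hence has unit Berezin Jacobian.

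First I would treat the body case. For $g\in\gB G=\rH_{2m}$ and $f\in L^2(G)$, unwinding the definitions gives
\begin{align*}
\widehat{L_0(g)f}(\hbar)
=\int_G\dd g'\ \big(L_0(g)f\big)(g')\,U^{-\hbar}(g')
=\int_G\dd g'\ f(g^{-1}g')\,U^{-\hbar}(g').
\end{align*}
Substituting $g'\mapsto gg'$, invoking left-invariance of $\dd g'$ and the morphism property $U^{-\hbar}(gg')=U^{-\hbar}(g)U^{-\hbar}(g')$, I factor the constant operator $U^{-\hbar}(g)$ out of the integral:
\begin{align*}
\widehat{L_0(g)f}(\hbar)
=\int_G\dd g'\ f(g')\,U^{-\hbar}(gg')
=U^{-\hbar}(g)\int_G\dd g'\ f(g')\,U^{-\hbar}(g')
=U^{-\hbar}(g)\,\hat f(\hbar).
\end{align*}
Because $g\in\gB G$, the axiom $\pi(\gB g)=\pio(\gB g)$ of \eqref{Eq:SG-rep} gives $U^{-\hbar}(g)=U_0^{-\hbar}(g)$, which is the first formula. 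The second formula follows from the verbatim same computation on the full supergroup: for $g\in G$ and $f\in(L^2(G))^\infty$ one has $\widehat{L(g)f}(\hbar)=\int_G\dd g'\ f(g^{-1}g')\,U^{-\hbar}(g')$, and the substitution $g'\mapsto gg'$ together with the morphism property of $U^{-\hbar}$ produces $U^{-\hbar}(g)\,\hat f(\hbar)$.

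The main obstacle is purely technical, namely justifying that the change of variables and the factorisation of $U^{-\hbar}(g)$ through the integral are legitimate in the graded setting. For the first formula this is routine: the operators $U_0^{-\hbar}(g')$ are bounded on $\ehH_S^{-\hbar}$ and the integral converges in operator norm, exactly as in the extension argument of Lemma \ref{lem-extendpi}. For the second formula the operators $U^{-\hbar}(g)$ are $\superA$-linear maps of $\superA\otimes\ehH_S^\infty$ rather than bounded operators on $\ehH_S^{-\hbar}$, so the identity must be read on smooth vectors. The clean way to handle this is to decompose $U^{-\hbar}$ along the odd variables as in Proposition \ref{Prop:pibounded}, writing $U^{-\hbar}(\xx,t)=e^{-i\hbar t}\widetilde{U_0^{-\hbar}}(x^{\ev})\sum_\g (x^{\od})^\g\,U_\g$ with finitely many bounded coefficient operators $U_\g$; the even integral then converges and the coefficient operators pull through, reducing the statement to the bounded body computation. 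Left-invariance of the Berezin integral under the super group law — i.e.\ the unimodularity already invoked — is precisely what makes the substitution valid in both cases.
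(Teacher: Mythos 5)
Your proof is correct, but it takes a more elementary route than the paper. The paper disposes of this corollary in one line: \emph{``It is a direct consequence of the inversion formula, with a change of variable''} --- i.e.\ it leans on Theorem \ref{thm-inv}, implicitly applying the inversion formula to $L(g)f$, moving $U^{-\hbar}(g)$ around inside the supertrace via the morphism property, and identifying the result by uniqueness of the Fourier transform. You instead work directly with the defining integral $\hat f(\hbar)=\int_G\dd g'\,f(g')U^{-\hbar}(g')$, using only left-invariance of the Berezin--Haar measure, the morphism property of $U^{-\hbar}$, and the fact that the even $\superA$-linear operator $U^{-\hbar}(g)$ commutes with multiplication by the $\superA_\gC$-valued scalar $f(g')$ so that it can be pulled out of the integral; the reduction to bounded coefficient operators via the decomposition of Proposition \ref{Prop:pibounded} correctly handles the odd directions, exactly as in Lemma \ref{lem-extendpi}. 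This buys you independence from the inversion formula, the supertrace, and any injectivity argument, so your proof is self-contained where the paper's is not. One caveat, which you share with the paper rather than introduce: the group Fourier transformation is defined on $L^1(G)$, while the statement is for $f\in L^2(G)$ (resp.\ $(L^2(G))^\infty$), so strictly speaking the computation is performed on a dense subspace where the defining integral converges and then extended by continuity via Parseval--Plancherel; since the paper is equally silent on this point, it is not a gap in your argument relative to the paper's own standard of rigor.
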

\begin{proof}
It is a direct consequence of the inversion formula, with a change of variable.
\end{proof}

\subsection{Metaplectic representation}\label{Sec:meta}

Let  $(\modE,\om)$ be a symplectic superspace of dimension
$2m|\signp+\signq$. 
We use the Schr\"odinger representation of the Heisenberg supergroup 
$\rH(\modE_0,\omega)$ and previous works \cite{Nishiyama:1990}
to build the metaplectic representation as a SUR of the  metaplectic supergroup. 
For simplicity, we assume that the signature is of the form $(\signp,\signq)=(r+2s,r)$. 
The other cases admit a similar treatment and are left to the reader.


We start with algebraic preliminaries, following \cite{Nishiyama:1990}.
The  orthosymplectic $\gR$-Lie superalgebra is defined as
\begin{equation*}
\mathfrak{spo}(\modE,\omega)_\gR:=\{A\in\caL(\gB\modE)\;|\; \forall X,Y\in\gB\modE,\ \omega(AX,Y)+(-1)^{|A||X|}\omega(X,AY)=0\}. 
\end{equation*}
Denote by $\kg_\gR:=(\kh(\modE,\om))_\gR$ the Heisenberg $\gR$-Lie superalgebra. 
The canonical Lie supebracket on the quotient algebra $\mathfrak U(\kg_\gR)/\langle Z-\gone\rangle$
induces a $\gR$-Lie superalgebra structure on the space of quadratic elements,
\begin{equation*}
\caL_2:=\mathrm{span}\{XY+(-1)^{|X||Y|}YX \; |\; X,Y\in\kg_\gR\}\subseteq \mathfrak U(\kg_\gR)/\langle Z-\gone\rangle,
\end{equation*}
and a linear action of $\caL_2$ on the space of degree $1$ elements,
$\gB \modE\subseteq \mathfrak U(\kg_\gR)/\langle Z-\gone\rangle$.

\begin{proposition}[\cite{Nishiyama:1990}, Theorem 3.5]
\label{prop-nishi}
The linear action of $\caL_2$ on $\gB \modE$ defines an isomorphism of 
$\gR$-Lie superalgebras between $\caL_2$ and $\mathfrak{spo}(\modE,\omega)_\gR$.
\end{proposition}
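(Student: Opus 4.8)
The plan is to realise the quotient $\mathcal{W}:=\kU(\kg_\gR)/\langle Z-\gone\rangle$ as a Clifford--Weyl algebra and to exhibit the action of $\caL_2$ as the super-adjoint action. In $\mathcal{W}$ the super-commutator $[a,b]:=ab-(-1)^{|a||b|}ba$ restricts on $\gB\modE$ to the scalar relation
\begin{equation*}
[X,Y]=\om(X,Y)\gone,\qquad X,Y\in\gB\modE,
\end{equation*}
coming from \eqref{eq-heisliealg} together with $Z=\gone$. The action in the statement is $\rho\colon\caL_2\to\End(\gB\modE)$, $\rho(W):=\ad_W|_{\gB\modE}=[W,-]|_{\gB\modE}$. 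First I would check it is well defined, i.e.\ that $[\caL_2,\gB\modE]\subseteq\gB\modE$: writing $W=XY+(-1)^{|X||Y|}YX$ and applying the super-Leibniz rule $[ab,c]=a[b,c]+(-1)^{|b||c|}[a,c]b$ together with $[X,V]=\om(X,V)\gone$, the commutator $[W,V]$ collapses to an explicit $\gR$-linear combination of $X$ and $Y$, hence lies in $\gB\modE$.

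Next I would show $\rho(W)\in\mathfrak{spo}(\modE,\om)_\gR$. Setting $A:=\rho(W)$ with $|A|=|W|$, the key observation is that $[V,V']=\om(V,V')\gone$ is central, so $\ad_W[V,V']=0$. Expanding by the graded Jacobi identity,
\begin{equation*}
0=\ad_W[V,V']=[AV,V']+(-1)^{|W||V|}[V,AV']
=\big(\om(AV,V')+(-1)^{|A||V|}\om(V,AV')\big)\gone,
\end{equation*}
which is exactly the defining relation of $\mathfrak{spo}(\modE,\om)_\gR$. That $\rho$ is a morphism of $\gR$-Lie superalgebras is then automatic: the super-adjoint representation $\ad\colon\mathcal{W}\to\End(\mathcal{W})$ satisfies $\ad_{[W_1,W_2]}=[\ad_{W_1},\ad_{W_2}]$ by Jacobi, and $\gB\modE$ is an $\ad_{\caL_2}$-invariant subspace, so restriction gives $\rho([W_1,W_2])=[\rho(W_1),\rho(W_2)]$ once one knows $[\caL_2,\caL_2]\subseteq\caL_2$; this closure (equivalently, the vanishing of any scalar correction in the bracket of two quadratics) is a direct computation in the filtered algebra and is already part of the asserted Lie superalgebra structure on $\caL_2$.

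It then remains to prove bijectivity, for which I would argue by injectivity plus a dimension count. If $\rho(W)=0$, then $W$ super-commutes with all of $\gB\modE$, hence with all of $\mathcal{W}$ since $\gB\modE$ generates it; as $\signp+\signq$ is even, the graded centre of the Clifford--Weyl algebra $\mathcal{W}$ is $\gR\gone$, so $W\in\gR\gone$. But the principal symbol map $\mathrm{gr}\,\mathcal{W}\simeq\mathrm{Sym}(\gB\modE)$ sends $XY+(-1)^{|X||Y|}YX$ to $2\,X\cdot Y$, thereby identifying $\caL_2$ with $\mathrm{Sym}^2(\gB\modE)$ and forcing $\caL_2\cap\gR\gone=\algzero$; thus $W=0$ and $\rho$ is injective. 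Finally, the same symbol identification gives $\dim_\gR\caL_2=\dim_\gR\mathrm{Sym}^2(\gB\modE)$, while the non-degenerate form $\om$ identifies $A\mapsto\om(A-,-)$ between $\mathfrak{spo}(\modE,\om)_\gR$ and the super-symmetric bilinear forms, i.e.\ with $\mathrm{Sym}^2(\gB\modE)$ as well; hence the two superalgebras have equal finite dimension and the injective morphism $\rho$ is an isomorphism.

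The main obstacle I anticipate is the sign bookkeeping in the super-Leibniz and graded-Jacobi manipulations and, more substantively, establishing that the bracket of two quadratic elements is again quadratic with no scalar (central) correction term — equivalently that the relevant central extension splits — which is precisely what upgrades a bijective linear map to a genuine Lie superalgebra isomorphism. Verifying this, together with the triviality of the graded centre (where the evenness of $\signp+\signq$ enters), is the delicate part, whereas the matching of dimensions via $\mathrm{Sym}^2(\gB\modE)$ is routine.
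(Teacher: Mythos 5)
The paper never proves this proposition: it is imported wholesale from Nishiyama's paper (the bracketed citation, Theorem 3.5), so there is no internal argument to compare against. Your attempt therefore supplies a proof where the paper supplies a reference, and the proof you give is correct and is essentially the standard one underlying the cited result: realize $\kU(\kg_\gR)/\langle Z-\gone\rangle$ as the Clifford--Weyl algebra of $(\gB\modE,\om)$, check via super-Leibniz that $\ad_W$ preserves $\gB\modE$ for quadratic $W$, extract the defining relation of $\mathfrak{spo}(\modE,\om)_\gR$ from the graded Jacobi identity together with centrality of $[V,V']=\om(V,V')\gone$, and conclude by injectivity plus a dimension count against $\mathrm{Sym}^2(\gB\modE)$. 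What the paper's route buys is brevity; what yours buys is a self-contained statement that also makes visible exactly where each structural ingredient (non-degeneracy of $\om$, PBW, triviality of the supercentre) enters.

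Two spots should be tightened, neither fatal. First, your appeal to the evenness of $\signp+\signq$ for the triviality of the graded centre is unnecessary: the supercentre of the Clifford--Weyl algebra is $\gR\gone$ in every case, since for odd Clifford rank the volume element is central only in the ungraded sense and, being odd, fails to supercommute with the generators; evenness does hold here anyway, as the section assumes $(\signp,\signq)=(r+2s,r)$. Second, the phrase ``thereby identifying $\caL_2$ with $\mathrm{Sym}^2(\gB\modE)$'' quietly assumes that the principal symbol is injective on $\caL_2$, i.e.\ that $\caL_2$ meets $\gR\gone\oplus\gB\modE$ trivially, which is the very point you need; the clean repair is to observe that super-symmetrization $X\cdot Y\mapsto XY+(-1)^{|X||Y|}YX$ is a well-defined linear surjection $\mathrm{Sym}^2(\gB\modE)\to\caL_2$ whose composition with the symbol map is $2\,\mathrm{id}$, so both maps are bijections; this yields at once $\caL_2\cap\gR\gone=\algzero$ and $\dim\caL_2=\dim\mathrm{Sym}^2(\gB\modE)=\dim\mathfrak{spo}(\modE,\om)_\gR$. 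Finally, the computation you defer (that the bracket of two quadratics carries no central correction) does go through: writing $XY+(-1)^{|X||Y|}YX=2XY-\om(X,Y)\gone$ and expanding $[XY,UV]$ by super-Leibniz, the candidate scalar terms cancel pairwise because $\om$ is even, so nonvanishing of $\om(X,V)\om(Y,U)$ forces $|X|=|V|$ and $|Y|=|U|$, which kills the residual Koszul signs.
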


In the sequel, we do not distinguish between $\caL_2$ and $\mathfrak{spo}(\modE,\omega)_\gR$.
Consider the basis $(c_i,e_\alpha, Z)$ of $\kg_\gR$, with $(c_i)_{1\leq i\leq 2m}$ a basis of $\gR^{2m}$ 
with degree 0 and $(e_\alpha)_{1\leq \alpha\leq 2(r+s)}$ a basis of $\gR^{2(r+s)}$ with degree 1 (see Equation \eqref{eq-baseheis}).
It induces the following basis of $\caL_2\simeq\mathfrak{spo}(\modE,\omega)_\gR$:
\begin{equation*}
m_{ij}:=c_ic_j+c_jc_i,\qquad s_{\alpha\beta}:=e_\alpha e_\beta-e_\beta e_\alpha,\qquad t_{i\alpha}=c_ie_\alpha + e_\alpha c_i,
\end{equation*}
where $1\leq i,j\leq m$ and $1\leq \alpha,\beta\leq 2(r+s)$.
The Schr\"odinger representation $(\ehH_S,U_0,U_*)$ with parameter $\hbar$ induces 
a superalgebra morphism, $U_*:\mathfrak{U}(\kg_\gR)/\langle Z-i\hbar\gone\rangle\to\caL(\ehH_S^\infty)$.
We define the map $\mu_*:\mathfrak{spo}(\modE,\omega)_\gR\to\caL(\ehH_S^\infty)$  by setting
$\mu_*:=\frac{1}{i\hbar} (U_*)|_{\caL_2}$. In the above basis we have
\begin{align}
&\mu_*(m_{ij}):=\frac{1}{i\hbar}\big(U_*(c_i)U_*(c_j)+U_*(c_j)U_*(c_i)\big)= -\frac{2i}{\hbar}U_*(c_i)U_*(c_j)-\omega_{ij},\nonumber\\
&\mu_*(s_{\alpha\beta}):=\frac{1}{i\hbar}\big(U_*(e_\alpha)U_*(e_\beta)-U_*(e_\beta)U_*(e_\alpha)\big)= -\frac{2i}{\hbar}U_*(e_\alpha)U_*(e_\beta)-\omega_{\alpha\beta},\nonumber\\
&\mu_*(t_{i\alpha}):=\frac{1}{i\hbar}\big(U_*(c_i)U_*(e_\alpha)+U_*(e_\alpha)U_*(c_i)\big)= -\frac{2i}{\hbar}U_*(c_i)U_*(e_\alpha)=-\frac{2i}{\hbar}U_*(e_\alpha)U_*(c_i).\label{eq-meta}
\end{align}

\begin{proposition}[\cite{Nishiyama:1990}, Proposition 4.6]
\label{prop-nishibis}
The map $\mu_*:\mathfrak{spo}(\modE,\omega)_\gR\to\caL(\ehH_S^\infty)$ 
is a $\gR$-Lie superalgebra morphism satisfying $\mu_*(X)^\dag=-\mu_*(X)$
for all $X\in\mathfrak{spo}(\modE,\omega)_\gR$.
\end{proposition}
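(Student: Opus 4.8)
The plan is to deduce both assertions directly from the fact that $U_*$ is a morphism of superalgebras from $\mathfrak U(\kg_\gR)/\langle Z-i\hbar\gone\rangle$ to $\caL(\ehH_S^\infty)$, the only real work being to keep track of the normalization factor $\frac{1}{i\hbar}$. Throughout I identify $\caL_2$ with the span of the symmetrized quadratics $XY+(-1)^{|X||Y|}YX$ with $X,Y\in\gB\modE$ of degree $1$, so that these lift canonically to $\mathfrak U(\kg_\gR)$ without any explicit occurrence of $Z$, and $\mu_*=\frac{1}{i\hbar}U_*$ is unambiguous on them. It then suffices to check both properties on the basis $m_{ij},s_{\alpha\beta},t_{i\alpha}$ of \eqref{eq-meta} and extend by linearity.

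For the morphism property, I would first record the elementary fact that for quadratic elements $X,Y\in\caL_2$ the supercommutator computed in $\mathfrak U(\kg_\gR)$ factors as $[X,Y]=Z\cdot\{X,Y\}$, with $\{X,Y\}$ again quadratic: every contraction uses a bracket $[c_i,c_j]=\omega_{ij}Z$ or $[e_\alpha,e_\beta]=\omega_{\alpha\beta}Z$ (see \eqref{eq-baseheis}), which produces exactly one central factor $Z$ and lowers the generator-degree by two. By Proposition \ref{prop-nishi} the class of $\{X,Y\}$ modulo $\langle Z-\gone\rangle$ is precisely the $\mathfrak{spo}(\modE,\omega)_\gR$-bracket of $X$ and $Y$. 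Applying the superalgebra morphism $U_*$, for which $U_*(Z)=i\hbar\gone$, gives $[U_*(X),U_*(Y)]=U_*([X,Y])=i\hbar\,U_*(\{X,Y\})$, whence
\begin{equation*}
[\mu_*(X),\mu_*(Y)]=\frac{1}{(i\hbar)^2}[U_*(X),U_*(Y)]=\frac{1}{i\hbar}U_*(\{X,Y\})=\mu_*([X,Y]).
\end{equation*}
Here the single factor $i\hbar$ supplied by $U_*(Z)$ is exactly what corrects the $(i\hbar)^{-2}$ to $(i\hbar)^{-1}$, making the chosen normalization the correct one.

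For the skew-superadjointness, I would use the composition rule $(ST)^\dag=(-1)^{|S||T|}T^\dag S^\dag$, which follows from Proposition \ref{prop-superhilbert-adjoint}, together with the fact that $U_*(Y)^\dag=-U_*(Y)$ for every degree-$1$ generator $Y\in\gB\modE$. For odd $Y=e_\alpha$ this is the defining axiom of a SUR; for even $Y=c_i$ it follows by differentiating the isometry identity \eqref{Eq:Isometry} for $U_0$ at the identity, which gives $\langle dU_0(c_i)v,w\rangle=-\langle v,dU_0(c_i)w\rangle$. A direct computation then shows that taking the superadjoint of a symmetrized quadratic $X=Y_1Y_2+(-1)^{|Y_1||Y_2|}Y_2Y_1$ merely interchanges its two terms — the composition sign combining with the two minus signs from $U_*(Y_i)^\dag=-U_*(Y_i)$ — so that $U_*(X)^\dag=U_*(X)$. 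Since $\hbar\in\gR^\times$ one has $\overline{1/(i\hbar)}=-1/(i\hbar)$, and therefore $\mu_*(X)^\dag=\overline{(1/i\hbar)}\,U_*(X)^\dag=-\frac{1}{i\hbar}U_*(X)=-\mu_*(X)$.

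The only genuine subtlety — and the place where the argument could go wrong — is the bookkeeping of the scalar $\frac{1}{i\hbar}$ and of the various signs: the morphism property relies on a bracket of quadratics carrying \emph{exactly one} power of $Z$, while skew-superadjointness hinges on the complex conjugation $\overline{i\hbar}=-i\hbar$ turning the self-superadjointness $U_*(X)^\dag=U_*(X)$ into the required skew-superadjointness of $\mu_*(X)$. Once these normalizations are verified, both statements are immediate.
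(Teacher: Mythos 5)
The paper itself contains no proof of this proposition: it is imported wholesale from \cite{Nishiyama:1990} (Proposition 4.6), so there is no internal argument to compare yours against. Judged on its own merits, your strategy --- deduce everything from the facts that $U_*$ is a superalgebra morphism with $U_*(Z)=i\hbar\gone$ and that $U_*(v)^\dag=-U_*(v)$ for linear $v\in\gB\modE$ --- is sound, and your reading of $\caL_2$ as the span of symmetrized quadratics in the vector generators (so that the lift to $\mathfrak{U}(\kg_\gR)$ is canonical) is the correct one. The superadjoint half is fully correct: the composition rule $(ST)^\dag=(-1)^{|S||T|}T^\dag S^\dag$ does follow from Proposition \ref{prop-superhilbert-adjoint}, skew-superadjointness of $U_*(e_\alpha)$ is the SUR axiom while that of $U_*(c_i)=dU_0(c_i)$ follows by differentiating superunitarity of $U_0$ on $\ehH_S^\infty$, taking $\dag$ of a symmetrized quadratic indeed swaps its two terms so that $U_*(X)^\dag=U_*(X)$, and $\overline{1/(i\hbar)}=-1/(i\hbar)$ then gives $\mu_*(X)^\dag=-\mu_*(X)$.

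The morphism half has one step that does not yet hold as written. Your contraction count shows $[X,Y]=Z\{X,Y\}$ where $\{X,Y\}$ is a combination of quadratic \emph{monomials} $v_kv_l$; but a quadratic monomial is not a symmetrized quadratic, since $v_kv_l=\tfrac12\bigl(v_kv_l+(-1)^{|v_k||v_l|}v_lv_k\bigr)+\tfrac12\om(v_k,v_l)Z$. Hence a priori $[X,Y]=Z\,w+cZ^2$ with $w$ in the canonical lift of $\caL_2$ and $c\in\gR$, and your final identity $\tfrac{1}{i\hbar}U_*(\{X,Y\})=\mu_*([X,Y])$ requires $c=0$: otherwise $U_*(\{X,Y\})$ differs from $U_*(w)$ by the constant $c\,i\hbar\,\gone$. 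The vanishing of this $Z^2$-coefficient is true but is precisely the nontrivial content of the step; it is not forced by ``one contraction per term'', as the example $[p^2,q^2]=4Zqp+2Z^2=2Z(pq+qp)$ shows --- the $Z^2$-term disappears only after re-expressing the result in the symmetrized basis. To close the gap, either check the finitely many bracket types among the basis elements of \eqref{eq-meta} directly, or argue abstractly: reducing modulo $\langle Z-\gone\rangle$, the class of $[X,Y]$ lies in $\caL_2$ by Proposition \ref{prop-nishi}, the class of $w$ lies in $\caL_2$, and scalars intersect $\caL_2$ trivially in the quotient (Weyl--Clifford) algebra, forcing $c=0$. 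With this one-line patch your proof is complete.
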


Let $\mathrm{GL}(\modE_0)$ be the supergroup of invertible $\superA$-linear transformations of $\modE_0$.
The orthosymplectic supergroup $\SpO(\modE_0,\omega)$ is defined as 
the sub-supergroup of $\mathrm{GL}(\modE_0)$ preserving the symplectic form $\omega$,
 $$
\SpO(\modE_0,\omega):=\{M\in\mathrm{GL}(\modE_0)\; |\;  \forall X,Y\in\modE_0,\ \omega(MX,MY)=\omega(X,Y)\}.
$$
As soon as both $\signp$ and $\signq$ are non-zero, $\SpO(\modE_0,\omega)$ is not connected
and we denote by $\SpO^0(\modE_0,\omega)$  its connected component at the identity.
The super Harish-Chandra pair of $\SpO^0(\modE_0,\omega)$ is 
$$(\Sp(2m,\gR)\times \SO^0(\signp,\signq),\mathfrak{spo}(2m|\signp,\signq)_\gR),$$
where the adjoint action $\Ad$ is given by
$$
\forall M\in\Sp(2m,\gR)\times \SO^0(\signp,\signq),\
A\in\mathfrak{spo}(2m|\signp,\signq)_\gR,\quad\Ad_M A=MAM^{-1},$$ 
the product being taken in the algebra $\caL(\gB \modE)$.
We denote by $\Mp(2m,\gR)$ the metaplectic group, $\Spin^0(\signp,\signq)$ the connected component at the identity of the spin group, and $$\rho:\Mp(2m,\gR)\times \Spin^0(\signp,\signq)\to \Sp(2m,\gR)\times \SO^0(\signp,\signq)$$
the canonical $\gZ_2\times\gZ_2$-covering.

\begin{definition}
The {\defin metaplectic supergroup} $\Mp(\modE_0,\omega)$ is a $\gZ_2\times\gZ_2$-covering 
of the orthosymplectic supergroup $\SpO^0(\modE_0,\omega)$. Its associated 
super Harish-Chandra pair is $(\Mp(2m,\gR)\times \Spin^0(\signp,\signq),\mathfrak{spo}(2m|\signp,\signq)_\gR)$,
where the adjoint action $\widetilde{\Ad}$ is given by 
$$\widetilde{\Ad}_M A=\Ad_{\rho(M)}A,$$
for all $M\in\Mp(2m,\gR)\times \Spin(\signp,\signq)$
and $A\in\mathfrak{spo}(2m|\signp,\signq)_\gR$.
\end{definition}

If $\signp=\signq=0$, then $\mathfrak{spo}(\modE,\omega)_\gR\simeq\mathfrak{sp}(2m,\gR)$
and the representation $\mu_*$ is known to integrate as a unitary representation of $\Mp(2m,\gR)$
on $L^2(\gR^m)$, called the metaplectic representation \cite{Folland:1989}.
If $m=0$, then $\mathfrak{spo}(\modE,\omega)_\gR\simeq\mathfrak{o}(\signp,\signq)$.
According to Section \ref{subsec-spinor}, the representation $U_*$ defines an algebra isomorphism
$$\End(L^2(\gR^{0|r})\otimes Hol(\gC^{0|s}))\simeq\mathbb{C}\ell(2n).$$ Since the Lie algebra
$\mathfrak{o}(\signp,\signq)$ exponentiates in the Clifford algebra $\mathbb{C}\ell(2n)$ 
as the spin group $\Spin(\signp,\signq)$, 
the representation $\mu_*$ integrates as a SUR of $\Spin(\signp,\signq)$
on $L^2(\gR^{0|r})\otimes Hol(\gC^{0|s})$, which is the spinor representation.
Let $\mu_0: \Mp(2m,\gR)\times \Spin^0(\signp,\signq)\to \caU(\ehH_S)$ be the tensor product 
of the metaplectic representation of $\Mp(2m,\gR)$ with the spin representation 
of $\Spin^0(\signp,\signq)$. 

\begin{theorem}
The triple $(\ehH_S,\mu_0,\mu_*)$ is a SUR of the metaplectic supergroup $\Mp(\modE_0,\omega)$ 
on the Hilbert superspace $\ehH_S$. It is called the {\defin metaplectic representation}.
\end{theorem}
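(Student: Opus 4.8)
The plan is to verify the three axioms of Definition~\ref{def-reprhc} for the super Harish-Chandra pair $(G_0,\mathfrak{spo}(2m|\signp,\signq)_\gR)$ with $G_0:=\Mp(2m,\gR)\times\Spin^0(\signp,\signq)$, and then to upgrade the result to a SUR of the Lie supergroup $\Mp(\modE_0,\omega)$ through the equivalence of Theorem~\ref{thm:Equivalence}. First I would check that $(\ehH_S,\mu_0)$ is a superunitary representation of the \emph{connected} group $G_0$. Writing $\ehH_S\simeq L^2(\gR^m)\hat\otimes S_\gC$ with $S_\gC=L^2(\gR^{0|r})\otimes Hol(\gC^{0|s})$, the operator $\mu_0(g_1,g_2)=\mu_0^{\Mp}(g_1)\otimes\mu_0^{\Spin}(g_2)$ is the tensor product of the classical metaplectic operator, unitary on the parity-$0$ Hilbert space $L^2(\gR^m)$ and hence superunitary \cite{Folland:1989}, with the spin operator, which is superunitary on the finite-dimensional spinor module $S_\gC$. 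A degree-$0$ tensor product of two isometries is again an isometry for the inner product \eqref{Eq:TPsuperH}, so Proposition~\ref{prop:isosuperunitary} gives $\mu_0(g_1,g_2)\in\caU(\ehH_S)$, while strong continuity is inherited from the two factors. Since $S_\gC$ is finite-dimensional and the smooth vectors of the metaplectic representation form the Schwartz space, the smooth vectors of $\mu_0$ coincide with $\caS(\gR^m)\otimes S_\gC=\ehH_S^\infty$; thus the operators $\mu_*(X)$ of Proposition~\ref{prop-nishibis} are defined on and preserve the correct domain.

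Next I would observe that Proposition~\ref{prop-nishibis} already furnishes two of the three requirements on $\mu_*$: it is an $\gR$-Lie superalgebra morphism and satisfies $\mu_*(X)^\dag=-\mu_*(X)$, in particular for every odd $X$. The remaining identity $\mu_*=d\mu_0$ on the even part $(\mathfrak{spo})_0=\mathfrak{sp}(2m,\gR)\oplus\mathfrak{so}(\signp,\signq)$ I would establish factorwise. On $\mathfrak{sp}(2m,\gR)$, spanned by the $m_{ij}$, the formulas \eqref{eq-meta} are exactly the infinitesimal generators of the oscillator/metaplectic representation on $L^2(\gR^m)$, so $\mu_*|_{\mathfrak{sp}}=d\mu_0^{\Mp}$ by the classical identification \cite{Folland:1989}. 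On $\mathfrak{so}(\signp,\signq)$, spanned by the $s_{\alpha\beta}$, the spin representation $\mu_0^{\Spin}$ was \emph{defined} as the integration of $\mu_*|_{\mathfrak{so}}$ inside $\mathbb{C}\ell(\signp+\signq)$ (via Proposition~\ref{Schrodinger-Clifford}), so $d\mu_0^{\Spin}=\mu_*|_{\mathfrak{so}}$ holds by construction.

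Finally, for the equivariance $\mu_*(\widetilde{\Ad}_g X)=\mu_0(g)\mu_*(X)\mu_0(g)^{-1}$ appearing as the second line of \eqref{Eq:HC-rep}, I would invoke the remark following Definition~\ref{def-reprhc}: because $G_0$ is connected and $\mu_*$ is a Lie superalgebra morphism with $\mu_*=d\mu_0$ on $(\mathfrak{spo})_0$, this line is automatic. Indeed its derivative at the identity along any $Y\in(\mathfrak{spo})_0$ is $\mu_*([Y,X])=[\mu_*(Y),\mu_*(X)]$, which holds since $\mu_*$ is a morphism, so both sides of the equivariance relation coincide throughout the connected $G_0$. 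With all three axioms verified, $(\ehH_S,\mu_0,\mu_*)$ is a SUR of the pair, and Theorem~\ref{thm:Equivalence} transports it to a SUR of $\Mp(\modE_0,\omega)$. The only steps requiring genuine input beyond bookkeeping are the two classical identifications — that the metaplectic smooth vectors are Schwartz and that $\mu_*|_{\mathfrak{sp}}$ integrates to the metaplectic representation — whereas everything else reduces to superunitarity of a tensor product or to the connectedness of $G_0$.
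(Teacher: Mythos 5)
Your proposal is correct, and its skeleton coincides with the paper's proof: verify the axioms of Definition~\ref{def-reprhc}, quote Proposition~\ref{prop-nishibis} for the morphism property and the identity $\mu_*(X)^\dag=-\mu_*(X)$, use connectedness of $\Mp(2m,\gR)\times\Spin^0(\signp,\signq)$ to get the $\Ad$-equivariance for free, and then settle the one genuinely analytic point, namely that the space of smooth vectors of $\mu_0$ equals $\ehH_S^\infty=\caS(\gR^{m|r+s})$. Where you diverge is precisely in that last step. The paper argues directly on the tensor product: since $d\mu_0$ restricted to $\mathfrak{sp}(2m,\gR)$ acts by operators of the form $x_ix_j\otimes\gone$, $x_i\partial_{x_j}\otimes\gone$, $\partial_{x_i}\partial_{x_j}\otimes\gone$, every smooth vector lies in the domain of all powers of $(1+x^2-\partial_x^2)\otimes\gone$, hence in $\caS(\gR^m)\otimes\big(L^2(\gR^{0|r})\otimes Hol(\gC^{0|s})\big)$ by the Reed--Simon characterization of the Schwartz space. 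You instead reduce to the classical statement that the smooth vectors of the metaplectic representation of $\Mp(2m,\gR)$ are exactly $\caS(\gR^m)$, and promote it to the product group via the observation that tensoring with a \emph{finite-dimensional} factor does not change smooth vectors (restricting the joint orbit map to $G_1\times\{e\}$ and projecting onto components makes this elementary). Both arguments are valid and ultimately rest on the same harmonic-oscillator domain fact; the paper's version is self-contained modulo Reed--Simon, while yours outsources that fact to the classical literature but buys a cleaner structural statement (smooth vectors of a representation tensored with a finite-dimensional one) that is reusable elsewhere. Your additional care in checking $\mu_*=d\mu_0$ factorwise on $\mathfrak{sp}(2m,\gR)$ and $\mathfrak{so}(\signp,\signq)$, and in invoking Theorem~\ref{thm:Equivalence} to pass from the pair to the supergroup, fills in steps the paper treats as definitional; neither constitutes a gap.
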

\begin{proof}
Let us check the axioms of Definition \ref{def-reprhc}. By definition, 
$\mu_0$ is a SUR of the Lie group $\Mp(2m,\gR)\times \Spin^0(\signp,\signq)$ and we have 
$\mu_*=\dd\mu_0$ on the even part of $\mathfrak{spo}(\modE,\omega)_\gR$.
By Proposition~\ref{prop-nishibis}, the map $\mu_*:\mathfrak{spo}(\modE,\omega)_\gR\to\caL(\ehH_S^\infty)$  is a $\gR$-Lie superalgebra morphism
satisfying $\mu_*(X)^\dag=-\mu_*(X)$
for all $X\in\mathfrak{spo}(\modE,\omega)_\gR$.
Since $\Mp(2m)\times \Spin^0(\signp,\signq)$ is connected, the condition \eqref{Eq:HC-rep}
 is automatically satisfied.

To conclude the proof, it suffices to show that
the space $\ehH_\mu^\infty$ of smooth vectors of the representation $\mu_0$
is equal to $\ehH_S^\infty$.
On one side, by definition of $\mu_0$, we have $\ehH_S^\infty\subseteq\ehH_\mu^\infty$.
On the other side, the restriction of $d\mu_0$ to $\mathfrak{sp}(2m,\gR)$ acts by operators 
of type $\partial_{x_i}\partial_{x_j}\otimes\gone$, $x_i\partial_{x_j}\otimes\gone$, and $x_ix_j\otimes\gone$ 
on $L^2(\gR^m)\otimes\left(L^2(\gR^{0|r})\otimes Hol(\gC^{0|s})\right)$.
Hence, $\ehH^\infty_\mu$ is included in all the domains of powers of the operator $(1+x^2-\partial_x^2)\otimes \gone$,
that is $\ehH^\infty_\mu\subseteq \caS(\gR^{m})\otimes\left(L^2(\gR^{0|r})\otimes Hol(\gC^{0|s})\right)$
(see \cite[page 141]{Reed:1980}).
Since $\ehH^\infty_S=\caS(\gR^{m|r+s})$ (see Proposition \ref{prop-smoothschwartz}),
the result follows.
\end{proof}

The $\gZ_2\times\gZ_2$-covering introduced above extend to the supergroups,
$\rho:\Mp(\modE_0,\omega)\to \SpO^0(\modE_0,\omega)$.
If $\xx\in\modE_0$ and $M\in\Mp(\modE_0,\omega)$, we denote by $\rho(M)\xx$ the result of
the natural action $\SpO^0(\modE_0,\omega)\times\modE_0\to\modE_0$ on the pair $(\rho(M),\xx)$.

\begin{proposition}
The metaplectic and the Schr\"odinger representations satisfy: 
\begin{equation*}
\forall M\in\Mp(\modE_0,\omega),g\in\rH(\modE_0,\om),\quad U(\rho(M)g)=\mu(M)U(g)\mu(M)^{-1},
\end{equation*}
where, using the decomposition $g=(\xx,t)\in\modE_0\times\gR^{1|0}$, we have $\rho(M)g=(\rho(M)\xx,t)$. 
\end{proposition}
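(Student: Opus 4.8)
The plan is to reduce the group identity to an infinitesimal intertwining relation on the Heisenberg side, to establish that relation from the fact that $U_*$ is an algebra morphism, and then to re-integrate using the connectedness of the even metaplectic group together with the nilpotence of the odd directions. First I would differentiate both sides in $g$ at the identity of $\rH(\modE_0,\omega)$. Since $\rho(M)\in\SpO^0(\modE_0,\omega)$ acts by automorphisms of the Heisenberg supergroup fixing the centre, one has $\rho(M)e^{tX}=e^{t\rho(M)_*X}$, where $\rho(M)_*$ is the induced automorphism of $\kh(\modE,\omega)_\gR$ (acting on $\modE$ through $\rho(M)$ and fixing $Z$). By Lemma \ref{Lem:expandpi} the left-hand side $g\mapsto U(\rho(M)g)$ then differentiates to $U_*\circ\rho(M)_*$, while conjugation by the fixed operator $\mu(M)$ differentiates to $\Ad_{\mu(M)}\circ U_*$. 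Because $U$ is reconstructed from $U_*$ through Proposition \ref{prop-diffeom} and Equation \eqref{piopiH:pi}, and likewise $\mu$ from $\mu_*$, the stated group identity is equivalent to the infinitesimal relation
\begin{equation}\label{eq-metaintertwine}
\mu(M)\,U_*(X)\,\mu(M)^{-1}=U_*\big(\rho(M)_*X\big),\qquad X\in\kh(\modE,\omega)_\gR.
\end{equation}

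The key step is the infinitesimal version of \eqref{eq-metaintertwine}: for $A\in\mathfrak{spo}(\modE,\omega)_\gR\simeq\caL_2$ and $X\in\gB\modE$,
\begin{equation*}
[\mu_*(A),U_*(X)]=U_*(A\cdot X),
\end{equation*}
where $A\cdot X$ is the natural linear action of $\caL_2$ on $\gB\modE$ and the left bracket is the supercommutator of operators. This follows at once from the fact that $U_*$ is an algebra morphism on $\mathfrak U(\kg_\gR)/\langle Z-i\hbar\gone\rangle$ together with Proposition \ref{prop-nishi}, which identifies $A\cdot X$ with the bracket $[A,X]$ computed in the enveloping algebra. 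Indeed, the abstract bracket $[A,X]$ carries one factor of the central generator $Z$, so applying $U_*$ produces a factor $i\hbar$ which is exactly cancelled by the normalisation $\mu_*=\frac{1}{i\hbar}(U_*)|_{\caL_2}$; the explicit formulas \eqref{eq-meta} make this cancellation transparent. As $\mu_*$ is a Lie superalgebra morphism by Proposition \ref{prop-nishibis}, and $\dd\rho$ identifies the Lie superalgebra of the metaplectic supergroup with $\mathfrak{spo}(\modE,\omega)_\gR$ acting on $\modE$ as in Proposition \ref{prop-nishi}, this is precisely the derivative of \eqref{eq-metaintertwine} in $M$.

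It then remains to integrate. Writing $M=M_0\,e^{A_1}$ with $M_0$ in the even metaplectic group and $A_1\in\kg^{(1)}$ odd, via Proposition \ref{prop-diffeom}, Equation \eqref{piopiH:pi} gives $\mu(M)=\mu(M_0)\exp(\mu_*(A_1))$. For the even factor, let $P(M_0)$ denote the assertion that \eqref{eq-metaintertwine} holds for $M_0$ and all $X$; the same computation as above shows that $\{M_0\mid P(M_0)\}$ is a subgroup, and that along any one-parameter subgroup $t\mapsto e^{tA}$ (with $A$ even) the operator-valued curve $\gamma(t)=\mu(e^{tA})^{-1}U_*(\rho(e^{tA})_*X)\mu(e^{tA})$ satisfies $\gamma(0)=U_*(X)$ and $\gamma'(t)=0$ by the key step, so $P(e^{tA})$ holds for all $t$. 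Since such subgroups generate the connected group $\Mp(2m,\gR)\times\Spin^0(\signp,\signq)$, the relation holds for every $M_0$. For the odd factor, $\mu_*(A_1)$ is a nilpotent even operator on $\superA\otimes\ehH_S^\infty$, so $\exp(\mu_*(A_1))$ is a finite polynomial and conjugation by it equals $\exp(\ad_{\mu_*(A_1)})$; iterating the key step gives $\exp(\ad_{\mu_*(A_1)})U_*(X)=U_*\big(\exp(\ad_{A_1})X\big)=U_*(\rho(e^{A_1})_*X)$. Combining the two factors yields \eqref{eq-metaintertwine}, hence the theorem.

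The main obstacle is bookkeeping rather than conceptual: one must track the Koszul signs in the supercommutators when $A$ or $X$ is odd, and, more delicately, justify the reduction of the supergroup identity to \eqref{eq-metaintertwine} in the odd nilpotent directions, where there is no connectedness argument available and one relies instead on the finite exponential series and on $\mu_*$ being a genuine Lie superalgebra morphism, so that the $\ad$-action integrates correctly.
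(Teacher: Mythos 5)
Your proposal is correct and follows essentially the same route as the paper: both reduce the group-level identity to the infinitesimal relation $U_*(AX)=[\mu_*(A),U_*(X)]$ on $\ehH_S^\infty$ and prove it by viewing $A$ and $X$ as elements of order $2$ and $1$ in $\mathfrak{U}(\kg_\gR)/\langle Z-i\hbar\gone\rangle$, where $AX=\frac{1}{i\hbar}[A,X]$, so that the algebra-morphism property of $U_*$ and the normalisation $\mu_*=\frac{1}{i\hbar}(U_*)|_{\caL_2}$ give the claim. The only difference is presentational: the paper compresses the re-integration into the single remark that the body of the metaplectic supergroup is connected, whereas you spell out the even (connectedness/one-parameter subgroup) and odd (finite nilpotent exponential) parts explicitly, which is a useful elaboration of the same argument rather than a different proof.
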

\begin{proof}
Since the body of the metaplectic supergroup is connected, it is enough to check the relation at the infinitesimal level on $\ehH^\infty_\mu=\ehH_S^\infty$, that is 
\begin{equation*}
\forall A\in\mathfrak{spo}(\modE,\omega)_\gR,\ \forall X\in\gB\modE,\quad U_*(AX)=[\mu_*(A),U_*(X)],
\end{equation*}
where $AX\in\gB\modE$ is the result of the standard action of $\mathfrak{spo}(\modE,\omega)_\gR$ on $\gB\modE$.
Identifying $X$ and $A$ as elements of $\mathfrak{U}(\kg_\gR)/\langle Z-i\hbar\gone\rangle$
of order $1$ and $2$, we obtain $AX=\frac{1}{i\hbar}[A,X]$ where the commutator is taken in
the algebra $\mathfrak{U}(\kg_\gR)/\langle Z-i\hbar\gone\rangle$. Since $U_*$ is an algebra morphism,
we get that
$$
U_*(AX)=\frac{1}{i\hbar} U_*([A,X])= [\frac{1}{i\hbar}U_*(A),U_*(X)]. 
$$
The result follows from the definition of $\mu_*$.
\end{proof}

%

\appendix \section{Graded $\BCH$ series}

%
%

Let $\kg$ be a finite dimensional Lie superalgebra over the algebra $\superA$
and $\caN_\superA$ the ideal of nilpotent elements in $\superA$.
For any $X\in\caN_\superA\otimes\gB \kg$, the exponential of $X$ is given by the sum
\begin{equation*}
\exp(X)=\sum_{k=0}^\infty\frac{X^k}{k!} .
\end{equation*}
Since $X$ has nilpotent coefficients, the sum is finite and $\exp(X)$
is a formal polynomial over $\superA$.
We set $\kg^{(1)}:=\superA_1\otimes(\kg_\gR)_1$
and $\kg^{(0)}:=(\caN_\superA\cap\superA_0)\otimes(\kg_\gR)_0$.
These are subspaces of $(\caN_\superA\otimes\gB \kg)\cap \kg_0$.
Moreover, $\kg^{(0)}\oplus\kg^{(1)}$ is stable under the bracket.

A Lie polynomial is a polynomial where multiplication of variables
is given by Lie bracket.

\begin{proposition}\label{Prop:BCH}
There exists Lie polynomials $\BCH_0:\kg^{(1)}\times\kg^{(1)}\to\kg^{(0)}$
and $\BCH_1:\kg^{(1)}\times\kg^{(1)}\to\kg^{(1)}$ such that
\begin{equation}\label{BCH}
\forall X,Y\in\kg^{(1)},\quad \exp(X)\exp(Y)=
\exp(\BCH_0(X,Y))\exp(\BCH_1(X,Y)).
\end{equation}
\end{proposition}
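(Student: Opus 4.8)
The plan is to reduce the statement to the classical (terminating) Baker--Campbell--Hausdorff formula on the nilpotent Lie algebra $\mathfrak{n}:=\kg^{(0)}\oplus\kg^{(1)}$, and then to re-expand the outcome into coordinates of the second kind adapted to the splitting $\mathfrak{n}=\kg^{(0)}\oplus\kg^{(1)}$. First I would record the refined bracket rules coming from $\superA$-bilinearity and supercommutativity. Using $\superA_1\superA_1\subseteq\caN_\superA\cap\superA_0$, $(\caN_\superA\cap\superA_0)\,\superA_1\subseteq\caN_\superA\cap\superA_1$ together with $[(\kg_\gR)_1,(\kg_\gR)_1]\subseteq(\kg_\gR)_0$ and $[(\kg_\gR)_0,(\kg_\gR)_1]\subseteq(\kg_\gR)_1$, one obtains
\[
[\kg^{(1)},\kg^{(1)}]\subseteq\kg^{(0)},\qquad [\kg^{(0)},\kg^{(1)}]\subseteq\kg^{(1)},\qquad [\kg^{(0)},\kg^{(0)}]\subseteq\kg^{(0)}.
\]
Thus $\mathfrak{n}$ is a $\gZ_2$-graded ordinary Lie algebra with even part $\kg^{(0)}$ and odd part $\kg^{(1)}$. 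Each bracket strictly increases the number of $\superA_1$-factors carried by the coefficients, and since odd elements of $\superA$ square to zero only finitely many such factors occur; hence the lower central series of $\mathfrak{n}$ terminates and $\mathfrak{n}$ is nilpotent. In particular every exponential below is a polynomial and every bracket series I write is finite.

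Second, for $X,Y\in\kg^{(1)}\subseteq\mathfrak{n}$ the classical BCH formula gives $\exp(X)\exp(Y)=\exp\big(C(X,Y)\big)$, where $C(X,Y)=X+Y+\tfrac12[X,Y]+\cdots$ is the usual Lie series; by nilpotency of $\mathfrak{n}$ this is a finite sum of iterated brackets, hence a genuine Lie polynomial valued in $\mathfrak{n}$. The key observation is that, by the graded rules above, an iterated bracket of $k$ arguments drawn from $\kg^{(1)}$ lies in $\kg^{(0)}$ when $k$ is even and in $\kg^{(1)}$ when $k$ is odd (a one-line induction on the bracket-length). Separating the terms of $C(X,Y)$ according to the parity of their length therefore yields
\[
C(X,Y)=Z_0(X,Y)+Z_1(X,Y),\qquad Z_0\in\kg^{(0)},\ Z_1\in\kg^{(1)},
\]
with $Z_0$ (even lengths) and $Z_1$ (odd lengths) again Lie polynomials in $X,Y$.

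Third, and this is the step carrying the real content, I would factor $\exp(Z_0+Z_1)$ as $\exp(\BCH_0)\exp(\BCH_1)$ with $\BCH_0\in\kg^{(0)}$ and $\BCH_1\in\kg^{(1)}$. Consider the polynomial map $c:\kg^{(0)}\times\kg^{(1)}\to\mathfrak{n}$ determined by $\exp(A)\exp(B)=\exp\big(c(A,B)\big)$, that is $c(A,B)=C(A,B)$; under the identification $\mathfrak{n}=\kg^{(0)}\oplus\kg^{(1)}$ its linear part is the identity and its higher part consists of iterated brackets, so on the nilpotent algebra $\mathfrak{n}$ it is a polynomial bijection invertible by Lie polynomials (coordinates of the second kind on a nilpotent group; concretely one solves $c(A,B)=Z_0+Z_1$ recursively along the lower central series, correcting $A\in\kg^{(0)}$ and $B\in\kg^{(1)}$ at each filtration step by a bracket term of the next degree). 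Setting $(\BCH_0,\BCH_1):=c^{-1}(Z_0,Z_1)$ and composing with the Lie polynomials $Z_0,Z_1$ of the previous step produces the required Lie polynomials in $X,Y$. I expect this inversion to be the main obstacle, the delicate points being that the inverse respects the grading $\kg^{(0)}\oplus\kg^{(1)}$ and that it is expressible purely through brackets. A clean shortcut is that the factorization is an instance of Proposition~\ref{prop-diffeom}: applying the global diffeomorphism $G^\wod\times\kg^{(1)}\to G$ to $g=\exp(X)\exp(Y)$, whose body is the identity, writes $g=g_0\,e^{\BCH_1}$ with $g_0$ unipotent in $G^\wod$, whence $g_0=\exp(\BCH_0)$ for a unique $\BCH_0\in\kg^{(0)}$, the polynomial dependence yielding the Lie-polynomial claim.
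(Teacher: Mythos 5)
Your main route is genuinely different from the paper's. The paper takes the existence and uniqueness of the polynomials $\BCH_0$ (even degrees) and $\BCH_1$ (odd degrees) as immediate and spends all its effort on showing they are \emph{Lie} polynomials: associativity of $\exp(X)\exp(Y)\exp(Z)$ forces $\BCH_1(\BCH_1(X,Y),Z)=\BCH_1(X,\BCH_1(Y,Z))$, Eichler's elementary induction \cite{Eichler:1968} (his proof that the classical $\BCH$ series is a Lie series) is then applied to the homogeneous components $F_{2k+1}$ of $\BCH_1$, and finally $\BCH_0$ is Lie because $\exp(\BCH_0(X,Y))=\exp(\BCH(X,Y))\exp(-\BCH_1(X,Y))$ exhibits it as a classical $\BCH$ composition of Lie polynomials. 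You instead apply the classical $\BCH$ theorem on $\mathfrak{n}=\kg^{(0)}\oplus\kg^{(1)}$, split by parity of bracket length, and invert the map $(A,B)\mapsto\BCH(A,B)$ by a filtration recursion. Your route avoids Eichler's theorem and simultaneously proves existence (which the paper only asserts); the paper's route avoids your inversion step, which is where all the real work sits in your plan. Yours is viable, but needs two repairs.

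First, $\mathfrak{n}$ is \emph{not} nilpotent in general, so you cannot invoke "classical BCH on a nilpotent Lie algebra" or "polynomial bijection on a nilpotent group" as such. The paper's standing hypothesis on $\superA$ is that for every $n$ there exist $a_1,\dots,a_n\in\superA_1$ with $a_1\cdots a_n\neq 0$; since the proposition concerns an arbitrary finite-dimensional $\kg$ (e.g.\ with $\gB\kg=\mathfrak{osp}(1|2)$, where iterated brackets starting from odd elements never die), elements of the form $[a_1\otimes u_1,[a_2\otimes u_2,[\dots]]]$ survive to arbitrary depth and the lower central series of $\mathfrak{n}$ does not terminate. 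Your phrase "only finitely many such factors occur" is the flaw: squaring to zero kills products with \emph{repeated} odd factors, not products of distinct ones. What is true, and is exactly what you need, is a uniform pointwise bound: for $X,Y\in\kg^{(1)}$ the coefficients form a set of at most $2\dim\gB\kg_1$ odd elements, any product of odd elements with a repetition vanishes (they anticommute and square to zero), hence every Lie --- indeed every associative --- monomial in $X,Y$ of length $>2\dim\gB\kg_1$ vanishes identically. This single fact gives termination of $\exp(X)\exp(Y)$ and of $C(X,Y)$, a degree bound for $\BCH_0,\BCH_1$ independent of $X,Y$ (so they are honest fixed polynomials), and termination of your recursion, provided you run the recursion on arguments that are themselves Lie polynomials in $X,Y$: projecting $\BCH(A,B)$ onto $\kg^{(0)}$ and $\kg^{(1)}$ (a monomial with $b$ occurrences of $B$ lands in $\kg^{(0)}$ iff $b$ is even) gives $Z_0=A+F(A,B)$, $Z_1=B+G(A,B)$ with $F,G$ of bracket length $\geq 2$; the updates $A\mapsto Z_0-F(A,B)$, $B\mapsto Z_1-G(A,B)$ preserve the grading, push the error to higher bracket length, and stabilize below the uniform bound.

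Second, the "clean shortcut" does not prove the proposition. Proposition \ref{prop-diffeom} yields existence and uniqueness of the factorization $g=g_0\,e^{\BCH_1}$ with polynomial dependence on $(X,Y)$, but says nothing about $\BCH_0,\BCH_1$ being linear combinations of iterated brackets. That Lie-polynomial property is the entire content of the statement --- it is what allows the identity to be transported along Lie superalgebra morphisms, e.g.\ $\pi_*(\BCH_i(X,Y))=\BCH_i(\pi_*(X),\pi_*(Y))$ in the proof of Theorem \ref{thm:Equivalence} --- and polynomial dependence alone does not imply it. So the shortcut cannot replace your main route; keep the recursion.
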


\begin{proof}
Clearly, there exists unique polynomials $\BCH_0$ of even degree 
and $\BCH_1$ of odd degree such that Equation \eqref{BCH}
is fulfilled. It remains to prove that they are Lie polynomials.
Taking $X,Y,Z\in\kg^{(1)}$, we compute
\begin{multline*}
\left(\exp(X)\exp(Y)\right)\exp(Z)=\exp(\BCH_0(X,Y))\exp(\BCH_1(X,Y))\exp(Z)\\
=\exp(\BCH_0(X,Y))\exp(\BCH_0(\BCH_1(X,Y),Z)\exp(\BCH_1(\BCH_1(X,Y),Z)))
\end{multline*}
and 
\begin{align*}
\exp(X)\left(\exp(Y)\exp(Z)\right)=&\exp(X)\exp(\BCH_0(Y,Z))\exp(\BCH_1(Y,Z))\\
=&\exp(\Ad_{\exp(X)}\BCH_0(Y,Z))\exp(X)\exp(\BCH_1(Y,Z))\\
=&\exp(\Ad_{\exp(X)}\BCH_0(Y,Z))\exp(\BCH_0(X,\BCH_0(Y,Z))\\
&\times\exp(\BCH_1(X,\BCH_1(Y,Z))).
\end{align*}
Products of polynomials of even degree is again of even degree, thus the 
equality of the two above terms yields
\begin{equation*}
\BCH_1(\BCH_1(X,Y),Z)=\BCH_1(X,\BCH_1(Y,Z)).
\end{equation*}
By decomposing the polynomial $\BCH_1$ in terms of its degrees,
we get $\BCH_1(X,Y)=\sum_k F_{2k+1}(X,Y)$, and then
\begin{equation*}
\sum_iF_{2i+1}\Big(\sum_jF_{2j+1}(X,Y),Z\Big)=
\sum_iF_{2i+1}\Big(X,\sum_jF_{2j+1}(Y,Z)\Big).
\end{equation*}
Since $F_1(X,Y)=X+Y$ and $F_{2i+1}(\l X,\m Y)=0$ for all $i\in\gN$ and $\l,\m\in\gR$,
the proof in \cite{Eichler:1968} that the standard $\BCH$ series is a Lie series applies.
As a result, each $F_{2i+1}$ is a Lie polynomial and $\BCH_1$ is also one.
Using that the standard $\BCH$ series is a Lie polynomial on $\kg^{(1)}$, 
the equality $\exp(\BCH(X,Y))=\exp(\BCH_0(X,Y))\exp(\BCH_1(X,Y))$ 
implies that $\BCH_0$ is also a Lie polynomial.
\end{proof}
\begin{example}
The first terms of $\BCH_1$ are 
$$\BCH_1(tX,tY)=t(X+Y)+\frac{t^3}{6}\left(2[X,[X,Y]]+[[X,Y],Y]\right)+O(t^5),$$
with $t\in\gR$.
\end{example}

\begin{remark}
By Ado theorem, any Lie supergroup is locally a matrix supergroups.
For matrix supergroups, the exponential map defined from the flow
of left invariant vector field coincides with the exponential series computed
in the algebra of supermatrices (see \cite[Paragraph VI.3.10]{Tuynman:2005}).  
Hence, for any Lie supergroup with Lie superalgebra $\kg$, 
the exponential map satisfies
\begin{equation*}
\quad e^Xe^Y=e^{\BCH_0(X,Y)}e^{\BCH_1(X,Y))},
\end{equation*}
for all  $X,Y\in\kg^{(1)}$ in a suitable neighborhood of zero.
\end{remark}

\begin{remark}
Let $G$ be a Lie supergroup. According to Proposition \ref{prop-diffeom}, 
it is diffeomorphic to $G^\wod\times\kg^{(1)}$ via $(g_0,X)\mapsto g_0\, e^X$. 
By pull-back to $G^\wod\times\kg^{(1)}$, the group law of $G$ reads as 
\begin{equation}\label{GroupLaw:BCH}
(g_0,X)(g_0',X')=(g_0g_0'e^{\BCH_0(\Ad_{(g_0')^{-1}}X,X')},\BCH_1(\Ad_{(g_0')^{-1}}X,X')),
\end{equation}
for all $g_0,g_0'\in G^\wod$ and $X,X'\in\kg^{(1)}$.
\end{remark}

\begin{remark}\label{Rmk:SupergrouptoSHC}
Let $(G_0,\kg_\gR)$ be a super Harish-Chandra pair and
$G^\wod$ the Lie supergroup obtained from $G_0$ by 
$\superA_0$-scalar extension. Then, the supermanifold
$G^\wod\times\kg^{(1)}$ endowed with the product
\eqref{GroupLaw:BCH} is a Lie supergroup. This provides 
a construction of the Lie supergroup associated to a super
Harish-Chandra pair.
\end{remark}

%
%
%
%
%

\bibliographystyle{utcaps}
\bibliography{biblio-these,biblio-perso,biblio-recents}

\end{document}